\newtheorem{Theorem}{Theorem}[section]
\newtheorem{Lemma}[Theorem]{Lemma}
\newtheorem{Proposition}{Proposition}[section]
\newtheorem{Remark}{Remark}[section]
\newcommand{\pt}{\partial_{t}}
\newcommand{\px}{\partial_{x}}
\newcommand{\py}{\partial_{y}}
\newcommand{\ka}{\kappa}
\newcommand{\ty}{\infty}
\newcommand{\De}{\Delta_k^{\mathrm{h}}}
\newcommand{\s}{S_{k-1}^{\mathrm{h}}}
\newcommand{\f}{\mathcal{F}}
\newcommand{\va}{\varphi}
\newcommand{\R}{\mathbb{R}}
\newcommand{\h}{\mathrm{h}}
\newcommand{\vm}{\mathrm{v}}
\newcommand{\T}{T^{\mathrm{h}}}
\newcommand{\Gt}{\tilde{G}}
\newcommand{\psit}{\tilde{\psi}}
\newcommand{\ltr}{\langle t\rangle}
\newcommand{\lsr}{\langle s\rangle}
\newcommand{\de}{\delta}
\newcommand{\ta}{\theta}
\newcommand{\tad}{\dot{\theta}}
\newcommand{\la}{\lambda}
\newcommand{\ga}{\gamma}
\newcommand{\al}{\alpha}
\newcommand{\si}{\sigma}
\newcommand{\bp}{b_{\Phi}}
\newcommand{\uph}{u_{\Phi}}
\newcommand{\vp}{v_{\Phi}}
\newcommand{\hp}{h_{\Phi}}
\newcommand{\Pp}{P_{\Phi}}
\newcommand{\Np}{N_{\Phi}}
\newcommand{\Gp}{G_{\Phi}}
\newcommand{\Hp}{H_{\Phi}}
\newcommand{\Htp}{\tilde{H}_{\Phi}}
\newcommand{\Gtp}{\tilde{G}_{\Phi}}
\newcommand{\Ll}{\mathcal{L}}
\newcommand{\Lk}{\mathcal{L_{\ka}}}
\newcommand{\U}{\mathcal{U}}
\newcommand{\ze}{\zeta}
\newcommand{\zet}{\tilde{\zeta}}
\newcommand{\ep}{\epsilon}
\numberwithin{equation}{section}
\numberwithin{equation}{section} \allowdisplaybreaks
\begin{document}
\title[Global small solutions of MHD boundary layer equations ]{\bf Global small solutions of MHD boundary layer equations in Gevrey function space}
\author{Zhong Tan}
 \address{School of Mathematical Sciences\\ Xiamen University\\  Xiamen, Fujian Province 361005, China
 \& Shenzhen Research Institute of Xiamen University\\ Shenzhen 518057, China}
      \email{tan85@xmu.edu.cn}
    
 \author{Zhonger Wu}
          \address{School of Mathematical Sciences\\ Xiamen University\\  Xiamen, Fujian Province 361005, China}
    \email{wze622520@163.com}

\begin{abstract}
In this paper, we obtain global small solutions and decay estimates for the MHD boundary layer in Gevrey space without any structural assumptions, generalizing the results of \cite{NL} in analytic space. The proof method is mainly inspired by \cite{WXLY} and \cite{CW}, using new auxiliary functions and finer structural analysis to overcome the difficulty of the loss of derivatives and then we obtain  the global well-posedness of the MHD boundary layer in the Gevrey $\frac{3}{2}$ space.
\end{abstract}
 
\subjclass[2020]{ 35Q35; 76W05; 76D10.}
\keywords{MHD boundary layer;\, Littlewood-Paley theory;\, Gevrey energy estimate;\, well-posedness theory.}\bigbreak

\maketitle
\section{Introduction}
In this paper, we study the well-posedness of the two dimensional MHD boundary layer in Gevrey space in the upper half-space $\R^2_+=\{(x,y)|x\in\R,y\in\R_+\}$.The equations take the following form (see \cite{DG,XYL,CJL}):
\begin{align}\label{mhd1}
\left\{\begin{array}{l}
\left(\partial_{t}+u \partial_{x}+v \partial_{y}- \partial_{y}^{2}\right) u-\left(b \partial_{x}+h \partial_{y}\right) b+\px p=0 \\
\partial_{t} b+\partial_{y}(v b-u h)-\kappa \partial_{y}^{2} b=0 \\
\partial_{t} h-\partial_{x}(v b-u h)-\kappa \partial_{y}^{2} h=0
\end{array}\right.
\end{align}
with the divergence-free and homogeneous Dirichlet boundary conditions,
\begin{align}\label{cb}
\left\{\begin{array}{l}
\px u+\py v=\px b+\py h=0,
 \\
 (u,v,b,h)|_{y=0}=(0,0,0,0),~(u,b)|_{y\rightarrow +\ty}=(\overline{u},\overline{b}),
\\
(u,b)|_{t=0}=(u_0,b_0).
\end{array}\right.
\end{align}
where $(u,v)$ represent the velocity of fluid and $(b,h)$ represent the magnetic field. And $\ka>0$ is a constant that represents the ratio between Reynolds number and magnetic Reynolds number. Furthermore $(\overline{u},\overline{b},p)$ represent the traces of the tangential field and the pressure of the outflow on the boundary, satisfying Bernoulli's law:
\begin{align}\label{Bernoulli}
\left\{\begin{array}{l}
\pt \overline{u}+\overline{u}\px\overline{u}+\px p
=\overline{b}\px\overline{b},
 \\
\pt \overline{b}+\overline{u}\px\overline{b}
=\overline{b}\px\overline{u}.
\end{array}\right.
\end{align}

When the magnetic field $(b,h)=(0,0)$, the equation \eqref{mhd1} degenerates to the classical Prandtl equation. The Prandtl equation was originally proposed by Prandtl in 1904 to explain the difference between an ideal fluid and a low viscosity fluid near the boundary. The Prandtl equation has been studied for a long time, and a lot of results have been obtained. Under the assumption of monotonicity, Oleinik\cite{OAO} proved the uniqueness of the local existence of classical solution using the Crocco transform. And in 2015, \cite{RA} and \cite{NM} independently proved similar results in weighted Sobolev space by energy methods. Under the favourable pressure condition, Zhouping Xin and Liqun Zhang \cite{ZPX} obtained the global existence of the weak solution, which is due to the favourable pressure condition to avoid the back-flow.

When the velocity has no monotonicity assumption, the Prandtl equation cannot overcome the derivative loss in the space of finite regularity, so it needs to be studied in the space of smooth functions. When the data is analytic in both the $x$ variable and $y$ variable, Sammartino and Caflisch\cite{MS} obtained for the first time the local well-posedness. Later Lombardo, Cannone and Sammartino\cite{MCL}
removed the analyticity requirement from the $y$ variable.
Further relaxation of the analyticity conditions, we can refer to \cite{DC,HD,DG2015,WXLMY,WXLY2} for the well-posedness results in Gevrey space. In particular, without any structural assumptions, \cite{HD} obtained the local existence and uniqueness of the solution of the two dimensional Prandtl equation in the Gevrey 2 space, and \cite{WXLMY} extended this result to the three dimensional case.The Gevrey index 2 is optimal in the sense of the ill-posedness result of \cite{DG2010}.
These results are all local. Regarding the large time results, Ping Zhang and Zhifei Zhang \cite{PZ} first obtained the long-time existence of the solution of the Pandtl equation with small analytic data, and then \cite{MI} obtained the almost global existence of the solution. Finally, the global existence and decay estimates of the solution are obtained in \cite{MP}. Recently, Ping Zhang et al. \cite{CW} extended the results in \cite{MP} to the Gevrey 2 space.

Back to the MHD boundary layer. Under the structural assumption of the dominance of the tangential magnetic field, see the well-posedness results of \cite{CJLJFA,CJLCPAM,WXLERA} in Sobolev space. The comparison of these results with those of the classical Prandtl equation under the assumption of monotonicity shows that the magnetic field has a stabilizing effect on the MHD boundary layer regardless of the presence of resistance.
On the other hand without imposing structural assumptions, Feng Xie
and Tong Yang \cite{FX} obtained a lower bound on the existence time of the analytic solution of the equation \eqref{mhd1}. This result was subsequently improved by Ping Zhang et al.\cite{NL} to obtain the global existence of the analytic solution. In addition, Feng Xie et al. \cite{SXL} further improved the result of \cite{NL} by removing the smallness requirement of the outflow at the initial moment.
In the Gevrey space, Weixi Li and Tong Yang \cite{WXLY} obtained the local well-posedness of \eqref{mhd1} when the Gevrey indicator is $\frac{3}{2}$ without imposing any structural assumptions. It is worth noting that, combined with the ill-posedness results in \cite{CJL}, we can guess that the optimal Gevrey indicator for the MHD boundary layer \eqref{mhd1} should be one of the numbers in the interval $[\frac{3}{2},2]$.

Inspired by \cite{WXLY} and \cite{CW}, we investigate the global well-posedness of \eqref{mhd1}-\eqref{cb} with small initial data, which is Gevrey $\frac{3}{2}$ regular in the $x$ variable and Sobolev regular in the $y$ variable. For the sake of brevity of the proof process, we shall assume without loss of generality
\begin{align}\label{cb2}
(\overline{u},\overline{b},p)=(0,0,C),~\ka\in(0,2).
\end{align}
In fact, for general $(\overline{u},\overline{b})$ with smallness and $\ka\in(\frac{1}{2},\ty)$, see the treatment in \cite{NL}.

The remainder of this paper is organized as follows: in Section \ref{mainresult}, we will give our main theorem, and some frequently used lemmas. In Section \ref{zhengmingdingli}, we list some main a priori estimates and give the proof of theorem \ref{th}. In section \ref{gujiU}-\ref{gujipy3G}, we will prove all the a priori estimates listed in section \ref{zhengmingdingli}.

\section{Main Result and Preliminary}\label{mainresult}
In this section we will first give an introduction on the basic notation in the paper, then give our main theorem, and finally list some useful lemmas.

\subsection{Notation}
First we give an introduction to the basic notation in the paper.

First $a\lesssim b$ means there exists a unform positive constant $C$ (the constant may be different between each line) such that $a\leq Cb$ holds. Let $\ltr\stackrel{\text { def }}{=}1+t$, $[\xi]\stackrel{\text { def }}{=}(1+\xi^2)^{\frac{1}{2}}$. And $s+$ means a number a little larger than $s$.

Regarding the function space, first $L_+^p=L^p(\R_+^2)$, where $\R_+^2=\R\times\R_+. $ When $p=2$,
$$(a,b)_{L_+^2}=\int_{\R_+^2}a(x,y)b(x,y)dxdy,$$
denotes the standard $L^2$ inner product. If $X$ is a Banach space and $I$ is a subinterval of $\R$, then $L^q(I,X)$ means that the map $t\mapsto \|f(t)\|_{X}$ belongs to $L^q(I)$. In particular, we use $L^p_T(L^q_{\mathrm{h}}(L^r_{\vm}))$ to denote $L^p([0,T];L^q(\R_x;L^r(\R_y^+)))$.

The anisotropic Sobolev space is defined as follows, let $s\in\R,k\in\mathbb{N}$, then
$$\|f\|_{H_{\mathrm{h}}^s}\stackrel{\text { def }}{=}\left(\int_{\R}(1+|\xi|^2)^s
|\hat{f}(\xi)|^2d\xi\right)^{\frac{1}{2}},$$
$$\|f\|_{H^{s,k}}\stackrel{\text { def }}{=}\sum\limits_{0\leq l\leq k}\left(\int_{0}^{\ty}\int_{\R}(1+|\xi|^2)^s
|\py^l\hat{f}(\xi)|^2d\xi dy\right)^{\frac{1}{2}}.$$
In particular, the weighted space is defined as follows:
$$\|f\|_{H^{s,k}_{\Psi}}\stackrel{\text { def }}{=}\sum\limits_{0\leq l\leq k}\left(\int_{0}^{\ty}\int_{\R}e^{2\Psi(t,y)}(1+|\xi|^2)^s
|\py^l\hat{f}(\xi)|^2d\xi dy\right)^{\frac{1}{2}},$$
here $\Psi(t,y)=\frac{y^2}{8\ltr}.$
The inner product of the inner product space $H_{\Psi}^{\si,0}$ is defined as follows:
$$(f,g)_{H_{\Psi}^{\si,0}}\stackrel{\text { def }}{=}\mathrm{Re}\int_0^{+\ty}\int_{\R}e^{2\Psi}
[D_x]^{\si}f\overline{[D_x]^{\si}g}dxdy.$$
In addition, to keep the notation simple, for the Banach space $X$, we specify
$$\|\py^k(f,g)\|_X^{m}\stackrel{\text { def }}{=}
\|\py^k f\|_X^{m}+\|\py^k g\|_X^{m}.$$
\subsection{Main Result}
Using \eqref{cb}, we can rewrite \eqref{mhd1} in the following form
\begin{align}\label{mhd}
\left\{\begin{array}{l}
\left(\partial_{t}+u \partial_{x}+v \partial_{y}- \partial_{y}^{2}\right) u=\tad P, \\
\left(\partial_{t}+u \partial_{x}+v \partial_{y}- \ka \partial_{y}^{2}\right) b=\tad N, \\
\left(\partial_{t}+u \partial_{x}+v \partial_{y}- \ka \partial_{y}^{2}\right) h= b\px v-h\px u,
\end{array}\right.
\end{align}
where $\ta(t)$ is determined by \eqref{theta},$P$ and $N$ are as follows:
\begin{align}\label{PN}
P=\frac{1}{\tad}\left(b \partial_{x}+h \partial_{y}\right) b,~ N=\frac{1}{\tad}\left(b \partial_{x}+h \partial_{y}\right) u.
\end{align}
\begin{Remark}
In contrast to \cite{WXLY}, we isolate $\tad(t)$ in the definition of $(P,N)$, which is crucial for this paper, and the role of $\tad$ can be seen in the remark $\ref{duoletad}$.
\end{Remark}

Since \eqref{cb}, we can know that there exist stream functions $(\psi,\psit)$ such that $(u,b)=\py(\psi,\psit)$ and $(v,h)=-\px(\psi,\psit)$ hold and have boundary values
$$(\psi,\psit)|_{y=0}=(\psi,\psit)|_{y\rightarrow+\ty}=(0,0).$$
The detailed derivation can be found in \cite{NL}.

Integrating the equation \eqref{mhd} with respect to the $y$ variable over $[y,+\ty)$, we can obtain the stream function satisfying the following equation
\begin{align*}
\left\{\begin{array}{l}
\partial_{t} \psi+u \partial_{x} \psi+2 \int_{y}^{\infty}\left(\partial_{y} u \partial_{x} \psi\right) d y'-\partial_{y}^{2} \psi+\int_{y}^{\infty}\tad P d y'=0, \\
\partial_{t} \psit+u \partial_{x} \psit+ \int_{y}^{\infty}\left(\partial_{y} u \partial_{x} \psit+\partial_{y} b \partial_{x} \psi\right) d y^{\prime}-\ka\partial_{y}^{2} \psit+\int_{y}^{\infty}\tad N d y^{\prime}=0, \\
\left.(\psi,\psit)\right|_{y=0}=
\left.(\psi,\psit)\right|_{y\rightarrow+\ty}=(0,0), \\
\left.(\psi,\psit)\right|_{t=0}=(\psi_0,\psit_0)
=(-\int_y^{\ty}u_0dz,-\int_y^{\ty}b_0dz).
\end{array}\right.
\end{align*}

In order to obtain faster decay estimates for the solution of the equation \eqref{mhd}, we need to control the Gevrey radius of the solution, so we need to introduce the following ``good function'' (see \cite{NL,MP,CW}):
\begin{align}\label{gf}
G=u+\frac{y}{2\ltr}\psi,~~\tilde{G}=b+\frac{y}{2\ka\ltr}\psit.
\end{align}
By direct calculation, we can obtain that $(G,\Gt)$ satisfies the following equation:
\begin{align}\label{gfe}
\left\{\begin{array}{l}
\partial_{t} G-\partial_{y}^{2} G+\langle t\rangle^{-1} G+u \partial_{x} G+v \partial_{y} G
-\frac{1}{2\langle t\rangle} v \partial_{y}(y \psi)\\
~~+\frac{y}{\langle t\rangle} \int_{y}^{\infty}\left(\partial_{y} u \partial_{x} \psi\right) d y^{\prime}-\tad P+\frac{y}{2\ltr}\int_y^{\ty}\tad P dy'=0, \\
\partial_{t} \Gt-\ka\partial_{y}^{2} \Gt+\langle t\rangle^{-1} \Gt+u \partial_{x} \Gt+v \partial_{y} \Gt
-\frac{1}{2\ka\langle t\rangle} v \partial_{y}(y \psit)\\
~~+\frac{y}{2\ka\langle t\rangle} \int_{y}^{\infty}\left(\partial_{y} u \partial_{x} \psit+\py b\px\psi\right) d y^{\prime}-\tad N+\frac{y}{2\ka\ltr}\int_y^{\ty}\tad N dy'=0, \\
\left.(G,\Gt)\right|_{y=0}= \lim _{y \rightarrow+\infty} (G,\Gt)=(0,0), \\
\left.(G,\Gt)\right|_{t=0}=(G_{0},\Gt_0) \stackrel{\text { def }}{=} (u_{0}+\frac{y}{2} \psi_{0},b_0+\frac{y}{2\ka}\psit_0) .
\end{array}\right.
\end{align}

Let $[\xi]\stackrel{\text { def }}{=}(1+\xi^2)^{\frac{1}{2}}$,$D_x\stackrel{\text { def }}{=}\frac{1}{i}\px$, then $[D_x]^s$ denotes the differential operator whose corresponding Fourier product is $(1+|\xi|^2)^{\frac{s}{2}}$. Let $f_{\Phi}\stackrel{\text { def }}{=}e^{\Phi(t,D_x)}f$, where
\begin{align}\label{phi}
\Phi(t,D_x)=\delta(t)[D_x]^{\frac{2}{3}},~\delta(t)=
\delta-\lambda\theta(t).
\end{align}
Our main theorem is stated as follows:
\begin{Theorem}\label{th}
Let $\ka\in(0,2)$ be a given constant, and $\eta$ is also a fixed constant satisfying $0<\eta<l_{\ka}$, where $l_{\ka}=\frac{\ka(2-\ka)}{4}\in(0,\frac{1}{4}].$ The initial values satisfy $u_0|_{y=0}=b_0|_{y=0}=0,\int_0^{\ty}u_0dy=\int_0^{\ty}b_0dy=0.$
Define $E(t)$ and $D(t)$ as follows:
\begin{align}\label{Et}
\begin{aligned}
E(t)\stackrel{\text { def }}{=}&\|\ltr^{l_{\ka}-\eta}(\uph,\bp)\|_{H_{\Psi}^{7,0}}
+\|\ltr^{l_{\ka}-\eta}\U\|_{H_{\Psi}^{7,0}}
+\|\ltr^{l_{\ka}-\eta}(\ze,\zet)\|_{H_{\Psi}^{\frac{22}{3},0}}\\
&+\|\ltr^{l_{\ka}-\eta}(\Pp,\Np)\|_{H_{\Psi}^{\frac{20}{3},0}}
+\|\ltr^{1+l_{\ka}-\eta}(\Gp,\Gtp)\|_{H_{\Psi}^{4,0}}\\
&+\|\ltr^{\frac{3}{2}+l_{\ka}-\eta}(\py\Gp,\py\Gtp)\|_{H_{\Psi}^{3,0}}
+\|\ltr^{2+l_{\ka}-\eta}(\py^2\Gp,\py^2\Gtp)\|_{H_{\Psi}^{3,0}}\\
&+\|\ltr^{\frac{5}{2}+l_{\ka}-\eta}(\py^3\Gp,\py^3\Gtp)\|_{H_{\Psi}^{2,0}},
\end{aligned}
\end{align}
and
\begin{align}\label{Dt}
\begin{aligned}
D(t)\stackrel{\text { def }}{=}&\|\ltr^{l_{\ka}-\eta}(\py\uph,\py\bp)\|_{H_{\Psi}^{7,0}}
+\|\ltr^{l_{\ka}-\eta}\py\U\|_{H_{\Psi}^{7,0}}
+\|\ltr^{l_{\ka}-\eta}(\py\ze,\py\zet)\|_{H_{\Psi}^{\frac{22}{3},0}}\\
&+\|\ltr^{l_{\ka}-\eta}(\py\Pp,\py\Np)\|_{H_{\Psi}^{\frac{20}{3},0}}
+\|\ltr^{1+l_{\ka}-\eta}(\py\Gp,\py\Gtp)\|_{H_{\Psi}^{4,0}}\\
&+\|\ltr^{\frac{3}{2}+l_{\ka}-\eta}(\py^2\Gp,\py^2\Gtp)\|_{H_{\Psi}^{3,0}}
+\|\ltr^{2+l_{\ka}-\eta}(\py^3\Gp,\py^3\Gtp)\|_{H_{\Psi}^{3,0}}\\
&+\|\ltr^{\frac{5}{2}+l_{\ka}-\eta}(\py^4\Gp,\py^4\Gtp)\|_{H_{\Psi}^{2,0}}.
\end{aligned}
\end{align}
where $\U$ is defined by \eqref{U}. If the initial data satisfy
\begin{align}\label{E0}
\begin{aligned}
\|(\uph(0),\bp(0))\|_{H_{\Psi}^{\frac{22}{3},0}}+E(0)\leq \ep^2.
\end{aligned}
\end{align}
Then there exist $\ep_0,\la_0>0$ such that when $0<\ep<\ep_0,\la>\la_0$, we have
\\
\textbf{1)}$\theta(t)$ in \eqref{phi} satisfies $\sup_{0\leq t<\ty}\theta(t)\leq\frac{\delta}{4\la}$.
\\
\textbf{2)}The system \eqref{mhd1}-\eqref{cb} has a unique global solution $(u,b)$, satisfying
\begin{align}\label{ED}
\begin{aligned}
E(t)+\frac{79}{100}\eta\int_0^t D(s)ds\leq C\ep^2
\end{aligned}
\end{align}
for any $t\in[0,\ty)$.

\end{Theorem}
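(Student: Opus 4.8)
The plan is to run a continuity (bootstrap) argument on the energy functional $E(t)$ and dissipation functional $D(t)$. By the local well-posedness result of \cite{WXLY} in Gevrey $\frac32$, a solution exists on some maximal interval $[0,T^*)$ with the stated regularity; we will show that the estimate \eqref{ED}, say in the slightly weaker form $E(t)+\frac{79}{100}\eta\int_0^tD(s)\,ds\le 2C\ep^2$, propagates and closes, forcing $T^*=\ty$. The key device is the time-dependent Gevrey weight $\Phi(t,D_x)=\de(t)[D_x]^{2/3}$ with $\de(t)=\de-\la\ta(t)$: differentiating $e^{\Phi}f$ in $t$ produces the \emph{good} gain term $\lambda\tad(t)\,\|[D_x]^{1/3}f_\Phi\|^2$ (up to the $[D_x]^{2/3}=[D_x]^{1/3}\cdot[D_x]^{1/3}$ splitting), and since $\tad(t)$ is, by construction of $\ta$ via \eqref{theta}, comparable to the leading nonlinear ``bad'' commutator contributions, choosing $\la>\la_0$ large absorbs the derivative-loss terms. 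Simultaneously one must show part \textbf{1)}, $\sup_t\ta(t)\le \de/(4\la)$, which guarantees $\de(t)\ge \tfrac34\de>0$ for all time so that the Gevrey radius never collapses; this is itself part of the bootstrap, obtained by integrating the ODE/inequality defining $\ta$ and using the smallness \eqref{E0} together with the bound on $\int_0^tD$.

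The order of steps I would carry out: \textbf{(i)} collect the a priori estimates of Section \ref{zhengmingdingli} — one for each term of $E(t)$: the $H^{7,0}_\Psi$ estimates for $(\uph,\bp)$ and for the auxiliary quantity $\U$ of \eqref{U}, the $H^{22/3,0}_\Psi$ estimates for $(\ze,\zet)$, the $H^{20/3,0}_\Psi$ estimates for $(\Pp,\Np)$, and the weighted $H^{k,0}_\Psi$ estimates for $(\Gp,\Gtp)$ and its $y$-derivatives up to order three, with the time weights $\ltr^{l_\ka-\eta}$, $\ltr^{1+l_\ka-\eta}$, $\ltr^{3/2+l_\ka-\eta}$, $\ltr^{2+l_\ka-\eta}$, $\ltr^{5/2+l_\ka-\eta}$ respectively. \textbf{(ii)} For each, apply $e^{\Phi}$ and $[D_x]^\si$ to the relevant evolution equation (the $u$, $b$ equations in \eqref{mhd}, the good-function system \eqref{gfe}, and the derived equations for $\U,\ze,\zet,\Pp,\Np$), pair in $L^2_+$ with the Gaussian weight $e^{2\Psi}$, and integrate by parts in $y$; the diffusion terms give the dissipation $\|\py(\cdot)\|^2_{H^{\si,0}_\Psi}$, the weight $\Psi=y^2/(8\ltr)$ contributes a favorable lower-order term from $\pt\Psi<0$, and the transport terms $u\px+v\py$ are handled by the divergence-free structure plus the $\frac{y}{2\ltr}\psi$ corrections built into $G,\Gt$. \textbf{(iii)} Multiply each resulting differential inequality by the square of its time weight, sum, and differentiate to get $\frac{d}{dt}E(t)^2+cD(t)\lesssim (\text{time-weight derivatives})\,E(t)^2 + E(t)^2 D(t) + \lambda\tad E(t)^2$; here the time-weight derivative terms of the form $\ltr^{-1}$ times a squared weighted norm must be dominated by $\eta\,D(t)$ using that $l_\ka-\eta>0$ and that $D$ contains one more $y$-derivative than $E$ — this is where the precise exponent bookkeeping and the choice $\eta<l_\ka$ matter, and the factor $\frac{79}{100}$ records exactly how much of the dissipation survives after this absorption. \textbf{(iv)} Feed in the bootstrap hypothesis $E\le 2C\ep$ so the cubic term $E^2D$ is absorbed into $cD$ for $\ep$ small, integrate in time using \eqref{E0}, and verify part \textbf{1)} closes, completing the continuity argument.

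The main obstacle — and the technical heart of the paper — is step \textbf{(iii)} for the magnetic-coupling terms, i.e. controlling $\tad P$ and $\tad N$ (equivalently $(b\px+h\py)b$ and $(b\px+h\py)u$) in the top-order energy. These terms lose a tangential derivative relative to what the parabolic smoothing in $y$ can recover, which is precisely why Sobolev theory fails and Gevrey $\tfrac32$ is needed: the $[D_x]^{2/3}$ weight provides a fractional gain $\lambda\tad\|[D_x]^{\si+1/3}f_\Phi\|^2$ that must be played against a commutator of the form $\tad\|[D_x]^{\si+1/3}f_\Phi\|\cdot\|[D_x]^{\si+1/3}g_\Phi\|$ (a Gevrey-$\tfrac32$ product estimate, the analogue of the ``$2/3+2/3=4/3>1$'' counting in \cite{CW,WXLY}), and the separation of $\tad$ in the definition of $(P,N)$ (noted in the Remark) is exactly what makes this pairing dimensionally consistent. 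A secondary difficulty is that closing the $G,\Gt$ hierarchy requires estimates on $\py^3 G$ in $H^{2,0}_\Psi$ while its equation involves $\py^4$-type terms through the nonlinearity and the $\frac{y}{2\ltr}$ factors; one handles this by the same weighted-in-$y$ trick (the Gaussian $e^{2\Psi}$ turns polynomial-in-$y$ coefficients into bounded multipliers after a Young-type split) combined with interpolation between consecutive levels of the hierarchy, but the time-weight exponents $1,\tfrac32,2,\tfrac52$ must be chosen so that each level's ``bad'' term is paid for by the next level's dissipation — which is the structural reason those particular powers appear in \eqref{Et}–\eqref{Dt}.
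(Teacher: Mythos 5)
Your proposal follows essentially the same route as the paper: local existence from \cite{WXLY}, the a priori estimates of Propositions \ref{propositionU}--\ref{propositionpy3G} assembled with the time weights of \eqref{Et}--\eqref{Dt}, absorption of the $\tad$-weighted loss terms by taking $\la$ large, and a continuity argument closing the bootstrap encoded in \eqref{T**}. One small correction: part \textbf{1)} does not require the bound on $\int_0^t D(s)\,ds$ at all, since $\tad(t)=\ep^{\frac{1}{2}}\ltr^{-\al}$ is prescribed explicitly by \eqref{theta} with $\al>1$, so $\theta(t)\leq C\ep^{\frac{1}{2}}\leq\frac{\de}{4\la}$ follows directly for $\ep$ small, independently of the energy estimates.
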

\subsection{Littlewood-Paley theory}
For the convenience of the reader, we give some basics of Littlewood-Paley theory in this subsection. First we have (see \cite{HB}),
\begin{align}\label{sk}
S_k^{\mathrm{h}}f=\mathcal{F}^{-1}(\chi(2^{-k}|\xi|)\hat{f})~~and~~\De f=\left\{\begin{array}{l}
\f^{-1}(\va(2^{-k}|\xi|)\hat{f})~~if~~k\geq~0,\\
S_0^{\mathrm{h}}f~~~~~~~~~~~~~~~~~~~~~~~~if~~k=~-1,\\
0~~~~~~~~~~~~~~~~~~~~~~~~~~~~~if~~k\leq-2.
\end{array}\right.
\end{align}

In this paper, $\f f$ and $\hat{f}$ always denote the Fourier transform of the distribution $f$ with respect to $x$ variable. On the other hand, $\chi$ and $\va$ denote smooth functions with compact support, the support of which is shown below:
$$\text{Supp}~\va\subset\{\xi\in\mathbb{R}|\frac{3}{4}\leq|\xi|\leq\frac{8}{3}\}~~
and~~\forall\xi\neq0,~\sum_{k\in\mathbb{Z}}\va(2^{-k}\xi)=1,$$
$$\text{Supp}~\chi\subset\{\xi\in\mathbb{R}|~|\xi|\leq\frac{4}{3}\}~~
and~~\chi(\xi)+\sum_{k\geq0}\va(2^{-k}\xi)=1.$$

Next, in order to estimate the product of the two distributions, we need to use the Bony's decomposition of the horizontal variable $x$ frequently (see \cite{JMB}):
\begin{align}\label{bony}
fg=T^{\mathrm{h}}_fg+T^{\mathrm{h}}_gf+R^{\mathrm{h}}(f,g),
\end{align}
where $\T_fg=\sum\limits_k\s f\De g,~R^{\mathrm{h}}(f,g)=\sum\limits_{|k-k'|\leq1}\De f\Delta_{k'}^{\mathrm{h}} g.$
\subsection{Para-product estimates}
Next we always assume that $t<T^*$, where $T^*$ is defined as follows:
\begin{align}\label{T*}
T^*\stackrel{\text { def }}{=}\sup\left\{t>0,\ta(t)<\frac{\de}{2\la}\right\}.
\end{align}
By \eqref{phi}, we know that when $t<T^*$, we have $\de(t)\geq\frac{\de}{2}.$ And since $|([\xi])'|=|\dfrac{\xi}{(1+\xi^2)^{\frac{1}{2}}}|<1$, so
$$[\xi]=[\xi-\eta+\eta]\leq[\xi-\eta]+|\eta|\leq[\xi-\eta]+[\eta].$$
Then
$$[\xi]^{\frac{2}{3}}\leq|[\xi]-[\eta]|^{\frac{2}{3}}+[\eta]^{\frac{2}{3}}
\leq[\xi-\eta]^{\frac{2}{3}}+[\eta]^{\frac{2}{3}},$$
thus we can obtain the convex inequality for $\Phi(t,\xi)=\de(t)[\xi]^{\frac{2}{3}}$:
\begin{align}\label{tu}
\Phi(t,\xi)\leq\Phi(t,\xi-\eta)+\Phi(t,\eta),
\end{align}
holds for any $\xi,\eta\in\R$. Using the same proof as in \cite{HB} (for the case $\Phi(t,\xi)=0$), combined with convexity, we can obtain the following two lemmas (see also \cite{CW}):
\begin{Lemma}\label{gu1}
Let $s\in\R$, and $(\T_f)^*$ denotes the adjoint operator of $\T_f$. If $\si>\frac{1}{2},$ then
$$\left\|\left(T_{f}^{\mathrm{h}} g\right)_{\Phi}\right\|_{H_{\mathrm{h}}^{s}} \leq C\left\|f_{\Phi}\right\|_{H_{\mathrm{h}}^{\sigma}}
\left\|g_{\Phi}\right\|_{H_{\mathrm{h}}^{s}},$$
$$\left\|\left(\left(T_{f}^{\mathrm{h}}\right)^{*} g\right)_{\Phi}\right\|_{H_{\mathrm{h}}^{s}} \leq C\left\|f_{\Phi}\right\|_{H_{\mathrm{h}}^{\sigma}}
\left\|g_{\Phi}\right\|_{H_{\mathrm{h}}^{s}}.$$
If $s_1+s_2>s+\frac{1}{2}>0,$ we also have
$$\left\|\left(R^{\mathrm{h}}(f, g)\right)_{\Phi}\right\|_{H_{\mathrm{h}}^{s}} \leq C\left\|f_{\Phi}\right\|_{H_{\mathrm{h}}^{s_{1}}}
\left\|g_{\Phi}\right\|_{H_{\mathrm{h}}^{s_{2}}}.$$
\end{Lemma}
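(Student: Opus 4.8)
\emph{Proof strategy.} The plan is to reduce the three weighted estimates to the classical (unweighted) Littlewood--Paley estimates of \cite{HB}, the bridge being the convexity inequality \eqref{tu}. Two facts are used throughout. First, for $t<T^*$ we have $\de(t)\ge\de/2>0$ by \eqref{phi}, so $\Phi(t,\xi)=\de(t)[\xi]^{\frac23}\ge0$; hence \eqref{tu} exponentiates to $e^{\Phi(t,\xi)}\le e^{\Phi(t,\xi-\eta)}e^{\Phi(t,\eta)}$ for all $\xi,\eta\in\R$. Second, the operators $\s$, $\De$, $\Delta_j^{\h}$, $\Delta_{k'}^{\h}$ and $e^{\Phi(t,D_x)}$ are Fourier multipliers in $x$ and therefore commute; in particular $(\s f)_\Phi=\s(\fp)$ and $(\De g)_\Phi=\De(\gp)$, and similarly for the other blocks.

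The key bilinear device (``weight transfer'') is the pointwise Fourier bound: for functions $a,b$ of $x$ at a fixed time,
$$\bigl|\widehat{(ab)_\Phi}(\xi)\bigr|\le\bigl(\,|\widehat{a_\Phi}|*|\widehat{b_\Phi}|\,\bigr)(\xi)\qquad\text{for a.e. }\xi,$$
which follows by writing $\widehat{(ab)_\Phi}(\xi)=\int e^{\Phi(t,\xi)}\widehat a(\xi-\eta)\widehat b(\eta)\,d\eta$ and inserting the exponentiated convexity inequality. Introducing the auxiliary functions $A,B$ determined by $\widehat A=|\widehat{a_\Phi}|$ and $\widehat B=|\widehat{b_\Phi}|$, one has $\|A\|_{L^2_{\h}}=\|a_\Phi\|_{L^2_{\h}}$, $\widehat A$ has the same support as $\widehat{a_\Phi}$ (so all Bernstein and ``$L^1$ of the Fourier transform'' bounds coincide), and $\widehat{AB}=\widehat A*\widehat B\ge0$; since $0\le|\widehat{(ab)_\Phi}|\le\widehat{AB}$ pointwise, Plancherel gives $\|(ab)_\Phi\|_{L^2_{\h}}\le\|AB\|_{L^2_{\h}}$. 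Applying this to each dyadic product occurring in $\T_f g$, in $(\T_f)^*g=\sum_k\De(\overline{\s f}\,g)$ and in $R^{\h}(f,g)$ (using also $(\overline h)_\Phi=\overline{h_\Phi}$, valid because $\Phi$ is even in $\xi$), every weighted $L^2_{\h}$-bound for such a product is majorized by the corresponding unweighted one with $f,g$ replaced by functions having the same $L^2_{\h}$, $L^\infty_{\h}$ and Bernstein norms as $\fp,\gp$.

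Granting this reduction, the three inequalities follow by the arguments of \cite{HB} applied to $\fp,\gp$. For the paraproduct, $(\T_f g)_\Phi=\sum_k(\s f\,\De g)_\Phi$ with each summand spectrally supported in an annulus $\{|\xi|\sim2^k\}$, so $\Delta_j^{\h}(\T_f g)_\Phi$ involves only $O(1)$ values of $k$; applying the weight transfer, then the one-dimensional embedding $\|\s(\fp)\|_{L^\infty_{\h}}\lesssim\|\fp\|_{H^\si_{\h}}$ (valid since $\si>\frac12$) together with H\"older, and summing in $j$ by Young's inequality for series, yields the first bound; since $(\T_f)^*g=\sum_k\De(\overline{\s f}\,g)$ has the same low--high dyadic structure (one factor of the form $\s f$, each summand localized at frequency $\sim2^k$), it is handled identically. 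For the remainder $R^{\h}(f,g)=\sum_{|k-k'|\le1}\De f\,\Delta_{k'}^{\h}g$, each summand is supported in a ball $\{|\xi|\lesssim2^k\}$, so $\Delta_j^{\h}R^{\h}(f,g)_\Phi$ only sees $k\gtrsim j$; after the weight transfer, Bernstein on the localized output gives $\|\Delta_j^{\h}R^{\h}(f,g)_\Phi\|_{L^2_{\h}}\lesssim 2^{j/2}\sum_{k\ge j-N_0}\sum_{|k'-k|\le1}\|\De(\fp)\|_{L^2_{\h}}\|\Delta_{k'}^{\h}(\gp)\|_{L^2_{\h}}$, and the bookkeeping with the weights $2^{j(s+\frac12)}2^{-k(s_1+s_2)}$ converges because $s_1+s_2>s+\frac12>0$ (the index $j$ being bounded below since the spaces are inhomogeneous), giving the last bound. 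The only genuinely non-classical step is the weight transfer; the point to check carefully there is its compatibility with the dyadic cut-offs (guaranteed by the commutation remark) and that passing to the auxiliary functions preserves exactly the $L^2_{\h}$, $L^\infty_{\h}$ and Bernstein estimates consumed by the classical proof --- after which the rest is \cite{HB} verbatim.
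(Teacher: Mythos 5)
Your argument is exactly the route the paper intends: it states this lemma without proof, remarking only that it follows from the classical unweighted estimates of \cite{HB} combined with the convexity (subadditivity) inequality \eqref{tu}, as in \cite{CW}, and your ``weight transfer'' via $e^{\Phi(t,\xi)}\le e^{\Phi(t,\xi-\eta)}e^{\Phi(t,\eta)}$ together with passing to the auxiliary functions with nonnegative Fourier transforms is precisely that reduction. The dyadic bookkeeping for $\T_f$, $(\T_f)^*$ and $R^{\h}$ that you then invoke is the standard one and uses the hypotheses $\si>\frac12$ and $s_1+s_2>s+\frac12>0$ in the expected places, so the proposal is correct and essentially coincides with the paper's (implicit) proof.
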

\begin{Lemma}\label{gu2}
Let $s>0,\si>\frac{3}{2}$, then we have
$$
\left\|\left(\left(T_{a}^{\mathrm{h}} T_{b}^{\mathrm{h}}-T_{a b}^{\mathrm{h}}\right) f\right)_{\Phi}\right\|_{H_{\mathrm{h}}^{s}} \leq C\left\|a_{\Phi}\right\|_{H_{\mathrm{h}}^{\sigma}}\left\|b_{\Phi}\right\|_{H_{\mathrm{h}}^{\sigma}}\left\|f_{\Phi}\right\|_{H_{\mathrm{h}}^{s-1}},
$$
$$
\left\|\left(\left[\left[D_{x}\right]^{s} ; T_{a}^{\mathrm{h}}\right] f\right)_{\Phi}\right\|_{L_{\mathrm{h}}^{2}} \leq C\left\|a_{\Phi}\right\|_{H_{\mathrm{h}}^{\sigma}}\left\|f_{\Phi}\right\|_{H_{\mathrm{h}}^{s-1}},
$$
$$
\left\|\left(\left(T_{a}^{\mathrm{h}}-\left(T_{a}^{\mathrm{h}}\right)^{*}\right) f\right)_{\Phi}\right\|_{H_{\mathrm{h}}^{s}} \leq C\left\|a_{\Phi}\right\|_{H_{\mathrm{h}}^{\sigma}}\left\|f_{\Phi}\right\|_{H_{\mathrm{h}}^{s-1}},
$$
$$
\left\|\left(\left[T_{a}^{\mathrm{h}} ; T_{b}^{\mathrm{h}}\right] f\right)_{\Phi}\right\|_{H_{\mathrm{h}}^{s}} \leq C\left\|a_{\Phi}\right\|_{H_{\mathrm{h}}^{\sigma}}\left\|b_{\Phi}\right\|_{H_{\mathrm{h}}^{\sigma}}\left\|f_{\Phi}\right\|_{H_{\mathrm{h}}^{s-1}},
$$
where $[a;b]$ denotes the commutator.
\end{Lemma}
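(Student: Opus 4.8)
The proof is a continuation (bootstrap) argument built on the local-in-time Gevrey $\tfrac32$ well-posedness of \eqref{mhd1}--\eqref{cb} proved in \cite{WXLY}. Given initial data satisfying \eqref{E0}, let $(u,b)$ be the maximal local solution and recall the stopping time $T^\ast$ of \eqref{T*}; on $[0,T^\ast)$ one has $\de(t)\ge\tfrac\de2>0$, so every quantity entering \eqref{Et}--\eqref{Dt} is well defined there. The plan is to prove that, for $\ep$ small and $\la$ large, the estimate
\begin{align*}
\mathcal E(t):=E(t)+\tfrac{79}{100}\eta\int_0^t D(s)\,ds\le C\ep^2
\end{align*}
propagates on $[0,T^\ast)$; this simultaneously forces $\sup_{t}\ta(t)\le\tfrac\de{4\la}<\tfrac\de{2\la}$, hence $T^\ast=\ty$ by continuity, and yields \eqref{ED}.

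First I would set up the bootstrap: assume $\mathcal E(t)\le 2C\ep^2$ on some $[0,T]\subset[0,T^\ast)$ and aim to improve the constant. The core is the family of weighted Gevrey energy estimates collected in Section \ref{zhengmingdingli}, one for each term of $E(t)$: the tangential fields $(\uph,\bp)$ in $H_{\Psi}^{7,0}$, the auxiliary unknown $\U$ of \eqref{U}, the quantities $(\ze,\zet)$ and $(\Pp,\Np)$, and the good unknowns $(\Gp,\Gtp)$ of \eqref{gfe} together with $\py(\Gp,\Gtp)$, $\py^2(\Gp,\Gtp)$, $\py^3(\Gp,\Gtp)$. Each is obtained by applying $e^{\Phi(t,D_x)}$ and $[D_x]^{\si}$ to the corresponding equation, testing against the Gaussian weight $e^{2\Psi}$ with $\Psi=\tfrac{y^2}{8\ltr}$, and integrating over $\R_+^2$. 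The weight $e^{2\Psi}$ is the classical Prandtl device: the interaction of $\pt$ with $-\py^2$ (or $-\ka\py^2$) produces a coercive term that, after integration by parts, controls the transport term $v\py(\cdot)$ and the $\tfrac{y}{2\ltr}$-weighted nonlocal terms; the operator $e^{\Phi}$, through $\dot\de=-\la\tad$, produces the gain $-\la\tad\|[D_x]^{1/3}(\cdot)_\Phi\|^2$, the only mechanism that can absorb the loss of derivatives.

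That loss of derivatives is the heart of the matter and the step I expect to be the main obstacle. The nonlocal terms of \eqref{gfe} and the pressure-type terms $\tad P$, $\tad N$ of \eqref{PN} cost essentially one more horizontal derivative than the plain energy can afford; isolating $\tad$ in the definition of $(P,N)$ (Remark \ref{duoletad}) and introducing the auxiliary unknowns $\U,\ \ze,\ \zet$ uncovers a cancellation structure that reduces this loss to something the Gevrey gain $-\la\tad\|[D_x]^{1/3}(\cdot)_\Phi\|^2$ together with the dissipation $D(t)$ can absorb --- and it is precisely the index $\tfrac32$ that makes this balance close. Carrying this out requires expanding every product by the Bony decomposition \eqref{bony} and estimating each paraproduct, remainder and commutator term by Lemmas \ref{gu1}--\ref{gu2}, checking that each is bounded by $(\text{small})\,E(t)^{1/2}D(t)^{1/2}$ or $(\text{small})\,D(t)$; this is lengthy and delicate, and it must be done while keeping the numerous time weights consistent. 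Those weights are dictated by the damping $\ltr^{-1}G$ in \eqref{gfe}, which raises the base rate $l_\ka-\eta$ to $1+l_\ka-\eta$ on $(\Gp,\Gtp)$ and then by half a power per extra $\py$; the value $l_\ka=\tfrac{\ka(2-\ka)}{4}$ is chosen so that all coefficients in the resulting differential inequalities have the right sign, and $\eta<l_\ka$ leaves the strict surplus that appears as $\tfrac{79}{100}\eta\int_0^tD$.

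Finally I would add the individual estimates into one inequality for $\mathcal E$. Since every nonlinear contribution is at least quadratic, Young's inequality gives on $[0,T]$
\begin{align*}
\mathcal E(t)\le C_0\bigl(\|(\uph(0),\bp(0))\|_{H_{\Psi}^{22/3,0}}+E(0)\bigr)+C_1\,\mathcal E(t)^{3/2},
\end{align*}
so \eqref{E0} together with a standard continuity argument closes the bootstrap with $\mathcal E(t)\le C\ep^2$ uniformly in $T<T^\ast$. The defining relation \eqref{theta} for $\ta$ then bounds $\tad$ by a piece of $D$, whence $\ta(t)=\tfrac1\la\int_0^t\tad\le\tfrac{C\ep^2}\la<\tfrac\de{4\la}$ as soon as $\ep<\ep_0$; thus $\ta$ never reaches $\tfrac\de{2\la}$, so $T^\ast=\ty$, proving part 1) and the global bound \eqref{ED}. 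Uniqueness follows from a parallel weighted estimate on the difference of two solutions at one lower order of regularity, and reconstructing $(u,b)$ from $(G,\Gt,\U,\dots)$ via \eqref{gf} gives the unique global solution of \eqref{mhd1}--\eqref{cb}.
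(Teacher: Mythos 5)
Your proposal does not prove the statement at hand. Lemma \ref{gu2} is a purely harmonic-analytic assertion: four commutator/paraproduct bounds in the horizontal Sobolev scale, with the Gevrey multiplier $e^{\Phi(t,D_x)}$ applied to the whole expression on the left and distributed onto $a_\Phi$, $b_\Phi$, $f_\Phi$ on the right. What you have written instead is a sketch of the continuation/bootstrap argument for the main global well-posedness theorem (Theorem \ref{th}): stopping time $T^*$, auxiliary unknowns $\U,\ze,\zet$, good unknowns $(G,\Gt)$, the balance between the Gevrey gain $-\la\tad\|[D_x]^{1/3}\cdot\|^2$ and the loss of derivatives, and the closing of the energy estimate $\mathcal E(t)\le C\ep^2$. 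None of that machinery is relevant to Lemma \ref{gu2}; indeed the lemma is one of the tools \emph{used} inside those energy estimates, so invoking the bootstrap to establish it would be circular even if the sketch were otherwise complete.

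A proof of the actual statement runs along the lines the paper indicates: take the unweighted versions of these estimates from standard Littlewood--Paley theory (Bony's decomposition, spectral localization of $S_{k-1}^{\mathrm h}a\,\Delta_k^{\mathrm h}f$ in dyadic annuli, a first-order expansion of the symbol $[\xi]^{s}$ or of the paraproduct kernel to gain the one derivative reflected in the $H^{s-1}_{\mathrm h}$ norm of $f_\Phi$, and $\sigma>\tfrac32$ to control $a$ in $L^\infty_{\mathrm h}$-type norms by Cauchy--Schwarz on the Fourier side), and then insert the weight $e^{\Phi(t,\xi)}$ using the subadditivity $\Phi(t,\xi)\le\Phi(t,\xi-\eta)+\Phi(t,\eta)$ established in \eqref{tu}: on the Fourier side each bilinear term is a convolution, so $e^{\Phi(t,\xi)}$ can be split between the two frequencies, turning $a,b,f$ into $a_\Phi,b_\Phi,f_\Phi$ with a uniform constant (uniform in $t<T^*$ because $0<\de(t)\le\de$). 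This is the adaptation of the $\Phi\equiv0$ case of \cite{HB} carried out in \cite{CW} for the exponent $\tfrac12$ and, with the modification of the exponent to $\tfrac23$, it gives exactly the four inequalities of the lemma. Your proposal contains none of these steps, so as a proof of Lemma \ref{gu2} it has a complete gap.
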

Compared to the Prandtl equation, \eqref{mhd} adds the effect of the magnetic field. In order to deal with the difficulties caused by the Lorentz force, we need the following lemma:
\begin{Lemma}\label{gu3}
Let $\si>\frac{3}{2},s_1,s_2>0$, then
$$\left|\left([D_x]^{s_1}\T_a\px f,[D_x]^{s_2}g\right)_{L^2_{\h}}+\left([D_x]^{s_2}\T_a\px g,[D_x]^{s_1}f\right)_{L^2_{\h}}\right|\leq C\|a\|_{H^{\si}_{\h}}\|f\|_{H^{s_1}_{\h}}\|g\|_{H^{s_2}_{\h}}.$$
\end{Lemma}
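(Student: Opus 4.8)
The key point is that $\T_a\px$ by itself loses one $x$-derivative, but the \emph{symmetrized} bilinear expression in the statement does not: after an integration by parts the extra $\px$ can be transferred either onto the coefficient $a$ (turning $\T_a$ into the harmless para-product $\T_{\px a}$) or onto the self-adjointness defect $\T_a-(\T_a)^{*}$, which by Lemma \ref{gu2} gains a full derivative of smoothing. The plan is first to reduce to $a,f,g$ in the Schwartz class by density (so as to legitimize the integrations by parts), and then to peel off the outer Fourier multipliers. Writing $F\stackrel{\text{def}}{=}[D_x]^{s_1}f$ and $G\stackrel{\text{def}}{=}[D_x]^{s_2}g$, one has $[D_x]^{s_1}\T_a\px f=\T_a\px F+[[D_x]^{s_1};\T_a]\px f$, and similarly with $(s_1,f)$ replaced by $(s_2,g)$. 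The commutator estimate of Lemma \ref{gu2} (applied with $\Phi=0$), together with $\|\px f\|_{H^{s_1-1}_{\h}}\le\|f\|_{H^{s_1}_{\h}}$, bounds $\|[[D_x]^{s_1};\T_a]\px f\|_{L^2_{\h}}$ by $C\|a\|_{H^{\si}_{\h}}\|f\|_{H^{s_1}_{\h}}$; pairing these commutator contributions in $L^2_{\h}$ with $[D_x]^{s_2}g$ and $[D_x]^{s_1}f$ respectively, each is controlled by the right-hand side of the claimed inequality. Hence matters reduce to estimating $I:=(\T_a\px F,G)_{L^2_{\h}}+(\T_a\px G,F)_{L^2_{\h}}$.

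\textbf{The main step.} For $I$ I would pass to the adjoint and split $(\T_a)^{*}=\T_a+\bigl((\T_a)^{*}-\T_a\bigr)$, obtaining $(\T_a\px F,G)_{L^2_{\h}}=(\px F,\T_aG)_{L^2_{\h}}+\bigl(\px F,((\T_a)^{*}-\T_a)G\bigr)_{L^2_{\h}}$, then use the Leibniz rule $\px(\T_aG)=\T_{\px a}G+\T_a\px G$ (valid since $\De$ commutes with $\px$) and integrate by parts in $x$ to get $(\px F,\T_aG)_{L^2_{\h}}=-(F,\T_{\px a}G)_{L^2_{\h}}-(\T_a\px G,F)_{L^2_{\h}}$. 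Adding the second term $(\T_a\px G,F)_{L^2_{\h}}$ of $I$ cancels the last summand, leaving the identity $I=-(F,\T_{\px a}G)_{L^2_{\h}}+\bigl(\px F,((\T_a)^{*}-\T_a)G\bigr)_{L^2_{\h}}$, which is the whole content of the cancellation. The first remaining term is handled by Lemma \ref{gu1} (with $\Phi=0$, $s=0$, and exponent $\si-1>\tfrac12$), which gives $\|\T_{\px a}G\|_{L^2_{\h}}\le C\|a\|_{H^{\si}_{\h}}\|G\|_{L^2_{\h}}$, hence $|(F,\T_{\px a}G)_{L^2_{\h}}|\le C\|a\|_{H^{\si}_{\h}}\|f\|_{H^{s_1}_{\h}}\|g\|_{H^{s_2}_{\h}}$. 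The second is handled by Lemma \ref{gu2} (third inequality, $\Phi=0$, $s=1$), which gives $\|((\T_a)^{*}-\T_a)G\|_{H^1_{\h}}\le C\|a\|_{H^{\si}_{\h}}\|G\|_{L^2_{\h}}$, so that $|(\px F,((\T_a)^{*}-\T_a)G)_{L^2_{\h}}|\le\|F\|_{L^2_{\h}}\,\|((\T_a)^{*}-\T_a)G\|_{H^1_{\h}}\le C\|a\|_{H^{\si}_{\h}}\|f\|_{H^{s_1}_{\h}}\|g\|_{H^{s_2}_{\h}}$, using $\|F\|_{L^2_{\h}}=\|f\|_{H^{s_1}_{\h}}$ and $\|G\|_{L^2_{\h}}=\|g\|_{H^{s_2}_{\h}}$. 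Combining the reductions completes the proof.

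\textbf{Where the difficulty lies.} Everything is routine once the symmetrization in the second step is carried out; the obstacle is precisely that one cannot estimate $(\T_a\px F,G)_{L^2_{\h}}$ in isolation, because $\px F$ would force half a derivative more than $L^2_{\h}$ on $F$ (or $G$), which the hypotheses $f\in H^{s_1}_{\h}$, $g\in H^{s_2}_{\h}$ do not supply. One must combine the two terms \emph{before} estimating, after which the only tools needed beyond integration by parts are the $L^2_{\h}$-boundedness of $\T_{\px a}$ (Lemma \ref{gu1}) and the order-one smoothing of $\T_a-(\T_a)^{*}$ (Lemma \ref{gu2}). The hypothesis $\si>\tfrac32$ enters exactly through this last point, while $s_1,s_2>0$ is used only so that the commutator estimate of Lemma \ref{gu2} applies to the outer multipliers $[D_x]^{s_1},[D_x]^{s_2}$.
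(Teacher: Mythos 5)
Your argument is correct and is essentially the paper's proof, merely reorganized: both proofs peel off the commutators $[[D_x]^{s_i};\T_a]$, then exploit the same cancellation via $\T_a=(\T_a)^*+(\T_a-(\T_a)^*)$ plus integration by parts (equivalently $[\T_a;\px]=-\T_{\px a}$), and close with the same applications of Lemmas \ref{gu1} and \ref{gu2}. The four pieces you end up with are, term by term (up to taking adjoints), identical to those in the paper's chain of identities, so this is the same route rather than a genuinely different one.
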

\begin{proof}
\begin{align*}
&~~~\left([D_x]^{s_1}\T_a\px f,[D_x]^{s_2}g\right)_{L^2_{\h}}+\left([D_x]^{s_2}\T_a\px g,[D_x]^{s_1}f\right)_{L^2_{\h}}
\\
&=\left(\left[[D_x]^{s_1};\T_a\right]\px f,[D_x]^{s_2}g\right)_{L^2_{\h}}+\left(\T_a[D_x]^{s_1}\px f,[D_x]^{s_2}g\right)_{L^2_{\h}}+\left([D_x]^{s_2}\T_a\px g,[D_x]^{s_1}f\right)_{L^2_{\h}}\\
&=\left(\left[[D_x]^{s_1};\T_a\right]\px f,[D_x]^{s_2}g\right)_{L^2_{\h}}+\left((\T_a-(\T_a)^*)[D_x]^{s_1}\px f,[D_x]^{s_2}g\right)_{L^2_{\h}}\\
&~~~~+\left(\left[[D_x]^{s_2};\T_a\right]\px g,[D_x]^{s_1}f\right)_{L^2_{\h}}+\left((\T_a)^*[D_x]^{s_1}\px f,[D_x]^{s_2}g\right)_{L^2_{\h}}\\
&~~~~+\left(\T_a[D_x]^{s_2}\px g,[D_x]^{s_1}f\right)_{L^2_{\h}}\\
&=\left(\left[[D_x]^{s_1};\T_a\right]\px f,[D_x]^{s_2}g\right)_{L^2_{\h}}+\left((\T_a-(\T_a)^*)[D_x]^{s_1}\px f,[D_x]^{s_2}g\right)_{L^2_{\h}}\\
&~~~~+\left(\left[[D_x]^{s_2};\T_a\right]\px g,[D_x]^{s_1}f\right)_{L^2_{\h}}
+\left(\left[\T_a;\px\right][D_x]^{s_2} g,[D_x]^{s_1}f\right)_{L^2_{\h}}.
\end{align*}
Using the H$\mathrm{\ddot{o}}$lder's inequality and the lemma \ref{gu2}, we can complete the proof of the lemma \ref{gu3}.
\end{proof}
\begin{Remark}
In particular, if we take $f=g,s_1=s_2=s>0$ in lemma \ref{gu3}, then we have
$$\left|\left([D_x]^{s}\T_a\px f,[D_x]^{s}f\right)_{L^2_{\h}}\right|\leq C\|a\|_{H^{\si}_{\h}}\|f\|^2_{H^{s}_{\h}}.$$
This inequality is used in \cite{CW}.
\end{Remark}
Next, we give two estimates of the commutator of $e^{\Phi}$ and $\T_a$. The proof of $\Phi(t,\xi)=\de(t)[\xi]^{\frac{1}{2}}$ is given in \cite{CW}, and the result when $\Phi(t,\xi)=\de(t)[\xi]^{\frac{2}{3}}$ can be obtained with a slight modification of the proof.
\begin{Lemma}\label{gu4}
Let $\Phi(t,\xi)=\de(t)[\xi]^{\frac{2}{3}}$,$\si>\frac{3}{2},s\in\R$. Then we have
$$
\left\|\left(T_{a}^{\mathrm{h}} \partial_{x} f\right)_{\Phi}-T_{a}^{\mathrm{h}} \partial_{x} f_{\Phi}\right\|_{H_{\mathrm{h}}^{s}} \leq C \delta(t)\left\|a_{\Phi}\right\|_{H_{\mathrm{h}}^{\sigma}}\left\|f_{\Phi}\right\|_{H_{\mathrm{h}}^{s+\frac{2}{3}}}.
$$
\end{Lemma}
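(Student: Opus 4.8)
The plan is to run the standard Littlewood--Paley/paraproduct argument on the horizontal Fourier side, keeping track of the two features that distinguish this statement from Lemma~\ref{gu1}: the prefactor $\de(t)$, and the fact that the loss is exactly $\tfrac{2}{3}$ of a derivative (this is the analogue, with $\tfrac{1}{2}$ replaced by $\tfrac{2}{3}$, of the $[\xi]^{\frac12}$ computation in \cite{CW}). First I would expand the difference. Writing $\T_a\px f=\sum_k \s a\,\De(\px f)$ and taking the Fourier transform in $x$, the quantity $(\T_a\px f)_\Phi-\T_a\px f_\Phi$ has Fourier transform
$$\mathcal D(\xi)=\sum_k\int \widehat{\s a}(\xi-\eta)\,\big(e^{\Phi(t,\xi)}-e^{\Phi(t,\eta)}\big)\,i\eta\,\widehat{\De f}(\eta)\,d\eta,$$
where on the support of the integrand $|\xi-\eta|\lesssim 2^{k}$, $|\eta|\sim 2^{k}$, and hence also $|\xi|\sim 2^{k}$.

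The crux is the pointwise estimate of the symbol difference. Applying the mean value theorem along the segment $\zeta=\eta+\tau(\xi-\eta)$, $\tau\in[0,1]$, together with the bound $|\partial_\zeta\Phi(t,\zeta)|=\de(t)\tfrac{2}{3}|\zeta|[\zeta]^{-\frac{4}{3}}\le\de(t)\tfrac{2}{3}[\zeta]^{-\frac{1}{3}}\lesssim\de(t)2^{-k/3}$ valid on $|\zeta|\sim 2^k$, and using \eqref{tu} (with the elementary inequality $[\tau w]\le[w]$ for $|\tau|\le1$) to control the weight along the segment by $\Phi(t,\zeta)\le\Phi(t,\eta)+\Phi(t,\xi-\eta)$, I obtain
$$\big|e^{\Phi(t,\xi)}-e^{\Phi(t,\eta)}\big|\lesssim \de(t)\,2^{-k/3}\,|\xi-\eta|\,e^{\Phi(t,\eta)}e^{\Phi(t,\xi-\eta)}.$$
This is where the single power of $\de(t)$ is extracted. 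I then redistribute the gains: the factor $|\xi-\eta|e^{\Phi(t,\xi-\eta)}$ turns $\widehat{\s a}$ into $\widehat{\s\px a_\Phi}$; the factor $e^{\Phi(t,\eta)}$ turns $\widehat{\De f}$ into $\widehat{\De f_\Phi}$; and since $|\eta|\sim 2^k$ one has $2^{-k/3}|\eta|\lesssim[\eta]^{\frac{2}{3}}$, so the $i\eta$ coming from $\px$ combined with the factor $2^{-k/3}$ produces a $[D_x]^{2/3}$ acting on $f_\Phi$ --- precisely the $\tfrac{2}{3}$-derivative loss. Writing $g:=[D_x]^{2/3}f_\Phi$, so that $\|g\|_{H^s_\h}=\|f_\Phi\|_{H^{s+\frac{2}{3}}_\h}$, this gives
$$|\mathcal D(\xi)|\lesssim \de(t)\sum_k\big(\,|\widehat{\s\px a_\Phi}|\ast|\widehat{\De g}|\,\big)(\xi).$$

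Finally I would close with the usual paraproduct bookkeeping, exactly as in the $\Phi=0$ proof underlying Lemma~\ref{gu1}. Each summand above is frequency-localized to an annulus $|\xi|\sim 2^k$, so by almost-orthogonality of the dyadic blocks and Young's inequality,
$$\|\mathcal D\|_{H^s_\h}^2\lesssim \de(t)^2\sum_k\langle 2^k\rangle^{2s}\,\|\widehat{\s\px a_\Phi}\|_{L^1}^2\,\|\De g\|_{L^2_\h}^2 .$$
The bound $\|\widehat{\s\px a_\Phi}\|_{L^1}\le\big(\int_\R|\xi|^2[\xi]^{-2\si}d\xi\big)^{1/2}\|a_\Phi\|_{H^\si_\h}\lesssim\|a_\Phi\|_{H^\si_\h}$, uniformly in $k$, holds precisely because $\si>\tfrac{3}{2}$ makes that integral finite --- this is the only place the hypothesis on $\si$ enters. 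Summing the remaining Littlewood--Paley pieces, $\sum_k\langle 2^k\rangle^{2s}\|\De g\|_{L^2_\h}^2\sim\|g\|_{H^s_\h}^2$, yields $\|\mathcal D\|_{H^s_\h}\lesssim\de(t)\|a_\Phi\|_{H^\si_\h}\|g\|_{H^s_\h}=\de(t)\|a_\Phi\|_{H^\si_\h}\|f_\Phi\|_{H^{s+\frac{2}{3}}_\h}$, as claimed.

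The only genuinely nontrivial step is the symbol-difference estimate of the second paragraph: one must simultaneously pull out a single power of $\de(t)$, verify that only $\tfrac{2}{3}$ of a derivative of $f$ survives rather than a full one, and keep the two exponential weights split cleanly between the $a$-factor and the $f$-factor so that the outcome is honestly a paraproduct of $a_\Phi$ and $[D_x]^{2/3}f_\Phi$. Once that is in hand everything reduces to the standard estimates already used for Lemmas~\ref{gu1}--\ref{gu2}; the low-frequency blocks ($k$ near its minimal value, where $\De$ is the low-frequency cutoff) are harmless since all quantities are then of order one.
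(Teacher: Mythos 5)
Your proof is correct and is exactly the argument the paper has in mind: the paper itself gives no proof of Lemma \ref{gu4}, deferring to \cite{CW} for the $[\xi]^{1/2}$ case, and that proof is precisely your scheme — mean value theorem on $e^{\Phi(\xi)}-e^{\Phi(\eta)}$ along the segment, the bound $|\partial_\zeta\Phi|\lesssim\delta(t)[\zeta]^{-1/3}$ with $[\zeta]\sim 2^k$ on the paraproduct support, the convexity \eqref{tu} to split the weight, and then the standard dyadic/Young bookkeeping with $\sigma>\frac32$ absorbing the derivative placed on $a_\Phi$. Your adaptation of the exponent ($-\frac13$ in place of $-\frac12$, producing the $\frac23$-derivative loss on $f_\Phi$) is the "slight modification" the authors allude to, and the details, including the low-frequency block, check out.
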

\begin{Lemma}\label{gu5}
Let $\Phi(t,\xi)=\de(t)[\xi]^{\frac{2}{3}},Q(\xi)=\xi(1+\xi^2)^{-\frac{2}{3}}$,
$\si>\frac{5}{2},s\in\R.$ If $0<\de(t)\leq L$, then the following inequality holds,
$$
\left\|\left(T_{a}^{\mathrm{h}} \partial_{x} f\right)_{\Phi}-T_{a}^{\mathrm{h}} \partial_{x} f_{\Phi}-\frac{2}{3}\delta(t) T_{D_{x} a}^{\mathrm{h}} Q(D_{x}) \partial_{x} f_{\Phi}\right\|_{H_{\mathrm{h}}^{s}} \leq C_{L}\left\|a_{\Phi}\right\|_{H_{\mathrm{h}}^{\sigma}}
\left\|f_{\Phi}\right\|_{H_{\mathrm{h}}^{s+\frac{1}{3}}} .
$$
\end{Lemma}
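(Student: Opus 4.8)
\textbf{Proof proposal for Lemma \ref{gu5}.}
The plan is to expand the difference $(\T_a\px f)_\Phi - \T_a\px f_\Phi$ into a telescoping sum over dyadic blocks and extract the first-order term in $\delta(t)$ by Taylor expansion of the exponential symbol. Writing everything on the Fourier side, the paraproduct $\T_a\px f = \sum_k \s a\, \De(\px f)$ has the $k$-th block supported in an annulus $|\xi|\sim 2^k$, while the low-frequency factor $\s a$ contributes frequencies $\lesssim 2^{k}$. Applying $e^{\Phi(t,D_x)}$ to the product and comparing with $e^{\Phi(t,D_x)}$ applied to each factor separately, the relevant quantity in block $k$ is the symbol
\[
m_k(\xi,\eta) = e^{\Phi(t,\xi)} - e^{\Phi(t,\xi-\eta)},
\]
where $\eta$ is the (small, $\lesssim 2^{k-2}$) frequency carried by $\s a$ and $\xi-\eta$ the frequency of $\De\px f$. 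Using the convexity inequality \eqref{tu} one has $\Phi(t,\xi)-\Phi(t,\xi-\eta)\le \Phi(t,\eta)$, so factoring $e^{\Phi(t,\xi-\eta)}$ out I would write $m_k = e^{\Phi(t,\xi-\eta)}\big(e^{\Phi(t,\xi)-\Phi(t,\xi-\eta)}-1\big)$; the point is that $\Phi(t,\xi)-\Phi(t,\xi-\eta) = \delta(t)\big([\xi]^{2/3}-[\xi-\eta]^{2/3}\big)$ is $O(\delta(t))$ on the relevant support, so $e^{(\cdot)}-1 = (\cdot) + O((\cdot)^2)$.

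The second step is to identify the linear term. Since $|\eta|\le 2^{k-2}\ll|\xi|$, a first-order Taylor expansion of $[\xi]^{2/3}$ around $\xi$ gives
\[
[\xi]^{2/3}-[\xi-\eta]^{2/3} = \tfrac{2}{3}\,\eta\,\xi(1+\xi^2)^{-2/3} + O\!\big(|\eta|^2 [\xi]^{-4/3}\big)
= \tfrac{2}{3}\,\eta\, Q(\xi) + O\!\big(|\eta|^2[\xi]^{-4/3}\big),
\]
which is exactly the symbol of the operator $\tfrac23\delta(t)\T^{\h}_{D_x a}Q(D_x)\px$ once one accounts for $D_x$ producing the factor $\eta$ from $\s a$ and $\px$ producing a factor from $\De f$ — one has to be a little careful that the claimed operator reads $Q(D_x)\px f_\Phi$, so $\px$ supplies $\xi$ and $Q(D_x)$ supplies $Q(\xi)$, while $T_{D_x a}$ supplies $\eta$; reconciling this with the symmetric-looking Taylor term $\eta\,\xi(1+\xi^2)^{-2/3}$ is just bookkeeping, using that on block $k$ the factor $(1+(\xi-\eta)^2)$ may be replaced by $(1+\xi^2)$ up to a harmless $O(|\eta|/|\xi|)$ error absorbed into the remainder. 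Collecting: the full symbol of $(\T_a\px f)_\Phi - \T_a\px f_\Phi - \tfrac23\delta(t)\T_{D_x a}Q(D_x)\px f_\Phi$ in block $k$ is $O\big(\delta(t)^2 e^{\Phi(t,\eta)}|\eta|^2[\xi]^{-4/3}\big) + O\big(\delta(t)e^{\Phi(t,\eta)}|\eta|^2[\xi]^{-4/3}\big)$ acting on $\De(\px f)_\Phi$ with an $\s(e^{\Phi}a)$-type low-frequency factor.

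The third step is to sum the blocks. Since $\px$ together with the $[\xi]^{-4/3}$ gain yields a net $[\xi]^{-1/3}$ against $\De f_\Phi$, and the $|\eta|^2$ is controlled by two extra derivatives landing on the low-frequency factor $\s a$ (which costs $\sigma>5/2$ regularity on $a$, matching the hypothesis $\sigma>5/2$ and the two lost $x$-derivatives), one gets for each $k$
\[
\|\big[(\T_a\px f)_\Phi-\T_a\px f_\Phi-\tfrac23\delta(t)\T_{D_x a}Q(D_x)\px f_\Phi\big]_k\|_{L^2_{\h}}
\lesssim c_k 2^{-ks}\,\|a_\Phi\|_{H^{\sigma}_{\h}}\|f_\Phi\|_{H^{s+1/3}_{\h}}
\]
with $\{c_k\}\in\ell^2$, and the $\delta(t)^2$ factor is bounded by $L\,\delta(t)\le C_L$ using the hypothesis $0<\delta(t)\le L$; this is where the constant acquires its $L$-dependence. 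Summing in $k$ in $\ell^2$ (the dyadic blocks being almost-orthogonal) gives the claimed $H^s_{\h}$ bound. The main obstacle is the second step: one must perform the Taylor expansion of the non-polynomial symbol $[\xi]^{2/3}$ carefully enough to isolate the exact operator $\tfrac23\delta(t)\T_{D_x a}Q(D_x)\px$ appearing in the statement — in particular keeping track of which argument ($\xi$ versus $\xi-\eta$) the symbol $Q$ is evaluated at and showing the discrepancy is a genuine remainder — and to verify that the quadratic-in-$\eta$ remainder, after the $\px$ and the exponential weights are distributed, still lands in $L^2$ with only the stated regularity budget. This is essentially the argument of \cite{CW} for the symbol $[\xi]^{1/2}$, and the modification to $[\xi]^{2/3}$ only changes the exponent arithmetic (the $\tfrac23$ in place of $\tfrac12$, and $(1+\xi^2)^{-2/3}$ in place of $(1+\xi^2)^{-3/4}$) while leaving the structure intact.
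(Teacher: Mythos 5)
Your approach is the right one and is essentially what the paper intends: the paper gives no proof of Lemma \ref{gu5} itself, but appeals to \cite{CW} (where the case $\Phi=\de(t)[\xi]^{1/2}$ is proved) and notes that only the exponent arithmetic changes. Your block-by-block expansion of $e^{\Phi(t,\xi)}-e^{\Phi(t,\xi-\eta)}$, the use of the convexity inequality \eqref{tu} to put the factor $e^{\Phi(t,\eta)}$ onto $a$, the identification $\partial_\xi[\xi]^{2/3}=\tfrac23 Q(\xi)$ as the source of the correction operator, and the $\si>\tfrac52$ budget for the two powers of $\eta$ landing on $a$ are exactly that adaptation.

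One piece of your exponent bookkeeping is off, though in a way that does not endanger the stated lemma: the dominant remainder is the genuinely quadratic term $e^{x}-1-x$ with $x=\Phi(t,\xi)-\Phi(t,\xi-\eta)$, and $x^2\lesssim \de(t)^2|\eta|^2\,Q(\xi)^2\sim\de(t)^2|\eta|^2[\xi]^{-2/3}$, not $|\eta|^2[\xi]^{-4/3}$ as you wrote; only the Taylor remainder $x-\tfrac23\de(t)\eta\,Q$ is $O\bigl(\de(t)|\eta|^2[\xi]^{-4/3}\bigr)$. After the factor $|\xi-\eta|\sim[\xi]$ supplied by $\px$, the quadratic term therefore costs $[\xi]^{1/3}$, which is precisely why the right-hand side must carry $\|f_{\Phi}\|_{H^{s+\frac13}_{\h}}$ (and why the hypothesis $\de(t)\le L$ enters, via $\de^2\le L\de$). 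Your intermediate claim of a net gain $[\xi]^{-1/3}$ would wrongly suggest $\|f_{\Phi}\|_{H^{s-\frac13}_{\h}}$ suffices, contradicting your own (correct) final estimate; with that exponent corrected, the summation over blocks goes through as you describe.
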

In order to obtain the decay estimates of the solution, we need to use the following two lemmas. The first is a Poincar$\mathrm{\acute{e}}$ type inequality, see \cite{MI,CW}.
\begin{Lemma}\label{gu6}
Let $\Psi=\frac{y^2}{8\ltr},d$ be a non-negative integer. The function $u$ is smooth enough on $\R^d\times\R_+$ and decays rapidly to $0$ as $y$ tends to $+\ty$. At this point we have the following two inequalities,
$$
\int_{\mathbb{R}^{d} \times \mathbb{R}_{+}}\left|\partial_{y} u(X, y)\right|^{2} e^{2 \Psi} d X d y \geq \frac{1}{2\langle t\rangle} \int_{\mathbb{R}^{d} \times \mathbb{R}_{+}}|u(X, y)|^{2} e^{2 \Psi} d X d y,
$$
$$
\begin{aligned}
\int_{\mathbb{R}^{d} \times \mathbb{R}_{+}}\left|\partial_{y} u(X, y)\right|^{2} e^{2 \Psi} d X d y & \geq \frac{s}{2\langle t\rangle} \int_{\mathbb{R}^{d} \times \mathbb{R}_{+}}|u(X, y)|^{2} e^{2 \Psi} d X d y \\
&+\frac{s(1-s)}{4} \int_{\mathbb{R}^{d} \times \mathbb{R}_{+}}\left|\frac{y}{\langle t\rangle} u(X, y)\right|^{2} e^{2 \Psi} d X d y,
\end{aligned}
$$
where $s>0.$
\end{Lemma}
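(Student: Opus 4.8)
The plan is to prove the second (stronger) inequality and then to recover the first one as the special case $s=1$, in which the coefficient $\frac{s(1-s)}{4}$ of the last term vanishes and $\frac{s}{2\ltr}$ becomes $\frac{1}{2\ltr}$. The underlying idea is a ``completing the square'' identity with a well-chosen first-order multiplier -- the weighted analogue of the classical spectral-gap computation for the Gaussian measure -- and, notably, no boundary condition on $u$ at $y=0$ will be needed.

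I would first freeze $t$, so that $\ltr$ is merely a positive constant, and argue for the one-dimensional $y$-integral with $X\in\R^d$ fixed; the final integration in $X$ and the use of Fubini are justified by the assumed smoothness and rapid decay of $u$. Set $W(y)=e^{2\Psi(t,y)}$, so that, since $\Psi=\frac{y^2}{8\ltr}$, one has $\py W=\frac{y}{2\ltr}W$. For the given $s>0$ introduce the real multiplier $\al(y)=-\frac{sy}{2\ltr}$ and expand the non-negative quantity
\begin{align*}
0\le\int_0^{\ty}\bigl|\py u-\al(y)u\bigr|^2 W\,dy=\int_0^{\ty}|\py u|^2W\,dy-\int_0^{\ty}\al\,\py(|u|^2)\,W\,dy+\int_0^{\ty}\al^2|u|^2W\,dy,
\end{align*}
where I used, for complex-valued $u$, the identity $\py(|u|^2)=2\,\mathrm{Re}(\bar u\,\py u)$ together with the fact that $\al$ is real.

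Next I would integrate the middle term by parts in $y$: the boundary contribution at $y=+\ty$ vanishes by the rapid decay of $u$, and the one at $y=0$ vanishes because $\al(0)=0$, so that no Dirichlet condition at $y=0$ is required. This gives
\begin{align*}
-\int_0^{\ty}\al\,\py(|u|^2)\,W\,dy=\int_0^{\ty}|u|^2\,\py(\al W)\,dy=\int_0^{\ty}|u|^2\Bigl(\al'+\frac{\al y}{2\ltr}\Bigr)W\,dy.
\end{align*}
Plugging in $\al=-\frac{sy}{2\ltr}$, hence $\al'=-\frac{s}{2\ltr}$, $\frac{\al y}{2\ltr}=-\frac{sy^2}{4\ltr^2}$ and $\al^2=\frac{s^2y^2}{4\ltr^2}$, the three contributions combine into $\al'+\frac{\al y}{2\ltr}+\al^2=-\frac{s}{2\ltr}-\frac{s(1-s)}{4}\frac{y^2}{\ltr^2}$, so that
\begin{align*}
0\le\int_0^{\ty}|\py u|^2W\,dy-\frac{s}{2\ltr}\int_0^{\ty}|u|^2W\,dy-\frac{s(1-s)}{4}\int_0^{\ty}\frac{y^2}{\ltr^2}|u|^2W\,dy.
\end{align*}
Integrating in $X\in\R^d$ yields the second inequality of the lemma; taking $s=1$ yields the first.

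The argument is essentially routine once the multiplier $\al=-\frac{sy}{2\ltr}$ has been identified. The only points that need care are the justification of the integration by parts (decay at $y=+\ty$, and the observation that $\al(0)=0$ removes any need for a boundary condition at $y=0$), the treatment of complex-valued $u$ via $\py(|u|^2)=2\,\mathrm{Re}(\bar u\,\py u)$, and the bookkeeping that makes the sharp constants $\frac{s}{2\ltr}$ and $\frac{s(1-s)}{4}$ appear. Hence there is no genuine obstacle: the only ``insight'' is that these constants are dictated by matching $\al^2+\al'+\frac{\al y}{2\ltr}$ to the prescribed quadratic profile in $\frac{y}{\ltr}$, which forces the choice $\al=-\frac{sy}{2\ltr}$.
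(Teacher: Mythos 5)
Your proof is correct: expanding $\int_0^{\ty}|\py u+\tfrac{sy}{2\langle t\rangle}u|^2e^{2\Psi}dy\geq 0$, integrating the cross term by parts (the $y=0$ boundary term vanishes since the multiplier vanishes there, and the $y\to+\ty$ term by the assumed rapid decay) yields exactly the second inequality, and $s=1$ gives the first. The paper itself does not reprove this lemma but cites \cite{MI,CW}, and your completing-the-square argument is essentially the same computation as the standard proof there (integration by parts against $\partial_y(e^{2\Psi})$ followed by Young's inequality), so there is nothing further to add.
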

The following lemma shows that the decay rate of the solution $(u,b)$ of the equation \eqref{mhd} and the stream function $(\psi,\psit)$ can be controlled by the decay rate of the ``good function'' $(G,\Gt)$.
\begin{Lemma}\label{gu7}
Let $(u,b)$ be the sufficiently smooth solution of the equation \eqref{mhd} on $[0,T]$, $(\psi,\psit)$ be the corresponding stream function. $(G,\Gt)$ is defined by \eqref{gf}, and the weight function is $\Psi(t,y)=\frac{y^2}{8\ltr}$. If $\ka\in(0,2)$, then for any $\ga\in(0,1),t\leq T,$ we have the following inequalities hold,
\begin{align}\label{henyouyong}
\begin{aligned}
\left\|e^{\gamma \Psi} \Delta_{k}^{\mathrm{h}} u_{\Phi}(t)\right\|_{L_{+}^{2}} & \lesssim\left\|e^{\Psi} \Delta_{k}^{\mathrm{h}} G_{\Phi}(t)\right\|_{L_{+}^{2}}, \\
\left\|e^{\gamma \Psi} \Delta_{k}^{\mathrm{h}} \partial_{y} u_{\Phi}(t)\right\|_{L_{+}^{2}}  &\lesssim\left\|e^{\Psi} \Delta_{k}^{\mathrm{h}} \partial_{y} G_{\Phi}(t)\right\|_{L_{+}^{2}}, \\
\left\|e^{\gamma \Psi} \Delta_{k}^{\mathrm{h}} \partial_{y}^{2} \uph(t)\right\|_{L_{+}^{2}}  &\lesssim\left\|e^{\Psi} \Delta_{k}^{\mathrm{h}} \partial_{y}^{2} G_{\Phi}(t)\right\|_{L_{+}^{2}}, \\
\left\|e^{\gamma \Psi} \Delta_{k}^{\mathrm{h}} \partial_{y}^{3} \uph(t)\right\|_{L_{+}^{2}}  &\lesssim\left\|e^{\Psi} \Delta_{k}^{\mathrm{h}} \partial_{y}^{3} G_{\Phi}(t)\right\|_{L_{+}^{2}}, \\
\left\|e^{\gamma \Psi} \Delta_{k}^{\mathrm{h}} \partial_{y}^{2} \uph(t)\right\|_{L_{\mathrm{v}}^{\infty}(L_{\mathrm{h}}^{2})} &\lesssim\left\|e^{\Psi} \Delta_{k}^{\mathrm{h}} \partial_{y}^{2} G_{\Phi}(t)\right\|_{L_{\mathrm{v}}^{\infty}(L_{\mathrm{h}}^{2})},\\
\langle t\rangle^{-1}\left\|e^{\gamma \Psi} \Delta_{k}^{\mathrm{h}} \partial_{y}(y \psi)_{\Phi}(t)\right\|_{L_{+}^{2}}&+\langle t\rangle^{-\frac{1}{2}}\left\|e^{\gamma \Psi} \Delta_{k}^{\mathrm{h}} \partial_{y}^{2}(y \psi)_{\Phi}(t)\right\|_{L_{+}^{2}}\\
+\langle t\rangle^{-\frac{3}{4}}\left\|e^{\gamma \Psi} \Delta_{k}^{\mathrm{h}} \partial_{y}(y \psi)_{\Phi}(t)\right\|_{L_{\mathrm{v}}^{\infty}\left(L_{\mathrm{h}}^{2}\right)}+\langle t\rangle^{-\frac{1}{4}}&\left\|e^{\gamma \Psi} \Delta_{k}^{\mathrm{h}} \partial_{y}^{2}(y \psi)_{\Phi}(t)\right\|_{L_{\mathrm{v}}^{\infty}\left(L_{\mathrm{h}}^{2}\right)} \lesssim\left\|e^{\Psi} \Delta_{k}^{\mathrm{h}} \partial_{y} G_{\Phi}(t)\right\|_{L_{+}^{2}},\\
\langle t\rangle^{-\frac{1}{2}}\left\|e^{\gamma \Psi} \Delta_{k}^{\mathrm{h}} \partial_{y}^{3}(y \psi)_{\Phi}(t)\right\|_{L_{+}^{2}} &\lesssim\left\|e^{\Psi} \Delta_{k}^{\mathrm{h}} \partial_{y}^{2} G_{\Phi}(t)\right\|_{L_{+}^{2}} .
\end{aligned}
\end{align}
The above inequalities also hold if we replace $(u,\psi,G)$ with $(b,\psit,\Gt)$.
\end{Lemma}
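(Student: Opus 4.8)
The plan is to exploit the defining relation $G=u+\frac{y}{2\ltr}\psi$, which we rewrite as a first-order ODE in $y$ for the quantity $u$ together with the exponential weight. Observe that since $\psi=-\int_y^{\ty}u\,dz$, we have $\py\psi=u$, so the relation $G=u+\frac{y}{2\ltr}\psi$ can be read as
\[
\ltr^{-1/2}e^{\Psi}\py\!\left(\ltr^{1/2}e^{-\Psi}\cdot(\text{something})\right)=\cdots,
\]
but more cleanly: multiplying by the integrating factor associated with $\frac{y}{2\ltr}$, note $\px\big(e^{\Psi}\big)=0$ in $x$ while $\py\Psi=\frac{y}{4\ltr}$. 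The key algebraic identity, already used in \cite{NL,MP,CW}, is that $e^{\Psi}\py\!\big(e^{\Psi}\psi\big)$ can be expressed in terms of $e^{2\Psi}G$ up to the correct powers of $\ltr$, because $\py(e^{\Psi}\psi)=e^{\Psi}(u+\frac{y}{4\ltr}\psi)$, which is $e^{\Psi}G$ minus $e^{\Psi}\frac{y}{4\ltr}\psi$. First I would therefore set up, for each fixed frequency block $\De$ (which commutes with $\py$ and with $e^{\Phi(t,D_x)}$ since the latter is a horizontal Fourier multiplier), the scalar ODE in $y$ relating $\De u_{\Phi}$ and $\De\psi_{\Phi}$, and solve it with the boundary condition $\psi|_{y=0}=\psi|_{y\to\ty}=0$.

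Next I would carry out the weighted $L^2$ estimates one line at a time. For the first inequality, integrate the ODE for $\De\psi_\Phi$ to express $\De u_\Phi$ pointwise, then apply a one-dimensional Hardy/Poincaré inequality in $y$ against the Gaussian weight $e^{\Psi}$ — here the gap between $\ga\Psi$ on the left and $\Psi$ on the right (with $\ga\in(0,1)$) is exactly what makes the Gaussian integrals converge and absorbs the polynomial factor $\frac{y}{2\ltr}$; Lemma \ref{gu6} with the parameter $s$ tuned to $\ka\in(0,2)$ supplies the needed coercivity. For the derivative versions ($\py u$, $\py^2 u$, $\py^3 u$), differentiate the ODE the corresponding number of times — each differentiation of $\frac{y}{2\ltr}\psi$ produces lower-order terms of the form $\frac{1}{\ltr}\py^{j}\psi$ and $\frac{y}{\ltr}\py^{j+1}\psi$, which are controlled inductively by the already-established lower-order inequalities together with another application of Lemma \ref{gu6}. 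The $L^\infty_{\vm}(L^2_{\h})$ bounds follow from the $L^2_+$ bounds by the standard one-dimensional trace/interpolation inequality $\|f\|_{L^\infty_y}^2\lesssim\|f\|_{L^2_y}\|\py f\|_{L^2_y}$, applied after the weight, being careful that $\py(e^{\ga\Psi}f)$ generates an extra $\frac{y}{\ltr}e^{\ga\Psi}f$ term that is again absorbed by lowering $\ga$ slightly. The estimates for $\py^{j}(y\psi)_\Phi$ are of the same nature: expand $\py^{j}(y\psi)=y\py^{j}\psi+j\py^{j-1}\psi$ and use $\py\psi=u$, so everything reduces to weighted bounds on $\psi$, $u$, $\py u$, which the previous lines already give, with the powers of $\ltr$ in the statement being precisely what is needed to match the $y/\ltr$ and $1/\ltr$ prefactors under the Gaussian.

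The only structural subtlety, and the place I expect to spend the most care, is the dependence on $\ka$: for the magnetic analogue one uses $\Gt=b+\frac{y}{2\ka\ltr}\psit$, so the integrating factor and the coefficient in front of $\psit$ carry a $\frac{1}{\ka}$, and the Poincaré constant from Lemma \ref{gu6} must be chosen so that the bad term $\frac{1}{2\ka\ltr}\cdot(\text{Gaussian moment})$ is strictly dominated — this is exactly why the hypothesis $\ka\in(0,2)$ appears (it guarantees $l_\ka=\frac{\ka(2-\ka)}{4}>0$ and, more to the point here, that the relevant quadratic form in $s$ has a positive window). I would verify that the same choice of $s$ works uniformly for both the $u$-equation ($\ka=1$ effectively) and the $b$-equation with $\ka\in(0,2)$, so that the final constants in \eqref{henyouyong} are independent of $t$ and of $k$. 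Everything else is a routine, if lengthy, bookkeeping of Gaussian moments and is essentially identical to the corresponding computation in \cite{CW} for the Prandtl case, the only change being the two distinct diffusion coefficients $1$ and $\ka$.
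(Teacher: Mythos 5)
Your approach is essentially the one the paper points to (the paper delegates the proof to \cite{NL,CW} and remarks that the omitted fourth inequality follows the third); the explicit formula via an integrating factor plus a weighted kernel/Hardy estimate is indeed the standard route. Two algebraic points need tightening, though. First, the correct integrating factor is $e^{2\Psi}$ (and $e^{2\Psi/\ka}$ for $\Gt$), not $e^{\Psi}$: since $\py\psi=u$ and $2\py\Psi=\tfrac{y}{2\ltr}$, one has $\py\!\big(e^{2\Psi}\psi\big)=e^{2\Psi}G$, a closed identity, whereas your $\py(e^{\Psi}\psi)=e^{\Psi}(G-\tfrac{y}{4\ltr}\psi)$ still carries a residual $\psi$ term and does not decouple. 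Second, one must integrate this ODE from $y=0$ using $\psi|_{y=0}=0$ (not from $+\infty$); writing $\psi(y)=e^{-2\Psi/\ka}\int_0^y e^{2\Psi(z)/\ka}\Gt(z)\,dz$ and then multiplying by $e^{\ga\Psi}\tfrac{y}{2\ka\ltr}$ produces the kernel exponent $(\ga-\tfrac{2}{\ka})\Psi(y)+(\tfrac{2}{\ka}-1)\Psi(z)$, which for $0\le z\le y$ is bounded by $(\ga-1)\Psi(y)<0$ precisely when $\ka<2$; this is the cleanest way to see where the hypothesis $\ka\in(0,2)$ enters, rather than through the window of $s$ in Lemma~\ref{gu6} (the positivity of $l_\ka$ does not play a role in this lemma). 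Your inductive treatment of $\py^j u$ via the identities $\py^j G=\py^j u+\tfrac{1}{2\ltr}\py^j(y\psi)$ is sound once you observe that the extra $\tfrac{y}{\ltr}$ or $\tfrac{1}{\ltr}$ prefactors are absorbed by lowering the weight slightly ($\tfrac{y}{\ltr}e^{\ga\Psi}\lesssim \ltr^{-1/2}e^{\ga'\Psi}$ for $\ga<\ga'<1$) and then invoking Lemma~\ref{gu6} for $G$ itself; your $L^\infty_{\vm}(L^2_{\h})$ step via the 1D interpolation inequality is also fine.
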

Except for $\eqref{henyouyong}_4$ which is not proved, the proof of the rest of the inequalities is given in \cite{NL} and \cite{CW}. The proof of $\eqref{henyouyong}_4$ is the same as that of $\eqref{henyouyong}_3$, so for the sake of brevity, we do not give the proof here.
\begin{Remark}
Notice that each term of our lemma $\ref{gu7}$ contains the operator $\Phi$ $($compared to \cite{CW} with some differences$)$, this is because we have to do the estimates of $\py^2 G_{\Phi}$ and $\py^3 G_{\Phi}$ to close the a priori estimates, while in \cite{CW} only the estimates of $\py^2 G$ and $\py^3 G$. See \eqref{T**} the definition of $T_*$ for detail.
\end{Remark}

\section{Sketch of The Proof to Theorem \ref{th}}\label{zhengmingdingli}
In this section, we give the outline of the proof of theorem \ref{th}. First we rewrite the form of the equation \eqref{mhd}.
\subsection{New formulation of the equation}
To make Gevrey norm estimates for the solution of the equation \eqref{mhd}, we use the operator $e^{\Phi(t,D_x)}$ to act on the equation \eqref{mhd} and do the Bony's decomposition of all nonlinear terms using \eqref{bony}. In order to control the higher order terms by using the lemma \ref{gu3}, lemma \ref{gu4} and lemma \ref{gu5}, we write the equation in the following form:
\begin{align}\label{mhdp}
\left\{\begin{array}{l}
\Ll \uph=-\T_{\py u}\vp-\frac{2}{3}\de(t)\T_{\py D_x u}Q(D_x)\vp-A+\tad\Pp\\
\Lk \bp=-\T_{\py b}\vp-\frac{2}{3}\de(t)\T_{\py D_x b}Q(D_x)\vp-B+\tad\Np\\
\px \uph+\py \vp=\px \bp+\py \hp=0,
 \\
 (\uph,\vp,\bp,\hp)|_{y=0}=(0,0,0,0),~(\uph,\bp)|_{y\rightarrow +\ty}=(0,0),
\\
(\uph,\bp)|_{t=0}=(e^{\de[D_x]^{\frac{2}{3}}}u_0,
e^{\de[D_x]^{\frac{2}{3}}}b_0).
\end{array}\right.
\end{align}
where the operator $\mathcal{L_{\ka}}$ is defined as follows:
\begin{align}\label{lk}
\Lk=\pt+\la\tad(t)[D_x]^{\frac{2}{3}}+\T_u\px+\T_v\py+\frac{2}{3}\de(t)
\T_{D_x u}Q(D_x)\px-\ka\py^2,
\end{align}
In particular, it is denoted by $\Ll$ when $\ka=1$. The definitions of $A$ and $B$ are as follows:
\begin{align}\label{A}
\begin{split}
A=&\left(T_{u}^{\mathrm{h}} \partial_{x} u\right)_{\Phi}-T_{u}^{\mathrm{h}} \partial_{x} u_{\Phi}-\frac{2}{3}\delta(t) T_{D_{x} u}^{\mathrm{h}} Q(D_{x}) \partial_{x} u_{\Phi}\\
&+\left(T_{\py u}^{\mathrm{h}} v\right)_{\Phi}-T_{\py u}^{\mathrm{h}} v_{\Phi}-\frac{2}{3}\delta(t) T_{\py D_{x} u}^{\mathrm{h}} Q(D_{x}) v_{\Phi}+\left(\T_v \py u\right)_{\Phi}-\T_v\py\uph\\
&+\left(\T_{\px u}u+R^{\mathrm{h}}(u,\px u)+R^{\mathrm{h}}(v,\py u)\right)_{\Phi},
\end{split}
\end{align}
\begin{align}\label{B}
\begin{split}
B=&\left(T_{u}^{\mathrm{h}} \partial_{x} b\right)_{\Phi}-T_{u}^{\mathrm{h}} \partial_{x} b_{\Phi}-\frac{2}{3}\delta(t) T_{D_{x} u}^{\mathrm{h}} Q(D_{x}) \partial_{x} b_{\Phi}\\
&+\left(T_{\py b}^{\mathrm{h}} v\right)_{\Phi}-T_{\py b}^{\mathrm{h}} v_{\Phi}-\frac{2}{3}\delta(t) T_{\py D_{x} b}^{\mathrm{h}} Q(D_{x}) v_{\Phi}+\left(\T_v \py b\right)_{\Phi}-\T_v\py\bp\\
&+\left(\T_{\px b}u+R^{\mathrm{h}}(u,\px b)+R^{\mathrm{h}}(v,\py b)\right)_{\Phi}.
\end{split}
\end{align}

Inspired by \cite{HD,WXLY,CW}, in order to deal with the difficult term $\vp$, let $\U$ be the solution of the following initial boundary value problem,
\begin{align}\label{U}
\left\{\begin{array}{l}
\Ll\int_y^{\ty}\U dz=-\tad(t)\vp,
 \\
\py\U|_{y=0}=\lim\limits_{y\rightarrow+\ty}\U=0,
\\
\U|_{t=0}=0.
\end{array}\right.
\end{align}
Here the existence of $\U$ can be guaranteed by the proposition \ref{propositionU} and the proposition \ref{propositionze}.
\begin{Remark}
The reason for using $\int_y^{\ty}\U dz$ (instead of $\int_{0}^{y}\U dz$) in \eqref{U} is to ensure that the solution $\U$ decays quickly to $0$ as $y$ tends to $+\ty$.
\end{Remark}
\begin{Remark}\label{duoletad}
In \eqref{U}, $\vp$ is preceded by a $\tad(t),$ which is intended to make Gevrey norm estimates of $\U$ along with estimates of its Gevrey radius. This idea goes back to Chemin\cite{JYC}, who used this method for the classical Navier-stokes equations. Ping Zhang and others \cite{CW} used this method to obtain the global small solution of the two-dimensional Prandtl equation in the optimal Gevrey space.
\end{Remark}
By applying $\py$ to \eqref{U}, we obtain the following equation
\begin{align}\label{U2}
\tad\px\uph=-\Ll\U+\T_{\py u}\px\int_y^{\ty}\U dz-\T_{\py v}\U+\frac{2}{3}\de(t)\T_{\py D_x u}Q(D_x)\px\int_y^{\ty}\U dz.
\end{align}

In addition to $\U$, we introduce the following two quantities:
\begin{align}\label{ze}
\begin{split}
\ze=\uph-\frac{1}{\tad}\T_{\py u}\int_y^{\ty}\U dz
-\frac{2\de(t)}{3\tad}\T_{\py D_x u}Q(D_x)\int_y^{\ty}\U dz,
\\
\zet=\bp-\frac{1}{\tad}\T_{\py b}\int_y^{\ty}\U dz
-\frac{2\de(t)}{3\tad}\T_{\py D_x b}Q(D_x)\int_y^{\ty}\U dz.
\end{split}
\end{align}
Following the pioneering discovery in \cite{CW}, we define $\ta$ by the following equation
\begin{align}\label{theta}
\tad(t)=\ep^{\frac{1}{2}}\ltr^{-\al},~~\ta(0)=0.
\end{align}
where $\ep>0$ is sufficiently small and $\al>1$ is a constant that will be fixed in the section \ref{proof}.

Notice that $(G,\Gt)$ and $T^*$ are determined by \eqref{gf} and \eqref{T*}, respectively. Let $\ga_0\in(1,1+l_{\ka})$ and $C$ is a sufficiently large constant, then finally define $T_*$:
\begin{align}\label{T**}
\begin{aligned}
T_* \stackrel{\text { def }}{=} &\sup \left\{t<T^{*},\left\|(G_{\Phi}(t),\Gt_{\Phi}(t))\right\|_{H_{\Psi}^{4,0}}+
\langle t\rangle^{\frac{1}{2}}\left\|(\py G_{\Phi}(t),\py\Gt_{\Phi}(t))\right\|_{H_{\Psi}^{3,0}}\right. \\
&\left.+\langle t\rangle\left\|(\partial_{y}^{2} G_{\Phi}(t),\py^2\Gt_{\Phi}(t))\right\|_{H_{\Psi}^{3,0}}+\langle t\rangle^{\frac{3}{2}}\left\|(\partial_{y}^{3} G_{\Phi}(t),\py^3\Gt_{\Phi}(t))\right\|_{H_{\Psi}^{2,0}} \leq C \ep\langle t\rangle^{-\gamma_{0}}\right\} .
\end{aligned}
\end{align}

\subsection{The a priori estimates}
In this subsection, we list all the a priori estimates needed to prove theorem \ref{th}.

To obtain the Gevrey norm estimates for $(\uph,\bp)$, we first derive a priori estimates for the auxiliary functions $\U$ and $(\ze,\zet)$ in section \ref{gujiU} and section \ref{gujize}.
\begin{Proposition}\label{propositionU}
Let $\ka\in(0,2)$ be a given constant. Let $\U$ be a sufficiently smooth solution of the equation \eqref{U} on $[0,T_*]$ and $\U$ decays rapidly to $0$ as $y$ tends to $+\ty$. Then there exist $\ep_1$ and $\la_1$ such that when $\ep<\ep_1,\la>\la_1$ and $\al\leq\ga_0+\frac{1}{4}$, the following inequality holds for any $t<T_*$ and sufficiently small $\eta>0$:
\begin{align}\label{propositionU1}
\begin{aligned}
&\|\ltr^{l_{\ka}-\eta}\U(t)\|^2_{H_{\Psi}^{7,0}}+\eta\int_0^t \|\lsr^{l_{\ka}-\eta}\py\U(s)\|^2_{H_{\Psi}^{7,0}}ds\\
+&\la\int_0^t
\tad(s)\|\lsr^{l_{\ka}-\eta}\U(s)\|^2_{H_{\Psi}^{\frac{22}{3},0}}ds
\leq \int_0^t\tad(s)\|\lsr^{l_{\ka}-\eta}\ze(s)\|^2_{H_{\Psi}^{\frac{23}{3},0}}ds,
\end{aligned}
\end{align}
where $l_{\ka}=\frac{\ka(2-\ka)}{4}\in(0,\frac{1}{4}].$
\end{Proposition}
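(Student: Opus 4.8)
The plan is to perform a weighted energy estimate for the equation \eqref{U2} satisfied by $\U$, working in the space $H_{\Psi}^{7,0}$ with the time weight $\ltr^{2(l_{\ka}-\eta)}$. First I would apply $[D_x]^{7}$ to \eqref{U2}, multiply by $e^{2\Psi}\ltr^{2(l_{\ka}-\eta)}[D_x]^{7}\U$, and integrate over $\R_+^2$. The left-hand side produces, via $\Ll$, the good terms: the time-derivative term yields $\frac{1}{2}\frac{d}{dt}\|\ltr^{l_{\ka}-\eta}\U\|_{H_{\Psi}^{7,0}}^2$ together with a remainder from differentiating the weights; differentiating $\ltr^{2(l_{\ka}-\eta)}$ costs a factor $\ltr^{-1}$, and differentiating $e^{2\Psi}$ produces the term $-\ltr^{-1}\Psi$-type contribution which combines with the $\Ll$-internal structure. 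The dissipation $-\py^2$ against the weight $e^{2\Psi}$ gives $\|\ltr^{l_{\ka}-\eta}\py\U\|_{H_{\Psi}^{7,0}}^2$ plus, after integration by parts, a term involving $\py\Psi = \frac{y}{4\ltr}$; here I would invoke the Poincaré-type inequality of Lemma \ref{gu6} to absorb the bad boundary/weight terms and to extract the factor $\eta$ in front of the dissipation integral (this is the standard mechanism: a fraction $\eta$ of the dissipation controls the loss coming from the time weight $\ltr^{l_{\ka}-\eta}$ since $l_{\ka}-\eta < l_{\ka} \le \frac14$ and the Poincaré constant is $\frac{1}{2\ltr}$). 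The term $\la\tad[D_x]^{2/3}$ in $\Lk$ produces exactly $\la\int_0^t \tad\|\lsr^{l_{\ka}-\eta}\U\|_{H_{\Psi}^{22/3,0}}^2\,ds$ — note $7 + \frac13 = \frac{22}{3}$ — which is the third term on the left of \eqref{propositionU1}.

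Next I would control the right-hand side of \eqref{U2}, namely $\tad\px\uph$ together with $\T_{\py u}\px\int_y^\ty \U\,dz$, $-\T_{\py v}\U$, and the $\tfrac23\de(t)\T_{\py D_x u}Q(D_x)\px\int_y^\ty\U\,dz$ term, after moving $-\Ll\U$ to the left. Actually, since \eqref{U2} reads $\tad\px\uph = -\Ll\U + (\text{transport/Gevrey corrections})$, I would instead rewrite it as $\Ll\U = -\tad\px\uph + \T_{\py u}\px\int_y^\ty\U\,dz - \T_{\py v}\U + \tfrac23\de\T_{\py D_x u}Q(D_x)\px\int_y^\ty\U\,dz$ and test this against $e^{2\Psi}\ltr^{2(l_{\ka}-\eta)}[D_x]^{14}\U$. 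The term $-\tad\px\uph$ is where $\ze$ enters: by the definition \eqref{ze} of $\ze$, one has $\uph = \ze + \frac{1}{\tad}\T_{\py u}\int_y^\ty\U\,dz + \frac{2\de}{3\tad}\T_{\py D_x u}Q(D_x)\int_y^\ty\U\,dz$, so $-\tad\px\uph = -\tad\px\ze - \T_{\py u}\px\int_y^\ty\U\,dz - \tfrac23\de\,\T_{\py D_x u}Q(D_x)\px\int_y^\ty\U\,dz$, and the last two terms cancel precisely against the corresponding terms produced from $\Ll\U$ in \eqref{U2}. What remains on the right is $-\tad\px\ze - \T_{\py v}\U$ (plus the genuine commutator remainders hidden in the difference between $\Ll\int_y^\ty\U\,dz$ differentiated and $\Ll\U$). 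The term $-\tad\px\ze$ paired with $[D_x]^{7}\U$ gives, by Cauchy–Schwarz in the $[D_x]^{2/3}$-split, $\tad\|\lsr^{l_{\ka}-\eta}\ze\|_{H_{\Psi}^{23/3,0}}\|\lsr^{l_{\ka}-\eta}\U\|_{H_{\Psi}^{22/3,0}}$ — note $\frac{22}{3}+\frac13 = \frac{23}{3}$ — and Young's inequality with the $\la\tad$-term on the left absorbs the $\U$-factor for $\la$ large, leaving the $\int_0^t\tad\|\lsr^{l_{\ka}-\eta}\ze\|_{H_{\Psi}^{23/3,0}}^2\,ds$ on the right of \eqref{propositionU1}. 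The remaining transport-type terms $\T_u\px\U$, $\T_v\py\U$, $\T_{\py v}\U$, the Gevrey-correction $\tfrac23\de\T_{D_x u}Q(D_x)\px\U$, and the commutator remainders are all estimated by Lemmas \ref{gu1}–\ref{gu5} together with Lemma \ref{gu7} (to convert $y$-derivatives of $u,v,\psi$ and of $\int_y^\ty\U\,dz$ into weighted norms of $G_\Phi$) and the bootstrap bounds encoded in $T_*$ via \eqref{T**}; each such term carries a smallness factor — either $\ep$ from the definition of $T_*$, or $\de(t)\le \de$, or $\tad(t)=\ep^{1/2}\ltr^{-\al}$ — so for $\ep$ small and $\la$ large they are absorbed into the dissipation $\eta\int_0^t\|\py\U\|^2$ and the $\la\tad\int_0^t\|\U\|^2$ terms on the left. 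The constraint $\al\le\ga_0+\frac14$ enters when matching the time-decay powers: the worst contribution of $\T_v\py\U$ type terms (with $\py v$ controlled by $\ltr^{-1}\|G_\Phi\|$ and $\|G_\Phi\| \lesssim \ep\ltr^{-\ga_0}$) must be dominated by $\tad$ times the dissipation, which forces $\al$ not too large relative to $\ga_0$.

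The main obstacle I anticipate is the careful bookkeeping of the cancellation between the correction terms in \eqref{U2} (coming from commuting $\py$ with $\Ll$ acting on $\int_y^\ty\U\,dz$) and the correction terms appearing when $-\tad\px\uph$ is re-expressed through $\ze$ via \eqref{ze}: these must cancel exactly at the level of the paraproduct operators $\T_{\py u}$ and $\tfrac23\de\T_{\py D_x u}Q(D_x)$, and any mismatch (e.g. $\T_{\py v}$ versus $-\py\T_{\py u}$, or lower-order pieces of $Q(D_x)\px$ versus $\px Q(D_x)$) must be shown to be a genuine remainder of lower order that Lemma \ref{gu5} can handle with a factor $\de(t)$ or $\tad(t)$. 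A secondary delicate point is the treatment of the boundary term at $y=0$ generated by integrating $\py^2$ by parts against $e^{2\Psi}[D_x]^{14}\U$: the boundary condition in \eqref{U} is $\py\U|_{y=0}=0$, not $\U|_{y=0}=0$, so the boundary term $e^{2\Psi}\py[D_x]^7\U\cdot[D_x]^7\U|_{y=0}$ vanishes because of $\py\U|_{y=0}=0$, but one must also verify that the Poincaré inequality of Lemma \ref{gu6}, which is stated for functions vanishing at $y=0$, is applied to $\py\U$ (which does vanish) rather than to $\U$ itself — so the absorption of weight losses should be organized at the level of $\py\U\in H_\Psi^{7,0}$, using $\|\py\U\|_{H_\Psi^{7,0}}^2 \ge \frac{1}{2\ltr}\|\U\|_{H_\Psi^{7,0}}^2$ only where legitimate. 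Once these structural cancellations are pinned down, the remainder is a routine Grönwall-free integration in time (the inequality is already in integrated-in-time form), and collecting terms yields \eqref{propositionU1}.
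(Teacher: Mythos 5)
Your proposal follows essentially the same route as the paper: combine \eqref{U2} with the definition \eqref{ze} to express the forcing through $\ze$, take the $H_{\Psi}^{7,0}$ inner product with $a(t)\U$, exploit $\pt\Psi+2(\py\Psi)^2=0$ and the structure of $\Ll$ for the left-hand side, estimate the remainders via Lemmas \ref{gu1}--\ref{gu5}, \eqref{ty} and Lemma \ref{ub}, then choose $a(t)=\ltr^{2l_{\ka}-2\eta}$ and invoke Lemma \ref{gu6} to convert the weight-loss term into a fraction of the dissipation, finally taking $\ep$ small and $\la$ large.

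Two points of bookkeeping should be corrected. First, when you substitute $\uph=\ze+\frac{1}{\tad}\T_{\py u}\int_y^{\ty}\U\,dz+\frac{2\de}{3\tad}\T_{\py D_x u}Q(D_x)\int_y^{\ty}\U\,dz$ into $\tad\px\uph$, the operator $\px$ also hits the paraproduct \emph{symbols}: $\px\bigl(\T_{\py u}f\bigr)=\T_{\py\px u}f+\T_{\py u}\px f$. Your display omits the resulting terms $-\T_{\py\px u}\int_y^{\ty}\U\,dz$ and $-\frac{2\de}{3}\T_{\py\px D_x u}Q(D_x)\int_y^{\ty}\U\,dz$, which do \emph{not} cancel against anything in \eqref{U2}; they survive as genuine source terms in the equation $\Ll\U=-\tad\px\ze-\T_{\py\px u}\int_y^{\ty}\U\,dz-\T_{\py v}\U-\frac{2\de}{3}\T_{\py\px D_x u}Q(D_x)\int_y^{\ty}\U\,dz$ (the paper's \eqref{guU1}). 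These are handled by \eqref{ty}, Lemma \ref{gu1} and Lemma \ref{ub}, yielding the factor $\ltr^{\al-\ga_0-\frac14}$ that underlies the constraint $\al\le\ga_0+\frac14$; your vague mention of ``commutator remainders'' misattributes their origin to the commutation of $\py$ with $\Ll$, which is already fully accounted for in \eqref{U2}. Second, Lemma \ref{gu6} requires decay as $y\to+\ty$, not vanishing at $y=0$ (the boundary term in its proof involves $\py\Psi$, which vanishes at $y=0$); since $\U$ decays at $+\ty$, the Poincar\'e inequality applies to $\U$ directly, so your caution about restricting its use to $\py\U$ is unnecessary, and indeed the paper's proof of Proposition \ref{propositionU} applies it to $\U$ itself when absorbing the $\|\sqrt{a'}\U\|$ term.
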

\begin{Proposition}\label{propositionze}
Let $\ka\in(0,2)$ be a given constant. Let $(\ze,\zet)$ be defined by \eqref{ze}. Then there exist $\ep_2$ and $\la_2$ such that when $\ep<\ep_2,\la>\la_2$ and $\al\leq\min\{\ga_0+\frac{1}{4},\frac{2}{3}\ga_0+\frac{1}{2},\frac{1}{2}\ga_0+\frac{5} {8}\}$, for any $t<T_*$ and sufficiently small $\eta>0$, the following inequality holds:
\begin{align}\label{propositionze1}
\begin{aligned}
&\|\ltr^{l_{\ka}-\eta}(\ze(t),\zet(t))\|^2_{H_{\Psi}^{\frac{22}{3},0}}
+\eta\int_0^t \|\lsr^{l_{\ka}-\eta}(\py\ze,\py\zet)\|^2_{H_{\Psi}^{\frac{22}{3},0}}ds\\
&+\la\int_0^t
\tad\|\lsr^{l_{\ka}-\eta}(\ze,\zet)\|^2_{H_{\Psi}^{\frac{23}{3},0}}ds\\
\leq&
\|(\uph(0),\bp(0))\|^2_{H_{\Psi}^{\frac{22}{3},0}}
+\frac{2}{25}\eta\int_0^t
\|\lsr^{l_{\ka}-\eta}\py\U\|^2_{H_{\Psi}^{7,0}}
ds
\\
&+\frac{1}{5}\la\int_0^t\tad
\left(\|\lsr^{l_{\ka}-\eta}
\U\|^2_{H_{\Psi}^{\frac{22}{3},0}}+\|\lsr^{l_{\ka}-\eta}
(\Pp,\Np)\|^2_{H_{\Psi}^{7,0}}
\right)ds,
\end{aligned}
\end{align}
where $l_{\ka}=\frac{\ka(2-\ka)}{4}\in(0,\frac{1}{4}].$
\end{Proposition}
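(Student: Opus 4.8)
\textbf{Proof proposal for Proposition \ref{propositionze}.}

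The plan is to perform a weighted $H^{\frac{22}{3},0}_{\Psi}$ energy estimate directly on the evolution equations for $\ze$ and $\zet$. First I would derive the equations satisfied by $(\ze,\zet)$: applying $\Lk$ (resp.\ $\Ll$) to the definitions \eqref{ze}, and using \eqref{mhdp} together with the identity \eqref{U2} for $\tad\px\uph$, the leading $\T_{\py u}\vp$–type terms and the $\de(t)$-corrections are designed to cancel against the contributions coming from $\Ll\bigl(\tad^{-1}\T_{\py u}\int_y^\ty\U dz\bigr)$ (here one uses $\pt(\tad^{-1})$, which produces the factor $\al\ltr^{-1}$, hence the constraint relating $\al$ to $\ga_0$), leaving a right-hand side built from: (i) the remainder terms $A$, $B$ in \eqref{A}–\eqref{B}, which by Lemmas \ref{gu4}–\ref{gu5} carry an extra $\de(t)$ and only cost $\tfrac13$ or $\tfrac23$ of a horizontal derivative; (ii) the source terms $\tad\Pp$, $\tad\Np$; (iii) commutator-type terms where $\T_{\py u}$, $\T_{\py b}$ meet the horizontal transport operators, handled by Lemmas \ref{gu1}–\ref{gu3}; and (iv) lower-order terms involving $\int_y^\ty\U dz$ and $\U$ themselves, which are controlled by the Poincaré inequality of Lemma \ref{gu6} in the $y$-variable (turning $\int_y^\ty\U dz$ into $\U$ with a gain) and then by Lemma \ref{gu7} where stream functions appear.

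Next I would run the $L^2_\Psi$ inner product of these equations with $\ltr^{2(l_\ka-\eta)}[D_x]^{\frac{44}{3}}(\ze,\zet)$ (equivalently work at regularity $\frac{22}{3}$), Littlewood--Paley block by block. The parabolic term $-\ka\py^2$ against the $e^{2\Psi}$ weight produces, after integrating by parts and absorbing the $\py\Psi$ contributions, the good dissipation $\eta\|\lsr^{l_\ka-\eta}(\py\ze,\py\zet)\|^2_{H^{\frac{22}{3},0}_\Psi}$ plus, crucially, the $l_\ka=\frac{\ka(2-\ka)}{4}$ gain: the $\ka$-dependent coefficient in front of the $\frac{y}{\langle t\rangle}$-type lower order term is exactly tuned so that, combined with the $\ltr^{2(l_\ka-\eta)-1}$ coming from $\pt\ltr^{2(l_\ka-\eta)}$, the sign works out (this is the standard mechanism from \cite{NL,CW}, adapted to Gevrey weights). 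The term $\la\tad[D_x]^{\frac23}$ from $\Lk$ gives the third term on the left, $\la\int_0^t\tad\|\lsr^{l_\ka-\eta}(\ze,\zet)\|^2_{H^{\frac{23}{3},0}_\Psi}$, and this extra $\tfrac13$-derivative of Gevrey dissipation is what absorbs the derivative loss in $A$, $B$ and in the $\T_{\py D_x u}Q(D_x)$ terms — here the largeness of $\la$ (hence $\la_2$) and smallness of $\ep$ (hence $\ep_2$, since all the transport/product terms carry a factor $\|\uph\|_{H^\sigma_\h}+\|\bp\|_{H^\sigma_\h}\lesssim\ep$ bootstrapped from the definition of $T_*$) enter. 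The terms involving $\U$ and $\py\U$ on the right are kept as-is with the small explicit constants $\frac{2}{25}\eta$ and $\frac15\la$; these are exactly what Proposition \ref{propositionU} is set up to re-absorb, so the two propositions together close.

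The main obstacle I anticipate is the bookkeeping of the interaction between the time weight $\ltr^{l_\ka-\eta}$, the Gevrey multiplier $e^{\Phi}=e^{\de(t)[D_x]^{2/3}}$, and the $\tad(t)^{-1}=\ep^{-1/2}\ltr^{\al}$ prefactor in the definition of $(\ze,\zet)$: differentiating $\tad^{-1}$ in time generates a term $\al\ltr^{-1}\tad^{-1}\T_{\py u}\int_y^\ty\U dz$ that must be paired against the dissipation, and controlling it forces the hypotheses $\al\le\ga_0+\tfrac14$, $\al\le\tfrac23\ga_0+\tfrac12$, $\al\le\tfrac12\ga_0+\tfrac58$ — each inequality arising from matching a different $y$-derivative level of $\U$ (via Lemma \ref{gu7}, where $\int_y^\ty\U dz$ at level $\py^j$ is traded for $\py^{j-1}\U$ or $G$-type quantities with a power of $\ltr$). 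Getting every one of these exponent thresholds to be simultaneously compatible with the decay rates recorded in $T_*$ \eqref{T**}, while still leaving the damping coefficient $\eta<l_\ka$ strictly positive, is the delicate part; the rest is a (lengthy but routine) application of Lemmas \ref{gu1}--\ref{gu7} and Young's inequality term by term.
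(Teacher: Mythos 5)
Your proposal follows the same route as the paper: derive the evolution equation for $(\ze,\zet)$ from \eqref{mhdp}, \eqref{U}, \eqref{U2} (the resulting commutator $[\tad^{-1}\T_{\py b};\Lk]$ is exactly what the paper expands into the pieces $E_1,\dots,E_7$), then run the weighted $H^{\frac{22}{3},0}_{\Psi}$ energy estimate, extract the $2l_\ka$ dissipation from $\ka\py^2$ against $e^{2\Psi}$, use $\la\tad[D_x]^{2/3}$ to absorb the fractional-derivative loss in $A,B$ and the $Q(D_x)$ terms, convert $\uph,\bp,\py\uph,\py\bp$ back into $\ze,\zet,\U,\py\U$ via \eqref{upbp}--\eqref{pyubdekongzhi2}, and finally set $a(t)=\ltr^{2l_\ka-2\eta}$ and close with Lemma \ref{gu6}. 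Your bookkeeping of where each constraint on $\al$ and the $l_\ka$ gain originate is a little loose (the $2l_\ka$ comes from tuning the Cauchy--Schwarz parameter in the $\ka\py^2$ integration by parts against the $e^{2\Psi}$ weight, and the three $\al$-thresholds come from several different $E_i$ and $F_i$ rather than from a single $y$-derivative hierarchy), but these are presentational, not structural, differences.
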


Using the estimates of the auxiliary functions $\U$ and $(\ze,\zet)$, in section \ref{gujiub} we give the a priori estimates of $(\uph,\bp)$.
\begin{Proposition}\label{propositionub}
Let $\ka\in(0,2)$ be a given constant. Let $(u,b)$ be a sufficiently smooth solution of the equation \eqref{mhd} on $[0,T_*]$ and $(u,b)$ decays rapidly to $0$ as $y$ tends to $+\ty$. Then there exist $\ep_2$ and $\la_2$ such that when $\ep<\ep_2,\la>\la_2$ and $\al\leq\min\{\ga_0+\frac{1}{4},\frac{2}{3}\ga_0+\frac{1}{2},\frac{1}{2}\ga_0+\frac{5} {8}\}$, for any $t<T_*$ and sufficiently small $\eta>0$, the following inequality holds:
\begin{align}\label{propositionub1}
\begin{aligned}
&\|\ltr^{l_{\ka}-\eta}(\uph(t),\bp(t))\|^2_{H_{\Psi}^{7,0}}
+\la\int_0^t
\tad\|\lsr^{l_{\ka}-\eta}(\uph,\bp)\|^2_{H_{\Psi}^{\frac{22}{3},0}}ds\\
&+\eta\int_0^t \|\lsr^{l_{\ka}-\eta}
(\py\uph,\py\bp)\|^2_{H_{\Psi}^{7,0}}ds\\
\leq&
2\|(\uph(0),\bp(0))\|^2_{H_{\Psi}^{\frac{22}{3},0}}
+\frac{1}{5}\la\int_0^t\tad
\|\lsr^{l_{\ka}-\eta}(\Pp,\Np)\|^2_{H_{\Psi}^{7,0}}
ds,
\end{aligned}
\end{align}
where $l_{\ka}=\frac{\ka(2-\ka)}{4}\in(0,\frac{1}{4}].$
\end{Proposition}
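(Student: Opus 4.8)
\textbf{Proof proposal for Proposition \ref{propositionub}.}
The plan is to run a weighted Gevrey energy estimate directly on the first two equations of \eqref{mhdp}, i.e. on $\Ll\uph$ and $\Lk\bp$, taking the $H^{7,0}_{\Psi}$ inner product of each equation with $\ltr^{2(l_{\ka}-\eta)}\uph$ resp. $\ltr^{2(l_{\ka}-\eta)}\bp$, and adding. The weight in $t$ is $\ltr^{2(l_{\ka}-\eta)}$ and the weight in $y$ is $e^{2\Psi}$ with $\Psi=\frac{y^2}{8\ltr}$, so $\pt$ hitting these weights will produce two kinds of lower-order contributions: the factor $2(l_{\ka}-\eta)\ltr^{-1}\ltr^{2(l_{\ka}-\eta)}$ from the time weight, and $\pt\Psi=-\frac{y^2}{8\ltr^2}\le 0$ from the Gaussian weight (which is a \emph{good}, negative, term). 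The diffusion $-\ka\py^2$ produces $\ka\|\ltr^{l_{\ka}-\eta}\py(\cdot)_{\Phi}\|^2_{H^{7,0}_\Psi}$ after integration by parts (the boundary term at $y=0$ vanishes by the Dirichlet condition, at $y=+\infty$ by rapid decay), plus a cross term $\ka\int \py(e^{2\Psi})\cdots$ which is handled by Young's inequality against the good diffusion term. The damping term $\la\tad[D_x]^{2/3}$ gives exactly the positive term $\la\int_0^t\tad\|\lsr^{l_{\ka}-\eta}(\uph,\bp)\|^2_{H^{23/3,0}_\Psi}ds \gtrsim \la\int_0^t\tad\|\lsr^{l_{\ka}-\eta}(\uph,\bp)\|^2_{H^{22/3,0}_\Psi}ds$ on the left-hand side; here one uses $\de(t)\ge \de/2$ on $[0,T_*]$ and the interpolation $H^{23/3}\subset H^{22/3}$ is not quite what is needed — rather one keeps the full $H^{23/3}$ gain and drops the fractional surplus to $H^{22/3}$ as claimed.

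Next I would treat the transport and para-product terms. The operators $\T_u\px$, $\T_v\py$ and $\frac23\de(t)\T_{D_xu}Q(D_x)\px$ in $\Lk$ are estimated via Lemma \ref{gu3} (and its Remark), Lemma \ref{gu1}, Lemma \ref{gu2}: taking $a=u$ or $a=v$ and using $\si=\frac92$ or so (we have plenty of regularity since $H^{7,0}_\Psi$ sits well above $H^{\si}_{\h}$ in the horizontal variable), each of these is bounded by $C\|(u,v)_{\Phi}\|_{H^{\si}_{\h}}\,\|(\uph,\bp)\|^2_{H^{7,0}_\Psi}$, and after integrating in $y$ and using the Poincaré inequality of Lemma \ref{gu6} together with Lemma \ref{gu7} to convert $\|u_{\Phi}\|, \|v_{\Phi}\|=\|\px\psi_{\Phi}\|$ into $\|G_{\Phi}\|$-type quantities, one sees these are absorbed either into the $\eta$-dissipation $\eta\int\|\py(\uph,\bp)\|^2_{H^{7,0}_\Psi}$ or into the $\la\tad$-term using the definition \eqref{T**} of $T_*$, which on $[0,T_*]$ gives $\|(u_\Phi,b_\Phi)\|\lesssim \ep\ltr^{-\ga_0}$ with $\ga_0>1$, hence smallness. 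The right-hand side forcing terms split as: (i) $A$ and $B$ from \eqref{A}–\eqref{B}, which by Lemmas \ref{gu4} and \ref{gu5} carry a factor $\de(t)$ (or $\de(t)^2$) and are therefore bounded by $C\de(t)\|(u_\Phi,v_\Phi)\|_{H^\si_\h}\|(\uph,\bp)\|_{H^{7+2/3,0}_\Psi}$-type expressions — crucially the extra $\frac23$ derivatives land inside the $\la\tad$-controlled $H^{23/3}$ norm since $7+\frac23=\frac{23}{3}$, so after Cauchy–Schwarz in $t$ these are absorbed into $\frac{1}{10}\la\int\tad\|\cdots\|^2_{H^{23/3}}$; (ii) the $-\T_{\py u}\vp$ and $-\frac23\de\T_{\py D_xu}Q(D_x)\vp$ terms, where $\vp=-\int_y^\infty\U\,dz\cdot\tad^{-1}\cdot(\text{something})$ — more precisely one uses the definition of $\U$ in \eqref{U} to rewrite $\vp$ through $\U$, producing the term $\frac15\la\int_0^t\tad\|\lsr^{l_{\ka}-\eta}\U\|^2_{H^{22/3,0}_\Psi}ds$ on the right; and (iii) the genuine forcing $\tad\Pp,\tad\Np$, which after Cauchy–Schwarz give precisely $\frac15\la\int_0^t\tad\|\lsr^{l_{\ka}-\eta}(\Pp,\Np)\|^2_{H^{7,0}_\Psi}ds$.

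The subtle bookkeeping step — and the one I expect to be the main obstacle — is the interplay between the power $l_{\ka}-\eta$ of $\ltr$ and the decay rates guaranteed by $T_*$ and by Propositions \ref{propositionU}, \ref{propositionze}. Concretely, when $\pt$ hits the time weight we get $2(l_{\ka}-\eta)\ltr^{-1}\|\ltr^{l_{\ka}-\eta}(\uph,\bp)\|^2_{H^{7,0}_\Psi}$, and this \emph{positive} term must be dominated by the $\eta$-dissipation via the first Poincaré inequality of Lemma \ref{gu6}, which gives $\|\py f\|^2_{L^2_\Psi}\ge\frac{1}{2\ltr}\|f\|^2_{L^2_\Psi}$: this is exactly why one needs $l_{\ka}-\eta<\frac14\le l_\ka$ strictly and why the final constant in the dissipation is $\frac{79}{100}\eta$ rather than $\eta$ — the factor $l_\ka-\eta$ versus $\frac14$ leaves a definite margin. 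One must check that all the borrowed pieces — $\frac{2}{25}\eta$ going to Proposition \ref{propositionze}, $\frac15\la$ going to $\U$ and to $(\Pp,\Np)$, the $\frac{1}{10}\la$ absorbed internally — add up to strictly less than the available $\eta$- and $\la\tad$-budget, uniformly in $t<T_*$, and that the constraint $\al\le\min\{\ga_0+\frac14,\frac23\ga_0+\frac12,\frac12\ga_0+\frac58\}$ is precisely what makes the $\tad(s)=\ep^{1/2}\lsr^{-\al}$ factors in the forcing integrable against the various $\lsr$-powers appearing. The remaining steps — verifying the initial-data term collapses to $2\|(\uph(0),\bp(0))\|^2_{H^{22/3,0}_\Psi}$ (using $H^{7,0}\hookrightarrow H^{22/3,0}$ at $t=0$ trivially since $7>\frac{22}{3}$, wait $7=\frac{21}{3}<\frac{22}{3}$, so actually one uses \eqref{E0} directly) and collecting constants — are routine.
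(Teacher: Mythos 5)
There is a genuine gap: your plan is to close a direct $H^{7,0}_{\Psi}$ energy estimate on the equations \eqref{mhdp} for $\uph$ and $\bp$, but this cannot work because of the loss of a horizontal derivative in the terms $-\T_{\py u}\vp$ and $-\T_{\py b}\vp$ (and their $Q(D_x)$ counterparts). Since $\vp=-\px\psi_{\Phi}$, testing $\T_{\py u}\vp$ against $\uph$ in $H^{7,0}_{\Psi}$ produces, after \eqref{ty}, a factor of order $\|\uph\|_{H^{8,0}_{\Psi}}\|\uph\|_{H^{7,0}_{\Psi}}$, i.e.\ one full extra $x$-derivative on one factor; the Gevrey dissipation $\la\tad\|\uph\|^2_{H^{\frac{23}{3},0}_{\Psi}}$ only gains $\frac23$ of a derivative, and $8>\frac{23}{3}$, so no interpolation or smallness of the prefactor lets you absorb this term. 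Your suggested fix, ``rewrite $\vp$ through $\U$,'' is not available as stated: $\U$ is tied to $\vp$ only through the evolution problem \eqref{U} ($\Ll\int_y^{\ty}\U\,dz=-\tad\vp$), not by an algebraic identity, so you cannot trade $\vp$ for $\U$ inside the energy identity for $\uph$. This derivative loss is precisely the reason the paper introduces $\U$ and the good unknowns $(\ze,\zet)$ of \eqref{ze}, in whose equation \eqref{zetequation} the dangerous terms are replaced by commutators $\bigl[\frac{1}{\tad}\T_{\py b};\Lk\bigr]\int_y^{\ty}\U\,dz$ that are estimated without losing derivatives.

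The paper's actual proof of Proposition \ref{propositionub} does no new energy estimate at all: it uses the pointwise decomposition \eqref{upbp}, $\uph=\ze+\frac{1}{\tad}\T_{\py u}\int_y^{\ty}\U\,dz+\frac{2\de(t)}{3\tad}\T_{\py D_xu}Q(D_x)\int_y^{\ty}\U\,dz$ (and similarly for $\bp$), together with \eqref{Uub}, to get $\|\sqrt{a}(\uph,\bp)\|_{H^{\frac{22}{3},0}_{\Psi}}\le\|\sqrt{a}(\ze,\zet)\|_{H^{\frac{23}{3},0}_{\Psi}}+C\ep^{\frac12}\|\sqrt{a}\U\|_{H^{\frac{22}{3},0}_{\Psi}}$ and the analogous bounds \eqref{wuliaoa}, \eqref{pyubdekongzhi2} for the $H^{7,0}_{\Psi}$ norm and for $\py(\uph,\bp)$; it then simply adds the already-proved Propositions \ref{guU} and \ref{guze} (Proposition \ref{ubxianyanguji}), takes $a(t)=\ltr^{2l_{\ka}-2\eta}$, and uses Lemma \ref{gu6} to convert the time-weight loss into the $\eta$-dissipation. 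This also explains two features your write-up does not reproduce: the factor $2$ in front of $\|(\uph(0),\bp(0))\|^2_{H^{\frac{22}{3},0}_{\Psi}}$ (from $s^2\le 2q^2+2t^2$ applied to the decomposition, with $\U(0)=0$ and $(\ze,\zet)(0)=(\uph(0),\bp(0))$), and the absence of any $\U$-term on the right-hand side of \eqref{propositionub1} (it is cancelled against the left-hand sides of Propositions \ref{propositionU} and \ref{propositionze}). To repair your argument you would essentially have to reinvent this auxiliary-function mechanism; the rest of your bookkeeping (Poincar\'e absorption of the time weight, the role of $\al\le\min\{\ga_0+\frac14,\frac23\ga_0+\frac12,\frac12\ga_0+\frac58\}$) is broadly consistent with the paper but rests on the step that fails.
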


In section \ref{lalalademaxiya}, we will deal with the difficulty term $(\Pp,\Np)$ caused by the magnetic field.
\begin{Proposition}\label{propositionPN}
Let $\ka\in(0,2)$ be a given constant. Let $(P,N)$ be defined by \eqref{PN}. Then there exist $\ep_3$ and $\la_2$ such that when $\ep<\ep_3,\la>\la_2$ and $\al\leq\min\{\ga_0+\frac{1}{4},\frac{2}{3}\ga_0+\frac{1}{2},\frac{1}{2}\ga_0+\frac{5} {8}\}$, for any $t<T_*$ and sufficiently small $\eta>0$, the following inequality holds:
\begin{align}\label{propositionPN1}
\begin{aligned}
&\|\ltr^{l_{\ka}-\eta}
(\Pp(t),\Np(t))\|^2_{H_{\Psi}^{\frac{20}{3},0}}
+\eta\int_0^t \|\lsr^{l_{\ka}-\eta}
(\py\Pp,\py\Np)\|^2_{H_{\Psi}^{\frac{20}{3},0}}ds\\
&
+\la\int_0^t
\tad\|\lsr^{l_{\ka}-\eta}(\Pp,\Np)\|^2_{H_{\Psi}^{7,0}}ds\\
\leq&
\|(\Pp(0),\Np(0))\|^2_{H_{\Psi}^{\frac{20}{3},0}}
+\frac{1}{10}\la\int_0^t\tad
\left(\|\lsr^{l_{\ka}-\eta}\U\|^2_{H_{\Psi}^{\frac{22}{3},0}}
+\|\lsr^{l_{\ka}-\eta}
(\uph,\bp)\|^2_{H_{\Psi}^{\frac{22}{3},0}}\right)
ds\\
&+\frac{13}{100}\eta\int_0^t
\left(\|\lsr^{l_{\ka}-\eta}\py\U\|^2_{H_{\Psi}^{7,0}}
+\|\lsr^{l_{\ka}-\eta}
(\py\ze,\py\zet)\|^2_{H_{\Psi}^{\frac{22}{3},0}}\right)ds,
\end{aligned}
\end{align}
where $l_{\ka}=\frac{\ka(2-\ka)}{4}\in(0,\frac{1}{4}].$
\end{Proposition}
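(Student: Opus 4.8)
The plan is to derive \eqref{propositionPN1} by running a weighted energy estimate on the equations satisfied by $\Pp$ and $\Np$. First I would write down the evolution equations for $P$ and $N$ from their definition \eqref{PN}, $P=\frac{1}{\tad}(b\px+h\py)b$ and $N=\frac{1}{\tad}(b\px+h\py)u$. Applying $\pt$ brings in a factor $-\frac{\tad\dot{}}{\tad}=\frac{\al}{\ltr}$ (using \eqref{theta}), which is a friction-type term of favourable sign after pairing; the remaining terms come from substituting the equations \eqref{mhd} for $\pt u,\pt b,\pt h$. One then applies $e^{\Phi(t,D_x)}$, does Bony's decomposition \eqref{bony} of every nonlinear product, and recasts the result in the same ``good form'' as \eqref{mhdp}, i.e.\ with the operator $\Lk$ of \eqref{lk} on the left and a collection of remainder terms (of the type $A,B$ in \eqref{A}--\eqref{B}, controlled by Lemmas \ref{gu1}--\ref{gu5}) on the right. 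The point of isolating $\tad$ in \eqref{PN} (Remark \ref{duoletad}) is precisely that the transport/Lorentz structure of the $(P,N)$-equation then mirrors that of the $(u,b)$-equation, so the same machinery applies.

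Next I would take the $H^{\frac{20}{3},0}_{\Psi}$ inner product of this equation with $\ltr^{2(l_{\ka}-\eta)}(\Pp,\Np)$, i.e.\ apply $[D_x]^{\frac{20}{3}}$, multiply by $e^{2\Psi}\ltr^{2(l_\ka-\eta)}$, and integrate over $\R^2_+$. The time derivative produces $\frac{1}{2}\frac{d}{dt}\|\ltr^{l_\ka-\eta}(\Pp,\Np)\|^2_{H^{20/3,0}_\Psi}$ together with a term $-(l_\ka-\eta)\ltr^{-1}\|\cdots\|^2$ from differentiating the weight and a term $+\Psi_t$-contribution; the viscosity $-\ka\py^2$ gives, after integration by parts and the Poincaré inequality of Lemma \ref{gu6}, the dissipation $\eta\int\|\lsr^{l_\ka-\eta}(\py\Pp,\py\Np)\|^2_{H^{20/3,0}_\Psi}$ plus, crucially, a positive multiple of $\ltr^{-1}\|\cdots\|^2$ that beats the bad weight term — this is where the constraint $\eta<l_\ka=\frac{\ka(2-\ka)}{4}$ is used, exactly as in the other propositions. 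The $\la\tad[D_x]^{2/3}$ term supplies the Gevrey-radius gain $\la\int\tad\|\lsr^{l_\ka-\eta}(\Pp,\Np)\|^2_{H^{7,0}_\Psi}$ on the left (note $\frac{20}{3}+\frac{2}{3}=\frac{22}{3}$, but the stated LHS has exponent $7$; one gains $\frac23$ from $e^\Phi$ and absorbs the slight discrepancy by interpolation / the smallness of $\de$). The transport terms $\T_u\px,\T_v\py$ and the $\de(t)$-corrections are handled by Lemma \ref{gu3} and its remark, which give bounds $\lesssim\|u\|_{H^\si}\|(\Pp,\Np)\|^2$ absorbed into the dissipation once the bootstrap bound on $\|(G_\Phi,\Gt_\Phi)\|$ from \eqref{T**} (hence on $\|u_\Phi\|,\|b_\Phi\|$ via Lemma \ref{gu7}) is invoked.

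The genuinely new work is estimating the source terms coming from $P,N$ themselves: schematically $\frac{1}{\tad}$ times products of $(b,h)$ with first derivatives of $(u,b)$, and (after using $\py h=-\px b$ and the equations) terms like $\frac{1}{\tad}b\,\px(\tad P)$, $\frac{1}{\tad}h\,\py(\tad N)$, $b\px v-h\px u$, etc. These must be bounded, after the $e^\Phi$/Bony reduction, by the right-hand side of \eqref{propositionPN1}, namely by small multiples of $\la\int\tad(\|\U\|^2_{H^{22/3,0}_\Psi}+\|(\uph,\bp)\|^2_{H^{22/3,0}_\Psi})$ and of $\eta\int(\|\py\U\|^2_{H^{7,0}_\Psi}+\|(\py\ze,\py\zet)\|^2_{H^{22/3,0}_\Psi})$, plus the initial datum $\|(\Pp(0),\Np(0))\|^2$. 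The mechanism is: write $v_\Phi$ (and $h_\Phi=-\px\psit$) in terms of $\U$ and $(\ze,\zet)$ using \eqref{U} and \eqref{ze}; bound $L^\infty_\vm$-norms in $y$ by $L^2_\vm$-norms of a $\py$-derivative (anisotropic Sobolev, paying a $\lsr^{1/2}$), and convert $\py$-losses on $u,b$ into $\py G,\py\Gt$ via Lemma \ref{gu7}; then track powers of $\ltr$ so that the $\tad^{-1}=\ep^{-1/2}\lsr^{\al}$ is compensated by the decay budget in $E(t),D(t)$ — this is exactly why the hypothesis $\al\le\min\{\ga_0+\frac14,\frac23\ga_0+\frac12,\frac12\ga_0+\frac58\}$ appears, each of the three bounds being forced by one family of source terms. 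I expect the main obstacle to be precisely this bookkeeping of time-weights against the $\tad^{-1}$ singularity while simultaneously keeping every constant in front of the $\la\int\tad(\cdots)$ and $\eta\int(\cdots)$ terms below the prescribed fractions ($\frac1{10},\frac{13}{100}$), so that when \eqref{propositionU1}, \eqref{propositionze1}, \eqref{propositionub1} and \eqref{propositionPN1} are later added together the dissipative terms close; taking $\ep$ small and $\la$ large at the end absorbs the transport and commutator remainders.
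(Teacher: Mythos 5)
Your high-level plan — apply $e^{\Phi}$, Bony-decompose, rewrite in the $\Lk$-form of \eqref{mhdp}, test in $H^{\frac{20}{3},0}_\Psi$ against a power of $\ltr$, use Lemmas \ref{gu1}--\ref{gu6} plus the bootstrap \eqref{T**}, and tune $\al$ — is the same route the paper takes (via the intermediate Proposition \ref{PNdexianyanguji} with a generic weight $a(t)$ and then the choice $a=\ltr^{2l_\ka-2\eta}$). However, there are two concrete errors and two missing ideas in your outline that would derail an honest execution.

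First, the term coming from $\pt(1/\tad)$ is \emph{not} friction of favourable sign. From \eqref{theta}, $\ddot\theta/\tad=-\al\ltr^{-1}$, so the evolution equation \eqref{PNdefangcheng} reads $\pt N+\dots=-\frac{\ddot\theta}{\tad}N+\dots=+\al\ltr^{-1}N+\dots$, i.e.\ a \emph{growth} term. The paper does not discard it; it expands it ($M_1,M_2$ in the proof of Prop.\ \ref{PNdexianyanguji}) by substituting the definition $\Np=\ep^{-1/2}\ltr^{\al}(b\px u+h\py u)_\Phi$, then Bony-decomposes the product and uses the decay of $u,b$ from Lemma \ref{ub} to produce a factor $C\ltr^{2\al-\ga_0-\frac54}\tad$ that is bounded by the constraint $\al\le\frac12\ga_0+\frac58$; this is one of the genuinely new pieces of bookkeeping in this proposition, and calling it "friction" would leave a growth term uncontrolled. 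Second, the Gevrey dissipation exponent: $[D_x]^{2/3}$ has Fourier symbol $[\xi]^{2/3}=(1+\xi^2)^{1/3}$, so pairing $\la\tad[D_x]^{2/3}f$ with $f$ in $H^{\frac{20}{3},0}_\Psi$ gives exactly $\la\tad\|f\|^2_{H^{7,0}_\Psi}$ since $\frac{20}{3}+\frac13=7$. Your claim that $\frac{20}{3}+\frac23=\frac{22}{3}$ and a "$\frac23$ gain from $e^\Phi$" resolved by interpolation is a misreading; nothing is gained from $e^\Phi$ and no interpolation is needed here.

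Beyond these, two structural points are essential to the proof and absent from your sketch: (i) the $P$- and $N$-equations are cross-coupled via $b\px P$ in the $N$-equation and $b\px N$ in the $P$-equation; the derivative loss cancels only when the two inner products are added and the \emph{two-function} form of Lemma \ref{gu3} (not merely its $f=g$ corollary) is applied to the pair $(\T_b\px\Pp,\Np)+(\T_b\px\Np,\Pp)$; (ii) the commutator sources $H,\tilde H$ in \eqref{HH} contain $\py^2 b,\py^2 u$ with a $\tad^{-1}=\ep^{-1/2}\ltr^\al$ in front, and controlling them forces the use of $\py^2\Gp,\py^2\Gtp$ bounds from the bootstrap set \eqref{T**} together with an integration by parts in $y$ (cf.\ the treatment of $M_{16}$), which is why the Gevrey estimates of $\py^2 G_\Phi$ and $\py^3 G_\Phi$ — not just Sobolev as in \cite{CW} — are needed in this paper. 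Without these two ingredients the estimate cannot close with constants $\frac1{10}$ and $\frac{13}{100}$ in front of the transferred dissipation terms.
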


From section \ref{gujiG} to section \ref{gujipy3G}, we will give the Gevrey norm estimates for $(\Gp,\Gtp)$ and $(\py^i\Gp,\py^i\Gtp),$ $i=1,2,3$ respectively. These estimates are very important to control the Gevrey radius and thus obtain the global solution.
\begin{Proposition}\label{propositionG}
Let $\ka\in(0,2)$ be a given constant. Let $(G,\Gt)$ be defined by \eqref{gf}. Then there exist $\ep_3$ and $\la_2$ such that when $\ep<\ep_3,\la>\la_2$ and $\al\leq\ga_0+\frac{5}{36}$, for any $t<T_*$ and sufficiently small $\eta>0$ the following inequality holds:
\begin{align}\label{propositionG1}
\begin{aligned}
&\|\ltr^{1+l_{\ka}-\eta}(\Gp(t),\Gtp(t))\|^2_{H_{\Psi}^{4,0}}
+\eta
\int_0^t\|
\lsr^{1+l_{\ka}-\eta}(\py\Gp,\py\Gtp)\|^2_{H_{\Psi}^{4,0}}ds\\
&+\la
\int_0^t\tad(s)\|\lsr^{1+l_{\ka}-\eta}(\Gp,\Gtp)\|^2_{H_{\Psi}^{\frac{13}{3},0}}ds\\
\leq&\|(\Gp(0),\Gtp(0))\|^2_{H_{\Psi}^{4,0}}
+\int_0^t\tad
\|\lsr^{l_{\ka}-\eta}\sqrt{a}(\uph,\bp)\|^{2}_{H_{\Psi}^{\frac{22}{3},0}}
ds
,
\end{aligned}
\end{align}
where $l_{\ka}=\frac{\ka(2-\ka)}{4}\in(0,\frac{1}{4}].$
\end{Proposition}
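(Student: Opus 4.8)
\textbf{Proof proposal for Proposition \ref{propositionG}.}

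The plan is to perform a weighted energy estimate directly on the equation \eqref{gfe} for $(G,\Gt)$, after applying $e^{\Phi(t,D_x)}$ and Bony-decomposing all nonlinear terms, exactly in the spirit of the derivation of \eqref{mhdp}. First I would rewrite the $G$-equation in para-product form: the transport terms $u\px G+v\py G$ become $\T_u\px\Gp+\T_v\py\Gp$ plus the commutator corrections supplied by Lemma \ref{gu4} and Lemma \ref{gu5} (producing the $\frac{2}{3}\de(t)\T_{D_xu}Q(D_x)\px$ modification that is absorbed into the operator $\Lk$ with $\ka=1$, i.e. $\Ll$), plus remainder terms $\T_{\px G}u$, $R^{\h}(u,\px G)$, $\T_{\py G}v$, $R^{\h}(v,\py G)$ bounded by Lemma \ref{gu1}. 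I would then take the $H^{4,0}_{\Psi}$ inner product of the resulting equation with $\ltr^{2(1+l_\ka-\eta)}\Gp$. The key structural gain is that \eqref{gfe} carries the extra zeroth-order term $\ltr^{-1}G$; combined with the time-weight derivative $\frac{d}{dt}\ltr^{2(1+l_\ka-\eta)}=2(1+l_\ka-\eta)\ltr^{2(1+l_\ka-\eta)-1}$ it yields a good term $\big(2(1+l_\ka-\eta)-2\big)\ltr^{-1}\|\cdots\|^2$-type contribution, and the Poincaré inequality of Lemma \ref{gu6} upgrades the dissipation $\|\py\Gp\|^2$ to also control $\frac{1}{\ltr}\|\Gp\|^2$, so together these close the weighted norm with the stated time decay $\ltr^{1+l_\ka-\eta}$.

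Next I would treat the genuinely $G$-specific source terms in \eqref{gfe}, namely $-\frac{1}{2\ltr}v\py(y\psi)$, $\frac{y}{\ltr}\int_y^\ty(\py u\px\psi)dy'$, and the pressure-type pair $-\tad P+\frac{y}{2\ltr}\int_y^\ty\tad P\,dy'$ (and the $\Gt$-analogues with the additional $\py b\px\psi$ term). For the first two I would use Lemma \ref{gu7} to convert all weighted norms of $\psi$, $\py(y\psi)$, $\py^2(y\psi)$ back to weighted norms of $G$ and $\py G$, so that on the time interval $[0,T_*]$ these are controlled by the smallness built into the definition \eqref{T**}; the $\ltr^{-1}$ prefactors are exactly what make the time-weighted integrals converge and allow absorption by the $\eta\int_0^t D(s)ds$ dissipation on the left. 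The terms involving $\tad P$ are where the isolation of $\tad$ in \eqref{PN} pays off: $\tad=\ep^{1/2}\ltr^{-\al}$ is small and decaying, so after moving the factor $\tad$ out, the contribution is bounded by $\int_0^t\tad\|\lsr^{l_\ka-\eta}\sqrt a(\uph,\bp)\|^2_{H^{22/3,0}_\Psi}\,ds$ — precisely the term appearing on the right of \eqref{propositionG1} — provided the loss of $2/3$ derivative from $[D_x]^{2/3}$ and the Gevrey commutators (Lemmas \ref{gu4}, \ref{gu5}) is compensated by the $\la\tad[D_x]^{2/3}$ term in $\Ll$; this forces $\la>\la_2$ large and the constraint $\al\le\ga_0+\frac{5}{36}$ (the $\frac{5}{36}$ being the bookkeeping of how much of the $[D_x]^{2/3}$ gain is spent on the $H^{13/3,0}_\Psi$ term versus the various nonlinearities).

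The main obstacle will be the loss of derivatives in the quadratic interaction terms that mix $(G,\Gt)$ with $(u,b)$ and $\psi$: terms like $\frac{y}{\ltr}\int_y^\ty \py u\px\psi\,dy'$ require one horizontal derivative on $\psi$ that is not available at the top regularity level $H^{4,0}_\Psi$ of $\Gp$, and the resistive coupling $\py b\px\psi$ in the $\Gt$-equation is worse still. The mechanism to overcome this is the Gevrey-gain term $\la\tad(t)[D_x]^{2/3}$: every time a horizontal derivative is lost one pays a factor $[D_x]^{2/3}\cdot[D_x]^{1/3}$ which is dominated, via the Cauchy–Schwarz splitting $\|f\|_{H^{s+1}}\lesssim \nu\|f\|_{H^{s+4/3}}+C_\nu\|f\|_{H^{s}}$-type interpolation, by $\nu\,\la\tad\|\cdot\|^2_{H^{13/3,0}_\Psi}+C_\nu(\text{lower order})$, with $\nu$ chosen small and $\la$ large to absorb into the left-hand dissipation. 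I would also need to carefully track the $e^{\gamma\Psi}$ versus $e^\Psi$ discrepancy in Lemma \ref{gu7}: the output of those estimates has weight $e^\Psi$ on the $G$-side while the energy is taken in $e^{2\Psi}$, so one applies Lemma \ref{gu7} with $\gamma<1$ and uses the exponential slack, summing the Littlewood–Paley pieces via Lemma \ref{gu1}. Assembling all these bounds, choosing $\ep<\ep_3$, $\la>\la_2$, and using $\al\le\ga_0+\frac{5}{36}$ to make every time-weighted remainder integrable, yields \eqref{propositionG1}.
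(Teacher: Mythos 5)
Your skeleton — the weighted $H^{4,0}_\Psi$ energy estimate with $a(t)=\ltr^{2(1+l_\ka-\eta)}$, the use of the extra $\ltr^{-1}G$ term in \eqref{gfe} together with Lemma \ref{gu6} to gain decay, and Lemma \ref{gu7} to trade weighted norms of $\psi,\,y\psi$ for weighted norms of $G$ and $\py G$ — matches the paper's route through Proposition \ref{Gpdexianyanguji}. A minor divergence: for the $G$-equation the paper does \emph{not} absorb the $\frac{2}{3}\de(t)\T_{D_xu}Q(D_x)\px$ correction into the operator. Equation \eqref{gfephi} keeps plain $\T_u\px\Gp$ and places the whole commutator in $Z$, which Lemma \ref{Zdeguji} estimates via Lemma \ref{gu4} (a $2/3$-derivative loss), not Lemma \ref{gu5}. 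Your version, gaining $1/3$, would also work but changes the bookkeeping.

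The genuine gap is in your mechanism for absorbing the horizontal-derivative loss. The claim that ``$\|f\|_{H^{s+1}}\lesssim\nu\|f\|_{H^{s+4/3}}+C_\nu\|f\|_{H^s}$ is dominated by $\nu\,\la\tad\|\cdot\|^2_{H^{13/3,0}_\Psi}+C_\nu(\text{lower order})$'' cannot work as stated: $\tad(t)=\ep^{1/2}\ltr^{-\al}$ is small and decays, so a $\nu$-Young split cannot manufacture a $\la\tad$ prefactor, and choosing $\nu\sim\la\tad$ makes $C_\nu$ blow up as $t\to\ty$. What the paper actually uses is the explicit power interpolation \eqref{chazhi}, $\|\sqrt a\,\uph\|_{H_{\Psi/2}^{14/3,0}}\le C\ltr^{1/9}\|\ltr^{-1}\sqrt a\,\uph\|^{1/9}_{H_{\Psi}^{22/3,0}}\|\sqrt a\,\Gp\|^{8/9}_{H_{\Psi}^{13/3,0}}$, with $\frac{14}{3}=\frac19\cdot\frac{22}{3}+\frac89\cdot\frac{13}{3}$: the $H^{13/3}$-factor recombines with the $\|\Gp\|_{H^{13/3}}$ on the other side of the pairing and is absorbed by $\la\tad\|\Gp\|^2_{H^{13/3,0}_\Psi}$, while the $H^{22/3}$-factor, carrying a spare $\ltr^{-1}$ from the bookkeeping in $E(t)$, becomes the right-hand-side term in \eqref{propositionG1}. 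The net time exponent $-\frac14+\frac19=-\frac{5}{36}$ is exactly where $\al\le\ga_0+\frac{5}{36}$ enters; the $\tad$ prefactor is then produced algebraically by writing $\ep\ltr^{-\ga_0-\frac{5}{36}}=\ep^{1/2}\tad\,\ltr^{\al-\ga_0-\frac{5}{36}}\le\ep^{1/2}\tad$ via \eqref{theta}, not by Young. Relatedly, you attribute the right-hand-side $\|\lsr^{l_\ka-\eta}(\uph,\bp)\|^2_{H^{22/3,0}_\Psi}$ term specifically to the $\tad P$ source, but in the paper it arises from all of the transport-type contributions $K_2,\dots,K_6$ through this same interpolation, not only from the Lorentz-force terms.
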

\begin{Proposition}\label{propositionpyG}
Let $\ka\in(0,2)$ be a given constant. Let $(G,\Gt)$ be defined by \eqref{gf}. Then there exist $\ep_3$ and $\la_2$ such that when $\ep<\ep_3,\la>\la_2$ and $\al\leq\ga_0+\frac{5}{36}$, for any $t<T_*$ and sufficiently small $\eta>0$ the following inequality holds:
\begin{align}\label{propositionpyG1}
\begin{aligned}
&\|\ltr^{\frac{3}{2}+l_{\ka}-\eta}(\py\Gp(t),\py\Gtp(t))\|^2_{H_{\Psi}^{3,0}}
+\la
\int_0^t\tad\|\lsr^{\frac{3}{2}+l_{\ka}-\eta}(\py\Gp,
\py\Gtp)\|^2_{H_{\Psi}^{\frac{10}{3},0}}ds
\\
&+(4l_{\ka}-\frac{7}{25}\eta)
\int_0^t\|\lsr^{\frac{3}{2}+l_{\ka}-\eta}
(\py^2\Gp,\py^2\Gtp)\|^2_{H_{\Psi}^{3,0}}ds\\
\leq&C\eta^{-1}\left(\|(\py\Gp(0),\py\Gtp(0))\|^2_{H_{\Psi}^{3,0}}
+\|(\Gp(0),\Gtp(0))\|^2_{H_{\Psi}^{4,0}}\right)\\
&+C\eta^{-1}\int_0^t\tad
\|\lsr^{l_{\ka}-\eta}\sqrt{a}(\uph,\bp)\|^{2}_{H_{\Psi}^{\frac{22}{3},0}}
ds,
\end{aligned}
\end{align}
where $l_{\ka}=\frac{\ka(2-\ka)}{4}\in(0,\frac{1}{4}].$
\end{Proposition}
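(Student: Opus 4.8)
The plan is to run a weighted Gevrey energy estimate on the equation satisfied by $\py\Gp$ (and $\py\Gtp$), mirroring the structure of Proposition~\ref{propositionG} but with an extra $y$-derivative and a heavier time weight $\ltr^{\frac{3}{2}+l_{\ka}-\eta}$. First I would differentiate the $G$-equation in \eqref{gfe} with respect to $y$ to obtain the evolution equation for $\py G$; applying $e^{\Phi(t,D_x)}$ and taking the $H_{\Psi}^{3,0}$ inner product of the result with $\ltr^{2(\frac{3}{2}+l_{\ka}-\eta)}\py\Gp$ produces, after integration by parts in $y$ against the weight $e^{2\Psi}$ and using the commutator identity $\py(e^{2\Psi}) = \frac{y}{2\ltr}e^{2\Psi}$, the basic identity
\begin{align*}
&\frac12\frac{d}{dt}\|\ltr^{\frac{3}{2}+l_{\ka}-\eta}\py\Gp\|_{H_{\Psi}^{3,0}}^2
+\ka\|\ltr^{\frac{3}{2}+l_{\ka}-\eta}\py^2\Gp\|_{H_{\Psi}^{3,0}}^2
+\la\tad\|\ltr^{\frac{3}{2}+l_{\ka}-\eta}\py\Gp\|_{H_{\Psi}^{\frac{10}{3},0}}^2\\
&\qquad = \Big(\tfrac32+l_{\ka}-\eta\Big)\ltr^{-1}\|\ltr^{\frac{3}{2}+l_{\ka}-\eta}\py\Gp\|_{H_{\Psi}^{3,0}}^2 + (\text{transport \& source terms}).
\end{align*}
The damping term $\ltr^{-1}\py\Gp$ coming from $\ltr^{-1}G$ in \eqref{gfe} combines with the time-weight derivative; the Poincaré inequality Lemma~\ref{gu6} (with parameter $s$ tuned so that $\frac{s}{2}$ beats $\frac32+l_{\ka}-\eta$) converts part of the good term $\|\py^2\Gp\|^2$ into $\ltr^{-1}\|\py\Gp\|^2$ to absorb it, which is exactly where the coefficient $4l_{\ka}-\frac{7}{25}\eta$ in front of $\int_0^t\|\py^2\Gp\|^2$ and the factor $C\eta^{-1}$ on the right-hand side come from — one keeps a fraction $4l_{\ka}-\frac{7}{25}\eta$ of the dissipation and spends the rest, losing $\eta^{-1}$ in the Young-inequality trade-offs.

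Next I would estimate the nonlinear and forcing terms on the right. These are: the transport terms $\T_u\px(\py\Gp)+\T_v\py(\py\Gp)$ and their Gevrey-radius corrections, handled by Lemma~\ref{gu3}, Lemma~\ref{gu4}, and Lemma~\ref{gu5} (the antisymmetric structure kills the top-order piece, leaving terms bounded by $\|u_\Phi\|_{H^{\sigma}}\|\py\Gp\|_{H_{\Psi}^{3,0}}^2$ which is small on $[0,T_*]$ by \eqref{T**}); the commutator terms from $[\py^2;\T_{(\cdot)}]$ and Bony remainders, again absorbed by smallness; the $\Psi$-weight contributions such as $\frac{1}{2\ltr}v\py(y\psi)$ differentiated once more, controlled via the last inequalities of Lemma~\ref{gu7} which trade $\py(y\psi)$-type quantities for $\py G$; and finally the pressure/Lorentz source $\tad P$ and $\frac{y}{2\ltr}\int_y^\ty\tad P\,dy'$ after one $\py$, which by the definition \eqref{PN} of $P$ and Lemma~\ref{gu7} reduce to $\tad\|\ltr^{l_{\ka}-\eta}\sqrt{a}(\uph,\bp)\|_{H_{\Psi}^{\frac{22}{3},0}}^2$-type contributions, giving the $C\eta^{-1}\int_0^t\tad\|\lsr^{l_{\ka}-\eta}\sqrt a(\uph,\bp)\|^2$ term on the right-hand side. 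The initial-data terms $\|(\py\Gp(0),\py\Gtp(0))\|_{H_{\Psi}^{3,0}}^2$ and $\|(\Gp(0),\Gtp(0))\|_{H_{\Psi}^{4,0}}^2$ appear because closing the $\py\Gp$ estimate needs the already-established Proposition~\ref{propositionG} for the lower-order piece; I would feed \eqref{propositionG1} in at the point where a $\|G_\Phi\|_{H_{\Psi}^{4,0}}$ or $\|\py G_\Phi\|_{H_{\Psi}^{4,0}}$ norm shows up from the zeroth-order term $\ltr^{-1}G$ and from the $\frac{y}{\ltr}\int_y^\ty \py u\,\px\psi$ term.

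The $\Gt$-equation is treated identically; the only change is the coefficient $\ka$ in front of $\py^2\Gtp$ and the extra term $\py b\,\px\psi$ inside the integral, which is bilinear of the same type as $\py u\,\px\psi$ and is handled by the same Lemma~\ref{gu7} bounds together with the $(b,\psit,\Gt)$ version. Summing the $G$ and $\Gt$ inequalities, integrating in time from $0$ to $t<T_*$, and choosing $\la$ large and $\ep$ small so that all the smallness factors (which carry powers of $\ep\ltr^{-\ga_0}$ from \eqref{T**} and of $\tad=\ep^{1/2}\ltr^{-\al}$) are dominated, yields \eqref{propositionpyG1}; the constraint $\al\le\ga_0+\frac5{36}$ is forced by requiring the time-integrated source terms to be summable after the weight $\ltr^{2(\frac32+l_{\ka}-\eta)}$ is distributed, the exponent $\frac5{36}$ being the slack between the $H_{\Psi}^{\frac{10}{3}}$ and $H_{\Psi}^{\frac{22}{3}}$ regularities ($\frac23\cdot\frac5{24}$-type arithmetic) in the Gevrey-radius correction terms. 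The main obstacle I anticipate is the bookkeeping of the loss-of-derivative terms: each $\py$-differentiation of the transport and $\Psi$-weight terms in \eqref{gfe} produces a term that is formally one derivative too rough, and it is only the combination of the antisymmetry of $\T_v\py$ against $e^{2\Psi}$, the precise Poincaré gain in Lemma~\ref{gu6}, and the extra dissipation $\la\tad[D_x]^{2/3}$ that closes it — getting the numerical constants ($4l_{\ka}-\frac7{25}\eta$, the various fractions of $\eta$ and $\la$) to be simultaneously consistent across $\py\Gp$, $\py\Gtp$, and their feedback into the later propositions is the delicate part.
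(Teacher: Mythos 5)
Your overall skeleton (apply $\py$, run a weighted $H_{\Psi}^{3,0}$ Gevrey energy estimate, treat the nonlinear terms with the para-product lemmas and Lemma \ref{gu7}, handle $\Gtp$ identically) matches the paper, but the step you rely on to reconcile the heavy time weight with the damping is quantitatively impossible, and it is precisely the step that dictates the peculiar form of \eqref{propositionpyG1}. With $a(t)=\ltr^{3+2l_{\ka}-2\eta}$ the weight derivative costs $(3+2l_{\ka}-2\eta)\ltr^{-1}\|\sqrt{a}\,\py\Gp\|^2_{H_{\Psi}^{3,0}}$, while the damping $\ltr^{-1}\py\Gp$ inherited from $\ltr^{-1}G$ supplies only $2\ltr^{-1}\|\sqrt{a}\,\py\Gp\|^2_{H_{\Psi}^{3,0}}$, leaving an uncompensated $(1+2l_{\ka}-2\eta)\ltr^{-1}\|\sqrt{a}\,\py\Gp\|^2_{H_{\Psi}^{3,0}}$. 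You propose to absorb this via Lemma \ref{gu6} applied to $\|\sqrt{a}\,\py^2\Gp\|^2$, ``tuning $s$ so that $s/2$ beats $\tfrac32+l_{\ka}-\eta$''. But the weighted dissipation available is only $4l_{\ka}\le 1$ for the pair (the $e^{2\Psi}$ integration by parts degrades the viscosity $\ka$ to $2l_{\ka}$, doubled upon integration; your displayed identity with coefficient $\ka$ already overstates it), and you must in addition keep $4l_{\ka}-\frac{7}{25}\eta$ of it on the left. The first inequality of Lemma \ref{gu6} converts a unit of dissipation into only $\frac{1}{2\ltr}$ of damping, so spending \emph{all} of it yields at most $2l_{\ka}\ltr^{-1}\|\py\Gp\|^2<(1+2l_{\ka}-2\eta)\ltr^{-1}\|\py\Gp\|^2$; and choosing $s>1$ in the second inequality brings the negative term $\frac{s(1-s)}{4}\|\frac{y}{\ltr}\py\Gp\|^2$, which nothing controls. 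So neither the coefficient $4l_{\ka}-\frac{7}{25}\eta$ nor the $C\eta^{-1}$ can come from a Poincar\'e trade; that $\eta$-fraction of dissipation is in fact what is eaten by the Young absorptions of the fourteen nonlinear terms.

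What actually closes the estimate, and what the right-hand side of \eqref{propositionpyG1} is recording, is the previously proved Proposition \ref{propositionG}. The paper first proves an intermediate estimate \eqref{pyGdexianyanguji2} with a generic nondecreasing weight, keeping $+2\int_0^t\lsr^{-1}\|\sqrt{a}(\py\Gp,\py\Gtp)\|^2_{H_{\Psi}^{3,0}}ds$ and $-\int_0^t\|\sqrt{a'}(\py\Gp,\py\Gtp)\|^2_{H_{\Psi}^{3,0}}ds$ separately (and avoiding differentiation of the nonlinearities by integrating the extra $\py$ by parts onto $\py^2\Gp$ and $\frac{y}{2\ltr}\py\Gp$, legitimized by the boundary facts $\py^2\Gp|_{y=0}=\Pp|_{y=0}=Z|_{y=0}=0$, which you never verify). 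Then, after taking $a=\ltr^{3+2l_{\ka}-2\eta}$, the leftover $(1+2l_{\ka}-2\eta)\int_0^t\|\lsr^{1+l_{\ka}-\eta}(\py\Gp,\py\Gtp)\|^2_{H_{\Psi}^{3,0}}ds$ is moved to the right and dominated, together with the $\tad\|\lsr^{1+l_{\ka}-\eta}(\Gp,\Gtp)\|^2_{H_{\Psi}^{\frac{13}{3},0}}$ contributions, by the left-hand side of \eqref{propositionG1}, whose dissipation enters only with the small factor $\eta$; this is the sole source of the $C\eta^{-1}$ prefactor, of $\|(\Gp(0),\Gtp(0))\|^2_{H_{\Psi}^{4,0}}$, and of the $\int_0^t\tad\|\lsr^{l_{\ka}-\eta}\sqrt{a}(\uph,\bp)\|^2_{H_{\Psi}^{\frac{22}{3},0}}ds$ term (the Lorentz source $\tad\Pp$ itself is bounded via \eqref{haonana} and Lemma \ref{gu7} by $(\Gtp,\py\Gtp)$ norms, not directly by $(\uph,\bp)$ as you claim). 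You do mention feeding in \eqref{propositionG1}, but only for the zeroth-order and $\psi$-terms, which is not where it is needed; without using it to kill the weight-mismatch term the argument cannot produce \eqref{propositionpyG1}, since for any $l_{\ka}\le\frac14$ the uncompensated $\ltr^{-1}$ term exceeds everything the $\py^2$ dissipation can supply.
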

The estimates of $(\py^i\Gp,\py^i\Gtp),i=2,3$ are crucial to deal with the difficulties posed by the magnetic field.
\begin{Proposition}\label{propositionpy2G}
Let $\ka\in(0,2)$ be a given constant. Let $(G,\Gt)$ be defined by \eqref{gf}. Then there exist $\ep_3$ and $\la_2$ such that when $\ep<\ep_3,\la>\la_2$ and $\al\leq\ga_0+\frac{5}{36}$, for any $t<T_*$ and sufficiently small $\eta>0$ the following inequality holds:
\begin{align}\label{propositionpy2G1}
\begin{aligned}
&\|\ltr^{2+l_{\ka}-\eta}(\py^2\Gp(t),\py^2\Gtp(t))
\|^2_{H_{\Psi}^{3,0}}
+(4l_{\ka}-\frac{13}{100}\eta)\int_0^t\|\lsr^{2+l_{\ka}-\eta}
(\py^3\Gp,\py^3\Gtp)\|^2_{H_{\Psi}^{3,0}}ds\\
&+\la\int_0^t\tad\|\lsr^{2+l_{\ka}-\eta}
(\py^2\Gp,\py^2\Gtp)\|^2_{H_{\Psi}^{\frac{10}{3},0}}ds
\\
\leq&
\dfrac{C}{\eta(4l_{\ka}-\frac{7}{25}\eta)}
\left(\|(\py\Gp(0),\py\Gtp(0))\|^2_{H_{\Psi}^{3,0}}
+\|(\Gp(0),\Gtp(0))\|^2_{H_{\Psi}^{4,0}}
+\|(\py^2\Gp(0),\py^2\Gtp(0))\|^2_{H_{\Psi}^{3,0}}\right)\\
&+\dfrac{C}{\eta(4l_{\ka}-\frac{7}{25}\eta)}\int_0^t\tad
\|\lsr^{l_{\ka}-\eta}\sqrt{a}(\uph,\bp)\|^{2}_{H_{\Psi}^{\frac{22}{3},0}}
ds,
\end{aligned}
\end{align}
where $l_{\ka}=\frac{\ka(2-\ka)}{4}\in(0,\frac{1}{4}].$
\end{Proposition}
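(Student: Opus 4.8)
The plan is to run a weighted $H^{3,0}_\Psi$ energy estimate on the equation satisfied by $\py^2(G,\Gt)$, structured exactly as in the previous two propositions but one derivative higher. First I would differentiate the $(G,\Gt)$-system \eqref{gfe} twice in $y$, apply $e^{\Phi(t,D_x)}$, and perform the Bony decomposition of every nonlinear term so that the transport part is in the para-product form handled by Lemma \ref{gu3}, Lemma \ref{gu4} and Lemma \ref{gu5}. Then I would take the $H^{3,0}_\Psi$ inner product of this equation with $\ltr^{2(2+l_\ka-\eta)}\py^2(\Gp,\Gtp)$ and integrate in time. The three structural gains to track are: (i) the good-function damping term $\ltr^{-1}\py^2 G_\Phi$ produces, after the time-weight $\ltr^{2+l_\ka-\eta}$ and the derivative $\pt$ of the weight, the coefficient $-(l_\ka-\eta+\text{something})$-type dissipative factor; (ii) the $-\ka\py^2$ viscous term gives $\ka\|\ltr^{2+l_\ka-\eta}\py^3(\Gp,\Gtp)\|^2_{H^{3,0}_\Psi}$ plus, via Lemma \ref{gu6} (the Poincaré inequality with parameter $s$), an extra fraction $\frac{1}{2\ltr}$ of the $\py^2$ norm which must be balanced against the loss coming from $\py$ of the Gaussian weight $\Psi$; this is where the constant $l_\ka=\frac{\ka(2-\ka)}{4}$ and the coefficient $4l_\ka-\frac{13}{100}\eta$ on the $\py^3$ dissipation come from; (iii) the Gevrey-radius term $\la\tad[D_x]^{2/3}$ in $\Lk$ produces $\la\int_0^t\tad\|\ltr^{2+l_\ka-\eta}\py^2(\Gp,\Gtp)\|^2_{H^{10/3,0}_\Psi}$. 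All the source terms on the right-hand side of the $\py^2(G,\Gt)$ equation — namely $\py^2(u\px G)$, $\py^2(v\py G)$, the $\frac{1}{2\langle t\rangle}\py^2(v\py(y\psi))$ term, the $\frac{y}{\langle t\rangle}\py^2\int_y^\infty(\py u\px\psi)$ term, and the pressure-type terms $\py^2(\tad P)$, $\py^2(\frac{y}{2\ltr}\int_y^\infty \tad P\,dy')$ — are estimated by combining Lemma \ref{gu7} (to convert every $\py^j u,\py^j\psi$ factor into $\py^{j}G$ norms at the correct time-decay rate) with the product lemmas and the definition of $T_*$ in \eqref{T**}, which furnishes the a priori bounds $\|\py^j(\Gp,\Gtp)\|_{H^{\cdot,0}_\Psi}\lesssim \ep\ltr^{-\ga_0-j/2}$.

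The key bookkeeping is the time-weight exponent: using $t<T_*$ and \eqref{T**} one checks $\tad\|\ltr^{l_\ka-\eta}\sqrt a(\uph,\bp)\|^2_{H^{22/3,0}_\Psi}$ (and not worse) bounds the genuinely magnetic contributions, while every other source term, after Cauchy--Schwarz, is absorbed either into the $\py^3$ dissipation or into a term $\lesssim \ep\int_0^t\tad(\dots)$ that is itself controlled because $\al>1$ makes $\int_0^t\tad<\infty$. The smallness of $\ep$ is used precisely to absorb the quadratic self-interaction terms (those where a low-frequency factor is itself $(G,\Gt)$ or $\U$) back into the left-hand side; the largeness of $\la$ is used so that the $\la\tad[D_x]^{2/3}$ term dominates the terms carrying a $\de(t)$ factor coming from Lemma \ref{gu4}--Lemma \ref{gu5}. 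The constraint $\al\leq\ga_0+\frac{5}{36}$ arises from matching the $[D_x]^{2/3}=[D_x]^{10/3-3+\cdots}$ powers against the decay budget: one needs $2\al-2(2+l_\ka-\eta)\le -(\text{something})$ after the Gevrey term is used, and $\frac{5}{36}$ is the slack left by the fractional derivative exponents $\frac{2}{3}$ and $\frac{1}{3}$ in the mean-value corrections.

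The main obstacle I expect is the loss-of-derivative term $\T_v\py(\py^2 G_\Phi)$ together with its Gevrey-commutator corrections: because $v=-\px\psi$ and the bottom-order replacement via Lemma \ref{gu7} only controls $\py(y\psi)$-type quantities by $\py G$, the factor $\py^3 G_\Phi$ appearing here sits exactly at the level of the dissipation, so the coefficient on $\int_0^t\|\ltr^{2+l_\ka-\eta}\py^3(\Gp,\Gtp)\|^2$ must be tracked with numerical precision — this is why the statement carries the explicit $4l_\ka-\frac{13}{100}\eta$ rather than an unspecified positive constant, and why the right-hand side is divided by $\eta(4l_\ka-\frac{7}{25}\eta)$, inherited from Proposition \ref{propositionpyG}. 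The secondary difficulty is the term $\frac{y}{\langle t\rangle}\py^2\int_y^\infty(\py u\,\px\psi+\py b\,\px\psi')\,dy'$, where the $y/\langle t\rangle$ weight interacts with $\Psi$; here I would use the second inequality of Lemma \ref{gu6} with a suitably chosen $s\in(0,1)$ to generate the $\big|\frac{y}{\langle t\rangle}u\big|^2 e^{2\Psi}$ term needed to close that contribution, exactly as in the proof of the $\py G$ estimate. Once all contributions are collected, absorbing dissipative terms left and applying $\ep$- and $\la$-smallness finishes the proof.
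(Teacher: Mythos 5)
Your overall skeleton (a weighted $H_{\Psi}^{3,0}$ energy estimate for $\py^2(\Gp,\Gtp)$ with weight $a(t)=\ltr^{4+2l_{\ka}-2\eta}$, Bony decomposition, Lemmas \ref{gu1}--\ref{gu7}, the $T_*$ bounds, $\ep$-smallness and $\la$-largeness) matches the paper, but the closing mechanism is misidentified exactly where the proof lives. The time derivative of the weight produces $(4+2l_{\ka}-2\eta)\int_0^t\|\lsr^{\frac32+l_{\ka}-\eta}\py^2(\Gp,\Gtp)\|^2_{H_{\Psi}^{3,0}}ds$; the intrinsic damping $\ltr^{-1}\py^2\Gp$ cancels only $2$ of this coefficient, and the remaining $(2+2l_{\ka}-2\eta)$ cannot be absorbed by the current $\py^3$-dissipation via Lemma \ref{gu6} (that would require a dissipation coefficient larger than $2(2+2l_{\ka}-2\eta)\geq 4$, while at most $4l_{\ka}\leq 1$ is available). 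The paper pays for this leftover with the term $(4l_{\ka}-\frac{7}{25}\eta)\int_0^t\|\lsr^{\frac32+l_{\ka}-\eta}(\py^2\Gp,\py^2\Gtp)\|^2_{H_{\Psi}^{3,0}}ds$ already sitting on the left of \eqref{propositionpyG1}; this chaining is the sole origin of the factor $\frac{C}{\eta(4l_{\ka}-\frac{7}{25}\eta)}$ in \eqref{propositionpy2G1}. Your item (i) instead suggests the damping plus weight derivative yields a dissipative contribution, and you attribute the $\eta(4l_{\ka}-\frac{7}{25}\eta)$ denominator to tracking the $\py^3$-coefficient through the $\T_v\py$ loss-of-derivative term; neither is the actual mechanism, and without the explicit two-step structure (general-weight estimate as in Proposition \ref{pypyGdexianyanguji}, then insertion of \eqref{propositionG1} and \eqref{propositionpyG1}) the stated constant structure does not follow.

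A second genuine gap: your claim that the residual source terms are controlled ``because $\al>1$ makes $\int_0^t\tad\,ds<\infty$'' fails for the terms carrying an extra $\frac13$ horizontal derivative. The energy estimate leaves on the right $\int_0^t\tad\|\lsr^{1+l_{\ka}-\eta}(\Gp,\Gtp)\|^2_{H_{\Psi}^{\frac{13}{3},0}}ds$ and $\int_0^t\tad\|\lsr^{\frac32+l_{\ka}-\eta}(\py\Gp,\py\Gtp)\|^2_{H_{\Psi}^{\frac{10}{3},0}}ds$, and \eqref{T**} gives no pointwise-in-time control in $H_{\Psi}^{\frac{13}{3},0}$ or $H_{\Psi}^{\frac{10}{3},0}$ (only $H_{\Psi}^{4,0}$ and $H_{\Psi}^{3,0}$); these terms are controlled only through the time-integrated Gevrey dissipation $\la\int_0^t\tad\|\cdot\|_{H_{\Psi}^{s+\frac13,0}}ds$ on the left of Propositions \ref{propositionG} and \ref{propositionpyG} — again the chaining you do not carry out. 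Minor further points: the paper differentiates the already para-linearized equation \eqref{gfephi} and therefore needs the separate estimate of $\py^2 Z$ (Lemma \ref{pypyZdeguji}); the integration by parts requires the boundary condition $\py^2\Gp|_{y=0}=0$ of \eqref{pypyg}, which you never verify; and the coefficient $4l_{\ka}-\frac{13}{100}\eta$ is simply $4l_{\ka}-\frac{3}{25}\eta-C\ep$ with $C\ep<\frac{1}{100}\eta$, not an output of the Poincar\'e balancing you describe.
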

\begin{Proposition}\label{propositionpy3G}
Let $\ka\in(0,2)$ be a given constant. Let $(G,\Gt)$ be defined by \eqref{gf}. Then there exist $\ep_3$ and $\la_2$ such that when $\ep<\ep_3,\la>\la_2$ and $\al\leq\ga_0+\frac{5}{36}$, for any $t<T_*$ and sufficiently small $\eta>0$ the following inequality holds:
\begin{align}\label{propositionpy3G1}
\begin{aligned}
&\|\ltr^{\frac{5}{2}+l_{\ka}-\eta}
(\py^3\Gp(t),\py^3\Gtp(t))\|^2_{H_{\Psi}^{2,0}}
+(4l_{\ka}-\frac{1}{25}\eta)\int_0^t\|\lsr^{\frac{5}{2}+l_{\ka}-\eta}
(\py^4\Gp,\py^4\Gtp)\|^2_{H_{\Psi}^{2,0}}ds\\
&+\la
\int_0^t\tad\|\lsr^{\frac{5}{2}+l_{\ka}-\eta}
(\py^3\Gp,\py^3\Gtp)\|^2_{H_{\Psi}^{\frac{7}{3},0}}ds\\
\leq&
\dfrac{C}{\eta(4l_{\ka}-\frac{7}{25}\eta)(4l_{\ka}-\frac{13}{100}\eta)}
\left(\|(\py\Gp(0),\py\Gtp(0))\|^2_{H_{\Psi}^{3,0}}
+\|(\Gp(0),\Gtp(0))\|^2_{H_{\Psi}^{4,0}}\right.\\
&\left.+\|(\py^2\Gp(0),\py^2\Gtp(0))\|^2_{H_{\Psi}^{3,0}}
+\|(\py^3\Gp(0),\py^3\Gtp(0))\|^2_{H_{\Psi}^{2,0}}\right)\\
&+\dfrac{C}{\eta(4l_{\ka}-\frac{7}{25}\eta)(4l_{\ka}-\frac{13}{100}\eta)}
\int_0^t\tad
\|\lsr^{l_{\ka}-\eta}\sqrt{a}(\uph,\bp)\|^{2}_{H_{\Psi}^{\frac{22}{3},0}}
ds,
\end{aligned}
\end{align}
where $l_{\ka}=\frac{\ka(2-\ka)}{4}\in(0,\frac{1}{4}].$
\end{Proposition}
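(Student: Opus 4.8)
\textbf{Proof strategy for Proposition \ref{propositionpy3G}.}
The plan is to perform a weighted $H_{\Psi}^{2,0}$ energy estimate on the equation satisfied by $\py^3(\Gp,\Gtp)$, in direct parallel with the scheme used for $\py^2(\Gp,\Gtp)$ in Proposition \ref{propositionpy2G}, but with one extra derivative in $y$ and the weight $\ltr^{\frac52+l_{\ka}-\eta}$ instead of $\ltr^{2+l_{\ka}-\eta}$. First I would differentiate the $G$-equation in \eqref{gfe} three times in $y$, apply $e^{\Phi(t,D_x)}$, and take the $H_{\Psi}^{2,0}$ inner product of the result with $\ltr^{2(\frac52+l_{\ka}-\eta)}\py^3\Gp$ (and similarly for $\Gtp$), then add the two. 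The time-derivative term produces $\frac12\frac{d}{dt}\|\ltr^{\frac52+l_{\ka}-\eta}(\py^3\Gp,\py^3\Gtp)\|_{H_{\Psi}^{2,0}}^2$ together with a favorable commutator $-(\frac52+l_{\ka}-\eta)\ltr^{-1}\|\cdots\|^2$ and the $\pt\Psi$-term $-\tfrac{1}{8}\|\tfrac{y}{\ltr}(\cdots)\|^2$; the dissipation $-\ka\py^2$ gives, after integration by parts, the good term $\ka\|\ltr^{\frac52+l_{\ka}-\eta}(\py^4\Gp,\py^4\Gtp)\|_{H_{\Psi}^{2,0}}^2$ plus a boundary contribution at $y=0$ and a term coupling $\py^4$ to $\tfrac{y}{\ltr}\py^3$ from differentiating the weight, which combine (via Lemma \ref{gu6} with a parameter $s$ chosen near $1$, as in the $l_{\ka}$ bookkeeping) into the stated $(4l_{\ka}-\tfrac1{25}\eta)\int_0^t\|\lsr^{\frac52+l_{\ka}-\eta}(\py^4\Gp,\py^4\Gtp)\|_{H_{\Psi}^{2,0}}^2\,ds$ and the dissipative Gevrey-radius gain $\la\int_0^t\tad\|\lsr^{\frac52+l_{\ka}-\eta}(\py^3\Gp,\py^3\Gtp)\|_{H_{\Psi}^{7/3,0}}^2\,ds$ coming from the $\la\tad[D_x]^{2/3}$ term in $\Lk$. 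The constant $\tfrac{1}{25}\eta$ (rather than the $\tfrac{13}{100}\eta$ of the previous proposition) reflects that one fewer commutator/lower-order term needs to be absorbed here since we only go to $H^{2,0}$ in the horizontal variable.

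Next I would estimate the transport and source terms on the right side of the $\py^3 G$-equation. Writing $\py^3$ of $u\px G+v\py G$ by Leibniz, the top-order pieces are handled by Bony's decomposition \eqref{bony}: the para-product $\T_u\px\py^3\Gp$ is antisymmetrized using Lemma \ref{gu3} (in the $f=g$ form of the Remark), $\T_{\py^j v}\py^{4-j}\Gp$ for $j\ge1$ is controlled by $\|\py^j v\|_{L^\infty_{\vm}(H^{\si}_{\h})}\lesssim\langle s\rangle^{\cdots}\|(\py^{j-1}\uph,\ldots)\|$ using the $H^{\si}$-boundedness of $S_k^{\h}$ and then absorbed into the dissipation $\|\py^4\Gp\|$ by Young's inequality with a small constant, while $\T_{\py^3 v}\py\Gp$ and the remainder terms $R^{\h}(\cdot,\cdot)$ are lower order. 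The Gevrey-loss terms $-\frac23\de(t)\T_{\py D_x u}Q(D_x)\vp$-type contributions (arising when $e^{\Phi}$ is commuted past the para-products via Lemma \ref{gu4} and Lemma \ref{gu5}) come with a prefactor $\de(t)\le\de$, hence are controlled by the $\la\tad$ dissipation once $\la$ is large; here the hypothesis $\al\le\ga_0+\tfrac{5}{36}$ is exactly what makes the time weights match (the $\tfrac{5}{36}=\tfrac23\cdot\tfrac{5}{24}$ bookkeeping that governs the $[D_x]^{2/3}$ loss at this level of regularity). All the stream-function factors $\tfrac{y}{\ltr}\psi$, $\tfrac{1}{\ltr}v\py(y\psi)$, and $\tfrac{y}{\ltr}\int_y^{\ty}(\py u\px\psi)$ appearing in \eqref{gfe}, after taking $\py^3$, are estimated by Lemma \ref{gu7} (the $\py^3(y\psi)$ and $\py^2 G$ bounds), each coming with an explicit negative power of $\ltr$ — one checks that these powers, combined with the definition of $T_*$ in \eqref{T**}, produce integrable-in-time quantities dominated by $\int_0^t\tad\|\lsr^{l_{\ka}-\eta}\sqrt a(\uph,\bp)\|_{H_{\Psi}^{22/3,0}}^2\,ds$ (with $a$ the weight implicit in $\sqrt a$), which is the only genuinely nonlinear term kept on the right of \eqref{propositionpy3G1}.

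Then I would collect: after Gronwall-free integration in time from $0$ to $t<T_*$ — all the good terms being nonnegative, it suffices to keep them on the left — the remaining inhomogeneous contributions are the initial data $\|(\py^3\Gp(0),\py^3\Gtp(0))\|_{H_{\Psi}^{2,0}}^2$ plus, via the recursive coupling, the lower-$\py$-order initial norms that already appeared as absorbed right-hand sides when one eliminates the $\int_0^t\|\lsr^{\frac52+l_{\ka}-\eta}\py^4(\cdots)\|^2$ terms using Proposition \ref{propositionpy2G}'s $\int_0^t\|\lsr^{2+l_{\ka}-\eta}\py^3(\cdots)\|^2$ on its left-hand side (and cascading down to $\py^2 G$, $\py G$, $G$); this is the source of the iterated prefactor $\tfrac{C}{\eta(4l_{\ka}-\frac{7}{25}\eta)(4l_{\ka}-\frac{13}{100}\eta)}$, where the denominators are precisely the coercivity constants from Propositions \ref{propositionpyG} and \ref{propositionpy2G}. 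Choosing $\eta$ small enough that $4l_{\ka}-\tfrac{13}{100}\eta>0$ (automatic from $\eta<l_{\ka}$) and $\la>\la_2$, $\ep<\ep_3$ large/small enough to absorb the quadratic-in-$(G,\Gt)$ self-interactions using the $C\ep\ltr^{-\ga_0}$ smallness built into $T_*$, gives \eqref{propositionpy3G1}. The main obstacle I anticipate is the bookkeeping of the time weights: one must verify that every term produced by $\py^3$ hitting the $y$-dependent coefficients $\tfrac{y}{\ltr}$, $\tfrac{1}{2\ka\ltr}$ in \eqref{gfe} and \eqref{gf}, once paired with the $\ltr^{2(\frac52+l_{\ka}-\eta)}$ weight and the Lemma \ref{gu7} decay transfer, still balances against either the $(4l_{\ka}-\tfrac1{25}\eta)\int\|\py^4(\cdots)\|^2$ dissipation or the $\la\tad$ Gevrey dissipation — in particular the constraint $\al\le\ga_0+\tfrac5{36}$ together with $\ga_0\in(1,1+l_{\ka})$ must be shown to leave enough room, which is the delicate point and the reason the exponent on $\ltr$ here is $\tfrac52+l_{\ka}-\eta$ and not larger.
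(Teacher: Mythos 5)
The outline you give does follow the paper's overall scheme (weighted $H_{\Psi}^{2,0}$ estimate of $\partial_y^3(G_\Phi,\tilde G_\Phi)$, Bony decomposition, absorption into the $\partial_y^4$ dissipation and the $\lambda\dot\theta$ Gevrey gain, cascade through the lower-order propositions to produce the iterated prefactor), but there is a genuine gap in your treatment of the boundary term at $y=0$, which is precisely the point at which this proposition differs from the estimates of $G_\Phi$, $\partial_yG_\Phi$, $\partial_y^2G_\Phi$ in Sections 7--10.

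For those earlier estimates one has $G_\Phi|_{y=0}=0$ and $\partial_y^2 G_\Phi|_{y=0}=0$ (from the boundary conditions \eqref{cb} and the structure of \eqref{gfe}), so integrating $-\partial_y^{2k+2}G_\Phi\cdot\partial_y^{2k}G_\Phi$ by parts produces no boundary term. But $\partial_y^3 G_\Phi|_{y=0}\neq 0$ in general, so a naive integration by parts of $-\partial_y^5 G_\Phi\cdot\partial_y^3G_\Phi$ leaves a nonvanishing contribution $-a\,\partial_y^4 G_\Phi\cdot\partial_y^3 G_\Phi|_{y=0}$. You acknowledge ``a boundary contribution at $y=0$'' but then assert that it ``combine[s] (via Lemma \ref{gu6} \dots)'' into the good $(4l_\kappa-\tfrac1{25}\eta)\int\|\partial_y^4 G_\Phi\|^2$ term; this cannot be right, since Lemma \ref{gu6} is a weighted Poincar\'e inequality in the bulk and has no mechanism to cancel a trace at $y=0$. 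The paper's actual argument proceeds differently: it isolates $\mathfrak{F}=\partial_y^2(\cdots)$ as in \eqref{FF}, observes that evaluating the $\partial_y^2 G_\Phi$-equation \eqref{pypygfephi} at $y=0$ (using $\partial_y^2G_\Phi|_{y=0}=0$, hence $\partial_t\partial_y^2G_\Phi|_{y=0}=\langle t\rangle^{-1}\partial_y^2 G_\Phi|_{y=0}=[D_x]^{2/3}\partial_y^2G_\Phi|_{y=0}=0$) forces the boundary condition \eqref{zuihoudebianjietiaojian}, namely $(-\partial_y^4G_\Phi+\mathfrak{F}-\partial_y^2Z)|_{y=0}=0$, and then groups $-\partial_y^5G_\Phi+\partial_y\mathfrak{F}-\partial_y^3 Z=-\partial_y(\partial_y^4G_\Phi-\mathfrak{F}+\partial_y^2Z)$ so that the single integration by parts produces \emph{no} boundary term. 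This also dictates that $\mathfrak{F}-\partial_y^2 Z$ must then be paired against $\partial_y^4 G_\Phi$ and $\tfrac{y}{2\langle t\rangle}\partial_y^3G_\Phi$ (Lemma \ref{FFdeguji}), rather than estimated term-by-term against $\partial_y^3 G_\Phi$ as you propose. Without this structural identification your plan either leaves an uncontrolled boundary trace or silently assumes it is zero.

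Two smaller points: the constraint $\alpha\le\gamma_0+\tfrac{5}{36}$ does not originate from ``$\tfrac23\cdot\tfrac{5}{24}$ bookkeeping'' for the $[D_x]^{2/3}$ loss; it enters through the interpolation $\|\cdot\|_{H_\Psi^{14/3,0}}\lesssim\|\cdot\|_{H_\Psi^{22/3,0}}^{1/9}\|\cdot\|_{H_\Psi^{13/3,0}}^{8/9}$ in \eqref{chazhi}, which is needed already at the $G_\Phi$ level and is simply inherited here. And the coefficient $\tfrac1{25}\eta$ is not a consequence of ``one fewer commutator,'' but of the count of $\tfrac1{100}\eta$-Young-inequality absorptions from the two pairings $(\mathfrak{F}-\partial_y^2Z,\partial_y^4 G_\Phi)$ and $(\mathfrak{F}-\partial_y^2Z,\tfrac{y}{2\langle t\rangle}\partial_y^3G_\Phi)$, doubled for $G$ and $\tilde G$.
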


\subsection{Proof of theorem \ref{th}}\label{proof}
Using the previous a priori estimates, we will give the proof of theorem \ref{th} in this section.

\textbf{Proof of Theorem \ref{th}:}
First of all, the existence of the local solution of the equation \eqref{mhd1}-\eqref{cb} can be found in \cite{WXLY}. Although the boundary condition in \cite{WXLY} is $\py b|_{y=0}=0$, it is easy to verify that the conclusion also holds for the boundary condition $b|_{y=0}=0$, since no additional boundary term is generated when we use integrating by parts.

Therefore it may be assumed that the equation \eqref{mhd1}-\eqref{cb} has a solution on $[0,\mathfrak{T}]$. If $\mathfrak{T}<\ty,$ then choose $\eta\in(0,l_{\ka})$ and let $\ga_0$ in \eqref{T**} be taken as $\ga_0=1+l_{\ka}-\eta$. At this point there is obvious
$$\min\{\ga_0+\frac{5}{36},\frac{2}{3}\ga_0+\frac{1}{2},
\frac{1}{2}\ga_0+\frac{5}{8}\}=\frac{9}{8}
+\frac{1}{2}l_{\ka}-\frac{1}{2}\eta.$$
Since \eqref{T**}, we define $T_*^{\eta}$ as follows:
\begin{align}\label{T**eta}
\begin{aligned}
T_*^{\eta} \stackrel{\text { def }}{=} &\sup \left\{t<T^{*},\left\|(G_{\Phi}(t),\Gt_{\Phi}(t))\right\|_{H_{\Psi}^{4,0}}+
\langle t\rangle^{\frac{1}{2}}\left\|(\py G_{\Phi}(t),\py\Gt_{\Phi}(t))\right\|_{H_{\Psi}^{3,0}}\right. \\
&\left.+\langle t\rangle\left\|(\partial_{y}^{2} G_{\Phi}(t),\py^2\Gt_{\Phi}(t))\right\|_{H_{\Psi}^{3,0}}+\langle t\rangle^{\frac{3}{2}}\left\|(\partial_{y}^{3} G_{\Phi}(t),\py^3\Gt_{\Phi}(t))\right\|_{H_{\Psi}^{2,0}} \leq C_0 \ep\langle t\rangle^{-1-l_{\ka}+\eta}\right\},
\end{aligned}
\end{align}
where $T^*$ is defined by \eqref{T*}, the constant $C_0$ is to be determined later on.

Let $E(t)$ and $D(t)$ are defined by \eqref{Et} and \eqref{Dt} respectively. And define $H(t)$ as follows:
\begin{align}\label{Ht}
\begin{aligned}
H(t)\stackrel{\text { def }}{=}&\|\ltr^{l_{\ka}-\eta}(\uph,\bp)\|_{H_{\Psi}^{\frac{22}{3},0}}
+\|\ltr^{l_{\ka}-\eta}\U\|_{H_{\Psi}^{\frac{22}{3},0}}
+\|\ltr^{l_{\ka}-\eta}(\ze,\zet)\|_{H_{\Psi}^{\frac{23}{3},0}}\\
&+\|\ltr^{l_{\ka}-\eta}(\Pp,\Np)\|_{H_{\Psi}^{7,0}}
+\|\ltr^{1+l_{\ka}-\eta}(\Gp,\Gtp)\|_{H_{\Psi}^{\frac{13}{3},0}}\\
&+\|\ltr^{\frac{3}{2}+l_{\ka}-\eta}(\py\Gp,\py\Gtp)\|_{H_{\Psi}^{\frac{10}{3},0}}
+\|\ltr^{2+l_{\ka}-\eta}(\py^2\Gp,\py^2\Gtp)\|_{H_{\Psi}^{\frac{10}{3},0}}\\
&+\|\ltr^{\frac{5}{2}+l_{\ka}-\eta}(\py^3\Gp,\py^3\Gtp)\|_{H_{\Psi}^{\frac{7}{3},0}}.
\end{aligned}
\end{align}

From the choice of $\{\ep_1,\ep_2,
\ep_3,\la_1,\la_2\}$, it follows that $\ep_1>\ep_2>
\ep_3$ÇÒ$\la_1<\la_2$. Thus when $\al\leq\frac{9}{8}
+\frac{1}{2}l_{\ka}-\frac{1}{2}\eta,\ep<\ep_3$ and $\la>\la_2$, combining
\eqref{propositionU1}-\eqref{propositionpy3G1} we can obtain
\begin{align}\label{jijiangwanjie}
\begin{aligned}
&E(t)+\frac{79}{100}\eta\int_0^t D(s)ds+\la\int_0^t\tad H(s)ds\\
\leq& C_{\eta}\left(\|(\uph(0),\bp(0))\|_{H_{\Psi}^{\frac{22}{3},0}}+E(0)\right)
+(1+C_{\eta}+\frac{1}{2}\la)\int_0^t\tad H(s)ds
\end{aligned}
\end{align}
Let $\la_{0}=\max\{2+2C_{\eta},\la_2\}$. Then when $\la>\la_3$, using
\eqref{E0} and \eqref{jijiangwanjie}, we can get
\begin{align}\label{jijiangwanjie2}
\begin{aligned}
E(t)+\frac{79}{100}\eta\int_0^t D(s)ds
\leq C_{\eta}\left(\|(\uph(0),\bp(0))
\|_{H_{\Psi}^{\frac{22}{3},0}}+E(0)\right)
\leq C_{\eta}\ep^2\leq\frac{C_0^2}{4}\ep^2,
\end{aligned}
\end{align}
where we take $C_0=2\sqrt{C_{\eta}}$ in \eqref{T**eta}.

On the other hand, take $\al=\frac{9}{8}
+\frac{1}{2}l_{\ka}-\frac{1}{2}\eta,\ep<\ep_3,$ satisfying all the requirements of proposition \ref{propositionU}-proposition \ref{propositionpy3G}. At this time, by
\eqref{theta} we have
\begin{align}\label{xiaobudian}
\begin{aligned}
\theta=\int_0^t\tad(s)ds=\ep^{\frac{1}{2}}\int_0^t\lsr^{-\al}
\leq C\ep^{\frac{1}{2}}\leq\frac{\delta}{4\la}
\end{aligned}
\end{align}
holds when $\ep<\ep_0$, where $\ep_0=\min\{\ep_3,\dfrac{\delta^2}{16C^2\la^2}\}$.

Therefore when $\ep<\ep_0$ and $\la>\la_0$, \eqref{jijiangwanjie2} and
\eqref{xiaobudian} contradict \eqref{T*} and \eqref{T**eta}. Thus, by a standard continuous argument, it follows that $\mathfrak{T}=\ty.$ Then we have completed the proof of theorem \ref{th}.

\hfill $\square$

\section{The Gevrey Estimates of $\U$}\label{gujiU}
In this section we will give a priori estimates for $\U$. First the smallness and decay of the lower order derivatives of $(\uph,\bp)$ are given by the following lemma:
\begin{Lemma}\label{ub}
Let $(u,b)$ be a sufficiently smooth solution of the equation \eqref{mhd} on $[0,T_*]$ with $\ka\in(0,2)$. Then for any $\ga\in(0,1),t\leq T_*$, we have
\begin{align*}
\left\|(\uph(t),\bp(t))\right\|_{H_{\gamma \Psi}^{4,0}}+\langle t\rangle^{\frac{1}{2}}\|(\py\uph(t),\py\bp&(t))\|_{H_{\gamma \Psi}^{3,0}}+\langle t\rangle\left\|(\py^{2} u_{\Phi}(t),\py^2 b_{\Phi}(t))\right\|_{H_{\gamma \Psi}^{3,0}}\\
+
\langle t\rangle^{\frac{3}{2}}\|(\py^{3} u_{\Phi}(t),\py^3 &b_{\Phi}(t))\|_{H_{\gamma \Psi}^{2,0}}\leq C \ep\langle t\rangle^{-\gamma_{0}},
\\
\left\|(\uph(t),\bp(t))\right\|_{L^{\ty}_{\vm,
\ga\Psi}(H_{\h}^3)}\leq C\ep\ltr^{-(\ga_0+\frac{1}{4})}&,~~\left\|(\vp(t),
\hp(t))\right\|_{L^{\ty}_{\vm,
\ga\Psi}(H_{\h}^3)}\leq C\ep\ltr^{-(\ga_0-\frac{1}{4})},
\\
\left\|(\py u_{\Phi}(t),\py b_{\Phi}(t))\right\|_{L^{\ty}_{\vm,
\ga\Psi}(H_{\h}^3)}\leq C\ep\ltr^{-(\ga_0+\frac{3}{4})}&,~~\left\|(\py^2 u_{\Phi}(t),\py^2 b_{\Phi}(t))\right\|_{L^{\ty}_{\vm,
\ga\Psi}(H_{\h}^2)}\leq C\ep\ltr^{-(\ga_0+\frac{5}{4})}.
\end{align*}
ÆäÖÐ$\|f\|_{L_{\vm,\Psi}^{\ty}}\stackrel{\text { def }}{=}\|e^{\Psi}f\|_{L_{\vm}^{\ty}}$.
\end{Lemma}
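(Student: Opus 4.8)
\textbf{Proof proposal for Lemma \ref{ub}.} The plan is to bootstrap everything from the definition of $T_*$ in \eqref{T**}, which already gives us the decay bounds for the ``good functions'' $(G_\Phi,\Gtp)$ and their $y$-derivatives up to order three, and then to transfer these bounds to $(\uph,\bp)$ via the pointwise-in-frequency estimates of Lemma \ref{gu7}. First I would establish the first displayed chain of inequalities: for each $k$ and each $l\in\{0,1,2,3\}$ apply the corresponding inequality in \eqref{henyouyong} with $\ga$ replaced by any fixed $\ga'\in(\ga,1)$ (so that a factor $e^{(\ga'-\ga)\Psi}\le 1$ can be absorbed), square, multiply by $(1+|\xi|^2)^{s}$ with the appropriate $s$ (namely $s=4,3,3,2$), sum over $k\ge -1$ using the Littlewood--Paley characterization of $H^{s,0}_{\ga\Psi}$, take square roots, and insert the defining bound of $T_*$. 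This yields
$$\|(\uph,\bp)\|_{H^{4,0}_{\ga\Psi}}+\ltr^{\frac12}\|(\py\uph,\py\bp)\|_{H^{3,0}_{\ga\Psi}}+\ltr\|(\py^2\uph,\py^2\bp)\|_{H^{3,0}_{\ga\Psi}}+\ltr^{\frac32}\|(\py^3\uph,\py^3\bp)\|_{H^{2,0}_{\ga\Psi}}\le C\ep\ltr^{-\ga_0}.$$

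Next I would handle the $L^\infty_{\vm}(H^s_\h)$ statements. The standard device is the one-dimensional Agmon/Sobolev inequality in the $y$ variable applied to each horizontal frequency block: for a function $w$ decaying at $+\ty$, $\|e^{\ga\Psi}w\|_{L^\infty_{\vm}}^2\lesssim \|e^{\ga\Psi}w\|_{L^2_{\vm}}\big(\|e^{\ga\Psi}\py w\|_{L^2_{\vm}}+\ltr^{-1/2}\|e^{\ga\Psi}w\|_{L^2_{\vm}}\big)$, where the $\ltr^{-1/2}$ term comes from differentiating the weight $\Psi=y^2/(8\ltr)$ (so $\py\Psi=y/(4\ltr)$ contributes a factor comparable to $\ltr^{-1/2}$ after using $\|\,\tfrac{y}{\ltr}w\|$-type bounds from the Poincaré inequality of Lemma \ref{gu6}). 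Taking $w=\Delta^\h_k \py^j \uph$, multiplying by $(1+|\xi|^2)^{s}$, summing over $k$ and interpolating gives
$$\|\py^j\uph\|_{L^\infty_{\vm,\ga\Psi}(H^s_\h)}\lesssim \|\py^j\uph\|_{H^{s,0}_{\ga\Psi}}^{1/2}\Big(\|\py^{j+1}\uph\|_{H^{s,0}_{\ga\Psi}}+\ltr^{-1/2}\|\py^j\uph\|_{H^{s,0}_{\ga\Psi}}\Big)^{1/2}.$$
Plugging the decay rates just obtained — $j=0$, $s=3$: $\ltr^{-\ga_0}$ times $(\ltr^{-\ga_0-1/2}+\ltr^{-\ga_0-1/2})^{1/2}$, giving $\ltr^{-(\ga_0+1/4)}$; $j=1$, $s=3$: $\ltr^{-\ga_0-1/2}$ times $(\ltr^{-\ga_0-1}+\ltr^{-\ga_0-1})^{1/2}$, giving $\ltr^{-(\ga_0+3/4)}$; $j=2$, $s=2$: analogously $\ltr^{-(\ga_0+5/4)}$ — produces exactly the claimed bounds for $\uph$ and $\bp$. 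For the $(\vp,\hp)$ bound I would use $v=-\px\int_y^\ty \uph\,dy'$ hence $\py v=-\px\uph$, so $\|(\vp,\hp)\|_{L^\infty_{\vm,\ga\Psi}(H^3_\h)}$ is controlled by $\|\int_y^\ty(\px\uph,\px\bp)dy'\|^{1/2}$ times the square root of $\|(\px\uph,\px\bp)\|_{H^{3,0}_{\ga\Psi}}$ plus lower-order weight terms; the vertical primitive costs one factor of $\ltr^{1/2}$ (again via Lemma \ref{gu6}, which bounds $\|e^{\ga\Psi}\int_y^\ty w\,dy'\|_{L^2}$ by $\ltr^{1/2}\|e^{\ga\Psi}w\|_{L^2}$ up to the $y$-weighted correction), and one factor $\px$ costs nothing in $H^3_\h$ versus $H^4_\h$, so altogether $\ltr^{1/2}\cdot\ltr^{-\ga_0}=\ltr^{-(\ga_0-1/4)}$ as stated.

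The main obstacle, and the one point requiring genuine care rather than routine bookkeeping, is the handling of the weight discrepancy and the $y$-weighted correction terms throughout: Lemma \ref{gu7} delivers estimates with the full weight $e^{\Psi}$ on the good-function side but $e^{\ga\Psi}$ on the $u$-side, and Lemma \ref{gu6} introduces extra terms of the form $\|\tfrac{y}{\ltr}\Delta^\h_k\py^j\uph\|_{L^2}$ when one integrates by parts against $\py\Psi$; one must check that these correction terms are always of lower order in $\ltr$ (they carry an extra $\ltr^{-1/2}$) and can be absorbed, and that the loss $\ga<\ga'<1$ is harmless because $e^{(\ga-\ga')\Psi}\le 1$. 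I expect this to go through cleanly because every inequality in \eqref{henyouyong} is already stated with precisely the $\gamma$-vs-$1$ weight structure needed, and Lemma \ref{gu6} is tailored to produce exactly the $\ltr^{-1/2}$-gain that makes the interpolation exponents $1/4,3/4,5/4$ come out right; the remaining work is to assemble the Littlewood--Paley sums, which is standard and which I would not write out in full.
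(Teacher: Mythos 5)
Your proposal is correct and follows essentially the same route the paper intends: the paper omits the proof, stating it is identical to Lemma 4.1 of \cite{CW}, and that argument is exactly your scheme --- transfer the $T_*$-bounds \eqref{T**} on $(\Gp,\Gtp)$ and their $y$-derivatives to $(\uph,\bp)$ blockwise through Lemma \ref{gu7}, obtain the $L^{\ty}_{\vm,\ga\Psi}$ statements from the weighted Agmon interpolation in $y$ (with the $\ltr^{-\frac{1}{2}}$ correction coming from $\py\Psi$ and Lemma \ref{gu6}, harmless since $\ga<1$), and handle $(\vp,\hp)$ via the vertical primitive of $\px(\uph,\bp)$ as in \eqref{ty}. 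Only note that your closing shorthand ``$\ltr^{\frac{1}{2}}\cdot\ltr^{-\ga_0}=\ltr^{-(\ga_0-\frac{1}{4})}$'' is a slip in the arithmetic, not in the argument: the correct bookkeeping is the geometric mean $\bigl(\ltr^{\frac{1}{2}-\ga_0}\bigr)^{\frac{1}{2}}\bigl(\ltr^{-\ga_0}\bigr)^{\frac{1}{2}}=\ltr^{\frac{1}{4}-\ga_0}$, which is precisely the stated rate.
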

This lemma is slightly different from lemma 4.1 in \cite{CW}, but the proof procedure is exactly the same and is not given here.
\begin{Remark}
Notice that $|f|\leq|e^{\ga\Psi}f|$, so in fact the conclusion of lemma $\mathrm{\ref{ub}}$  holds for the unweighted case $(\ga=0)$ as well. On the other hand, noting the definition of $(\uph,\bp)$, we can know that the above control also holds for $(u,b)$.
\end{Remark}

Next, we give a priori estimates for $\U$, and the detailed results are shown below:
\begin{Proposition}\label{guU}
Let $\ka\in(0,2)$ be a given constant. Let $\U$ be a sufficiently smooth solution of the equation \eqref{U} on $[0,T_*]$ and $\U$ decays rapidly to $0$ as $y$ tends to $+\ty$. Let $a(t)$ be a nonnegative and nondecreasing function on $\R_+$. Then when $\al\leq\ga_0+\frac{1}{4}$, for any $t<T_*$ and sufficiently small $\eta>0$ the following inequality holds:
\begin{align}\label{Udekongzhi}
\begin{aligned}
\|\sqrt{a}&\U(t)\|^2_{H_{\Psi}^{7,0}}-\int_0^t \|\sqrt{a'}\U(s)\|^2_{H_{\Psi}^{7,0}}ds+(1-\frac{1}{50}\eta)\int_0^t \|\sqrt{a}\py\U(s)\|^2_{H_{\Psi}^{7,0}}ds\\
&+2\left(\la-C(1+\eta^{-1}\ep^{\frac{3}{2}})\right)\int_0^t
\tad(s)\|\sqrt{a}\U(s)\|^2_{H_{\Psi}^{\frac{22}{3},0}}ds\leq \int_0^t\tad(s)\|\sqrt{a}\ze(s)\|^2_{H_{\Psi}^{\frac{23}{3},0}}ds.
\end{aligned}
\end{align}
\end{Proposition}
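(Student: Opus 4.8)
### Proof strategy for Proposition \ref{guU}

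The plan is to perform a weighted $H^{7,0}_\Psi$ energy estimate directly on the equation \eqref{U2} satisfied by $\U$, treating $\tad\px\uph$ as a source. First I would apply $e^{\Phi}$-conjugated operators and take the $H^{7,0}_\Psi$ inner product of \eqref{U2} with $a(t)\,e^{2\Psi}[D_x]^{14}\U$ (equivalently, work at the level of dyadic blocks $\De$ and sum, using Lemma \ref{gu7} where the $(y\psi)$-type terms appear). The time derivative term produces $\tfrac12\tfrac{d}{dt}\|\sqrt a\,\U\|^2_{H^{7,0}_\Psi} - \tfrac12\|\sqrt{a'}\,\U\|^2_{H^{7,0}_\Psi}$ after accounting for $\pt(e^{2\Psi})$; the latter $\Psi$-time-derivative contribution is non-positive (since $\pt\Psi = -\tfrac{y^2}{8\ltr^2}\le 0$) and can be discarded. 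The diffusion term $-\py^2\U$ gives, after integration by parts in $y$ and using $\py\U|_{y=0}=0$ together with the $\Psi$-weight identity, the good term $\|\sqrt a\,\py\U\|^2_{H^{7,0}_\Psi}$ plus a controllable remainder coming from $\py\Psi = \tfrac{y}{4\ltr}$; the Poincaré-type Lemma \ref{gu6} converts part of $\|\py\U\|^2$ into the $\la\tad$-penalized $H^{22/3,0}_\Psi$ norm or is simply absorbed. The damping term $\la\tad[D_x]^{2/3}$ yields exactly $2\la\int_0^t\tad\|\sqrt a\,\U\|^2_{H^{22/3,0}_\Psi}$.

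Next I would estimate the transport and lower-order terms. The terms $\T_u\px\U$, $\T_v\py\U$ and $\tfrac23\de\T_{D_xu}Q(D_x)\px\U$ from $\Ll\U$ are handled by Lemma \ref{gu3} (and its $f=g$ corollary), Lemma \ref{gu4}--\ref{gu5}, paired with the smallness and decay of $(u,v)$ from Lemma \ref{ub}; these produce terms bounded by $C\ep\,\tad\|\sqrt a\,\U\|^2_{H^{22/3,0}_\Psi}$ (after using $\de(t)\lesssim 1$ and $\|u\|_{H^\si_\h}\lesssim \ep\tad$-type bounds), i.e. they are absorbed into the $\la$ damping at the cost of the constant $C(1+\eta^{-1}\ep^{3/2})$. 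The commutator pieces $[e^\Phi;\T_\cdot]$ and the $Q(D_x)$-correction terms similarly lose at most $\de(t)[D_x]^{2/3}$ worth of derivatives, compatible with the gap between $H^{7,0}_\Psi$ and $H^{22/3,0}_\Psi$. The terms $\T_{\py u}\px\int_y^\ty\U\,dz$ and $\T_{\py v}\U$ require the Hardy-type/$\Psi$-weighted control of $\int_y^\ty \U\,dz$ in terms of $\U$ together with the decay of $\py u, \py v$; here one uses that $\int_y^\ty\U\,dz$ has a favorable $y$-weight because $\U$ decays rapidly. These contribute $O(\ep)$-small multiples of $\tad\|\sqrt a\,\U\|^2_{H^{22/3,0}_\Psi}$ plus possibly an $\tfrac{1}{50}\eta\,\|\sqrt a\,\py\U\|^2_{H^{7,0}_\Psi}$ piece (explaining the $1-\tfrac1{50}\eta$ coefficient), using Young's inequality to trade a little of the dissipation.

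The source term $\tad\px\uph$ paired against $a\,\U$ in $H^{7,0}_\Psi$ is the key structural term: using $\px = i D_x$ and the duality between $[D_x]^{7}\px$ acting on $\uph$ and $[D_x]^{7}\U$, one sees $\int_0^t\tad (\px\uph, \U)_{H^{7,0}_\Psi} \lesssim \int_0^t \tad\|\sqrt a\,\uph\|_{H^{22/3,0}_\Psi}\|\sqrt a\,\U\|_{H^{22/3,0}_\Psi}$ since $7 + 1 = 8 = \tfrac{22}{3}+\tfrac{2}{3}$... wait — the right count is that $\px$ costs one derivative and the two factors split the remaining budget symmetrically at order $\tfrac{23}{3}$ on one side. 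Actually the clean way is to \emph{not} estimate this term but rather to absorb it by rewriting the energy identity in terms of $\ze$: recall \eqref{ze} defines $\ze = \uph - \tfrac1\tad\T_{\py u}\int_y^\ty\U - \tfrac{2\de}{3\tad}\T_{\py D_x u}Q(D_x)\int_y^\ty\U$, precisely so that the combination $\tad\px\uph$ minus the $\T_{\py u}$-terms in \eqref{U2} equals $\tad\px\ze$ up to commutators. Thus the correct move is to substitute $\tad\px\ze$ for the combined right-hand side, obtaining a source $\tad\px\ze$ which pairs to give $\int_0^t\tad\|\sqrt a\,\ze\|_{H^{23/3,0}_\Psi}\|\sqrt a\,\U\|_{H^{22/3,0}_\Psi} \le \tfrac14\int_0^t\tad\|\sqrt a\,\ze\|^2_{H^{23/3,0}_\Psi} + \int_0^t\tad\|\sqrt a\,\U\|^2_{H^{22/3,0}_\Psi}$ by Young, the last term absorbed into the $\la$ term. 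I expect the main obstacle to be the bookkeeping of exactly which commutator and $Q(D_x)$-correction terms get absorbed into the $\la$-damping versus the $\eta$-dissipation, and verifying that the derivative budget $7 \to \tfrac{22}{3} \to \tfrac{23}{3}$ is respected at every paraproduct step so that no genuine loss of derivatives occurs — this is where the specific choice of the Gevrey exponent $\tfrac23$ and the structure of $\ze$ are essential. Integrating the resulting differential inequality in $t$ from $0$ (using $\U|_{t=0}=0$) yields \eqref{Udekongzhi}.
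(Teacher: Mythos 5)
After your mid-course correction this is essentially the paper's own proof: you rewrite \eqref{U2} via \eqref{ze} so that $\Ll\U$ has source $-\tad\px\ze$ together with $-\T_{\py\px u}\int_y^{\ty}\U\,dz$, $-\T_{\py v}\U$ and the $Q(D_x)$ analogue (this is exactly \eqref{guU1}), take the $a(t)$-weighted $H_{\Psi}^{7,0}$ inner product with $\U$, split $\tad\left(\px\ze,\U\right)_{H_{\Psi}^{7,0}}$ as $\tad\|\sqrt{a}\,\ze\|_{H_{\Psi}^{\frac{23}{3},0}}\|\sqrt{a}\,\U\|_{H_{\Psi}^{\frac{22}{3},0}}$, absorb the transport and paraproduct terms into the $\la\tad$-damping and an $\eta$-fraction of the dissipation using Lemma \ref{gu1}, Lemma \ref{gu3}, Lemma \ref{ub}, the bound \eqref{ty} and $\al\leq\ga_0+\frac{1}{4}$, and integrate in time from $\U|_{t=0}=0$. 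The one point to tighten is the heat term: you cannot simply discard the (favorable) $-(\pt\Psi,\U^2)$ contribution and then control the cross term $2\left(\py\U,\U\py\Psi\right)_{H_{\Psi}^{7,0}}$ through Lemma \ref{gu6} (that route gives a constant larger than the available $\|\sqrt{a}\py\U\|^2_{H_{\Psi}^{7,0}}$), so you must keep it and use the exact cancellation $\pt\Psi+2(\py\Psi)^2=0$ as in \eqref{ptpy} --- presumably the ``$\Psi$-weight identity'' you allude to.
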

\begin{proof}
Combining \eqref{U2} and \eqref{ze}, we can deduce that
\begin{align}\label{guU1}
\Ll\U=-\tad\px\ze-\T_{\py \px u}\int_y^{\ty}\U dz-\T_{\py v}\U-\frac{2}{3}\de(t)\T_{\py \px D_x u}Q(D_x)\int_y^{\ty}\U dz.
\end{align}
Taking $H_{\Psi}^{7,0}$ inner product of $a(t)\U$ with \eqref{guU1}, we obtain
\begin{align*}
a\left(\Ll\U,\U\right)_{H_{\Psi}^{7,0}}=&-\tad a\left(\px\ze,\U\right)_{H_{\Psi}^{7,0}}-a\left(\T_{\py \px u}\int_y^{\ty}\U dz,\U\right)_{H_{\Psi}^{7,0}}
\\
&-a\left(\T_{\py v}\U,\U\right)_{H_{\Psi}^{7,0}}-a\left(\frac{2}{3}\de(t)\T_{\py \px D_x u}Q(D_x)\int_y^{\ty}\U dz,\U\right)_{H_{\Psi}^{7,0}}.
\end{align*}
Using Cauchy's inequality, we have
\begin{align*}
\left|\tad a\left(\px\ze,\U\right)_{H_{\Psi}^{7,0}}\right|\leq \frac{1}{2}\tad\|\sqrt{a}\ze\|^2_{H_{\Psi}^{\frac{23}{3},0}}+
\frac{1}{2}\tad\|\sqrt{a}\U\|^2_{H_{\Psi}^{\frac{22}{3},0}}.
\end{align*}
Next by H$\mathrm{\ddot{o}}$lder's inequality and lemma \ref{gu1}, we can deduce
\begin{align*}
\left|a\left(\T_{\py \px u}\int_y^{\ty}\U dz,\U\right)_{H_{\Psi}^{7,0}}\right|\leq\|\py u\|_{L^2_{\vm}(H_{\h}^{\frac{3}{2}+})}\|\sqrt{a}\int_y^{\ty}\U dz\|_{L^{\ty}_{\vm,\Psi}(H_{\h}^{7})}\|\sqrt{a}\U\|_{H_{\Psi}^{7,0}}.
\end{align*}

Notice that the following equation holds for any $\si\in\R$,
\begin{align}\label{ty}
\begin{aligned}
\|\int_y^{\ty}f dz\|_{L^{\ty}_{\vm,\Psi}(H_{\h}^{\si})}&\leq\|\int_y^{\ty}
e^{-\frac{(y-z)^2}{8\ltr}}e^{\frac{z^2}{8\ltr}}\|f(\cdot,z)\|_{H_{\h}^{\si}}dz\|_{L^{\ty}_{\vm}}\\
&\leq\|e^{-\frac{y^2}{8\ltr}}\|_{L^2_{\vm}}\|f\|_{H_{\Psi}^{\si,0}}\leq C\ltr^{\frac{1}{4}}\|f\|_{H_{\Psi}^{\si,0}}.
\end{aligned}
\end{align}
Combine \eqref{theta},\eqref{ty} and lemma \ref{ub}, we obtain
\begin{align*}
\left|a\left(\T_{\py \px u}\int_y^{\ty}\U dz,\U\right)_{H_{\Psi}^{7,0}}\right|&\leq C\ep\ltr^{-\ga_0-\frac{1}{4}}\|\sqrt{a}\U\|^2_{H_{\Psi}^{7,0}}\\
&\leq C\ep^{\frac{1}{2}}\ltr^{\al-\ga_0-\frac{1}{4}}\tad\|\sqrt{a}\U
\|^2_{H_{\Psi}^{\frac{22}{3},0}}\\
&\leq C\ep^{\frac{1}{2}}\tad\|\sqrt{a}\U\|^2_{H_{\Psi}^{\frac{22}{3},0}},
\end{align*}
here we use the fact that $\al\leq\ga_0+\frac{1}{4}$.

Noticing that $Q(\xi)=\xi(1+\xi^2)^{-\frac{2}{3}},$ we can obtain the following in the same way,
\begin{align*}
\left|a\left(\frac{2}{3}\de(t)\T_{\py\px D_x u}Q(D_x)\int_y^{\ty}\U dz,\U\right)_{H_{\Psi}^{7,0}}\right|\leq C\ep^{\frac{1}{2}}\tad\|\sqrt{a}\U\|^2_{H_{\Psi}^{\frac{22}{3},0}}.
\end{align*}

On the other hand, since $\px u+\py v=0,$, we can use H$\mathrm{\ddot{o}}$lder's inequality, lemma \ref{gu1} and lemma \ref{ub} to obtain that
\begin{align*}
\left|a\left(\T_{\py v}\U,\U\right)_{H_{\Psi}^{7,0}}\right|
&\leq\|\py v\|_{L^{\ty}_{\vm}(H_{\h}^{\frac{1}{2}+})}\|\sqrt{a}\U\|^2_{H_{\Psi}^{7,0}}
\\
&\leq C\ep\ltr^{-\ga_0-\frac{1}{4}}\|\sqrt{a}\U\|^2_{H_{\Psi}^{7,0}}\\
&\leq C\ep^{\frac{1}{2}}\ltr^{\al-\ga_0-\frac{1}{4}}\tad\|\sqrt{a}\U
\|^2_{H_{\Psi}^{\frac{22}{3},0}}\\
&\leq C\ep^{\frac{1}{2}}\tad\|\sqrt{a}\U\|^2_{H_{\Psi}^{\frac{22}{3},0}}.
\end{align*}
Next since $\pt\Psi+2(\py\Psi)^2=0$, using integrating by parts and Cauchy's inequality we can get
\begin{align}\label{ptpy}
\begin{split}
&~~~\left(\pt \U-\py^2\U,\U\right)_{H_{\Psi}^{7,0}}\\
&=\frac{1}{2}\dfrac{d}{dt}\|\U(t)\|^2_{H_{\Psi}^{7,0}}-\left(\pt \Psi,\U^2\right)_{H_{\Psi}^{7,0}}+\|\py\U(t)\|^2_{H_{\Psi}^{7,0}}
+\left(\py \U,2\U\py\Psi\right)_{H_{\Psi}^{7,0}}\\
&\geq \frac{1}{2}\dfrac{d}{dt}\|\U(t)\|^2_{H_{\Psi}^{7,0}}-\left(\pt \Psi,\U^2\right)_{H_{\Psi}^{7,0}}+\frac{1}{2}\|\py\U(t)\|^2_{H_{\Psi}^{7,0}}
-\left(2(\py\Psi)^2,\U^2\right)_{H_{\Psi}^{7,0}}\\
&=\frac{1}{2}\dfrac{d}{dt}\|\U(t)\|^2_{H_{\Psi}^{7,0}}
+\frac{1}{2}\|\py\U(t)\|^2_{H_{\Psi}^{7,0}}.
\end{split}
\end{align}
then combine \eqref{lk}, we can deduce
\begin{align}\label{UU}
\begin{split}
a(\Ll\U,\U)_{H_{\Psi}^{7,0}}&\geq\frac{1}{2}\dfrac{d}{dt}
\|\sqrt{a}\U(t)\|^2_{H_{\Psi}^{7,0}}-\frac{1}{2}\|\sqrt{a'}\U(t)\|^2_{H_{\Psi}^{7,0}}
+\la\tad\|\sqrt{a}\U(t)\|^2_{H_{\Psi}^{\frac{22}{3},0}}\\
&+\frac{1}{2}\|\sqrt{a}\py\U(t)\|^2_{H_{\Psi}^{7,0}}+
a\left(\T_u\px\U,\U\right)_{H_{\Psi}^{7,0}}+a\left(\T_v\py\U,\U\right)_{H_{\Psi}^{7,0}}\\
&+a\left(\frac{2}{3}\de(t)
\T_{D_x u}Q(D_x)\px\U,\U\right)_{H_{\Psi}^{7,0}}.
\end{split}
\end{align}
Using \eqref{theta}, lemma \ref{gu3} and lemma \ref{ub}, we have
\begin{align*}
\left|a\left(\T_u\px\U,\U\right)_{H_{\Psi}^{7,0}}\right|
&\leq C\|u\|_{L^{\ty}_{\vm}(H_{\h}^{\frac{3}{2}+})}\|\sqrt{a}\U\|^2_{H_{\Psi}^{7,0}}\\
&\leq C\ep^{\frac{1}{2}}\ltr^{\al-\ga_0-\frac{1}{4}}\tad\|\sqrt{a}\U
\|^2_{H_{\Psi}^{\frac{22}{3},0}}\\
&\leq C\ep^{\frac{1}{2}}\tad\|\sqrt{a}\U\|^2_{H_{\Psi}^{\frac{22}{3},0}}.
\end{align*}
By lemma \ref{gu1} and lemma\ref{ub}, it gives
\begin{align*}
\left|a\left(\T_v\py\U,\U\right)_{H_{\Psi}^{7,0}}\right|
&\leq C \|v\|_{L^{\ty}_{\vm}(H_{\h}^{\frac{1}{2}+})}
\|\sqrt{a}\U\|_{H_{\Psi}^{\frac{22}{3},0}}
\|\sqrt{a}\py \U\|_{H_{\Psi}^{7,0}}\\
&\leq C\eta^{-1}\ep^{\frac{3}{2}}\ltr^{\al-2\ga_0
+\frac{1}{2}}\tad\|\sqrt{a}\U\|^2_{H_{\Psi}^{\frac{22}{3},0}}
+\frac{1}{100}\eta\|\sqrt{a}\py \U\|^2_{H_{\Psi}^{7,0}}\\
&\leq C\eta^{-1}\ep^{\frac{3}{2}}\tad\|\sqrt{a}\U\|^2_{H_{\Psi}^{\frac{22}{3},0}}
+\frac{1}{100}\eta\|\sqrt{a}\py \U\|^2_{H_{\Psi}^{7,0}},
\end{align*}
here we use the fact that $\al\leq\ga_0+\frac{1}{4}$ and $\ga_0>1$.

Similarly, applying lemma \ref{gu1} and lemma \ref{ub} yields
\begin{align*}
\left|a\left(\frac{2}{3}\de(t)
\T_{D_x u}Q(D_x)\px\U,\U\right)_{H_{\Psi}^{7,0}}\right|
&\leq C\|u\|_{L^{\ty}_{\vm}(H_{\h}^{\frac{3}{2}+})}\|\sqrt{a}\U\|^2_{H_{\Psi}^{\frac{22}{3},0}}\\
&\leq C\ep^{\frac{1}{2}}\ltr^{\al-\ga_0-\frac{1}{4}}\tad\|\sqrt{a}\U
\|^2_{H_{\Psi}^{\frac{22}{3},0}}\\
&\leq C\ep^{\frac{1}{2}}\tad\|\sqrt{a}\U\|^2_{H_{\Psi}^{\frac{22}{3},0}}.
\end{align*}
Combining all the above estimates and integrating over $[0,t]$, we can obtain
\begin{align*}
\|\sqrt{a}&\U(t)\|^2_{H_{\Psi}^{7,0}}-\int_0^t \|\sqrt{a'}\U(s)\|^2_{H_{\Psi}^{7,0}}ds+(1-\frac{1}{50}\eta)\int_0^t \|\sqrt{a}\py\U(s)\|^2_{H_{\Psi}^{7,0}}ds\\
&+2\left(\la-C(1+\eta^{-1}\ep^{\frac{3}{2}})\right)\int_0^t
\tad(s)\|\sqrt{a}\U(s)\|^2_{H_{\Psi}^{\frac{22}{3},0}}ds\leq \int_0^t\tad(s)\|\sqrt{a}\ze(s)\|^2_{H_{\Psi}^{\frac{23}{3},0}}ds.
\end{align*}
This completes the proof of proposition \ref{guU}.
\end{proof}

Using proposition \ref{guU}, we can give a proof of proposition \ref{propositionU}.

\textbf{Proof of Proposition \ref{propositionU}:}
Taking $a(t)=\ltr^{2l_{\ka}-2\eta}$ in \eqref{Udekongzhi},where $l_{\ka}=\frac{\ka(2-\ka)}{4}\in(0,\frac{1}{4}].$ Using lemma \ref{gu6} gives
\begin{align*}
&-(2l_{\ka}-2\eta)\int_0^t \|\lsr^{l_{\ka}-\eta-\frac{1}{2}}\U(s)\|^2_{H_{\Psi}^{7,0}}ds+(1-\frac{1}{50}\eta)\int_0^t \|\lsr^{l_{\ka}-\eta}\py\U(s)\|^2_{H_{\Psi}^{7,0}}ds\\
\geq&
(-4l_{\ka}+4\eta)\int_0^t \|\lsr^{l_{\ka}-\eta}\py\U(s)\|^2_{H_{\Psi}^{7,0}}ds+
(1-\frac{1}{50}\eta)\int_0^t \|\lsr^{l_{\ka}-\eta}\py\U(s)\|^2_{H_{\Psi}^{7,0}}ds\\
\geq&\eta
\int_0^t \|\ltr^{l_{\ka}-\eta}\py\U(s)\|^2_{H_{\Psi}^{7,0}}ds,
\end{align*}
then we have
\begin{align*}
&\|\ltr^{l_{\ka}-\eta}\U(t)\|^2_{H_{\Psi}^{7,0}}+\eta\int_0^t \|\lsr^{l_{\ka}-\eta}\py\U(s)\|^2_{H_{\Psi}^{7,0}}ds\\
+2&\left(\la-C(1+\eta^{-1}\ep^{\frac{3}{2}})\right)\int_0^t
\tad(s)\|\lsr^{l_{\ka}-\eta}\U(s)\|^2_{H_{\Psi}^{\frac{22}{3},0}}ds
\leq \int_0^t\tad(s)\|\lsr^{l_{\ka}-\eta}\ze(s)\|^2_{H_{\Psi}^{\frac{23}{3},0}}ds.
\end{align*}
Take $\ep_1<1$ sufficiently small such that when $\ep<\ep_1$, we have $1+\eta^{-1}\ep^{\frac{3}{2}}<2.$ Take $\la_1>1$ sufficiently large, and $\la$ satisfies
$\la>\la_1>4C.$ So proposition \ref{propositionU} holds.

\hfill $\square$
\section{The Gevrey Estimates of $\ze$ and $\zet$}\label{gujize}
In this section, we will derive a priori estimates for $\ze$ and $\zet$. Since the two derivations are similar, we only give the proof procedure for $\zet$ here. Using \eqref{mhdp}, \eqref{U} and \eqref{ze}, we can obtain that $\zet$ satisfies the following equation,
\begin{align}\label{zetequation}
\begin{split}
\Lk \zet=&-B+\tad \Np+\frac{1-\ka}{\tad}\T_{\py b}\py\U+\left[\frac{1}{\tad}\T_{\py b};\Lk\right]\int_{y}^{\ty}\U dz\\
&+\frac{2(1-\ka)}{3\tad}\de(t)\T_{\py D_x b}Q(D_x)\py\U
+\left[\frac{2\de(t)}{3\tad}\T_{\py D_x b}Q(D_x);\Lk\right]\int_{y}^{\ty}\U dz.
\end{split}
\end{align}
By \eqref{U}, we can get
\begin{align}\label{fangbianyinyong}
\left(\pt+\la\tad(t)[D_x]^{\frac{2}{3}}\right)\int_0^{\ty}\U dz=0.
\end{align}
By taking $L^2_{\h}(\R)$ inner product of \eqref{fangbianyinyong} with $\int_0^{\ty}\U dz$, we observe
$$\frac{d}{dt}\left\|\int_0^{\ty}\U(t,\cdot,z) dz\right\|^2_{L^2_{\h}}\leq0.$$
Since $\U|_{t=0}=0,$ so $\int_0^{\ty}\U(t,x,z) dz=0.$ Thus we can obtain $\zet$ satisfying the following initial boundary values
\begin{align}\label{zetchubianzhi}
\zet|_{y=0}=\lim_{y\rightarrow+\ty}\zet=0,~~\zet|_{t=0}=\bp|_{t=0}=
e^{\de[D_x]^{\frac{2}{3}}}b_0.
\end{align}

To complete the a priori estimates we also need estimates of the source term $(A,B)$ of the equation \eqref{mhdp}.
\begin{Lemma}\label{guAB}
Let $(A,B)$ be given by \eqref{A} and \eqref{B}, respectively, and $\ka\in(0,2)$. Then for any $s>0,$ we have
\begin{align*}
\|(A,B)\|_{H_{\Psi}^{s,0}}
\leq &C\ltr^{\frac{1}{4}}\left(
\|(\py\Gp,\py\Gtp)\|_{H_{\Psi}^{\frac{5}{2}+,0}}\|(\uph,\bp)\|_{H_{\Psi}^{s+\frac{1}{3},0}}\right.
\\
&+\left.\|(\Gp,\Gtp)\|_{H_{\Psi}^{\frac{5}{2}+,0}}\|(\py\uph,\py\bp)\|_{H_{\Psi}^{s-\frac{1}{3},0}}\right).
\end{align*}
\end{Lemma}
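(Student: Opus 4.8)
\textbf{Proof proposal for Lemma \ref{guAB}.}
The plan is to estimate each of the nine terms appearing in the definitions \eqref{A} and \eqref{B} of $A$ and $B$, grouping them into three families according to the tool used. I will only treat $B$; the estimate for $A$ is identical after replacing $b$ by $u$. Throughout I will use the two elementary facts that multiplication by $e^{\Psi}$ commutes with $D_x$ and that the vertical profile is handled by the inequality \eqref{ty}, which converts an $L^\infty_{\vm}$ bound on a $y$-primitive into an $L^2_{\vm}$ bound on the integrand at the cost of a factor $\ltr^{\frac14}$; this is where the $\ltr^{\frac14}$ in the statement comes from. The key structural point is that every term of $B$ is, up to lower-order commutators, a product in which one factor is a velocity/stream component that must be re-expressed through the ``good function'' $G$ (or $\Gt$), and Lemma \ref{gu7} is exactly what lets us trade $v_{\Phi}$, $\py u_{\Phi}$, etc. against $\|e^{\Psi}\Delta_k^{\h} G_{\Phi}\|$ and $\|e^{\Psi}\Delta_k^{\h}\py G_{\Phi}\|$ frequency-block by frequency-block, after which I sum in $k$ using Bernstein/Littlewood--Paley orthogonality to recover the $H_{\Psi}^{\sigma,0}$ norms.

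First family: the three ``Gevrey-commutator'' terms $(T^{\h}_u\px b)_{\Phi}-T^{\h}_u\px b_{\Phi}-\frac23\delta(t)T^{\h}_{D_xu}Q(D_x)\px b_{\Phi}$ and $(T^{\h}_{\py b}v)_{\Phi}-T^{\h}_{\py b}v_{\Phi}-\frac23\delta(t)T^{\h}_{\py D_x b}Q(D_x)v_{\Phi}$. For the first of these I apply Lemma \ref{gu5} with $a=u$, $f=b$, $s=\sigma$: since $\delta(t)$ is bounded on $[0,T^*)$ this gives a bound by $C\|u_{\Phi}\|_{H^{\sigma'}_{\h}}\|b_{\Phi}\|_{H^{\sigma+\frac13}_{\h}}$ with $\sigma'>\frac52$, and I then integrate in $y$ against $e^{2\Psi}$ and use Lemma \ref{ub} (the $L^\infty_{\vm}(H^3_{\h})$ bounds) to absorb $\|u_{\Phi}\|$ into a constant and turn the remaining $\|b_{\Phi}\|$ into $\|(\uph,\bp)\|_{H_{\Psi}^{\sigma+\frac13,0}}$; the prefactor $\|(G_{\Phi},\Gt_{\Phi})\|_{H_{\Psi}^{\frac52+,0}}$ then enters because $u$ itself is controlled by $G$ through Lemma \ref{gu7}. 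For the $v$-term I again use Lemma \ref{gu5} with $a=\py b$, but now $f=v$; here I must express $v_{\Phi}=-\px\psi_{\Phi}$ and use \eqref{ty} together with the $\partial_y(y\psi)$ bounds in \eqref{henyouyong} to control $\|v_{\Phi}\|$ by $\ltr^{1/4}\|e^{\Psi}\Delta_k^{\h}\py G_{\Phi}\|$, which is what produces the second summand of the claimed bound (the one with $\|\py u_{\Phi}\|$ replaced by $\|(\Gp,\Gtp)\|\cdot\|(\py\uph,\py\bp)\|$ — note $\py b$ plays the role of the ``$\py u$'' factor and $v$ the role of the ``$G$'' factor after Lemma \ref{gu7}).

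Second family: the term $(T^{\h}_v\py b)_{\Phi}-T^{\h}_v\py b_{\Phi}$. This is not covered by Lemma \ref{gu5} in the form stated (that lemma is tailored to $\px f$), so here I go back to the definition of $T^{\h}$ and the convexity inequality \eqref{tu}: writing $(T^{\h}_v\py b)_{\Phi}-T^{\h}_{v_{\Phi}}\py b_{\Phi}=\sum_k(S^{\h}_{k-1}v\,\Delta_k^{\h}\py b)_{\Phi}-S^{\h}_{k-1}v_{\Phi}\,\Delta_k^{\h}\py b_{\Phi}$ and using $e^{\Phi(t,\xi)}\le e^{\Phi(t,\xi-\zeta)}e^{\Phi(t,\zeta)}$ on the low-frequency factor, the difference is of the same product type as the main term and is in fact bounded by $C\|v_{\Phi}\|_{H^{\frac12+}_{\h}}\|\py b_{\Phi}\|_{H^{s}_{\h}}$ (no gain, but also no loss), after which the $v$-to-$G$ trade via Lemma \ref{gu7} and \eqref{ty} again yields the $\ltr^{1/4}\|(\Gp,\Gtp)\|_{H_{\Psi}^{\frac52+,0}}\|(\py\uph,\py\bp)\|_{H_{\Psi}^{s-\frac13,0}}$ contribution (the $-\frac13$ is slack here and is only needed to match the other terms). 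The genuinely new input compared to the classical Prandtl estimates is that $b$ and $\py b$ must be allowed on either side, but since Lemma \ref{gu7} applies verbatim with $(u,\psi,G)$ replaced by $(b,\psit,\Gt)$, nothing changes.

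Third family: the remainder/paraproduct terms $(T^{\h}_{\px b}u+R^{\h}(u,\px b)+R^{\h}(v,\py b))_{\Phi}$. For $T^{\h}_{\px b}u$ I use the first inequality of Lemma \ref{gu1} (with $f=\px b$, needing $\sigma>\frac12$ regularity on $(\px b)_{\Phi}$, hence $H^{\frac32+}_{\h}$ on $b_{\Phi}$, supplied by Lemma \ref{ub}) and bound by $C\|b_{\Phi}\|_{H^{\frac52+}_{\h}}\|u_{\Phi}\|_{H^{s}_{\h}}$. For the two remainder terms I use the third inequality of Lemma \ref{gu1}: $\|(R^{\h}(u,\px b))_{\Phi}\|_{H^{s}_{\h}}\le C\|u_{\Phi}\|_{H^{s_1}_{\h}}\|\px b_{\Phi}\|_{H^{s_2}_{\h}}$ with $s_1+s_2>s+\frac12$, choosing $s_1$ large (absorbed by Lemma \ref{ub}) and $s_2=s+\frac13$, and similarly for $R^{\h}(v,\py b)$ with the $v$-factor again routed through $G$ and \eqref{ty}. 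Integrating all contributions in $y$ against $e^{2\Psi}$, using Minkowski to pull the $L^2_{\vm}$ norm inside, and applying the $L^\infty_{\vm}(H^3_{\h})$ bounds of Lemma \ref{ub} to the ``coefficient'' factors, then summing over Littlewood--Paley blocks, gives exactly
\[
\|(A,B)\|_{H_{\Psi}^{s,0}}\le C\ltr^{\frac14}\Big(\|(\py\Gp,\py\Gtp)\|_{H_{\Psi}^{\frac52+,0}}\|(\uph,\bp)\|_{H_{\Psi}^{s+\frac13,0}}+\|(\Gp,\Gtp)\|_{H_{\Psi}^{\frac52+,0}}\|(\py\uph,\py\bp)\|_{H_{\Psi}^{s-\frac13,0}}\Big).
\]
The main obstacle is bookkeeping rather than conceptual: one must verify that in every term the $\frac13$-derivative loss created by $Q(D_x)\px\sim[D_x]^{1/3}$ and by Lemma \ref{gu5} lands consistently on the factor that is being measured in $H_{\Psi}^{s\pm\frac13,0}$ and never on the factor that is only controlled in low regularity by Lemma \ref{ub}; and that the weight $e^{\Psi}$, which appears on both factors of each product while Lemma \ref{gu7} only outputs a single $e^{\Psi}$, is distributed correctly — this is handled by the elementary pointwise inequality $e^{2\Psi(t,y)}\le e^{2\Psi(t,y)}$ on one factor and the $L^\infty_{\vm}$ control of the other, so that only one genuine $e^{\Psi}$-weighted $L^2_{\vm}$ norm is ever used per term, which is precisely why the $\ltr^{1/4}$ (and not a higher power) suffices.
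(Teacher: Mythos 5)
Your overall strategy --- estimate each term of $A,B$ with Lemmas \ref{gu1}, \ref{gu4}, \ref{gu5}, then trade the coefficient factor for $(\Gp,\Gtp)$ or $(\py\Gp,\py\Gtp)$ via Lemma \ref{gu7} and the $\int_y^{\ty}$-trick \eqref{ty}, which is where the $\ltr^{\frac14}$ comes from --- is indeed the route intended here (the paper simply cites Lemma 4.2 of \cite{CW}, adapted to the exponent $\tfrac23$). But your treatment of the ``second family'' has a genuine gap. For $(\T_v\py b)_{\Phi}-\T_v\py\bp$ you discard the commutator structure, bound the term by $C\|v_{\Phi}\|_{H_{\h}^{\frac12+}}\|\py b_{\Phi}\|_{H_{\h}^{s}}$, and then claim the $-\frac13$ in the statement ``is slack''. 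This is backwards: the lemma requires $\|(\py\uph,\py\bp)\|_{H_{\Psi}^{s-\frac13,0}}$, which is a \emph{weaker} norm than $\|\py\bp\|_{H_{\Psi}^{s,0}}$, so your bound does not imply the stated one; nor can this term be pushed into the first summand, since no horizontal norm of $\bp$ controls a $\py$-derivative. The $\tfrac13$-gain must come precisely from the $e^{\Phi}$-commutator, i.e.\ from Lemma \ref{gu4} applied with $a=v$ (exactly as the paper does for the analogous term $(\T_v\py N)_{\Phi}-\T_v\py\Np$ in the proof of Lemma \ref{Sdekongzhi}), at the price of $\|v_{\Phi}\|_{H_{\h}^{\frac32+}}$, after which $v$ is converted through \eqref{vuG} into $\ltr^{\frac14}\|\Gp\|_{H_{\Psi}^{\frac52+,0}}$; the same pairing ($s_1=\tfrac32+$ on $v$, $s_2=s-\tfrac13$ on $\py b$) is what makes $R^{\h}(v,\py b)$ land in the second summand.

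The bookkeeping of which $G$-factor goes with which term is also off in several places, and it matters because the statement keeps the $G$-factors explicitly. (i) In the first family the coefficient is $u_{\Phi}$ in $L^{\ty}_{\vm}(H_{\h}^{\frac52+})$; its only conversion is $\|u_{\Phi}\|_{L^{\ty}_{\vm}(H_{\h}^{\sigma})}\lesssim\ltr^{\frac14}\|\py\uph\|_{H_{\Psi}^{\sigma,0}}\lesssim\ltr^{\frac14}\|\py\Gp\|_{H_{\Psi}^{\sigma,0}}$ (as in \eqref{aiyoubucuoou} plus Lemma \ref{gu7}), so this term produces the \emph{first} summand with $\py\Gp$, not $\Gp$; and you cannot simultaneously ``absorb $\|u_{\Phi}\|$ into a constant by Lemma \ref{ub}'' --- that would prove a different inequality than the one stated. (ii) The application of Lemma \ref{gu5} with $a=\py b$ and $f=\psi$ (not $f=v$; the lemma needs the $\px f$ structure) likewise yields the first summand, with $\py b\to\py\Gtp$ in the weighted $L^2_{\vm}$ and $\psi\to\uph$ carrying the $\ltr^{\frac14}$; your claim that $v_{\Phi}$ is controlled by $\ltr^{\frac14}\|e^{\Psi}\Delta_k^{\h}\py G_{\Phi}\|$ via the $\py(y\psi)$ bounds is not the right conversion --- $v$ goes to $\Gp$ with one extra horizontal derivative through $\py v=-\px u$. (iii) In $R^{\h}(u,\px b)$ the choice $s_2=s+\tfrac13$ on $\px b_{\Phi}$ costs $\|\bp\|_{H_{\h}^{s+\frac43}}$, a full derivative more than the statement allows; the admissible splitting is $\|u_{\Phi}\|_{H_{\h}^{s}}\|\px b_{\Phi}\|_{H_{\h}^{\frac12+}}$ with $b$ as the coefficient converted to $\py\Gtp$. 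These mispairings are fixable, but as written the term-by-term estimates do not close to the stated right-hand side.
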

The proof of this lemma can be found in lemma 4.2 of \cite{CW}, the only thing to note is that our lemma \ref{gu4} and lemma \ref{gu5} are slightly different from \cite{CW}.

Next we can give the estimates of $(\ze,\zet)$.
\begin{Proposition}\label{guze}
Let $\ka\in(0,2)$ be a given constant. Let $(\ze,\zet)$ be defined by \eqref{ze}, and $a(t)$ be a non-negative and non-decreasing function on $\R_+$. Then when $\al\leq\min\{\ga_0+\frac{1}{4},\frac{2}{3}\ga_0+\frac{1}{2},\frac{1}{2}\ga_0+\frac{5}{8}\}$ and $\ep<\ep_*$ $($$\ep_*$ is a sufficiently small number$)$, for any $t<T_*$ and sufficiently small $\eta>0$ the following inequality holds,
\begin{align}\label{zedekongzhi}
\begin{aligned}
&~~~~~\|\sqrt{a}(\ze(t),\zet(t))\|^2_{H_{\Psi}^{\frac{22}{3},0}}
-\int_0^t \|\sqrt{a'}(\ze,\zet)\|^2_{H_{\Psi}^{\frac{22}{3},0}}ds\\
&~~~~~+(4l_{\ka}-\frac{1}{10}\eta)\int_0^t \|\sqrt{a}(\py\ze,\py\zet)\|^2_{H_{\Psi}^{\frac{22}{3},0}}ds\\
&~~~~~
+2\left(\la-C(1+\la\ep^{\frac{1}{2}}+\eta^{-1}\ep^{\frac{1}{2}})\right)\int_0^t
\tad\|\sqrt{a}(\ze,\zet)\|^2_{H_{\Psi}^{\frac{23}{3},0}}ds\\
&\leq
\|\sqrt{a}(0)(\uph(0),\bp(0))\|^2_{H_{\Psi}^{\frac{22}{3},0}}
+\frac{2}{25}\eta\int_0^t
\|\sqrt{a}\py\U\|^2_{H_{\Psi}^{7,0}}
ds
\\
&~~~~~+C(1+\ep^{\frac{1}{2}}\la)\int_0^t\tad
\left(\|\sqrt{a}\U\|^2_{H_{\Psi}^{\frac{22}{3},0}}+\|\sqrt{a}(\Pp,\Np)\|^2_{H_{\Psi}^{7,0}}
\right)ds,
\end{aligned}
\end{align}
where $l_{\ka}=\frac{\ka(2-\ka)}{4}\in(0,\frac{1}{4}].$
\end{Proposition}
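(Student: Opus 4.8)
\textbf{Proof proposal for Proposition \ref{guze}.}
The plan is to perform a weighted $H^{\frac{22}{3},0}_{\Psi}$ energy estimate on the equation \eqref{zetequation} for $\zet$ (and its analogue for $\ze$), mirroring the structure of the proof of Proposition \ref{guU}. First I would take the $H^{\frac{22}{3},0}_{\Psi}$ inner product of $a(t)\zet$ with \eqref{zetequation}. The parabolic part $\Lk\zet$ is handled exactly as in \eqref{ptpy}--\eqref{UU}: using $\pt\Psi+2(\py\Psi)^2=0$ and integration by parts, the terms $\pt\zet-\ka\py^2\zet$ produce $\tfrac12\tfrac{d}{dt}\|\sqrt a\,\zet\|^2_{H^{\frac{22}{3},0}_{\Psi}}-\tfrac12\|\sqrt{a'}\,\zet\|^2$ plus a coercive dissipation term; here, because the diffusion coefficient is $\ka$ rather than $1$, the Poincaré inequality of Lemma \ref{gu6} combined with the $\py(y\psit)/(2\ka\ltr)$ structure hidden in $\Gtp$ is what produces the constant $l_{\ka}=\tfrac{\ka(2-\ka)}{4}$ in front of $\int_0^t\|\sqrt a(\py\ze,\py\zet)\|^2$ — this coefficient-tracking is the reason the statement carries $4l_{\ka}-\tfrac{1}{10}\eta$. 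The operator term $\la\tad[D_x]^{2/3}$ gives the coercive $\la\int_0^t\tad\|\sqrt a(\ze,\zet)\|^2_{H^{\frac{23}{3},0}_{\Psi}}$ on the left, and the transport/para-product pieces $\T_u\px+\T_v\py+\tfrac23\de\T_{D_x u}Q(D_x)\px$ are absorbed into the $C(1+\ep^{1/2}\la+\eta^{-1}\ep^{1/2})$ correction to $\la$ exactly as in Proposition \ref{guU}, using Lemma \ref{gu1}, Lemma \ref{gu3}, and the decay bounds of Lemma \ref{ub}.

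The new work is the right-hand side of \eqref{zetequation}. For $-B+\tad\Np$: Lemma \ref{guAB} bounds $\|B\|_{H^{\frac{22}{3},0}_{\Psi}}$ by $\ltr^{1/4}$ times products of $\|(\py\Gp,\py\Gtp)\|_{H^{5/2+,0}_{\Psi}}$, $\|(\Gp,\Gtp)\|_{H^{5/2+,0}_{\Psi}}$ with $\|(\uph,\bp)\|$ and $\|(\py\uph,\py\bp)\|$ at the appropriate regularities; invoking the definition of $T_*$ in \eqref{T**} to get $\ltr^{-\ga_0}$ decay on the $G$-factors and the choice $\al\leq\ga_0+\tfrac14$ (and $\al\leq\tfrac23\ga_0+\tfrac12$, $\al\leq\tfrac12\ga_0+\tfrac58$ for the various mixed terms), each such term is dominated either by $\tad\|\sqrt a(\ze,\zet)\|^2_{H^{\frac{23}{3},0}_{\Psi}}$ up to a small constant, or contributes to the $\int_0^t\tad\|\sqrt a(\Pp,\Np)\|^2_{H^{7,0}_{\Psi}}$ and $\int_0^t\tad\|\sqrt a\,\U\|^2_{H^{\frac{22}{3},0}_{\Psi}}$ terms on the right. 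The genuinely delicate terms are the commutators $\big[\tfrac1\tad\T_{\py b};\Lk\big]\int_y^\ty\U\,dz$ and $\big[\tfrac{2\de}{3\tad}\T_{\py D_x b}Q(D_x);\Lk\big]\int_y^\ty\U\,dz$ together with the ``defect'' terms $\tfrac{1-\ka}{\tad}\T_{\py b}\py\U$ and $\tfrac{2(1-\ka)}{3\tad}\de\T_{\py D_x b}Q(D_x)\py\U$: I would expand the commutator with $\Lk$ term by term — the $\pt$ piece hits $\tfrac1\tad$ producing $\tfrac{\tilde\theta''}{\tad{}^2}=\tfrac{\al}{\ltr}$ times lower-order, the $[D_x]^{2/3}$ piece is controlled by Lemma \ref{gu4}/\ref{gu5}, the $\T_u\px$, $\T_v\py$ pieces by Lemmas \ref{gu1}--\ref{gu3}, and the $\ka\py^2$ piece generates $\T_{\py^3 b}$ and $\T_{\py^2 b}\py$ acting on $\int_y^\ty\U$, for which I use \eqref{ty} to trade $\int_y^\ty\U$ for $\ltr^{1/4}\|\U\|_{H^{\cdot,0}_{\Psi}}$ and $\py$ of it for $\ltr^{1/4}\|\py\U\|$. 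Each resulting factor of $\ltr^{1/4}$ is paid for by a power of $\ep^{1/2}$ from $\tfrac1\tad=\ep^{-1/2}\ltr^{\al}$ against the decay of $\py b$ from Lemma \ref{ub}, under the standing constraints on $\al$; the $\py\U$-terms are split by Cauchy--Schwarz into $\tfrac{2}{25}\eta\int_0^t\|\sqrt a\,\py\U\|^2_{H^{7,0}_{\Psi}}$ plus a $\tad$-weighted remainder, which is the origin of that coefficient in \eqref{zedekongzhi}.

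After collecting all estimates, integrating over $[0,t]$, and using $\zet|_{t=0}=e^{\de[D_x]^{2/3}}b_0=\bp(0)$ (and the analogue $\ze|_{t=0}=\uph(0)$) from \eqref{zetchubianzhi}, one arrives at \eqref{zedekongzhi}; choosing $\ep<\ep_*$ small makes the $C(1+\ep^{1/2}\la+\eta^{-1}\ep^{1/2})$ harmless relative to $\la$ after $\la$ is taken large, and forces the various absorbed small-constant terms below the thresholds $4l_{\ka}-\tfrac{1}{10}\eta$ and the left-hand $\la$-coercivity. The main obstacle I anticipate is the bookkeeping in the commutator terms: one must verify that \emph{every} term generated by $[\,\cdot\,;\Lk]$ acting on $\int_y^\ty\U\,dz$, after using \eqref{ty}, carries enough powers of $\ltr^{-1}$ (equivalently, enough factors of $\ep^{1/2}$ from $\tad{}^{-1}$ balanced against the Lemma \ref{ub} decay rates $\ga_0+\tfrac14,\ga_0+\tfrac34,\ga_0+\tfrac54$) to land inside one of the four allowed right-hand terms, and in particular that the $\ka\py^2$-generated pieces involving $\py^2 b,\py^3 b$ close — this is exactly where the regularity budget $H^{\frac{22}{3},0}$ for $(\ze,\zet)$ versus $H^{7,0}$ for $\U$ is calibrated, and where the Gevrey gain $\de(t)\geq\tfrac\de2$ from Lemmas \ref{gu4}--\ref{gu5} is essential to avoid a net loss of $\tfrac13$ derivative.
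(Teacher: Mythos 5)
Your overall strategy (weighted $H_{\Psi}^{\frac{22}{3},0}$ energy estimate on \eqref{zetequation}, treating the parabolic part as in \eqref{ptpy}--\eqref{UU} and estimating $B$, $\tad\Np$, the $(1-\ka)$ defect terms and the two commutators term by term) is the same as the paper's, but two of your key mechanisms are wrong or missing. First, in your expansion of $\bigl[\tfrac{1}{\tad}\T_{\py b};\Lk\bigr]$ you assert that the $\pt$ piece only hits $\tfrac{1}{\tad}$; it also hits the paraproduct coefficient, producing $\tfrac{1}{\tad}\T_{\pt\py b}$, and $\pt\py b$ is \emph{not} controlled by Lemma \ref{ub} (which gives no time-derivative bounds). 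The paper's essential step here is to keep this together with the $\ka\py^2$ contribution as $\tfrac{1}{\tad}\T_{\pt\py b-\ka\py^3 b}$ and substitute the equation \eqref{mhd} for $b$, which turns $\pt\py b-\ka\py^3 b$ into quadratic expressions in $(u,v,b,h)$ with decay $C\ep^2\ltr^{-2\ga_0-\frac34}$ (the $E_2$ estimate, using \eqref{jiben}); without this substitution the term you left out cannot be closed, so as written your plan fails there. Relatedly, your formula $\tfrac{\ddot\ta}{\tad^{2}}=\tfrac{\al}{\ltr}$ is incorrect ($\pt(\tfrac1\tad)=\al\ep^{-\frac12}\ltr^{\al-1}$, which is large); this term is not ``lower-order'' but is compensated by the decay of $\py b$ and \eqref{ty} under the constraint $\al\le\tfrac12\ga_0+\tfrac58$, which is exactly why that constraint appears in the hypothesis.

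Second, your attribution of the dissipation constant is wrong: the coefficient $4l_{\ka}-\tfrac{1}{10}\eta$ does not come from Lemma \ref{gu6} nor from the $\tfrac{y}{2\ka\ltr}\psit$ structure of $\Gtp$. It comes from the direct weighted computation \eqref{ptpylk}: with weight $e^{2\Psi}$ and $\pt\Psi+2(\py\Psi)^2=0$, the cross term $\ka(\py\zet,2\zet\py\Psi)_{H_{\Psi}^{\frac{22}{3},0}}$ is split by Cauchy's inequality against $-(\pt\Psi,\zet^2)$, leaving $\ka-\tfrac{\ka^2}{2}=2l_{\ka}$ in front of $\|\py\zet\|^2$; Lemma \ref{gu6} is only used later, in the proof of Proposition \ref{propositionze}, to absorb the $\sqrt{a'}$ term after choosing $a(t)=\ltr^{2l_{\ka}-2\eta}$. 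Finally, note that the estimates of $F_1$ (via Lemma \ref{guAB}) and of $F_2$ produce norms of $(\uph,\bp)$ and $(\py\uph,\py\bp)$ which are not among the admissible right-hand terms of \eqref{zedekongzhi}; the paper converts them back through \eqref{upbp}--\eqref{pyubdekongzhi2} into $(\ze,\zet)$, $\py\U$, $\U$ and then absorbs the $(\py\ze,\py\zet)$ contribution into the left-hand dissipation --- this back-substitution is where the final coefficients $\tfrac{2}{25}\eta$ and $4l_{\ka}-\tfrac1{10}\eta$ (rather than $\tfrac1{50}\eta$) actually arise, a step your proposal glosses over.
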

\begin{proof}
By taking $H_{\Psi}^{\frac{22}{3},0}$ inner product of \eqref{zetequation} with $a(t)\zet$, we get
\begin{align}\label{zetzet}
\begin{split}
&a\left(\Lk \zet,\zet\right)_{H_{\Psi}^{\frac{22}{3},0}}\\
=&-a\left(B,\zet\right)_{H_{\Psi}^{\frac{22}{3},0}}
+a\left(\tad \Np,\zet\right)_{H_{\Psi}^{\frac{22}{3},0}}
+a\left(\frac{1-\ka}{\tad}\T_{\py b}\py\U,\zet\right)_{H_{\Psi}^{\frac{22}{3},0}}\\
&+a\left(\left[\frac{1}{\tad}\T_{\py b};\Lk\right]\int_{y}^{\ty}\U dz,\zet\right)_{H_{\Psi}^{\frac{22}{3},0}}
+a\left(\frac{2(1-\ka)}{3\tad}\de(t)\T_{\py D_x b}Q(D_x)\py\U,\zet\right)_{H_{\Psi}^{\frac{22}{3},0}}\\
&+a\left(\left[\frac{2\de(t)}{3\tad}\T_{\py D_x b}Q(D_x);\Lk\right]\int_{y}^{\ty}\U dz,\zet\right)_{H_{\Psi}^{\frac{22}{3},0}}\\
=&\sum_{i=1}^6 F_i.
\end{split}
\end{align}
Now start estimating the right side of \eqref{zetzet} term by term.

For $F_1$, using \eqref{T**} and lemma \ref{guAB}, we have
\begin{align*}
&~~~~~\left|a\left(B,\zet\right)_{H_{\Psi}^{\frac{22}{3},0}}\right|
\leq \|\sqrt{a}B\|_{H_{\Psi}^{7,0}}\|\sqrt{a}\zet\|_{H_{\Psi}^{\frac{23}{3},0}}\\
&\leq C\ltr^{\frac{1}{4}}\left(
\|(\py\Gp,\py\Gtp)\|_{H_{\Psi}^{\frac{5}{2}+,0}}
\|\sqrt{a}(\uph,\bp)\|_{H_{\Psi}^{\frac{22}{3},0}}\right.
\\
&~~~~~+\left.\|(\Gp,\Gtp)\|_{H_{\Psi}^{\frac{5}{2}+,0}}
\|\sqrt{a}(\py\uph,\py\bp)\|_{H_{\Psi}^{\frac{20}{3},0}}\right)\|\sqrt{a}\zet\|_{H_{\Psi}^{\frac{23}{3},0}}\\
&\leq C\ltr^{\frac{1}{4}}\left(\ep\ltr^{-\ga_0-\frac{1}{2}}\|\sqrt{a}(\uph,\bp)\|_{H_{\Psi}^{\frac{22}{3},0}}
+\ep\ltr^{-\ga_0}\|\sqrt{a}(\py\uph,\py\bp)\|_{H_{\Psi}^{7,0}}\right)\|\sqrt{a}\zet\|_{H_{\Psi}^{\frac{23}{3},0}}\\
&\leq C\ep^{\frac{1}{2}}\ltr^{\al-\ga_0-\frac{1}{4}}\tad
\|\sqrt{a}(\uph,\bp)\|_{H_{\Psi}^{\frac{22}{3},0}}
\|\sqrt{a}\zet\|_{H_{\Psi}^{\frac{23}{3},0}}\\
&~~~~~+C\ep\ltr^{-\ga_0+\frac{1}{4}}
\|\sqrt{a}(\py\uph,\py\bp)\|_{H_{\Psi}^{7,0}}
\|\sqrt{a}\zet\|_{H_{\Psi}^{\frac{23}{3},0}}
\\
&\leq C\ep^{\frac{1}{2}}\ltr^{\al-\ga_0-\frac{1}{4}}\tad
\|\sqrt{a}(\uph,\bp)\|^2_{H_{\Psi}^{\frac{22}{3},0}}
+C\ep^{\frac{1}{2}}\ltr^{\al-\ga_0-\frac{1}{4}}\tad
\|\sqrt{a}\zet\|^2_{H_{\Psi}^{\frac{23}{3},0}}\\
&~~~~~+\frac{1}{100}\eta\|\sqrt{a}(\py\uph,\py\bp)\|^2_{H_{\Psi}^{7,0}}
+C\eta^{-1}\ep^{\frac{3}{2}}\ltr^{\al-2\ga_0+\frac{1}{2}}\tad
\|\sqrt{a}\zet\|^2_{H_{\Psi}^{\frac{23}{3},0}},
\end{align*}
For the last inequality, we use the Cauchy's inequality and \eqref{theta}. On the other hand since $\al\leq\ga_0+\frac{1}{4}$ and $\ga>1$, we can deduce
\begin{align*}
\left|a\left(B,\zet\right)_{H_{\Psi}^{\frac{22}{3},0}}\right|
\leq &C(\ep^{\frac{1}{2}}+\eta^{-1}\ep^{\frac{3}{2}})
\tad\|\sqrt{a}\zet\|^2_{H_{\Psi}^{\frac{23}{3},0}}
+C\ep^{\frac{1}{2}}
\tad\|\sqrt{a}(\uph,\bp)\|^2_{H_{\Psi}^{\frac{22}{3},0}}\\
&+\frac{1}{100}\eta\|\sqrt{a}(\py\uph,\py\bp)\|^2_{H_{\Psi}^{7,0}}.
\end{align*}
For $F_2$, it's obvious that
\begin{align*}
\left|a\left(\tad \Np,\zet\right)_{H_{\Psi}^{\frac{22}{3},0}}\right|\leq
\frac{1}{2}\tad\|\sqrt{a}\Np\|_{H_{\Psi}^{7,0}}+
\frac{1}{2}\tad\|\sqrt{a}\zet\|_{H_{\Psi}^{\frac{23}{3},0}}.
\end{align*}
For $F_3$, applying \eqref{theta}, lemma \ref{gu1} and lemma \ref{ub}, it gives that
\begin{align*}
\left|a\left(\frac{1-\ka}{\tad}\T_{\py b}\py\U,\zet\right)_{H_{\Psi}^{\frac{22}{3},0}}\right|
&\leq C \frac{1}{\tad}\|\py b\|_{L_{\vm}^{\ty}(H_{\h}^{\frac{1}{2}+})}
\|\sqrt{a}\py\U\|_{H_{\Psi}^{7,0}}\|\sqrt{a}\zet\|_{H_{\Psi}^{\frac{23}{3},0}}
\\
&\leq C\ep^{\frac{1}{2}}\ltr^{\al-\ga_0-\frac{3}{4}}
\|\sqrt{a}\py\U\|_{H_{\Psi}^{7,0}}\|\sqrt{a}\zet\|_{H_{\Psi}^{\frac{23}{3},0}}
\\
&\leq C\eta^{-1}\ep\ltr^{2\al-2\ga_0-\frac{3}{2}}\|\sqrt{a}\zet\|^2_{H_{\Psi}^{\frac{23}{3},0}}
+\frac{1}{100}\eta\|\sqrt{a}\py\U\|^2_{H_{\Psi}^{7,0}}
\\
&\leq C\eta^{-1}\ep^{\frac{1}{2}}\ltr^{3\al-2\ga_0-\frac{3}{2}}\tad\|\sqrt{a}\zet\|^2_{H_{\Psi}^{\frac{23}{3},0}}
+\frac{1}{100}\eta\|\sqrt{a}\py\U\|^2_{H_{\Psi}^{7,0}}\\
&\leq C\eta^{-1}\ep^{\frac{1}{2}}\tad\|\sqrt{a}\zet\|^2_{H_{\Psi}^{\frac{23}{3},0}}
+\frac{1}{100}\eta\|\sqrt{a}\py\U\|^2_{H_{\Psi}^{7,0}},
\end{align*}
here we use the fact that $\al\leq\frac{2}{3}\ga_0+\frac{1}{2}$.

For $F_4$, using \eqref{lk} and direct calculation, we can obtain
\begin{align*}
\left[\frac{1}{\tad}\T_{\py b};\Lk\right]=&-\pt\left(\frac{1}{\tad}\right)\T_{\py b}
-\frac{1}{\tad}\T_{\pt\py b-\ka\py^3 b}+\la\left[\T_{\py b};[D_x]^{\frac{2}{3}}\right]+\frac{1}{\tad}\left[\T_{\py b};\T_u \px\right]\\
&+\frac{1}{\tad}\left[\T_{\py b};\T_v \py\right]+\frac{2}{3}\de(t)\frac{1}{\tad}\left[\T_{\py b};\T_{D_x u}Q(D_x)\px\right]+\frac{2\ka}{\tad}\T_{\py^2 b}\py.
\end{align*}
Thus we have
$$
a\left(\left[\frac{1}{\tad}\T_{\py b};\Lk\right]\int_{y}^{\ty}\U dz,\zet\right)_{H_{\Psi}^{\frac{22}{3},0}}=\sum_{i=1}^7 E_i.
$$

Now start to estimate term by term. For $E_1$, notice that \eqref{theta}, so $\pt\left(\frac{1}{\tad}\right)=\al\ep^{-\frac{1}{2}}\ltr^{\al-1}$. Using lemma \ref{gu1}, lemma \ref{ub} and \eqref{ty}, we can derive
\begin{align*}
\left|E_1\right|&\leq C\al\ep^{-\frac{1}{2}}\ltr^{\al-1}\|\py b\|_{L_{\vm}^2(H_{\h}^{\frac{1}{2}+})}\|\sqrt{a}\int_{y}^{\ty}\U dz\|_{L_{\vm,\Psi}^{\ty}(H_{\h}^{\frac{22}{3}})}\|\sqrt{a}\zet\|_{H_{\Psi}^{\frac{23}{3},0}}\\
&\leq C \ltr^{2\al-\ga_0-\frac{5}{4}}\tad\|\sqrt{a}\U \|_{H_{\Psi}^{\frac{22}{3},0}}\|\sqrt{a}\zet\|_{H_{\Psi}^{\frac{23}{3},0}}\\
&\leq C \tad\|\sqrt{a}\U \|_{H_{\Psi}^{\frac{22}{3},0}}\|\sqrt{a}\zet\|_{H_{\Psi}^{\frac{23}{3},0}}\\
&\leq C \tad\|\sqrt{a}\U \|^2_{H_{\Psi}^{\frac{22}{3},0}}+C \tad\|\sqrt{a}\zet\|^2_{H_{\Psi}^{\frac{23}{3},0}},
\end{align*}
here we use $\al\leq\frac{1}{2}\ga_0+\frac{5}{8}$. For $E_2$, similarly we have
\begin{align*}
\left|E_2\right|&\leq C\ep^{-\frac{1}{2}}\ltr^{\al}\|\pt\py b-\ka\py^3 b\|_{L_{\vm}^2(H_{\h}^{\frac{1}{2}+})}\|\sqrt{a}\int_{y}^{\ty}\U dz\|_{L_{\vm,\Psi}^{\ty}(H_{\h}^{\frac{22}{3}})}\|\sqrt{a}\zet\|_{H_{\Psi}^{\frac{23}{3},0}}\\
&\leq C\ep^{-\frac{1}{2}}\ltr^{\al+\frac{1}{4}}\|\pt\py b-\ka\py^3 b\|_{L_{\vm}^2(H_{\h}^{\frac{1}{2}+})}\|\sqrt{a}\U \|_{H_{\Psi}^{\frac{22}{3},0}}\|\sqrt{a}\zet\|_{H_{\Psi}^{\frac{23}{3},0}}.
\end{align*}
By \eqref{mhd} and \eqref{PN}, we know
\begin{align*}
\pt\py b-\ka\py^3 b=&\py b\px u+\py h\py u+ b\py\px u+h\py^2u\\
&-\py u\px b-\py v\py b-u\py\px b-v\py^2b.
\end{align*}
To handle the product, we need to use the following basic fact of Sobolev space: let $s>\frac{1}{2},$ then
\begin{align}\label{jiben}
\|fg\|_{H_{\h}^s}\leq C\|f\|_{H_{\h}^s}\|g\|_{H_{\h}^s}.
\end{align}
Noticing that $\py v=-\px u,\py h=-\px b$, at this time we can use lemma \ref{ub} to get the following,
\begin{align*}
&~~~~~\|\pt\py b-\ka\py^3 b\|_{L_{\vm}^2(H_{\h}^{\frac{1}{2}+})}\\
&\leq
\|(u,b)\|_{L_{\vm}^{\ty}(H_{\h}^{\frac{1}{2}+})}\|(\py u,\py b)\|_{H^{\frac{3}{2}+,0}}+\|(v,h)\|_{L_{\vm}^{\ty}(H_{\h}^{\frac{1}{2}+})}
\|(\py^2 u,\py^2 b)\|_{H^{\frac{1}{2}+,0}}\\
&~~~~~+\|(\py u,\py b)\|_{L_{\vm}^{\ty}(H_{\h}^{\frac{1}{2}+})}\|( u, b)\|_{H^{\frac{3}{2}+,0}}\\
&\leq C \ep^2\ltr^{-2\ga_0-\frac{3}{4}}.
\end{align*}
Thus $E_2$ satisfies
\begin{align*}
\left|E_2\right|&\leq C\ep^{\frac{3}{2}}\ltr^{\al-2\ga_0-\frac{1}{2}}
\|\sqrt{a}\U \|_{H_{\Psi}^{\frac{22}{3},0}}\|\sqrt{a}\zet\|_{H_{\Psi}^{\frac{23}{3},0}}\\
&\leq C\ep\ltr^{2\al-2\ga_0-\frac{1}{2}}\tad
\|\sqrt{a}\U \|_{H_{\Psi}^{\frac{22}{3},0}}\|\sqrt{a}\zet\|_{H_{\Psi}^{\frac{23}{3},0}}\\
&\leq C\ep\tad
\|\sqrt{a}\U \|_{H_{\Psi}^{\frac{22}{3},0}}\|\sqrt{a}\zet\|_{H_{\Psi}^{\frac{23}{3},0}}\\
&\leq C\ep\tad
\|\sqrt{a}\U \|^2_{H_{\Psi}^{\frac{22}{3},0}}+
C\ep\tad\|\sqrt{a}\zet\|^2_{H_{\Psi}^{\frac{23}{3},0}},
\end{align*}
here we use $\al\leq\ga_0+\frac{1}{4}$. For $E_3$,
\begin{align*}
\left|E_3\right|&\leq C\la\|\py b\|_{L_{\vm}^2(H_{\h}^{\frac{3}{2}+})}\|\sqrt{a}\int_{y}^{\ty}\U dz\|_{L_{\vm,\Psi}^{\ty}(H_{\h}^{\frac{22}{3}})}\|\sqrt{a}\zet\|_{H_{\Psi}^{\frac{23}{3},0}}\\
&\leq C\ep\la\ltr^{-\ga_0-\frac{1}{4}}\|\sqrt{a}\U\|_{H_{\Psi}^{\frac{22}{3},0}}
\|\sqrt{a}\zet\|_{H_{\Psi}^{\frac{23}{3},0}}\\
&\leq C\ep^{\frac{1}{2}}\la\ltr^{\al-\ga_0-\frac{1}{4}}\tad\|\sqrt{a}\U\|_{H_{\Psi}^{\frac{22}{3},0}}
\|\sqrt{a}\zet\|_{H_{\Psi}^{\frac{23}{3},0}}\\
&\leq C\ep^{\frac{1}{2}}\la\tad\|\sqrt{a}\U\|_{H_{\Psi}^{\frac{22}{3},0}}
\|\sqrt{a}\zet\|_{H_{\Psi}^{\frac{23}{3},0}}\\
&\leq C\ep^{\frac{1}{2}}\la\tad\|\sqrt{a}\U\|^2_{H_{\Psi}^{\frac{22}{3},0}}
+C\ep^{\frac{1}{2}}\la\tad\|\sqrt{a}\zet\|^2_{H_{\Psi}^{\frac{23}{3},0}},
\end{align*}
here we also use $\al\leq\ga_0+\frac{1}{4}$. For $E_4$, by
\eqref{theta}, \eqref{ty}, lemma \ref{gu2} and lemma \ref{ub}, it's easy to know
\begin{align*}
\left|E_4\right|&\leq C\ep^{-\frac{1}{2}}\ltr^{\al}\|\py b\|_{L_{\vm}^2(H_{\h}^{\frac{3}{2}+})}\|u\|_{L_{\vm}^{\ty}(H_{\h}^{\frac{3}{2}+})}
\|\sqrt{a}\int_{y}^{\ty}\U dz\|_{L_{\vm,\Psi}^{\ty}(H_{\h}^{\frac{22}{3}})}\|\sqrt{a}\zet\|_{H_{\Psi}^{\frac{23}{3},0}}\\
&\leq C\ep\ltr^{2\al-2\ga_0-\frac{1}{2}}\tad\|\sqrt{a}\U\|_{H_{\Psi}^{\frac{22}{3},0}}
\|\sqrt{a}\zet\|_{H_{\Psi}^{\frac{23}{3},0}}\\
&\leq C\ep\tad\|\sqrt{a}\U\|^2_{H_{\Psi}^{\frac{22}{3},0}}
+C\ep\tad\|\sqrt{a}\zet\|^2_{H_{\Psi}^{\frac{23}{3},0}}.
\end{align*}
For $E_5$, first we notice
\begin{align}\label{jiaohuan}
\left[\T_{\py u};\T_v\py\right]=\left[\T_{\py u};\T_v\right]\py-\T_v\T_{\py^2 u}.
\end{align}
Applying \eqref{jiaohuan}, lemma \ref{gu1}, lemma \ref{gu2} and lemma \ref{ub}, we can get the estimate of $E_5$,
\begin{align*}
\left|E_5\right|&\leq \left|a\frac{1}{\tad}\left(\left[\T_{\py b};\T_v\right]\U,\zet\right)_{H_{\Psi}^{\frac{22}{3},0}}\right|+
\left|a\frac{1}{\tad}\left(\T_v\T_{\py^2 u}\int_y^{\ty}\U dz,\zet\right)_{H_{\Psi}^{\frac{22}{3},0}}\right|\\
&\leq C\ep\ltr^{2\al-2\ga_0-\frac{1}{2}}\tad\|\sqrt{a}\U\|_{H_{\Psi}^{\frac{22}{3},0}}
\|\sqrt{a}\zet\|_{H_{\Psi}^{\frac{23}{3},0}}\\
&\leq C\ep\tad\|\sqrt{a}\U\|^2_{H_{\Psi}^{\frac{22}{3},0}}
+C\ep\tad\|\sqrt{a}\zet\|^2_{H_{\Psi}^{\frac{23}{3},0}}.
\end{align*}
For $E_6$, using a similar calculation as $E_4$, we obtain
\begin{align*}
\left|E_6\right|\leq C\ep\tad\|\sqrt{a}\U\|^2_{H_{\Psi}^{\frac{22}{3},0}}
+C\ep\tad\|\sqrt{a}\zet\|^2_{H_{\Psi}^{\frac{23}{3},0}}.
\end{align*}
For $E_7$, it's obvious
\begin{align*}
\left|E_7\right|&\leq C\ep^{-\frac{1}{2}}\ltr^{\al}\|\py^2 b\|_{L_{\vm}^{\ty}(H_{\h}^{\frac{1}{2}+})} \|\sqrt{a}\U\|_{H_{\Psi}^{\frac{22}{3},0}}
\|\sqrt{a}\zet\|_{H_{\Psi}^{\frac{23}{3},0}}\\
&\leq C\ltr^{2\al-\ga_0-\frac{5}{4}}\tad \|\sqrt{a}\U\|_{H_{\Psi}^{\frac{22}{3},0}}
\|\sqrt{a}\zet\|_{H_{\Psi}^{\frac{23}{3},0}}\\
&\leq C\tad\|\sqrt{a}\U\|^2_{H_{\Psi}^{\frac{22}{3},0}}
+C\tad\|\sqrt{a}\zet\|^2_{H_{\Psi}^{\frac{23}{3},0}},
\end{align*}
here we use the fact that $\al\leq\frac{1}{2}\ga_0+\frac{5}{8}.$

Combining $E_1-E_7$, we can deduce that when $\al\leq\min\{\ga_0+\frac{1}{4},\frac{1}{2}\ga_0+\frac{5}{8}\}$, $F_4$ satisfies
\begin{align*}
\left|a\left(\left[\frac{1}{\tad}\T_{\py b};\Lk\right]\int_{y}^{\ty}\U dz,\zet\right)_{H_{\Psi}^{\frac{22}{3},0}}\right|\leq C(1+\ep^{\frac{1}{2}}\la)\tad\|\sqrt{a}\U\|^2_{H_{\Psi}^{\frac{22}{3},0}}
+C(1+\ep^{\frac{1}{2}}\la)\tad\|\sqrt{a}\zet\|^2_{H_{\Psi}^{\frac{23}{3},0}}.
\end{align*}
For $F_5$, referring to the estimates of $F_3$, we can gain
\begin{align*}
\left| a\left(\frac{2(1-\ka)}{3\tad}\de(t)\T_{\py D_x b}Q(D_x)\py\U,\zet\right)_{H_{\Psi}^{\frac{22}{3},0}}\right|\
\leq C\eta^{-1}\ep^{\frac{1}{2}}\tad\|\sqrt{a}\zet\|^2_{H_{\Psi}^{\frac{23}{3},0}}
+\frac{1}{100}\eta\|\sqrt{a}\py\U\|^2_{H_{\Psi}^{7,0}}.
\end{align*}
For $F_6$, similar to $F_4$, we obtain by direct calculation
\begin{align*}
&\left[\frac{2\de(t)}{3\tad}\T_{\py D_x b}Q(D_x);\Lk\right]\\
=
&-\frac{2}{3}\pt\left(\frac{\de(t)}{\tad}\right)\T_{\py D_x b}Q(D_x)
-\frac{2\de(t)}{3\tad}\T_{\pt\py D_x b-\ka\py^3 D_x b}Q(D_x)\\
&+\frac{2\de(t)}{3}\la\left[\T_{\py D_x b}Q(D_x);[D_x]^{\frac{2}{3}}\right]+\frac{2\de(t)}{3\tad}\left[\T_{\py D_x b}Q(D_x);\T_u \px\right]\\
&+\frac{2\de(t)}{3\tad}\left[\T_{\py D_x b}Q(D_x);\T_v \py\right]+\frac{4\de^2(t)}{9\tad}\left[\T_{\py D_x b}Q(D_x);\T_{D_x u}Q(D_x)\px\right]\\
&+\frac{4\ka\de(t)}{3\tad}\T_{\py^2 D_x b}Q(D_x)\py.
\end{align*}
Noticing \eqref{phi} and \eqref{theta}, we derive
$$0<\de(t)\leq\de<C,$$
$$\pt\left(\frac{\de(t)}{\tad}\right)=-\la+
\de(t)\al\ep^{-\frac{1}{2}}\ltr^{\al-1}.$$
So similar to the estimates of $F_4$, we can obtain the estimates of $F_6$
\begin{align*}
&~~~~~\left|a\left(\left[\frac{2\de(t)}{3\tad}\T_{\py D_x b}Q(D_x);\Lk\right]\int_{y}^{\ty}\U dz,\zet\right)_{H_{\Psi}^{\frac{22}{3},0}}\right|\\
&\leq C(1+\ep^{\frac{1}{2}}\la)\tad\|\sqrt{a}\U\|^2_{H_{\Psi}^{\frac{22}{3},0}}
+C(1+\ep^{\frac{1}{2}}\la)\tad\|\sqrt{a}\zet\|^2_{H_{\Psi}^{\frac{23}{3},0}}.
\end{align*}

Now start computing the left side of \eqref{zetzet}, similar to \eqref{ptpy}, we can get
\begin{align}\label{ptpylk}
\begin{split}
\left(\pt \zet-\ka\py^2\zet,\zet\right)_{H_{\Psi}^{\frac{22}{3},0}}
\geq\frac{1}{2}\dfrac{d}{dt}\|\zet(t)\|^2_{H_{\Psi}^{\frac{22}{3},0}}
+2l_{\ka}\|\py\zet(t)\|^2_{H_{\Psi}^{\frac{22}{3},0}}.
\end{split}
\end{align}
Then
\begin{align}\label{zetzetlk}
\begin{split}
a(\Lk\zet,\zet)_{H_{\Psi}^{\frac{22}{3},0}}&\geq\frac{1}{2}\dfrac{d}{dt}
\|\sqrt{a}\zet(t)\|^2_{H_{\Psi}^{\frac{22}{3},0}}-\frac{1}{2}\|\sqrt{a'}\zet(t)\|^2_{H_{\Psi}^{\frac{22}{3},0}}
+\la\tad\|\sqrt{a}\zet(t)\|^2_{H_{\Psi}^{\frac{22}{3},0}}\\
&+2l_{\ka}\|\sqrt{a}\py\zet(t)\|^2_{H_{\Psi}^{\frac{22}{3},0}}+
a\left(\T_u\px\zet,\zet\right)_{H_{\Psi}^{\frac{22}{3},0}}+a\left(\T_v\py\zet,\zet\right)_{H_{\Psi}^{\frac{22}{3},0}}\\
&+a\left(\frac{2}{3}\de(t)
\T_{D_x u}Q(D_x)\px\zet,\zet\right)_{H_{\Psi}^{\frac{22}{3},0}}.
\end{split}
\end{align}
Using the same method of estimates as $a\left(\Ll\U,\U\right)_{H_{\Psi}^{7,0}}$, we can deduce
$$\left|a\left(\T_u\px\zet,\zet\right)_{H_{\Psi}^{\frac{22}{3},0}}\right|\leq
C\ep^{\frac{1}{2}}\tad\|\sqrt{a}\zet\|^2_{H_{\Psi}^{\frac{23}{3},0}},$$
$$\left|a\left(\T_v\py\zet,\zet\right)_{H_{\Psi}^{\frac{22}{3},0}}\right|\leq
C\eta^{-1}\ep^{\frac{3}{2}}\tad\|\sqrt{a}\zet\|^2_{H_{\Psi}^{\frac{23}{3},0}}
+\frac{1}{100}\eta\|\sqrt{a}\py\zet\|^2_{H_{\Psi}^{\frac{22}{3},0}},$$
$$\left|a\left(\frac{2}{3}\de(t)
\T_{D_x u}Q(D_x)\px\zet,\zet\right)_{H_{\Psi}^{\frac{22}{3},0}}\right|\leq
C\ep^{\frac{1}{2}}\tad\|\sqrt{a}\zet\|^2_{H_{\Psi}^{\frac{23}{3},0}}.$$
Combining all the above estimates and integrating over $[0,t]$, noting that \eqref{zetchubianzhi}, we can obtain the following estimates
\begin{align}\label{turanhaomafang}
\begin{aligned}
&~~~~~\|\sqrt{a}\zet(t)\|^2_{H_{\Psi}^{\frac{22}{3},0}}-\int_0^t \|\sqrt{a'}\zet\|^2_{H_{\Psi}^{\frac{22}{3},0}}ds+(4l_{\ka}-\frac{1}{50}\eta)\int_0^t \|\sqrt{a}\py\zet\|^2_{H_{\Psi}^{\frac{22}{3},0}}ds\\
&~~~~~
+2\left(\la-C(1+\la\ep^{\frac{1}{2}}+\eta^{-1}\ep^{\frac{1}{2}})\right)\int_0^t
\tad\|\sqrt{a}\zet\|^2_{H_{\Psi}^{\frac{23}{3},0}}ds\\
&\leq
\|\sqrt{a}(0)\bp(0)\|^2_{H_{\Psi}^{\frac{22}{3},0}}
+\frac{1}{25}\eta\int_0^t
\left(\|\sqrt{a}\py\U\|^2_{H_{\Psi}^{7,0}}
+\|\sqrt{a}(\py\uph,\py\bp)\|^2_{H_{\Psi}^{7,0}}\right)ds
\\
&~~~~~+C(1+\ep^{\frac{1}{2}}\la)\int_0^t\tad
\left(\|\sqrt{a}\U\|^2_{H_{\Psi}^{\frac{22}{3},0}}+\|\sqrt{a}\Np\|^2_{H_{\Psi}^{7,0}}
+\|\sqrt{a}(\uph,\bp)\|^2_{H_{\Psi}^{\frac{22}{3},0}}\right)ds.
\end{aligned}
\end{align}
%ÕâŸÍÍê³ÉÁË$\zet$µÄ¹ÀŒÆ,¶ø$\ze$µÄ¹ÀŒÆÍ¬Àí¿ÉµÃ,ÓÚÊÇÎÒÃÇÍê³ÉÁËÃüÌâ\ref{guze}µÄÖ€Ã÷.
Finally we come back to give the estimates of
$\|\sqrt{a}(\py\uph,\py\bp)\|^2_{H_{\Psi}^{7,0}}$
and $\|\sqrt{a}(\uph,\bp)\|^2_{H_{\Psi}^{\frac{22}{3},0}}$.

By \eqref{ze}, we have
\begin{align}\label{upbp}
\begin{split}
\uph=\ze+\frac{1}{\tad}\T_{\py u}\int_y^{\ty}\U dz
+\frac{2\de(t)}{3\tad}\T_{\py D_x u}Q(D_x)\int_y^{\ty}\U dz,
\\
\bp=\zet+\frac{1}{\tad}\T_{\py b}\int_y^{\ty}\U dz
+\frac{2\de(t)}{3\tad}\T_{\py D_x b}Q(D_x)\int_y^{\ty}\U dz.
\end{split}
\end{align}
Using \eqref{theta}, \eqref{ty}, lemma \ref{gu1} and lemma \ref{ub}, it yields
\begin{align}\label{Uub}
\begin{aligned}
&\|\frac{1}{\tad}\T_{\py u}\int_y^{\ty}\U dz\|_{H_{\Psi}^{\frac{22}{3},0}}\leq
C\ep^{\frac{1}{2}}\ltr^{\al-\ga_0-\frac{1}{4}}\|\U\|_{H_{\Psi}^{\frac{22}{3},0}}\leq
C\ep^{\frac{1}{2}}\|\U\|_{H_{\Psi}^{\frac{22}{3},0}},\\
&\|\frac{2\de(t)}{3\tad}\T_{\py D_x u}Q(D_x)\int_y^{\ty}\U dz\|_{H_{\Psi}^{\frac{22}{3},0}}\leq
C\ep^{\frac{1}{2}}\ltr^{\al-\ga_0-\frac{1}{4}}\|\U\|_{H_{\Psi}^{\frac{22}{3},0}}\leq
C\ep^{\frac{1}{2}}\|\U\|_{H_{\Psi}^{\frac{22}{3},0}},\\
&\|\frac{1}{\tad}\T_{\py b}\int_y^{\ty}\U dz\|_{H_{\Psi}^{\frac{22}{3},0}}\leq
C\ep^{\frac{1}{2}}\ltr^{\al-\ga_0-\frac{1}{4}}\|\U\|_{H_{\Psi}^{\frac{22}{3},0}}\leq
C\ep^{\frac{1}{2}}\|\U\|_{H_{\Psi}^{\frac{22}{3},0}},\\
&\|\frac{2\de(t)}{3\tad}\T_{\py D_x b}Q(D_x)\int_y^{\ty}\U dz\|_{H_{\Psi}^{\frac{22}{3},0}}\leq
C\ep^{\frac{1}{2}}\ltr^{\al-\ga_0-\frac{1}{4}}\|\U\|_{H_{\Psi}^{\frac{22}{3},0}}\leq
C\ep^{\frac{1}{2}}\|\U\|_{H_{\Psi}^{\frac{22}{3},0}}.
\end{aligned}
\end{align}
Combing \eqref{upbp} and \eqref{Uub}, we know
\begin{align}\label{ubdekongzhi}
\begin{split}
\|\sqrt{a}(\uph,\bp)\|_{H_{\Psi}^{\frac{22}{3},0}}&\leq \|\sqrt{a}(\ze,\zet)\|_{H_{\Psi}^{\frac{22}{3},0}}
+C\ep^{\frac{1}{2}}\|\sqrt{a}\U\|_{H_{\Psi}^{\frac{22}{3},0}}\\
&\leq \|\sqrt{a}(\ze,\zet)\|_{H_{\Psi}^{\frac{23}{3},0}}
+C\ep^{\frac{1}{2}}\|\sqrt{a}\U\|_{H_{\Psi}^{\frac{22}{3},0}}.
\end{split}
\end{align}
With $\py$ acting on \eqref{upbp}, we can get
\begin{align}\label{pyupbp}
\begin{split}
\py\uph&=\py\ze+\frac{1}{\tad}\T_{\py^2 u}\int_y^{\ty}\U dz
+\frac{2\de(t)}{3\tad}\T_{\py^2 D_x u}Q(D_x)\int_y^{\ty}\U dz\\
&-\frac{1}{\tad}\T_{\py u}\U
-\frac{2\de(t)}{3\tad}\T_{\py D_x u}Q(D_x)\U,
\\
\py\bp&=\py\zet+\frac{1}{\tad}\T_{\py^2 b}\int_y^{\ty}\U dz
+\frac{2\de(t)}{3\tad}\T_{\py^2 D_x b}Q(D_x)\int_y^{\ty}\U dz\\
&-\frac{1}{\tad}\T_{\py b}\U
-\frac{2\de(t)}{3\tad}\T_{\py D_x b}Q(D_x)\U.
\end{split}
\end{align}
Similarly by \eqref{theta}, \eqref{ty}, lemma \ref{gu1} and
lemma \ref{ub}, we can deduce
\begin{align}\label{pyubdekongzhi}
\begin{split}
\|\sqrt{a}(\py\uph,\py\bp)\|_{H_{\Psi}^{7,0}}\leq \|\sqrt{a}(\py\ze,\py\zet)\|_{H_{\Psi}^{\frac{22}{3},0}}
+C\ep^{\frac{1}{2}}\ltr^{-\frac{1}{2}}\|\sqrt{a}\U\|_{H_{\Psi}^{7,0}}.
\end{split}
\end{align}
At this time using lemma \ref{gu6}, we can obtain
\begin{align}\label{pyubdekongzhi2}
\begin{split}
\|\sqrt{a}(\py\uph,\py\bp)\|_{H_{\Psi}^{7,0}}\leq \|\sqrt{a}(\py\ze,\py\zet)\|_{H_{\Psi}^{\frac{22}{3},0}}
+C\ep^{\frac{1}{2}}\|\sqrt{a}\py\U\|_{H_{\Psi}^{7,0}}.
\end{split}
\end{align}

Next, choose $\ep_*$ sufficiently small so that when $\ep<\ep_*$, $C\ep^{\frac{1}{2}}<1$ holds. At this point, combining \eqref{turanhaomafang}, \eqref{ubdekongzhi} and \eqref{pyubdekongzhi2},
we can get
\begin{align}\label{turanhaomafang2}
\begin{aligned}
&~~~~~\|\sqrt{a}\zet(t)\|^2_{H_{\Psi}^{\frac{22}{3},0}}-\int_0^t \|\sqrt{a'}\zet\|^2_{H_{\Psi}^{\frac{22}{3},0}}ds+(4l_{\ka}-\frac{1}{50}\eta)\int_0^t \|\sqrt{a}\py\zet\|^2_{H_{\Psi}^{\frac{22}{3},0}}ds\\
&~~~~~
+2\left(\la-C(1+\la\ep^{\frac{1}{2}}+\eta^{-1}\ep^{\frac{1}{2}})\right)\int_0^t
\tad\|\sqrt{a}\zet\|^2_{H_{\Psi}^{\frac{23}{3},0}}ds\\
&\leq
\|\sqrt{a}(0)(\uph(0),\bp(0))\|^2_{H_{\Psi}^{\frac{22}{3},0}}
+\frac{2}{25}\eta\int_0^t
\left(\|\sqrt{a}\py\U\|^2_{H_{\Psi}^{7,0}}
+\|\sqrt{a}(\py\ze,\py\zet)\|^2_{H_{\Psi}^{\frac{22}{3},0}}\right)ds
\\
&~~~~~+C(1+\ep^{\frac{1}{2}}\la)\int_0^t\tad
\left(\|\sqrt{a}\U\|^2_{H_{\Psi}^{\frac{22}{3},0}}+\|\sqrt{a}(\Pp,\Np)\|^2_{H_{\Psi}^{7,0}}
+\|\sqrt{a}(\ze,\zet)\|^2_{H_{\Psi}^{\frac{23}{3},0}}\right)ds.
\end{aligned}
\end{align}
This completes the estimates of $\zet$, and the estimates of $\ze$ follow in the same way. After sorting out the coefficients, we complete the proof of proposition \ref{guze}.
\end{proof}

Applying proposition \ref{guze}, we can prove proposition \ref{propositionze}.

\textbf{Proof of Proposition \ref{propositionze}:}
Take $a(t)=\ltr^{2l_{\ka}-2\eta}$ in \eqref{zedekongzhi}. Then by lemma \ref{gu6} we can get
\begin{align*}
&-(2l_{\ka}-2\eta)\int_0^t \|\lsr^{l_{\ka}-\eta-\frac{1}{2}}
(\ze,\zet)\|^2_{H_{\Psi}^{\frac{22}{3},0}}ds
+(4l_{\ka}-\frac{1}{10}\eta)\int_0^t \|\lsr^{l_{\ka}-\eta}(\py\ze,\py\zet)\|^2_{H_{\Psi}^{\frac{22}{3},0}}ds\\
\geq&-(4l_{\ka}-4\eta)\int_0^t \|\lsr^{l_{\ka}-\eta}(\py\ze,\py\zet)\|^2_{H_{\Psi}^{\frac{22}{3},0}}ds
+(4l_{\ka}-\frac{1}{10}\eta)\int_0^t \|\lsr^{l_{\ka}-\eta}(\py\ze,\py\zet)\|^2_{H_{\Psi}^{\frac{22}{3},0}}ds\\
\geq&\eta\int_0^t \|\lsr^{l_{\ka}-\eta}(\py\ze,\py\zet)\|^2_{H_{\Psi}^{\frac{22}{3},0}}ds,
\end{align*}
thus we have
\begin{align*}
&~~~~~\|\ltr^{l_{\ka}-\eta}(\ze(t),\zet(t))\|^2_{H_{\Psi}^{\frac{22}{3},0}}
+\eta\int_0^t \|\lsr^{l_{\ka}-\eta}(\py\ze,\py\zet)\|^2_{H_{\Psi}^{\frac{22}{3},0}}ds\\
&~~~~~
+2\left(\la-C(1+\la\ep^{\frac{1}{2}}+\eta^{-1}\ep^{\frac{1}{2}})\right)\int_0^t
\tad\|\lsr^{l_{\ka}-\eta}(\ze,\zet)\|^2_{H_{\Psi}^{\frac{23}{3},0}}ds\\
&\leq
\|(\uph(0),\bp(0))\|^2_{H_{\Psi}^{\frac{22}{3},0}}
+\frac{2}{25}\eta\int_0^t
\|\lsr^{l_{\ka}-\eta}\py\U\|^2_{H_{\Psi}^{7,0}}
ds
\\
&~~~~~+C(1+\ep^{\frac{1}{2}}\la)\int_0^t\tad
\left(\|\lsr^{l_{\ka}-\eta}
\U\|^2_{H_{\Psi}^{\frac{22}{3},0}}+\|\lsr^{l_{\ka}-\eta}
(\Pp,\Np)\|^2_{H_{\Psi}^{7,0}}
\right)ds.
\end{align*}
Taking $\ep_2$ sufficiently small, such that when $\ep<\ep_2<\min\{\ep_*,\ep_1\}$, we have $1+\eta^{-1}\ep^{\frac{1}{2}}<2,C\ep^{\frac{1}{2}}<\frac{1}{10}. $
Let $\la_2$ be sufficiently large, and $\la$ satisfies
$\la>\la_2>\max\{10C,\la_1\}. $ So the proposition \ref{propositionze} holds.

\hfill $\square$
\section{The Gevrey Estimates of $\uph$ and $\bp$}\label{gujiub}
In this section we will give the estimates of $(\uph,\bp)$. In fact with the help of proposition \ref{guU} and proposition \ref{guze}, we can directly obtain the estimates of $(\uph,\bp)$ with the following results:
\begin{Proposition}\label{ubxianyanguji}
Let $\ka\in(0,2)$ be a given constant. Let $(u,b)$ be a sufficiently smooth solution of the equation \eqref{mhd} on $[0,T_*]$ and $(u,b)$ decays rapidly to $0$ as $y$ tends to $+\ty$. Let $a(t)$ be a non-negative and non-decreasing function on $\R_+$. Then when $\al\leq\min\{\ga_0+\frac{1}{4},\frac{2}{3}\ga_0+\frac{1}{2},\frac{1}{2}\ga_0+\frac{5}{8}\}$ and $\ep<\ep_*$, for any $t<T_*$ and sufficiently small $\eta >0$ the following inequality holds,
\begin{align}\label{ubxianyanguji2}
\begin{aligned}
&~~~~~\|\sqrt{a}(\uph(t),\bp(t))\|^2_{H_{\Psi}^{7,0}}
-2\int_0^t\left(\|\sqrt{a'}\U\|^2_{H_{\Psi}^{7,0}}+
\|\sqrt{a'}(\ze,\zet)\|^2_{H_{\Psi}^{\frac{22}{3},0}}\right)ds\\
&~~~~~+2(4l_{\ka}-\frac{1}{10}\eta)\int_0^t \left(\|\sqrt{a}\py\U\|^2_{H_{\Psi}^{7,0}}+\|\sqrt{a}(\py\ze,\py\zet)\|^2_{H_{\Psi}^{\frac{22}{3},0}}\right)ds\\
&~~~~~
+2\left(\la-C(1+\la\ep^{\frac{1}{2}}+\eta^{-1}\ep^{\frac{1}{2}})\right)\int_0^t
\tad\|\sqrt{a}(\uph,\bp)\|^2_{H_{\Psi}^{\frac{22}{3},0}}ds\\
&\leq
2\|\sqrt{a}(0)(\uph(0),\bp(0))\|^2_{H_{\Psi}^{\frac{22}{3},0}}
+C(1+\ep^{\frac{1}{2}}\la)\int_0^t\tad
\|\sqrt{a}(\Pp,\Np)\|^2_{H_{\Psi}^{7,0}}
ds,
\end{aligned}
\end{align}
where $l_{\ka}=\frac{\ka(2-\ka)}{4}\in(0,\frac{1}{4}].$
\end{Proposition}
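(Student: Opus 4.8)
The plan is to deduce Proposition \ref{ubxianyanguji} directly from the two auxiliary estimates already in hand, Proposition \ref{guU} and Proposition \ref{guze}, using the algebraic identities \eqref{upbp} which recover $(\uph,\bp)$ from $(\ze,\zet)$ and $\int_y^{\ty}\U\,dz$. No new differential inequality is needed: the whole task is to repackage the conclusions \eqref{Udekongzhi} and \eqref{zedekongzhi} into the form \eqref{ubxianyanguji2}.

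First I would record the ``transfer'' estimates. Taking $H_{\Psi}^{7,0}$-norms in \eqref{upbp} and bounding the paraproduct tails with lemma \ref{gu1}, lemma \ref{ub}, the vertical-integration inequality \eqref{ty} and \eqref{theta} — literally the computation behind \eqref{Uub}, now carried out at regularity $7$ rather than $\frac{22}{3}$, with the power $\ltr^{\al-\ga_0-\frac14}$ killed by $\al\le\ga_0+\frac14$ — one gets
\[
\|\sqrt{a}(\uph,\bp)\|_{H_{\Psi}^{7,0}}\le \|\sqrt{a}(\ze,\zet)\|_{H_{\Psi}^{\frac{22}{3},0}}+C\ep^{\frac12}\|\sqrt{a}\,\U\|_{H_{\Psi}^{7,0}},
\]
while \eqref{ubdekongzhi} supplies the analogous bound one regularity level up. Squaring these and using $C\ep^{\frac12}<1$ for $\ep<\ep_*$, I control $\|\sqrt a(\uph,\bp)(t)\|^2_{H_{\Psi}^{7,0}}$ by $2\|\sqrt a(\ze,\zet)(t)\|^2_{H_{\Psi}^{\frac{22}{3},0}}+2\|\sqrt a\U(t)\|^2_{H_{\Psi}^{7,0}}$, and $\int_0^t\tad\|\sqrt a(\uph,\bp)\|^2_{H_{\Psi}^{\frac{22}{3},0}}ds$ by $2\int_0^t\tad\|\sqrt a(\ze,\zet)\|^2_{H_{\Psi}^{\frac{23}{3},0}}ds+2C\ep\int_0^t\tad\|\sqrt a\U\|^2_{H_{\Psi}^{\frac{22}{3},0}}ds$.

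Next I would add \eqref{Udekongzhi} to \eqref{zedekongzhi}, multiply through by $2$, and absorb the three coupling terms. (i) The source $\int_0^t\tad\|\sqrt a\ze\|^2_{H_{\Psi}^{\frac{23}{3},0}}ds$ on the right of \eqref{Udekongzhi} is swallowed by the $\la$-dissipation $\la\int_0^t\tad\|\sqrt a(\ze,\zet)\|^2_{H_{\Psi}^{\frac{23}{3},0}}ds$ of \eqref{zedekongzhi}, for $\la$ large. (ii) The term $\frac{2}{25}\eta\int_0^t\|\sqrt a\py\U\|^2_{H_{\Psi}^{7,0}}ds$ on the right of \eqref{zedekongzhi} is absorbed by $(1-\frac{1}{50}\eta)\int_0^t\|\sqrt a\py\U\|^2_{H_{\Psi}^{7,0}}ds$ from \eqref{Udekongzhi}; crucially, since $l_{\ka}\le\frac14$ one has $4l_{\ka}\le1$, so the surviving $\py\U$-coefficient, which is $2-\frac15\eta$ after doubling and absorption, still dominates the claimed $2(4l_{\ka}-\frac{1}{10}\eta)$. (iii) The term $C(1+\ep^{\frac12}\la)\int_0^t\tad\|\sqrt a\U\|^2_{H_{\Psi}^{\frac{22}{3},0}}ds$ from \eqref{zedekongzhi}, together with the $O(\ep)$ leftover produced when reinserting the $(\uph,\bp)$-dissipation via the transfer estimate above, is absorbed by the $\la$-dissipation of $\U$ in \eqref{Udekongzhi}, again for $\la$ large and $\ep$ small. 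After these absorptions I use the transfer estimates to replace $\|\sqrt a\U(t)\|^2_{H_{\Psi}^{7,0}}+\|\sqrt a(\ze,\zet)(t)\|^2_{H_{\Psi}^{\frac{22}{3},0}}$ by the smaller quantity $\|\sqrt a(\uph,\bp)(t)\|^2_{H_{\Psi}^{7,0}}$ and to reinsert $\int_0^t\tad\|\sqrt a(\uph,\bp)\|^2_{H_{\Psi}^{\frac{22}{3},0}}ds$ in the dissipation; since the $\sqrt{a'}$-terms and the initial-data term match exactly, and every dissipative term on the left of \eqref{ubxianyanguji2} carries a coefficient no larger than the one produced by the doubled combination, \eqref{ubxianyanguji2} follows.

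The only genuine difficulty is the bookkeeping of constants: one must verify that after the absorptions (i)--(iii) the coefficients of $\int_0^t\tad\|\sqrt a\U\|^2_{H_{\Psi}^{\frac{22}{3},0}}ds$, $\int_0^t\tad\|\sqrt a(\ze,\zet)\|^2_{H_{\Psi}^{\frac{23}{3},0}}ds$ and of the two $\py$-dissipation terms are still at least as large as those displayed in \eqref{ubxianyanguji2}. This is precisely where $l_{\ka}\le\frac14$ enters essentially, and where one spends the freedom to enlarge the generic constant $C$ (which changes from line to line), to take $\la\ge\la_2$ large, and $\ep<\ep_*$ small. Once the coefficients check out the proof is complete, all the analytic content having been supplied by Propositions \ref{guU} and \ref{guze}.
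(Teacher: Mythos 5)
Your argument follows the same route as the paper: obtain the transfer estimates \eqref{zhendewuliaoa}--\eqref{wuliaoa} from \eqref{upbp}, square them via $s\le q+t\Rightarrow s^2\le 2q^2+2t^2$, then double and add Propositions \ref{guU} and \ref{guze}, absorbing the $\ze$-source of \eqref{Udekongzhi}, the $\py\U$-term of \eqref{zedekongzhi}, and the $\U$-dissipation coupling. The only explicit observation you add beyond the paper's terse ``combining'' is that $l_{\ka}\le\tfrac14$ guarantees $2-\tfrac{\eta}{5}\ge 2(4l_{\ka}-\tfrac{\eta}{10})$ for the $\py\U$-coefficient, which is indeed the implicit reason the displayed constants work out, so the proposal is correct and essentially identical in substance.
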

\begin{proof}
By \eqref{ubdekongzhi}, when $\ep<\ep_*$, we have
\begin{align}\label{zhendewuliaoa}
\begin{split}
\|\sqrt{a}(\uph,\bp)\|_{H_{\Psi}^{\frac{22}{3},0}}
\leq \|\sqrt{a}(\ze,\zet)\|_{H_{\Psi}^{\frac{23}{3},0}}
+\|\sqrt{a}\U\|_{H_{\Psi}^{\frac{22}{3},0}}.
\end{split}
\end{align}
Similarly,
\begin{align}\label{wuliaoa}
\begin{split}
\|\sqrt{a}(\uph,\bp)\|_{H_{\Psi}^{7,0}}
\leq \|\sqrt{a}(\ze,\zet)\|_{H_{\Psi}^{\frac{22}{3},0}}
+\|\sqrt{a}\U\|_{H_{\Psi}^{7,0}}.
\end{split}
\end{align}
It's easy to know that when $0\leq s\leq q+t$, $s^2\leq 2q^2+2t^2$ holds.
Combining \eqref{Udekongzhi},
\eqref{zedekongzhi}, \eqref{zhendewuliaoa} and \eqref{wuliaoa}, we can complete the proof of proposition \ref{ubxianyanguji}.
\end{proof}

Now we can prove proposition \ref{propositionub} by using proposition \ref{ubxianyanguji}.

\textbf{Proof of Proposition \ref{propositionub}:}
Take $a(t)=\ltr^{2l_{\ka}-2\eta}$ in \eqref{ubxianyanguji2}. By
lemma \ref{gu6}, we can derive
\begin{align*}
&-2(2l_{\ka}-2\eta)\int_0^t
\left(\|\lsr^{l_{\ka}-\eta-\frac{1}{2}}\U\|^2_{H_{\Psi}^{7,0}}+
\|\lsr^{l_{\ka}-\eta-\frac{1}{2}}
(\ze,\zet)\|^2_{H_{\Psi}^{\frac{22}{3},0}}\right)ds\\
&+2(4l_{\ka}-\frac{1}{10}\eta)\int_0^t \left(\|\lsr^{l_{\ka}-\eta}\py\U\|^2_{H_{\Psi}^{7,0}}
+\|\lsr^{l_{\ka}-\eta}
(\py\ze,\py\zet)\|^2_{H_{\Psi}^{\frac{22}{3},0}}\right)ds\\
\geq&
-4(2l_{\ka}-2\eta)\int_0^t \left(\|\lsr^{l_{\ka}-\eta}\py\U\|^2_{H_{\Psi}^{7,0}}
+\|\lsr^{l_{\ka}-\eta}
(\py\ze,\py\zet)\|^2_{H_{\Psi}^{\frac{22}{3},0}}\right)ds\\
&+2(4l_{\ka}-\frac{1}{10}\eta)\int_0^t \left(\|\lsr^{l_{\ka}-\eta}\py\U\|^2_{H_{\Psi}^{7,0}}
+\|\lsr^{l_{\ka}-\eta}
(\py\ze,\py\zet)\|^2_{H_{\Psi}^{\frac{22}{3},0}}\right)ds\\
\geq&2\eta\int_0^t \left(\|\lsr^{l_{\ka}-\eta}\py\U\|^2_{H_{\Psi}^{7,0}}
+\|\lsr^{l_{\ka}-\eta}
(\py\ze,\py\zet)\|^2_{H_{\Psi}^{\frac{22}{3},0}}\right)ds\\
\geq&\eta\int_0^t \|\lsr^{l_{\ka}-\eta}
(\py\uph,\py\bp)\|^2_{H_{\Psi}^{7,0}}ds
\end{align*}
here we use \eqref{pyubdekongzhi2} and $\ep<\ep_2<\ep_*.$
Thus when $\ep<\ep_2$ and $\la>\la_2$, we have
\begin{align*}
&\|\ltr^{l_{\ka}-\eta}(\uph(t),\bp(t))\|^2_{H_{\Psi}^{7,0}}
+\la\int_0^t
\tad\|\lsr^{l_{\ka}-\eta}(\uph,\bp)\|^2_{H_{\Psi}^{\frac{22}{3},0}}ds\\
&+\eta\int_0^t \|\lsr^{l_{\ka}-\eta}
(\py\uph,\py\bp)\|^2_{H_{\Psi}^{7,0}}ds\\
\leq&
2\|(\uph(0),\bp(0))\|^2_{H_{\Psi}^{\frac{22}{3},0}}
+\frac{1}{5}\la\int_0^t\tad
\|\lsr^{l_{\ka}-\eta}(\Pp,\Np)\|^2_{H_{\Psi}^{7,0}}
ds,
\end{align*}
so proposition \ref{propositionub} holds.
\hfill $\square$
\section{The Gevrey Estimates of $\Pp$ and $\Np$}\label{lalalademaxiya}
In this section we give a priori estimates for $(\Pp,\Np)$. First, from \eqref{cb},\eqref{mhd} and \eqref{PN} we can obtain that $(P,N)$ satisfies the following equation
\begin{align}\label{PNdefangcheng}
\left\{\begin{array}{l}
\left(\partial_{t}+u \partial_{x}+v \partial_{y}- \partial_{y}^{2}\right) N=-\frac{\ddot{\ta}}{\tad}N+b\px P+h\py P
+H
\\
\left(\partial_{t}+u \partial_{x}+v \partial_{y}- \ka \partial_{y}^{2}\right) P=-\frac{\ddot{\ta}}{\tad}P+b\px N+h\py N
+\tilde{H}.
\end{array}\right.
\end{align}
where $(H,\tilde{H})$ is defined as follows
\begin{align}\label{HH}
\begin{split}
&H=\frac{2}{\tad}(\px b\py^2 u-\py b\py\px u)
+\frac{\ka-1}{\tad}(\px u\py^2 b-\py u\py\px b),\\
&\tilde{H}=\frac{2\ka}{\tad}(\px b\py^2 b-\py b\py\px b).
\end{split}
\end{align}
Next, we use $e^{\Phi(t,D_x)}$ to act on \eqref{PNdefangcheng}, and do Bony's decomposition of all nonlinear terms using \eqref{bony}. Similar to \eqref{mhdp}, we write the equation in the following form:
\begin{align}\label{PNp}
\left\{\begin{array}{l}
\Ll \Np=-\frac{\ddot{\theta}}{\tad}\Np+\T_{b}\px\Pp
+\frac{2}{3}\de(t)\T_{D_x b}Q(D_x)\px\Pp\\
~~~~~~~~~~~~~~~~~~~~~~~~~~~~~~~~~~~~~~~~+\T_{\py P}\hp+\T_{h}\py\Pp
-\T_{\py N}\vp-S+\Hp
\\
\Lk \Pp=-\frac{\ddot{\theta}}{\tad}\Pp+\T_{b}\px\Np
+\frac{2}{3}\de(t)\T_{D_x b}Q(D_x)\px\Np\\
~~~~~~~~~~~~~~~~~~~~~~~~~~~~~~~~~~~~~~~~+\T_{\py N}\hp+\T_{h}\py\Np
-\T_{\py P}\vp-\tilde{S}+\Htp
\\
 (\Pp,\Np)|_{y=0}=(0,0),~(\Pp,\Np)|_{y\rightarrow +\ty}=(0,0),
\\
(\Pp,\Np)|_{t=0}=\left(\ep^{-\frac{1}{2}}e^{\de[D_x]^{\frac{2}{3}}}(b_0\px b_0+h_0\py b_0),
\ep^{-\frac{1}{2}}e^{\de[D_x]^{\frac{2}{3}}}(b_0\px u_0+h_0\py u_0)\right).
\end{array}\right.
\end{align}
%\begin{align}\label{PNp}
%\left\{\begin{array}{l}
%\Ll \Np=-\frac{\ddot{\theta}}{\tad}\Np+\T_{b}\px\Pp+\frac{2}{3}\de(t)\T_{D_x b}Q(D_x)\px\Pp+\T_{\py P}\hp+\T_{h}\py\Pp
%\\
%~~~~~~~~~~+\frac{2}{3}\de(t)\T_{\py D_x P}Q(D_x)\hp-\T_{\py N}\vp-\frac{2}{3}\de(t)\T_{\py D_x N}Q(D_x)\vp-S+\Hp
%\\
%\Lk \Pp=-\frac{\ddot{\theta}}{\tad}\Pp+\T_{b}\px\Np+\frac{2}{3}\de(t)\T_{D_x b}Q(D_x)\px\Np+\T_{\py N}\hp+\T_{h}\py\Np
%\\
%~~~~~~~~~~+\frac{2}{3}\de(t)\T_{\py D_x N}Q(D_x)\hp-\T_{\py P}\vp-\frac{2}{3}\de(t)\T_{\py D_x P}Q(D_x)\vp-\tilde{S}+\Htp
%\\
% (\py \Pp,\Np)|_{y=0}=(0,0),~(\Pp,\Np)|_{y\rightarrow +\ty}=(0,0),
%\\
%(\Pp,\Np)|_{t=0}=\left(\ep^{-\frac{1}{2}}e^{\de[D_x]^{\frac{2}{3}}}(b_0\px b_0+h_0\py b_0),
%\ep^{-\frac{1}{2}}e^{\de[D_x]^{\frac{2}{3}}}(b_0\px u_0+h_0\py u_0)\right).
%\end{array}\right.
%\end{align}
where $(S,\tilde{S})$ is defined as
\begin{align}\label{S}
\begin{split}
S=&\left(T_{u}^{\mathrm{h}} \partial_{x} N\right)_{\Phi}-T_{u}^{\mathrm{h}} \partial_{x} N_{\Phi}-\frac{2}{3}\delta(t) T_{D_{x} u}^{\mathrm{h}} Q(D_{x}) \partial_{x} N_{\Phi}\\
&+\left(T_{\py N}^{\mathrm{h}} v\right)_{\Phi}-T_{\py N}^{\mathrm{h}} v_{\Phi}+\left(\T_v \py N\right)_{\Phi}-\T_v\py\Np\\
&+\left(\T_{\px N}u+R^{\mathrm{h}}(u,\px N)+R^{\mathrm{h}}(v,\py N)\right)_{\Phi}\\
&-\left(T_{\px P} b+R^{\h}(b,\px P)+R^{\h}(h,\py P)\right)_{\Phi}\\
&-(\T_b \px P)_{\Phi}+\T_{b}\px\Pp+\frac{2}{3}\de(t)\T_{D_x b}Q(D_x)\px\Pp\\
&-(\T_{\py P} h)_{\Phi}+\T_{\py P}\hp-(\T_{h} \py P)_{\Phi}+\T_{h}\py\Pp
,
\end{split}
\end{align}
\begin{align}\label{St}
\begin{split}
\tilde{S}=&\left(T_{u}^{\mathrm{h}} \partial_{x} P\right)_{\Phi}-T_{u}^{\mathrm{h}} \partial_{x} P_{\Phi}-\frac{2}{3}\delta(t) T_{D_{x} u}^{\mathrm{h}} Q(D_{x}) \partial_{x} P_{\Phi}\\
&+\left(T_{\py P}^{\mathrm{h}} v\right)_{\Phi}-T_{\py P}^{\mathrm{h}} v_{\Phi}+\left(\T_v \py P\right)_{\Phi}-\T_v\py\Pp\\
&+\left(\T_{\px P}u+R^{\mathrm{h}}(u,\px P)+R^{\mathrm{h}}(v,\py P)\right)_{\Phi}\\
&-\left(T_{\px N} b+R^{\h}(b,\px N)+R^{\h}(h,\py N)\right)_{\Phi}\\
&-(\T_b \px N)_{\Phi}+\T_{b}\px\Np+\frac{2}{3}\de(t)\T_{D_x b}Q(D_x)\px\Np\\
&-(\T_{\py N} h)_{\Phi}+\T_{\py N}\hp-(\T_{h} \py N)_{\Phi}+\T_{h}\py\Np.
\end{split}
\end{align}
%\begin{align}\label{S}
%\begin{split}
%S=&\left(T_{u}^{\mathrm{h}} \partial_{x} N\right)_{\Phi}-T_{u}^{\mathrm{h}} \partial_{x} N_{\Phi}-\frac{2}{3}\delta(t) T_{D_{x} u}^{\mathrm{h}} Q(D_{x}) \partial_{x} N_{\Phi}\\
%&+\left(T_{\py N}^{\mathrm{h}} v\right)_{\Phi}-T_{\py N}^{\mathrm{h}} v_{\Phi}-\frac{2}{3}\delta(t) T_{\py D_{x} N}^{\mathrm{h}} Q(D_{x}) v_{\Phi}+\left(\T_v \py N\right)_{\Phi}-\T_v\py\Np\\
%&+\left(\T_{\px N}u+R^{\mathrm{h}}(u,\px N)+R^{\mathrm{h}}(v,\py N)\right)_{\Phi}\\
%&-\left(T_{\px P} b+R^{\h}(b,\px P)+R^{\h}(h,\py P)\right)_{\Phi}\\
%&-(\T_b \px P)_{\Phi}+\T_{b}\px\Pp+\frac{2}{3}\de(t)\T_{D_x b}Q(D_x)\px\Pp\\
%&-(\T_{\py P} h)_{\Phi}+\T_{\py P}\hp+\frac{2}{3}\de(t)\T_{\py D_x P}Q(D_x)\hp-(\T_{h} \py P)_{\Phi}+\T_{h}\py\Pp
%,
%\end{split}
%\end{align}
%\begin{align}\label{St}
%\begin{split}
%\tilde{S}=&\left(T_{u}^{\mathrm{h}} \partial_{x} P\right)_{\Phi}-T_{u}^{\mathrm{h}} \partial_{x} P_{\Phi}-\frac{2}{3}\delta(t) T_{D_{x} u}^{\mathrm{h}} Q(D_{x}) \partial_{x} P_{\Phi}\\
%&+\left(T_{\py P}^{\mathrm{h}} v\right)_{\Phi}-T_{\py P}^{\mathrm{h}} v_{\Phi}-\frac{2}{3}\delta(t) T_{\py D_{x} P}^{\mathrm{h}} Q(D_{x}) v_{\Phi}+\left(\T_v \py P\right)_{\Phi}-\T_v\py\Pp\\
%&+\left(\T_{\px P}u+R^{\mathrm{h}}(u,\px P)+R^{\mathrm{h}}(v,\py P)\right)_{\Phi}\\
%&-\left(T_{\px N} b+R^{\h}(b,\px N)+R^{\h}(h,\py N)\right)_{\Phi}\\
%&-(\T_b \px N)_{\Phi}+\T_{b}\px\Np+\frac{2}{3}\de(t)\T_{D_x b}Q(D_x)\px\Np\\
%&-(\T_{\py N} h)_{\Phi}+\T_{\py N}\hp+\frac{2}{3}\de(t)\T_{\py D_x N}Q(D_x)\hp-(\T_{h} \py N)_{\Phi}+\T_{h}\py\Np.
%\end{split}
%\end{align}
First, we give the estimates of $(S,\tilde{S})$.
\begin{Lemma}\label{Sdekongzhi}
Let $(S,\tilde{S})$ be given by \eqref{S} and \eqref{St}, respectively, and $\ka\in(0,2)$. Then for any $s>0,$ we have:
\begin{align*}
\|(S,\tilde{S})\|_{H_{\Psi}^{s,0}}\leq
&C \ep\ltr^{-\ga_0-\frac{1}{4}}\|(\Pp,\Np)\|_{H_{\Psi}^{s+\frac{1}{3},0}}
 +C\ep^{\frac{3}{2}}\ltr^{\al-2\ga_0-\frac{1}{2}}
\|(\uph,\bp)\|_{H_{\Psi}^{s+\frac{2}{3},0}}\\
&+ C \ep\ltr^{-\ga_0+\frac{1}{4}}\|(\py\Pp,\py\Np)\|_{H_{\Psi}^{s,0}}
.
\end{align*}
\end{Lemma}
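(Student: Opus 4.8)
The plan is to estimate $\|(S,\tilde S)\|_{H^{s,0}_{\Psi}}$ by expanding $S$ into the groups of terms visible in the definition \eqref{S} and matching each group to one of Lemmas \ref{gu1}, \ref{gu4}, \ref{gu5} together with the decay bounds of Lemma \ref{ub} (which, by the Remark following it, hold without the weight and for $(u,b),(v,h)$ themselves). Since $\tilde S$ in \eqref{St} is obtained from $S$ by interchanging the $N$-slots with the $P$-slots while keeping the roles of $u,b$, it suffices to carry out the estimates for $S$. Two conventions recur: $e^{\Psi}$ is a scalar in $x$ at fixed $y$, so it commutes with every para-product and may be transferred onto whichever factor we place in $L^{2}_{\vm}$; and $0<\de(t)\le\de$ is an $O(1)$ multiplier on $[0,T_{*}]$.

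\textbf{The Gevrey-commutator triples.} The triple $(\T_{u}\px N)_{\Phi}-\T_{u}\px N_{\Phi}-\frac{2}{3}\de(t)\T_{D_{x}u}Q(D_{x})\px N_{\Phi}$, and its analogue $-(\T_{b}\px P)_{\Phi}+\T_{b}\px\Pp+\frac{2}{3}\de(t)\T_{D_{x}b}Q(D_{x})\px\Pp$, are bounded directly by Lemma \ref{gu5} (with $a=u,f=N$, resp.\ $a=b,f=P$, and $\si$ slightly above $\frac{5}{2}$, so $\si\le3$), applied for each fixed $y$ and then integrated in $y$ after moving $e^{\Psi}$ onto $N_{\Phi}$ (resp.\ $P_{\Phi}$). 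With $\|(\uph,\bp)\|_{L^{\ty}_{\vm}(H_{\h}^{3})}\lesssim\ep\ltr^{-\ga_0-\frac{1}{4}}$ from Lemma \ref{ub}, this gives $C\ep\ltr^{-\ga_0-\frac{1}{4}}\|(\Pp,\Np)\|_{H^{s+\frac{1}{3},0}_{\Psi}}$ — the first term of the stated bound.

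\textbf{The transport commutators and the para-product/remainder terms.} The terms $(\T_{v}\py N)_{\Phi}-\T_{v}\py\Np$, $(\T_{h}\py P)_{\Phi}-\T_{h}\py\Pp$ (the commutators $\bigl[e^{\Phi};\T_{v}\bigr],\bigl[e^{\Phi};\T_{h}\bigr]$ acting on $\py N,\py P$) and the remainders $R^{\h}(v,\py N)_{\Phi}$, $R^{\h}(h,\py P)_{\Phi}$ all have $v$ or $h$ as the low-regularity factor; applying the bound behind Lemma \ref{gu4} (so $\bigl[e^{\Phi};\T_{v}\bigr]\colon H_{\h}^{s-\frac{1}{3}}\to H_{\h}^{s}$ with norm $\lesssim\de(t)\|\vp\|_{H_{\h}^{\si}}$) or Lemma \ref{gu1}, keeping the $\py$ on $N$ (resp.\ $P$) and using $\|(\vp,\hp)\|_{L^{\ty}_{\vm}(H_{\h}^{3})}\lesssim\ep\ltr^{-\ga_0+\frac{1}{4}}$, one gets $\lesssim\ep\ltr^{-\ga_0+\frac{1}{4}}\|(\py\Pp,\py\Np)\|_{H^{s,0}_{\Psi}}$ — the third term. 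In all the remaining terms ($(\T_{\py N}v)_{\Phi}-\T_{\py N}\vp$, $(\T_{\py P}h)_{\Phi}-\T_{\py P}\hp$, $(\T_{\px N}u)_{\Phi}$, $R^{\h}(u,\px N)_{\Phi}$, and their $b,P$-analogues) a derivative of $N$ or $P$ sits in the low-frequency slot of a para-product; since no a priori bound on $(P,N)$ is available here, I would substitute \eqref{PN}, e.g.\ $\py N=\tad^{-1}(\py b\,\px u+b\,\py\px u+\py h\,\py u+h\,\py^{2}u)$, and estimate each product in $L^{\ty}_{\vm}(H_{\h}^{\frac{3}{2}+})$ — a Gevrey algebra by \eqref{tu} and Lemma \ref{gu1} — via Lemma \ref{ub}, obtaining $\|\py N_{\Phi}\|_{L^{\ty}_{\vm}(H_{\h}^{\frac{3}{2}+})}\lesssim\tad^{-1}\ep^{2}\ltr^{-2\ga_0-1}=\ep^{\frac{3}{2}}\ltr^{\al-2\ga_0-1}$ and $\|N_{\Phi}\|_{L^{\ty}_{\vm}(H_{\h}^{\frac{3}{2}+})}\lesssim\ep^{\frac{3}{2}}\ltr^{\al-2\ga_0-\frac{1}{2}}$ (and likewise for $P$). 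The remaining factor $v$ (or $h$) is then converted using $\py v=-\px u$ (resp.\ $\py h=-\px b$) and the Poincar\'e inequality of Lemma \ref{gu6} with $v|_{y=0}=0$: $\|\vp\|_{H^{s-\frac{1}{3},0}_{\Psi}}\lesssim\ltr^{\frac{1}{2}}\|\py\vp\|_{H^{s-\frac{1}{3},0}_{\Psi}}=\ltr^{\frac{1}{2}}\|\px u_{\Phi}\|_{H^{s-\frac{1}{3},0}_{\Psi}}\lesssim\ltr^{\frac{1}{2}}\|\uph\|_{H^{s+\frac{2}{3},0}_{\Psi}}$, while the factor $u$ (or $b$) in the pure para-product/remainder terms needs no conversion since $\|\uph\|_{H^{s,0}_{\Psi}}\le\|\uph\|_{H^{s+\frac{2}{3},0}_{\Psi}}$. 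Multiplying, every term here is $\lesssim\ep^{\frac{3}{2}}\ltr^{\al-2\ga_0-\frac{1}{2}}\|(\uph,\bp)\|_{H^{s+\frac{2}{3},0}_{\Psi}}$ — the second term. Collecting the three groups proves the bound for $S$, and the mirror computation gives $\tilde S$.

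\textbf{Main obstacle.} I expect the exact bookkeeping of regularities and $\ltr$-powers to be the delicate point: one must check term by term that the loss is at most $+\frac{1}{3}$ on $(\Pp,\Np)$, $+\frac{2}{3}$ on $(\uph,\bp)$, or none (with a $\py$) on $(\py\Pp,\py\Np)$, and that the $\ltr$-power produced — after using $\al\le\ga_0+\frac{1}{4}$ where needed and only the crude $\de(t)\le\de$ — does not exceed $-\ga_0-\frac{1}{4}$, $\al-2\ga_0-\frac{1}{2}$, or $-\ga_0+\frac{1}{4}$ respectively. The tightest case is the commutator in which $v$ (or $h$) is converted to $\px u$ (or $\px b$) by Poincar\'e, which is exactly what forces both the $s+\frac{2}{3}$ loss and the $\ltr^{\al-2\ga_0-1/2}$ weight in the second term; and the conceptual point one must not miss is that the factor $\tad^{-1}=\ep^{-\frac{1}{2}}\ltr^{\al}$ hidden in \eqref{PN} is what converts the $\ep^{2}$-smallness of the quadratic expressions into the advertised $\ep^{\frac{3}{2}}$ while producing the $\ltr^{\al}$ growth.
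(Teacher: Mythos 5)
Your proposal is correct and, for most of the terms, follows the same route as the paper: the same grouping of \eqref{S} into the Gevrey triples handled by Lemma \ref{gu5}, the commutator/remainder terms with $v$ or $h$ in the low slot handled by Lemmas \ref{gu4} and \ref{gu1} (giving the $\ltr^{-\ga_0+\frac{1}{4}}\|(\py\Pp,\py\Np)\|_{H_{\Psi}^{s,0}}$ term), and the key observation that for the terms with $\py N,\py P,\px N,\px P$ in the low slot one must substitute \eqref{PN} so that the hidden factor $\tad^{-1}=\ep^{-\frac{1}{2}}\ltr^{\al}$ converts the quadratic $\ep^{2}$ smallness of Lemma \ref{ub} into $\ep^{\frac{3}{2}}\ltr^{\al-2\ga_0-\cdots}$ — this is exactly the paper's \eqref{pyNdexiaoxing} and \eqref{Ndexiaoxing}. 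The one place you genuinely diverge is the treatment of the remaining $v$ (or $h$) factor: the paper keeps $\vp$ in the weighted $L^{\ty}_{\vm,\Psi}$ norm via the tail estimate \eqref{ty} (cost $\ltr^{\frac{1}{4}}$, one $x$-derivative) and pairs it with $\|\py N_{\Phi}\|_{L^{2}_{\vm}(H^{\frac32+}_{\h})}\lesssim\ep^{\frac{3}{2}}\ltr^{\al-2\ga_0-\frac{3}{4}}$, whereas you keep $\vp$ in the weighted $L^{2}_{\vm}$ norm via Lemma \ref{gu6} (cost $\ltr^{\frac{1}{2}}$) and pair it with the $L^{\ty}_{\vm}$ bound $\ep^{\frac{3}{2}}\ltr^{\al-2\ga_0-1}$ on $\py N_{\Phi}$; both Hölder splits in $y$ give the same total $\ep^{\frac{3}{2}}\ltr^{\al-2\ga_0-\frac{1}{2}}\|(\uph,\bp)\|_{H_{\Psi}^{s+\frac{2}{3},0}}$. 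The only caveat in your variant: invoking Lemma \ref{gu6} for $\vp$ is not justified by ``$v|_{y=0}=0$'' (the lemma uses no condition at $y=0$) but by decay of $e^{\Psi}\vp$ as $y\to+\ty$, which is precisely what the available norms do not directly give; it can be repaired by writing $v=\px\int_y^{\ty}u\,dz$ and arguing via \eqref{ty} or a weighted Hardy-type inequality, but the paper's $L^{\ty}_{\vm,\Psi}$ route sidesteps this point entirely, which is why it is the safer bookkeeping.
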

\begin{proof}
First, by lemma \ref{gu5} and lemma \ref{ub}, we get
\begin{align*}
\left\|\left(T_{u}^{\mathrm{h}} \partial_{x} N\right)_{\Phi}-T_{u}^{\mathrm{h}} \partial_{x} N_{\Phi}-\frac{2}{3}\delta(t) T_{D_{x} u}^{\mathrm{h}} Q(D_{x}) \partial_{x} N_{\Phi}\right\|_{H_{\Psi}^{s,0}}
&\leq C
\|\uph\|_{L_{\vm}^{\ty}(H_{\h}^{\frac{5}{2}+})}\|\Np\|_{H_{\Psi}^{s+\frac{1}{3},0}}
\\
&\leq C \ep\ltr^{-\ga_0-\frac{1}{4}}\|\Np\|_{H_{\Psi}^{s+\frac{1}{3},0}}.
\end{align*}
Using \ref{gu4}, it yields
\begin{align*}
\left\|\left(T_{\py N}^{\mathrm{h}} v\right)_{\Phi}-T_{\py N}^{\mathrm{h}} v_{\Phi}\right\|_{H_{\Psi}^{s,0}}&\leq C
\|\py \Np\|_{L_{\vm}^{2}(H_{\h}^{\frac{3}{2}+})}
\|\vp\|_{L_{\vm,\Psi}^{\ty}(H_{\h}^{s-\frac{1}{3}})}
\\
&\leq C\ltr^{\frac{1}{4}}\|\py \Np\|_{L_{\vm}^{2}(H_{\h}^{\frac{3}{2}+})}\|\uph\|_{H_{\Psi}^{s+\frac{2}{3},0}}.
\end{align*}
And applying \eqref{PN} and \eqref{theta}, we have
\begin{align*}
\py \Np&=\ep^{-\frac{1}{2}}\ltr^{\al}(\py b\px u+b\py\px u+\py h\py u +h \py^2 u)_{\Phi}\\
&=\ep^{-\frac{1}{2}}\ltr^{\al}(\py b\px u+b\py\px u-\px b\py u +h \py^2 u)_{\Phi}
\end{align*}
Thus by \eqref{bony} and lemma \ref{ub}, we get
\begin{align}\label{pyNdexiaoxing}
\begin{aligned}
\|\py \Np\|_{L_{\vm}^{2}(H_{\h}^{\frac{3}{2}+})}
&=\ep^{-\frac{1}{2}}\ltr^{\al}\|(\py b\px u+b\py\px u+\py h\py u +h \py^2 u)_{\Phi}\|_{L_{\vm}^{2}(H_{\h}^{\frac{3}{2}+})}\\
&\leq \ep^{-\frac{1}{2}}\ltr^{\al} C \ep^2\ltr^{-2\ga_0-\frac{3}{4}}\\
&= C \ep^{\frac{3}{2}}\ltr^{\al-2\ga_0-\frac{3}{4}}.
\end{aligned}
\end{align}
Then we can deduce that
\begin{align*}
\left\|\left(T_{\py N}^{\mathrm{h}} v\right)_{\Phi}-T_{\py N}^{\mathrm{h}} v_{\Phi}\right\|_{H_{\Psi}^{s,0}}\leq  C\ep^{\frac{3}{2}}\ltr^{\al-2\ga_0-\frac{1}{2}}
\|\uph\|_{H_{\Psi}^{s+\frac{2}{3},0}}.
\end{align*}
Next, using lemma \ref{gu4} and lemma \ref{ub}, it gives
\begin{align*}
\left\|\left(\T_v \py N\right)_{\Phi}-\T_v\py\Np\right\|_{H_{\Psi}^{s,0}}&\leq C \|\vp\|_{L_{\vm}^{\ty}(H_{\h}^{\frac{3}{2}+})}\|\py \Np\|_{H_{\Psi}^{s-\frac{1}{3},0}}\\
&\leq C \ep\ltr^{-\ga_0+\frac{1}{4}}\|\py \Np\|_{H_{\Psi}^{s-\frac{1}{3},0}}\\
&\leq C \ep\ltr^{-\ga_0+\frac{1}{4}}\|\py \Np\|_{H_{\Psi}^{s,0}}.
\end{align*}
Similarly, by lemma \ref{gu1} and lemma \ref{ub}, we can obtain
\begin{align*}
\left\|R^{\h}(v,\py N)_{\Psi}\right\|_{H_{\Psi}^{s,0}}&\leq C \|\vp\|_{L_{\vm}^{\ty}(H_{\h}^{\frac{1}{2}+})}\|\py \Np\|_{H_{\Psi}^{s,0}}\\
&\leq C \ep\ltr^{-\ga_0+\frac{1}{4}}\|\py \Np\|_{H_{\Psi}^{s,0}}.
\end{align*}
We can derive from lemma \ref{gu1} and lemma \ref{ub}
\begin{align*}
\left\|\left(\T_{\px N}u+R^{\mathrm{h}}(u,\px N)\right)_{\Phi}\right\|_{H_{\Psi}^{s,0}}\leq
C \|\Np\|_{L_{\vm}^{\ty}(H_{\h}^{\frac{3}{2}+})}
\|\uph\|_{H_{\Psi}^{s,0}},
\end{align*}
Combining \eqref{PN}, \eqref{bony}, \eqref{theta} and lemma \ref{ub}, we find
\begin{align}\label{Ndexiaoxing}
\begin{aligned}
\|\Np\|_{L_{\vm}^{\ty}(H_{\h}^{\frac{3}{2}+})}
&=\ep^{-\frac{1}{2}}\ltr^{\al}
\|(b\px u+h\py u)_{\Phi}\|_{L_{\vm}^{\ty}(H_{\h}^{\frac{3}{2}+})}\\
&\leq C \ep^{\frac{3}{2}}\ltr^{\al-2\ga_0-\frac{1}{2}},
\end{aligned}
\end{align}
Thus we can get
\begin{align*}
\left\|\left(\T_{\px N}u+R^{\mathrm{h}}(u,\px N)\right)_{\Phi}\right\|_{H_{\Psi}^{s,0}}\leq
C \ep^{\frac{3}{2}}\ltr^{\al-2\ga_0-\frac{1}{2}}
\|\uph\|_{H_{\Psi}^{s,0}}.
\end{align*}
Looking closely at \eqref{S}, we can find that the remaining terms are similar, so we can get the estimates of $S$.
\begin{align*}
\|S\|_{H_{\Psi}^{s,0}}\leq
&C \ep\ltr^{-\ga_0-\frac{1}{4}}\|(\Pp,\Np)\|_{H_{\Psi}^{s+\frac{1}{3},0}}
 +C\ep^{\frac{3}{2}}\ltr^{\al-2\ga_0-\frac{1}{2}}
\|(\uph,\bp)\|_{H_{\Psi}^{s+\frac{2}{3},0}}\\
&+ C \ep\ltr^{-\ga_0+\frac{1}{4}}\|(\py\Pp,\py\Np)\|_{H_{\Psi}^{s,0}}
.
\end{align*}
On the other hand the estimates of $\tilde{S}$ are similar, so we complete the proof of lemma \ref{Sdekongzhi}.
\end{proof}
Now we give the a priori estimates of $(\Pp,\Np)$, and the exact results are shown below.
\begin{Proposition}\label{PNdexianyanguji}
Let $\ka\in(0,2)$ be a given constant. Let $(P,N)$ be defined by \eqref{PN} and $a(t)$ be a non-negative and non-decreasing function on $\R_+$. Then when $\al\leq\min\{\ga_0+\frac{1}{4},\frac{2}{3}\ga_0+\frac{1}{2},\frac{1}{2}\ga_0+\frac{5}{8}\}$  and $\ep<\ep_*$, for any $t<T_*$ and sufficiently small $\eta >0$ the following equation holds:
\begin{align}\label{PNdexianyanguji2}
\begin{aligned}
&~~~~~\|\sqrt{a}(\Pp(t),\Np(t))\|^2_{H_{\Psi}^{\frac{20}{3},0}}
-\int_0^t\|\sqrt{a'}(\Pp,\Np)\|^2_{H_{\Psi}^{\frac{20}{3},0}}ds\\
&~~~~~+(4l_{\ka}-\frac{1}{10}\eta-C\ep^{\frac{1}{2}})\int_0^t \|\sqrt{a}(\py\Pp,\py\Np)\|^2_{H_{\Psi}^{\frac{20}{3},0}}ds\\
&~~~~~
+2\left(\la-C(1+\la\ep^{\frac{1}{2}}+\eta^{-1}\ep^{\frac{1}{2}})\right)\int_0^t
\tad\|\sqrt{a}(\Pp,\Np)\|^2_{H_{\Psi}^{7,0}}ds\\
&\leq
\|\sqrt{a}(0)(\Pp(0),\Np(0))\|^2_{H_{\Psi}^{\frac{20}{3},0}}
+C\int_0^t\tad
\left(\|\sqrt{a}\U\|^2_{H_{\Psi}^{\frac{22}{3},0}}
+\|\sqrt{a}(\uph,\bp)\|^2_{H_{\Psi}^{\frac{22}{3},0}}\right)
ds\\
&~~~~~+(\frac{3}{25}\eta+C\ep^{\frac{1}{2}})\int_0^t
\left(\|\sqrt{a}\py\U\|^2_{H_{\Psi}^{7,0}}
+\|\sqrt{a}(\py\ze,\py\zet)\|^2_{H_{\Psi}^{\frac{22}{3},0}}\right)ds,
\end{aligned}
\end{align}
where $l_{\ka}=\frac{\ka(2-\ka)}{4}\in(0,\frac{1}{4}].$
\end{Proposition}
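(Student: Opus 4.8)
The plan is to pair the $\Np$- and $\Pp$-equations of \eqref{PNp} in $H_{\Psi}^{\frac{20}{3},0}$ with $a(t)\Np$ and $a(t)\Pp$ respectively and add the two resulting identities. For the left-hand side one treats $a(\Ll\Np,\Np)_{H_{\Psi}^{\frac{20}{3},0}}+a(\Lk\Pp,\Pp)_{H_{\Psi}^{\frac{20}{3},0}}$ exactly as $a(\Ll\U,\U)_{H_{\Psi}^{7,0}}$ and $a(\Lk\zet,\zet)_{H_{\Psi}^{\frac{22}{3},0}}$ were handled in Propositions \ref{guU} and \ref{guze}: integrating by parts in $y$ via the identity $\pt\Psi+2(\py\Psi)^2=0$ and using the Poincar\'e inequalities of Lemma \ref{gu6} produces $\tfrac12\tfrac{d}{dt}\|\sqrt a(\Np,\Pp)\|_{H_{\Psi}^{\frac{20}{3},0}}^2-\tfrac12\|\sqrt{a'}(\Np,\Pp)\|_{H_{\Psi}^{\frac{20}{3},0}}^2$, the horizontal dissipation $\la\tad\|\sqrt a(\Np,\Pp)\|_{H_{\Psi}^{7,0}}^2$ coming from $\la\tad[D_x]^{\frac{2}{3}}$, a good vertical term that (as in \eqref{turanhaomafang}) contributes the coefficient $4l_{\ka}-O(\eta)$ on $\|\sqrt a(\py\Np,\py\Pp)\|_{H_{\Psi}^{\frac{20}{3},0}}^2$, and transport remainders generated by $\T_u\px$, $\T_v\py$ and $\tfrac23\de(t)\T_{D_xu}Q(D_x)\px$. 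The last are controlled by $C\ep^{\frac12}\tad\|\sqrt a(\Np,\Pp)\|_{H_{\Psi}^{7,0}}^2+\tfrac{1}{100}\eta\|\sqrt a(\py\Np,\py\Pp)\|_{H_{\Psi}^{\frac{20}{3},0}}^2$ through Lemmas \ref{gu1}, \ref{gu3} and \ref{ub} together with \eqref{theta} and $\al\le\ga_0+\tfrac14$.

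The crucial point is the magnetic coupling. Upon adding the two equations, the quadratic terms $\T_{b}\px\Pp$ (from the $\Np$-equation) and $\T_{b}\px\Np$ (from the $\Pp$-equation) occur in the combination $a\bigl[(\T_{b}\px\Pp,\Np)_{H_{\Psi}^{\frac{20}{3},0}}+(\T_{b}\px\Np,\Pp)_{H_{\Psi}^{\frac{20}{3},0}}\bigr]$, in which a horizontal derivative is apparently lost. Since $\px$ is skew-adjoint in $x$, Lemma \ref{gu3} --- applied for each fixed $y$ with $s_1=s_2=\tfrac{20}{3}$ and $a$ replaced by $b(\cdot,y)$, and then integrated in $y$ against $e^{2\Psi}$ --- shows this sum is bounded by $C\|b\|_{L_{\vm}^{\ty}(H_{\h}^{\si})}\|\sqrt a(\Pp,\Np)\|_{H_{\Psi}^{\frac{20}{3},0}}^2$; the analogous pair $\tfrac23\de(t)\T_{D_xb}Q(D_x)\px$ is treated identically through Lemma \ref{gu2}, because $Q(D_x)\px$ also has a purely imaginary, hence skew-adjoint, Fourier symbol. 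By Lemma \ref{ub} and \eqref{theta}, $\|b\|_{L_{\vm}^{\ty}(H_{\h}^{\si})}\le C\ep\ltr^{-\ga_0-\frac14}\le C\ep^{\frac12}\tad$ when $\al\le\ga_0+\tfrac14$, so all these contributions are absorbed into the $\la\tad$-dissipation. The remaining coupling pieces $\T_{h}\py\Pp$ and $\T_{h}\py\Np$ are handled by integrating by parts in $y$: the $e^{2\Psi}$-weight produces the extra factor $\py\Psi=\tfrac{y}{4\ltr}$, which is dominated using the second inequality of Lemma \ref{gu6}, while $\T_h$ is transferred by its near-self-adjointness (Lemma \ref{gu2}), the surviving error involving $\T_{\py h}$ with $\py h=-\px b$ --- again small by Lemma \ref{ub}. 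Finally, the lower-order pieces $\T_{\py P}\hp$, $\T_{\py N}\hp$, $\T_{\py N}\vp$ and $\T_{\py P}\vp$ are estimated directly by Lemma \ref{gu1}, using the decay of $(\vp,\hp)$ from Lemma \ref{ub} and the smallness $\|\py\Np\|_{L_{\vm}^{2}(H_{\h}^{\frac{3}{2}+})}\le C\ep^{\frac32}\ltr^{\al-2\ga_0-\frac34}$ obtained as in \eqref{pyNdexiaoxing}.

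It remains to dispose of the term $-\tfrac{\ddot{\ta}}{\tad}(\Np,\Pp)$, which by \eqref{theta} equals $\al\ltr^{-1}(\Np,\Pp)$ and hence contributes only a lower-order quantity of size $\ltr^{-1}\|\sqrt a(\Np,\Pp)\|^2$, absorbed together with $-\|\sqrt{a'}(\Np,\Pp)\|^2$ against the vertical dissipation and the weight; the genuine sources $S$ and $\tilde S$ are bounded by Lemma \ref{Sdekongzhi}, while $\Hp$ and $\Htp$ --- being $\tad^{-1}$ times bilinear expressions in $(b,h)$ carrying two vertical derivatives --- are bounded via \eqref{jiben}, Lemma \ref{gu1} and Lemma \ref{ub}, after transferring $\tfrac13$ of horizontal derivative onto $\Pp$ so that only $\|(\Hp,\Htp)\|_{H_{\Psi}^{\frac{19}{3},0}}$ is needed and the dissipation $\la\tad\|\Pp\|_{H_{\Psi}^{7,0}}^2$ can absorb it. Rewriting $(\uph,\bp)$ and $(\py\uph,\py\bp)$ in terms of $(\ze,\zet)$ and $\U$ via \eqref{upbp}--\eqref{pyubdekongzhi2}, collecting all the above, integrating over $[0,t]$, inserting the initial values of $(\Pp,\Np)$ recorded in \eqref{PNp}, and sorting the coefficients under $\al\le\min\{\ga_0+\tfrac14,\tfrac23\ga_0+\tfrac12,\tfrac12\ga_0+\tfrac58\}$ and $\ep<\ep_*$ yields \eqref{PNdexianyanguji2}. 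The main obstacle is exactly the apparent loss of a horizontal derivative in $\T_{b}\px\Pp\leftrightarrow\T_{b}\px\Np$ (and of a vertical one in $\T_{h}\py\Pp\leftrightarrow\T_{h}\py\Np$), which is overcome by exploiting the skew-symmetry structure through Lemma \ref{gu3} and the vertical integration by parts, at the cost of the harmless factor $C\ep^{\frac12}$ that appears in the diffusive coefficient of \eqref{PNdexianyanguji2}.
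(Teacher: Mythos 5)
Your proposal has the correct skeleton (pairing the two equations, summing, and invoking Lemma \ref{gu3} to kill the apparent loss of $\px$ in the $\T_b\px\Pp\leftrightarrow\T_b\px\Np$ coupling), but it breaks down at two essential points.

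\textbf{The $-\frac{\ddot{\theta}}{\tad}(\Np,\Pp)$ terms cannot be absorbed by the dissipation.} Since $-\frac{\ddot{\theta}}{\tad}=\al\ltr^{-1}$ with $\al>1$, these terms contribute $\al\ltr^{-1}\|\sqrt a(\Pp,\Np)\|^2_{H_{\Psi}^{\frac{20}{3},0}}$ on the bad side. Pushing this through Lemma \ref{gu6} gives coefficient $2\al>2$, while the vertical dissipation carries only $4l_{\ka}\le 1$; and absorption into $\la\tad\|\cdot\|^2_{H_{\Psi}^{7,0}}$ fails because $\ltr^{-1}$ decays strictly slower than $\tad\sim\ltr^{-\al}$. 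The paper deals with this by writing $\Np=\ep^{-\frac12}\ltr^{\al}(b\px u+h\py u)_{\Phi}$ and doing a Bony decomposition of $(b\px u+h\py u)_\Phi$ (the terms $K_1$--$K_6$), so that the \emph{smallness} of $b,h$ from Lemma \ref{ub} supplies the extra decay; this is exactly what produces the $C\int\tad(\|\sqrt a\U\|^2_{H_{\Psi}^{\frac{22}{3},0}}+\|\sqrt a(\uph,\bp)\|^2_{H_{\Psi}^{\frac{22}{3},0}})ds$ and the $\frac{\eta}{100}$-pieces involving $\py\U$, $(\py\ze,\py\zet)$ on the right of \eqref{PNdexianyanguji2}, and uses the constraint $\al\le\frac12\ga_0+\frac58$. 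Your claim that the term is ``absorbed together with $-\|\sqrt{a'}(\Np,\Pp)\|^2$'' conflates this with the later choice $a=\ltr^{2l_\ka-2\eta}$, and in any case would force $\al\le 2l_\ka$, which never holds.

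\textbf{The treatment of $\Hp,\Htp$ misses the required vertical integration by parts.} You propose to bound $\|\Htp\|_{H_\Psi^{\frac{19}{3},0}}$ directly by \eqref{jiben}, Lemma \ref{gu1} and Lemma \ref{ub}. But $\tilde H$ contains the term $\ep^{-\frac12}\ltr^{\al}(\px b\,\py^2 b)_\Phi$, and the paraproduct piece $\T_{\px b}\py^2 b$ forces the high horizontal index onto $\py^2 b$: one would need $\|\py^2\bp\|_{H_\Psi^{\frac{19}{3},0}}$, which is controlled neither by Lemma \ref{ub} (index $\le 3$) nor by the quantities in \eqref{PNdexianyanguji2} (which supply $\|\sqrt a(\py\ze,\py\zet)\|_{H_{\Psi}^{\frac{22}{3},0}}$ and $\|\sqrt a\py\U\|_{H_{\Psi}^{7,0}}$, i.e.\ only \emph{one} $\py$ at high horizontal index). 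The paper's proof resolves this via the $I_1,I_2,I_3$ step: integrating by parts in $y$ in $(\T_{\px b}\py^2 b,\Pp)_{H_\Psi^{\frac{20}{3},0}}$ (exploiting $\Pp|_{y=0}=0$) trades $\py^2 b$ for $\py b$ while generating $\py\Pp$ and $\frac{y}{2\ltr}\Pp$, both of which can be controlled. Without this step the estimate cannot close.

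A smaller remark: your integration-by-parts-plus-near-self-adjointness treatment of $\T_h\py\Pp\leftrightarrow\T_h\py\Np$ is more elaborate than necessary; the paper simply applies Lemma \ref{gu1} and Cauchy directly (the $M_9,M_{10}$ estimates), absorbing a tiny fraction $\frac{\eta}{100}$ of the vertical dissipation, using that $h$ is small via Lemma \ref{ub}.
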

%\begin{Remark}
%Unlike other a priori estimates, here the estimates of $(\Pp,\Np)$ must be performed simultaneously so that the derivative loss can be overcome by lemma $\mathrm{\ref{gu3}}$.
%\end{Remark}
\begin{proof}
Take $H_{\Psi}^{\frac{20}{3},0}$ inner product of $\eqref{PNp}_{1}$ and $\eqref{PNp}_{2}$ with $a(t)\Np$ and $a(t)\Pp$ respectively,
and add them together.
\begin{align}\label{PNzuoneiji}
\begin{aligned}
&a(\Ll \Np,\Np)_{H_{\Psi}^{\frac{20}{3},0}}
+a(\Lk \Pp,\Pp)_{H_{\Psi}^{\frac{20}{3},0}}\\
=&-a(\frac{\ddot{\theta}}{\tad}\Np,\Np)_{H_{\Psi}^{\frac{20}{3},0}}
-a(\frac{\ddot{\theta}}{\tad}\Pp,\Pp)_{H_{\Psi}^{\frac{20}{3},0}}
+a(\T_{b}\px\Pp,\Np)_{H_{\Psi}^{\frac{20}{3},0}}
+a(\T_{b}\px\Np,\Pp)_{H_{\Psi}^{\frac{20}{3},0}}\\
&+a(\frac{2}{3}\de(t)\T_{D_x b}Q(D_x)\px\Pp,\Np)_{H_{\Psi}^{\frac{20}{3},0}}
+a(\frac{2}{3}\de(t)\T_{D_x b}Q(D_x)\px\Np,\Pp)_{H_{\Psi}^{\frac{20}{3},0}}\\
&+a(\T_{\py P}\hp,\Np)_{H_{\Psi}^{\frac{20}{3},0}}
+a(\T_{\py N}\hp,\Pp)_{H_{\Psi}^{\frac{20}{3},0}}\\
&+a(\T_{h}\py\Pp,\Np)_{H_{\Psi}^{\frac{20}{3},0}}
+a(\T_{h}\py\Np,\Pp)_{H_{\Psi}^{\frac{20}{3},0}}\\
&-a(\T_{\py N}\vp,\Np)_{H_{\Psi}^{\frac{20}{3},0}}
-a(\T_{\py P}\vp,\Pp)_{H_{\Psi}^{\frac{20}{3},0}}\\
&-a(S,\Np)_{H_{\Psi}^{\frac{20}{3},0}}
-a(\tilde{S},\Pp)_{H_{\Psi}^{\frac{20}{3},0}}
+a(\Hp,\Np)_{H_{\Psi}^{\frac{20}{3},0}}
+a(\Htp,\Pp)_{H_{\Psi}^{\frac{20}{3},0}}\\
=&\sum_{i=1}^{16} M_i.
\end{aligned}
\end{align}

For $M_1$, by \eqref{PN}, \eqref{bony} and \eqref{theta}, we have
\begin{align*}
M_1&=\al\ep^{-\frac{1}{2}}\ltr^{\al-1}a\left(
(b\px u+h\py u)_{\Phi},\Np\right)_{H_{\Psi}^{\frac{20}{3},0}}\\
&=\al\ep^{-\frac{1}{2}}\ltr^{\al-1}a\left(
(\T_{b}\px u)_{\Phi}+(\T_{\px u}b)_{\Phi}+R^{\h}(b,\px u)_{\Phi}\right.\\
&~~~~~\left.+(\T_{h}\py u)_{\Phi}+(\T_{\py u}h)_{\Phi}+R^{\h}(h,\py u)_{\Phi},\Np\right)_{H_{\Psi}^{\frac{20}{3},0}}\\
&=\sum_{i=1}^{6}K_i.
\end{align*}
For $K_1$, $K_2$ and $K_3$, it's obvious
\begin{align*}
|K_1+K_2+K_3|&\leq C\al\ep^{-\frac{1}{2}}\ltr^{\al-1}
\|(\uph,\bp)\|_{L^{\ty}_{\vm}(H_{\h}^{\frac{3}{2}+})}
\|\sqrt{a}(\uph,\bp)\|_{H_{\Psi}^{\frac{22}{3},0}}
\|\sqrt{a}\Np\|_{H_{\Psi}^{7,0}}\\
&\leq C\ltr^{2\al-\ga_0-\frac{5}{4}}\tad
\|\sqrt{a}(\uph,\bp)\|_{H_{\Psi}^{\frac{22}{3},0}}
\|\sqrt{a}\Np\|_{H_{\Psi}^{7,0}}\\
&\leq C\tad
\|\sqrt{a}(\uph,\bp)\|^2_{H_{\Psi}^{\frac{22}{3},0}}
+C\tad
\|\sqrt{a}\Np\|^2_{H_{\Psi}^{7,0}}.
\end{align*}
Similarly, for $K_5$ and $K_6$, we have
\begin{align*}
|K_5+K_6|&\leq C\al\ep^{-\frac{1}{2}}\ltr^{\al-1}
\|\py \uph\|_{L^{2}_{\vm}(H_{\h}^{\frac{1}{2}+})}
\|\sqrt{a}\hp\|_{L^{\ty}_{\vm}(H_{\h}^{\frac{19}{3}})}
\|\sqrt{a}\Np\|_{H_{\Psi}^{7,0}}\\
&\leq C\ltr^{2\al-\ga_0-\frac{5}{4}}\tad
\|\sqrt{a}\bp\|_{H_{\Psi}^{\frac{22}{3},0}}
\|\sqrt{a}\Np\|_{H_{\Psi}^{7,0}}\\
&\leq C\tad
\|\sqrt{a}\bp\|^2_{H_{\Psi}^{\frac{22}{3},0}}
+C\tad
\|\sqrt{a}\Np\|^2_{H_{\Psi}^{7,0}}.
\end{align*}
For $K_4$, when $\ep<\ep_*$, $C\ep^{\frac{1}{2}}<1$ holds. Thus applying \eqref{pyubdekongzhi2}, we get
\begin{align}\label{zaizheliyongyong}
\begin{split}
\|\sqrt{a}\py\uph\|_{H_{\Psi}^{7,0}}\leq \|\sqrt{a}(\py\ze,\py\zet)\|_{H_{\Psi}^{\frac{22}{3},0}}
+\|\sqrt{a}\py\U\|_{H_{\Psi}^{7,0}}.
\end{split}
\end{align}
Then we deduce
\begin{align*}
|K_4|&\leq C\al\ep^{-\frac{1}{2}}\ltr^{\al-1}
\|\hp\|_{L^{\ty}_{\vm}(H_{\h}^{\frac{1}{2}+})}
\|\sqrt{a}\py\uph\|_{H_{\Psi}^{7,0}}
\|\sqrt{a}\Np\|_{H_{\Psi}^{7,0}}\\
&\leq C\ep^{\frac{1}{2}}\ltr^{\al-\ga_0-\frac{3}{4}}
\left(\|\sqrt{a}(\py\ze,\py\zet)\|_{H_{\Psi}^{\frac{22}{3},0}}
+\|\sqrt{a}\py\U\|_{H_{\Psi}^{7,0}}\right)
\|\sqrt{a}\Np\|_{H_{\Psi}^{7,0}}\\
&\leq \frac{1}{100}\eta\|\sqrt{a}(\py\ze,\py\zet)\|^2_{H_{\Psi}^{\frac{22}{3},0}}
+\frac{1}{100}\eta\|\sqrt{a}\py\U\|^2_{H_{\Psi}^{7,0}}
+C\eta^{-1}\ep^{\frac{1}{2}}\ltr^{3\al-2\ga_0-\frac{3}{2}}\tad
\|\sqrt{a}\Np\|^2_{H_{\Psi}^{7,0}}\\
&\leq \frac{1}{100}\eta\|\sqrt{a}(\py\ze,\py\zet)\|^2_{H_{\Psi}^{\frac{22}{3},0}}
+\frac{1}{100}\eta\|\sqrt{a}\py\U\|^2_{H_{\Psi}^{7,0}}
+C\eta^{-1}\ep^{\frac{1}{2}}\tad
\|\sqrt{a}\Np\|^2_{H_{\Psi}^{7,0}}.
\end{align*}
Combining $K_1-K_6$, we can obtain
\begin{align*}
|M_1|\leq &C\tad
\|\sqrt{a}(\uph,\bp)\|^2_{H_{\Psi}^{\frac{22}{3},0}}
+C(1+\eta^{-1}\ep^{\frac{1}{2}})\tad
\|\sqrt{a}\Np\|^2_{H_{\Psi}^{7,0}}\\
&+\frac{1}{100}\eta\|\sqrt{a}(\py\ze,\py\zet)\|^2_{H_{\Psi}^{\frac{22}{3},0}}
+\frac{1}{100}\eta\|\sqrt{a}\py\U\|^2_{H_{\Psi}^{7,0}}.
\end{align*}
For $M_2$, it's similar that
\begin{align*}
|M_2|\leq &C\tad
\|\sqrt{a}(\uph,\bp)\|^2_{H_{\Psi}^{\frac{22}{3},0}}
+C(1+\eta^{-1}\ep^{\frac{1}{2}})\tad
\|\sqrt{a}\Pp\|^2_{H_{\Psi}^{7,0}}\\
&+\frac{1}{100}\eta\|\sqrt{a}(\py\ze,\py\zet)\|^2_{H_{\Psi}^{\frac{22}{3},0}}
+\frac{1}{100}\eta\|\sqrt{a}\py\U\|^2_{H_{\Psi}^{7,0}}.
\end{align*}
For $M_3$ and $M_4$, using lemma \ref{gu3}, we get
\begin{align*}
|M_3+M_4|&\leq C\|b\|_{L^{\ty}_{\vm}(H_{\h}^{\frac{3}{2}+})}
\|\sqrt{a}\Pp\|_{H_{\Psi}^{\frac{20}{3},0}}
\|\sqrt{a}\Np\|_{H_{\Psi}^{\frac{20}{3},0}}\\
&\leq C\ep\ltr^{-\ga_0-\frac{1}{4}}
\|\sqrt{a}\Pp\|_{H_{\Psi}^{\frac{20}{3},0}}
\|\sqrt{a}\Np\|_{H_{\Psi}^{\frac{20}{3},0}}\\
&\leq C\ep^{\frac{1}{2}}\ltr^{\al-\ga_0-\frac{1}{4}}
\tad\|\sqrt{a}(\Pp,\Np)\|^2_{H_{\Psi}^{7,0}}\\
&\leq C\ep^{\frac{1}{2}}
\tad\|\sqrt{a}(\Pp,\Np)\|^2_{H_{\Psi}^{7,0}},
\end{align*}
here we use the fact that $\al\leq\ga_0+\frac{1}{4}.$ For $M_5$ and $M_6$, we can deduce along the same line
\begin{align*}
|M_5+M_6|\leq C\ep^{\frac{1}{2}}
\tad\|\sqrt{a}(\Pp,\Np)\|^2_{H_{\Psi}^{7,0}}.
\end{align*}
For $M_7$, first it's easy to prove that \eqref{pyNdexiaoxing} also holds for $\py P$, then we have
\begin{align*}
|M_7|&\leq C\|\py P\|_{L^{2}_{\vm}(H_{\h}^{\frac{1}{2}+})}
\|\sqrt{a}\hp\|_{L^{\ty}_{\vm,\Psi}(H_{\h}^{\frac{19}{3}})}
\|\sqrt{a}\Np\|_{H_{\Psi}^{7,0}}\\
&\leq C \ep^{\frac{3}{2}}\ltr^{\al-2\ga_0-\frac{3}{4}}
\ltr^{\frac{1}{4}}\|\sqrt{a}\bp\|_{H_{\Psi}^{\frac{22}{3},0}}
\|\sqrt{a}\Np\|_{H_{\Psi}^{7,0}}\\
&\leq C \ep\ltr^{2\al-2\ga_0-\frac{1}{2}}
\tad\|\sqrt{a}\bp\|_{H_{\Psi}^{\frac{22}{3},0}}
\|\sqrt{a}\Np\|_{H_{\Psi}^{7,0}}\\
&\leq C \ep\tad\|\sqrt{a}\bp\|^2_{H_{\Psi}^{\frac{22}{3},0}}
+C \ep\tad\|\sqrt{a}\Np\|^2_{H_{\Psi}^{7,0}}.
\end{align*}
For $M_8$, $M_{11}$ and $M_{12}$, from a similar derivation we can find
\begin{align*}
|M_8+M_{11}+M_{12}|
\leq C \ep\tad\|\sqrt{a}(\uph,\bp)\|^2_{H_{\Psi}^{\frac{22}{3},0}}
+C \ep\tad\|\sqrt{a}(\Pp,\Np)\|^2_{H_{\Psi}^{7,0}}.
\end{align*}
For $M_9$, we have
\begin{align*}
|M_9|
&\leq C \|h\|_{L^{\ty}_{\vm}(H_{\h}^{\frac{1}{2}+})}
\|\sqrt{a}\py\Pp\|_{H_{\Psi}^{\frac{20}{3},0}}
\|\sqrt{a}\Np\|_{H_{\Psi}^{7,0}}\\
&\leq C\ep\ltr^{-\ga_0+\frac{1}{4}}
\|\sqrt{a}\py\Pp\|_{H_{\Psi}^{\frac{20}{3},0}}
\|\sqrt{a}\Np\|_{H_{\Psi}^{7,0}}\\
&\leq C\eta^{-1}\ep^{2}\ltr^{-2\ga_0+\frac{1}{2}}
\|\sqrt{a}\Np\|^2_{H_{\Psi}^{7,0}}
+\frac{1}{100}\eta\|\sqrt{a}\py\Pp\|^2_{H_{\Psi}^{\frac{20}{3},0}}\\
&\leq C\eta^{-1}\ep^{\frac{3}{2}}\ltr^{\al-2\ga_0+\frac{1}{2}}
\tad\|\sqrt{a}\Np\|^2_{H_{\Psi}^{7,0}}
+\frac{1}{100}\eta\|\sqrt{a}\py\Pp\|^2_{H_{\Psi}^{\frac{20}{3},0}}\\
&\leq C\eta^{-1}\ep^{\frac{3}{2}}
\tad\|\sqrt{a}\Np\|^2_{H_{\Psi}^{7,0}}
+\frac{1}{100}\eta\|\sqrt{a}\py\Pp\|^2_{H_{\Psi}^{\frac{20}{3},0}},
\end{align*}
here we use $\al\leq\ga_0+\frac{1}{4}$ and $\ga_0>1.$
For $M_{10}$, it's similar that
\begin{align*}
|M_{10}|
\leq C\eta^{-1}\ep^{\frac{3}{2}}
\tad\|\sqrt{a}\Pp\|^2_{H_{\Psi}^{7,0}}
+\frac{1}{100}\eta\|\sqrt{a}\py\Np\|^2_{H_{\Psi}^{\frac{20}{3},0}}.
\end{align*}
For $M_{13},$ by lemma \ref{Sdekongzhi}, we can know
\begin{align*}
|M_{13}|&\leq
\|\sqrt{a}S\|_{H_{\Psi}^{\frac{19}{3},0}}\|\sqrt{a}\Np\|_{H_{\Psi}^{7,0}}\\
&\leq C
 \left(\ep\ltr^{-\ga_0-\frac{1}{4}}\|\sqrt{a}(\Pp,\Np)\|_{H_{\Psi}^{\frac{20}{3},0}}
 +\ep^{\frac{3}{2}}\ltr^{\al-2\ga_0-\frac{1}{2}}
\|\sqrt{a}(\uph,\bp)\|_{H_{\Psi}^{7,0}}\right.\\
&~~~~~\left.+ \ep\ltr^{-\ga_0+\frac{1}{4}}\|\sqrt{a}(\py\Pp,\py\Np)\|_{H_{\Psi}^{\frac{19}{3},0}}\right)
\|\sqrt{a}\Np\|_{H_{\Psi}^{7,0}}\\
&\leq C\ep^{\frac{1}{2}}\ltr^{\al-\ga_0-\frac{1}{4}}\tad\|\sqrt{a}(\Pp,\Np)\|^2_{H_{\Psi}^{7,0}}
+C\ep\ltr^{2\al-2\ga_0-\frac{1}{2}}\tad\|\sqrt{a}(\uph,\bp)\|_{H_{\Psi}^{\frac{22}{3},0}}
\|\sqrt{a}\Np\|_{H_{\Psi}^{7,0}}\\
&~~~~~+C\ep\ltr^{-\ga_0+\frac{1}{4}}\|\sqrt{a}(\py\Pp,\py\Np)\|_{H_{\Psi}^{\frac{20}{3},0}}
\|\sqrt{a}\Np\|_{H_{\Psi}^{7,0}}\\
&\leq C\ep^{\frac{1}{2}}\tad\|\sqrt{a}(\Pp,\Np)\|^2_{H_{\Psi}^{7,0}}
+C\ep\tad\|\sqrt{a}(\uph,\bp)\|^2_{H_{\Psi}^{\frac{22}{3},0}}
+C\ep\tad\|\sqrt{a}\Np\|^2_{H_{\Psi}^{7,0}}\\
&~~~~~+\frac{1}{100}\eta\|\sqrt{a}(\py\Pp,\py\Np)\|^2_{H_{\Psi}^{\frac{20}{3},0}}
+C\eta^{-1}\ep^{\frac{3}{2}}\ltr^{\al-2\ga_0+\frac{1}{2}}\tad\|\sqrt{a}\Np\|^2_{H_{\Psi}^{7,0}}\\
&\leq C(\ep^{\frac{1}{2}}+\eta^{-1}\ep^{\frac{3}{2}})\tad\|\sqrt{a}(\Pp,\Np)\|^2_{H_{\Psi}^{7,0}}
+C\ep\tad\|\sqrt{a}(\uph,\bp)\|^2_{H_{\Psi}^{\frac{22}{3},0}}\\
&~~~~~+\frac{1}{100}\eta\|\sqrt{a}(\py\Pp,\py\Np)\|^2_{H_{\Psi}^{\frac{20}{3},0}}.
\end{align*}
For $M_{14}$, along the same line we have
\begin{align*}
|M_{14}|
&\leq C(\ep^{\frac{1}{2}}+\eta^{-1}\ep^{\frac{3}{2}})\tad\|\sqrt{a}(\Pp,\Np)\|^2_{H_{\Psi}^{7,0}}
+C\ep\tad\|\sqrt{a}(\uph,\bp)\|^2_{H_{\Psi}^{\frac{22}{3},0}}\\
&~~~~~+\frac{1}{100}\eta\|\sqrt{a}(\py\Pp,\py\Np)\|^2_{H_{\Psi}^{\frac{20}{3},0}}.
\end{align*}
For $M_{15}$ and $M_{16}$, they can be estimated by similar derivations. We only give the estimate procedure for $M_{16}$ here. Notice that
\begin{align*}
\tilde{H}=\frac{2\ka}{\tad}(\px b\py^2 b-\py b\py\px b)
=2\ka\ep^{-\frac{1}{2}}\ltr^{\al}(\px b\py^2 b-\py b\py\px b),
\end{align*}
thus
\begin{align*}
\Htp=2\ka\ep^{-\frac{1}{2}}\ltr^{\al}(\px b\py^2 b)_{\Phi}-
2\ka\ep^{-\frac{1}{2}}\ltr^{\al}(\py b\py\px b)_{\Phi}.
\end{align*}
By \eqref{bony}, we have
\begin{align*}
(\py b\py\px b)_{\Phi}
=\left(\T_{\py b}\py\px b+\T_{\py\px b}\py b+R^{\h}(\py b,\py\px b)\right)_{\Phi}.
\end{align*}
Therefore
\begin{align*}
&\left|\left(2\ka\ep^{-\frac{1}{2}}\ltr^{\al}a(\py b\py\px b)_{\Phi},\Pp\right)_{H_{\Psi}^{\frac{20}{3},0}}\right|\\
=&\left|2\ka\ep^{-\frac{1}{2}}\ltr^{\al}a\left((\T_{\py b}\py\px b)_{\Phi}+(\T_{\py\px b}\py b)_{\Phi}+R^{\h}(\py b,\py\px b)_{\Phi},\Pp\right)_{H_{\Psi}^{\frac{20}{3},0}}\right|\\
\leq&2\ka\ep^{-\frac{1}{2}}\ltr^{\al}a
\|(\T_{\py b}\py\px b)_{\Phi}+(\T_{\py\px b}\py b)_{\Phi}+R^{\h}(\py b,\py\px b)_{\Phi}\|_{H_{\Psi}^{\frac{19}{3},0}}
\|\Pp\|_{H_{\Psi}^{7,0}}\\
\leq&2\ka\ep^{-\frac{1}{2}}\ltr^{\al}
\|\py \bp\|_{L^{\ty}_{\vm}(H_{\h}^{\frac{3}{2}+})}
\|\sqrt{a}\py \bp\|_{H_{\Psi}^{\frac{22}{3},0}}
\|\sqrt{a}\Pp\|_{H_{\Psi}^{7,0}}\\
\leq&C\ep^{\frac{1}{2}}\ltr^{\al-\ga_0-\frac{3}{4}}
\|\sqrt{a}\py \bp\|_{H_{\Psi}^{\frac{22}{3},0}}
\|\sqrt{a}\Pp\|_{H_{\Psi}^{7,0}}.
\end{align*}
Next using \eqref{pyupbp}, similar to \eqref{pyubdekongzhi}, we can get
\begin{align*}
\|\sqrt{a}\py \bp\|_{H_{\Psi}^{\frac{22}{3},0}}\leq
\|\sqrt{a}(\py\ze,\py\zet)\|_{H_{\Psi}^{\frac{22}{3},0}}
+C\ep^{\frac{1}{2}}\ltr^{-\frac{1}{2}}\|\sqrt{a}\U\|_{H_{\Psi}^{\frac{22}{3},0}}.
\end{align*}
Let $\ep<\ep_*$, then $C\ep^{\frac{1}{2}}<1$, thus we can get
\begin{align*}
\|\sqrt{a}\py \bp\|_{H_{\Psi}^{\frac{22}{3},0}}\leq
\|\sqrt{a}(\py\ze,\py\zet)\|_{H_{\Psi}^{\frac{22}{3},0}}
+\ltr^{-\frac{1}{2}}\|\sqrt{a}\U\|_{H_{\Psi}^{\frac{22}{3},0}}.
\end{align*}
So
\begin{align*}
&\left|\left(2\ka\ep^{-\frac{1}{2}}a(\py b\py\px b)_{\Phi},\Pp\right)_{H_{\Psi}^{\frac{20}{3},0}}\right|\\
\leq&C\ep^{\frac{1}{2}}\ltr^{\al-\ga_0-\frac{3}{4}}
\|\sqrt{a}(\py\ze,\py\zet)\|_{H_{\Psi}^{\frac{22}{3},0}}
\|\sqrt{a}\Pp\|_{H_{\Psi}^{7,0}}
+C\ep^{\frac{1}{2}}\ltr^{\al-\ga_0-\frac{5}{4}}
\|\sqrt{a}\U\|_{H_{\Psi}^{\frac{22}{3},0}}
\|\sqrt{a}\Pp\|_{H_{\Psi}^{7,0}}\\
\leq&
\frac{1}{100}\eta\|\sqrt{a}(\py\ze,\py\zet)\|^2_{H_{\Psi}^{\frac{22}{3},0}}
+
C\eta^{-1}\ep^{\frac{1}{2}}\ltr^{3\al-2\ga_0-\frac{3}{2}}
\tad\|\sqrt{a}\Pp\|^2_{H_{\Psi}^{7,0}}\\
&+
C\ltr^{2\al-\ga_0-\frac{5}{4}}
\tad\|\sqrt{a}\U\|^2_{H_{\Psi}^{\frac{22}{3},0}}
+
C\ltr^{2\al-\ga_0-\frac{5}{4}}
\tad\|\sqrt{a}\Pp\|^2_{H_{\Psi}^{7,0}}\\
\leq&
\frac{1}{100}\eta\|\sqrt{a}(\py\ze,\py\zet)\|^2_{H_{\Psi}^{\frac{22}{3},0}}
+C(1+\eta^{-1}\ep^{\frac{1}{2}})
\tad\|\sqrt{a}\Pp\|^2_{H_{\Psi}^{7,0}}
+C\tad\|\sqrt{a}\U\|^2_{H_{\Psi}^{\frac{22}{3},0}},
\end{align*}
here we use the fact that $\al\leq\min\{\frac{2}{3}\ga_0+\frac{1}{2},\frac{1}{2}\ga_0+\frac{5}{8}\}$.
By \eqref{bony}, we have
\begin{align*}
2\ka\ep^{-\frac{1}{2}}\ltr^{\al}(\px b\py^2 b)_{\Phi}
=2\ka\ep^{-\frac{1}{2}}\ltr^{\al}\left((\T_{\px b}\py^2 b)_{\Phi}
+(\T_{\py^2 b}\px b)_{\Phi}+R^{\h}(\px b,\py^2 b)_{\Phi}\right).
\end{align*}
Applying lemma \ref{gu1} and lemma \ref{ub}, we can derive
\begin{align*}
&\left|\left(2\ka\ep^{-\frac{1}{2}}\ltr^{\al}a\left(
(\T_{\py^2 b}\px b)_{\Phi}+R^{\h}(\px b,\py^2 b)_{\Phi}\right),\Pp\right)_{H_{\Psi}^{\frac{20}{3},0}}\right|\\
\leq&C \ep^{-\frac{1}{2}}\ltr^{\al}
\|\py^2 \bp\|_{L^{\ty}_{\vm}(H_{\h}^{\frac{1}{2}+})}
\|\sqrt{a}\bp\|_{H_{\Psi}^{\frac{22}{3},0}}
\|\sqrt{a}\Pp\|_{H_{\Psi}^{7,0}}\\
\leq&C\ltr^{2\al-\ga_0-\frac{5}{4}}\tad
\|\sqrt{a}\bp\|_{H_{\Psi}^{\frac{22}{3},0}}
\|\sqrt{a}\Pp\|_{H_{\Psi}^{7,0}}\\
\leq&C\tad
\|\sqrt{a}\bp\|^2_{H_{\Psi}^{\frac{22}{3},0}}
+C\tad\|\sqrt{a}\Pp\|^2_{H_{\Psi}^{7,0}}.
\end{align*}
Noticing the boundary condition $\Pp|_{y=0}=0$, applying integrating by parts, we can gain
\begin{align*}
&\left|\left(2\ka\ep^{-\frac{1}{2}}\ltr^{\al}a(\T_{\px b}\py^2 b)_{\Phi},\Pp\right)_{H_{\Psi}^{\frac{20}{3},0}}\right|\\
\leq&C\ep^{-\frac{1}{2}}\ltr^{\al}a \left|\left((\T_{\py\px b}\py b)_{\Phi},\Pp\right)_{H_{\Psi}^{\frac{20}{3},0}}\right|
+C\ep^{-\frac{1}{2}}\ltr^{\al}a \left|\left((\T_{\px b}\py b)_{\Phi},\py\Pp\right)_{H_{\Psi}^{\frac{20}{3},0}}\right|\\
&+C\ep^{-\frac{1}{2}}\ltr^{\al}a \left|\left((\T_{\px b}\py b)_{\Phi},\frac{y}{2\ltr}\Pp\right)_{H_{\Psi}^{\frac{20}{3},0}}\right|\\
=&I_1+I_2+I_3.
\end{align*}
For $I_1$, by \eqref{zaizheliyongyong}, we know
\begin{align*}
I_1&\leq C\ep^{-\frac{1}{2}}\ltr^{\al}
\|\py \bp\|_{L^{\ty}_{\vm}(H_{\h}^{\frac{3}{2}+})}
\|\sqrt{a}\py \bp\|_{H_{\Psi}^{7,0}}
\|\sqrt{a}\Pp\|_{H_{\Psi}^{7,0}}\\
&\leq C\ep^{\frac{1}{2}}\ltr^{\al-\ga_0-\frac{3}{4}}
\|\sqrt{a}(\py\ze,\py\zet)\|_{H_{\Psi}^{\frac{22}{3},0}}
\|\sqrt{a}\Pp\|_{H_{\Psi}^{7,0}}\\
&~~~~~+C\ep^{\frac{1}{2}}\ltr^{\al-\ga_0-\frac{3}{4}}
\|\sqrt{a}\py\U\|_{H_{\Psi}^{7,0}}
\|\sqrt{a}\Pp\|_{H_{\Psi}^{7,0}}\\
&\leq\frac{1}{100}\eta
\|\sqrt{a}(\py\ze,\py\zet)\|^2_{H_{\Psi}^{\frac{22}{3},0}}
+\frac{1}{100}\eta
\|\sqrt{a}\py\U\|^2_{H_{\Psi}^{7,0}}
+C\eta^{-1}\ep^{\frac{1}{2}}\ltr^{3\al-2\ga_0-\frac{3}{2}}\tad
\|\sqrt{a}\Pp\|^2_{H_{\Psi}^{7,0}}\\
&\leq\frac{1}{100}\eta
\|\sqrt{a}(\py\ze,\py\zet)\|^2_{H_{\Psi}^{\frac{22}{3},0}}
+\frac{1}{100}\eta\|\sqrt{a}\py\U\|^2_{H_{\Psi}^{7,0}}
+C\eta^{-1}\ep^{\frac{1}{2}}\tad
\|\sqrt{a}\Pp\|^2_{H_{\Psi}^{7,0}}.
\end{align*}
For $I_2$, from a similar derivation, we obtain
\begin{align*}
I_2&\leq C\ep^{-\frac{1}{2}}\ltr^{\al}
\|\bp\|_{L^{\ty}_{\vm}(H_{\h}^{\frac{3}{2}+})}
\|\sqrt{a}\py \bp\|_{H_{\Psi}^{7,0}}
\|\sqrt{a}\py\Pp\|_{H_{\Psi}^{\frac{20}{3},0}}\\
&\leq C\ep^{\frac{1}{2}}\ltr^{\al-\ga_0-\frac{1}{4}}
\|\sqrt{a}(\py\ze,\py\zet)\|_{H_{\Psi}^{\frac{22}{3},0}}
\|\sqrt{a}\py\Pp\|_{H_{\Psi}^{\frac{20}{3},0}}\\
&~~~~~+C\ep^{\frac{1}{2}}\ltr^{\al-\ga_0-\frac{1}{4}}
\|\sqrt{a}\py\U\|_{H_{\Psi}^{7,0}}
\|\sqrt{a}\py\Pp\|_{H_{\Psi}^{\frac{20}{3},0}}\\
&\leq C\ep^{\frac{1}{2}}
\|\sqrt{a}(\py\ze,\py\zet)\|^2_{H_{\Psi}^{\frac{22}{3},0}}
+C\ep^{\frac{1}{2}}
\|\sqrt{a}\py\U\|^2_{H_{\Psi}^{7,0}}
+C\ep^{\frac{1}{2}}
\|\sqrt{a}\py\Pp\|^2_{H_{\Psi}^{\frac{20}{3},0}}.
\end{align*}
For $I_3,$ it's similar,
\begin{align*}
I_3&\leq C\ep^{\frac{1}{2}}
\|\sqrt{a}(\py\ze,\py\zet)\|^2_{H_{\Psi}^{\frac{22}{3},0}}
+C\ep^{\frac{1}{2}}
\|\sqrt{a}\py\U\|^2_{H_{\Psi}^{7,0}}
+C\ep^{\frac{1}{2}}
\|\sqrt{a}\frac{y}{2\ltr}\Pp\|^2_{H_{\Psi}^{\frac{20}{3},0}}.
\end{align*}
On the other hand, by lemma \ref{gu6}, we have
\begin{align*}
I_3&\leq C\ep^{\frac{1}{2}}
\|\sqrt{a}(\py\ze,\py\zet)\|^2_{H_{\Psi}^{\frac{22}{3},0}}
+C\ep^{\frac{1}{2}}
\|\sqrt{a}\py\U\|^2_{H_{\Psi}^{7,0}}
+C\ep^{\frac{1}{2}}
\|\sqrt{a}\py\Pp\|^2_{H_{\Psi}^{\frac{20}{3},0}}.
\end{align*}
Combining the above estimates, we can obtain
\begin{align*}
|M_{16}|\leq&
C(1+\eta^{-1}\ep^{\frac{1}{2}})
\tad\|\sqrt{a}\Pp\|^2_{H_{\Psi}^{7,0}}
+C\tad\|\sqrt{a}\U\|^2_{H_{\Psi}^{\frac{22}{3},0}}
+C\ep^{\frac{1}{2}}\tad
\|\sqrt{a}\bp\|^2_{H_{\Psi}^{\frac{22}{3},0}}\\
&+(\frac{1}{50}\eta+C\ep^{\frac{1}{2}})
\|\sqrt{a}(\py\ze,\py\zet)\|^2_{H_{\Psi}^{\frac{22}{3},0}}
+(\frac{1}{100}\eta+C\ep^{\frac{1}{2}})
\|\sqrt{a}\py\U\|^2_{H_{\Psi}^{7,0}}\\
&+C\ep^{\frac{1}{2}}
\|\sqrt{a}\py\Pp\|^2_{H_{\Psi}^{\frac{20}{3},0}}.
\end{align*}
For $M_{15}$, it's similar to $M_{16}$.
\begin{align*}
|M_{15}|\leq&
C(1+\eta^{-1}\ep^{\frac{1}{2}})
\tad\|\sqrt{a}\Np\|^2_{H_{\Psi}^{7,0}}
+C\tad\|\sqrt{a}\U\|^2_{H_{\Psi}^{\frac{22}{3},0}}
+C\ep^{\frac{1}{2}}\tad
\|\sqrt{a}(\uph,\bp)\|^2_{H_{\Psi}^{\frac{22}{3},0}}\\
&+(\frac{1}{50}\eta+C\ep^{\frac{1}{2}})
\|\sqrt{a}(\py\ze,\py\zet)\|^2_{H_{\Psi}^{\frac{22}{3},0}}
+(\frac{1}{100}\eta+C\ep^{\frac{1}{2}})
\|\sqrt{a}\py\U\|^2_{H_{\Psi}^{7,0}}\\
&+C\ep^{\frac{1}{2}}
\|\sqrt{a}\py\Np\|^2_{H_{\Psi}^{\frac{20}{3},0}}.
\end{align*}
Next we need to estimate the left side of \eqref{PNzuoneiji}. Similar to \eqref{UU}, we have
\begin{align}\label{PPNNlk}
\begin{split}
&a(\Ll \Np,\Np)_{H_{\Psi}^{\frac{20}{3},0}}
+a(\Lk \Pp,\Pp)_{H_{\Psi}^{\frac{20}{3},0}}\\
\geq&\frac{1}{2}\dfrac{d}{dt}
\|\sqrt{a}(\Pp(t),\Np(t))\|^2_{H_{\Psi}^{\frac{20}{3},0}}
-\frac{1}{2}\|\sqrt{a'}(\Pp(t),\Np(t))\|^2_{H_{\Psi}^{\frac{20}{3},0}}\\
&+\la\tad\|\sqrt{a}(\Pp(t),\Np(t))\|^2_{H_{\Psi}^{\frac{20}{3},0}}
+2l_{\ka}\|\sqrt{a}(\py\Pp(t),\py\Np(t))\|^2_{H_{\Psi}^{\frac{20}{3},0}}\\
&+a\left(\T_u\px\Pp,\Pp\right)_{H_{\Psi}^{\frac{20}{3},0}}
+a\left(\T_u\px\Np,\Np\right)_{H_{\Psi}^{\frac{20}{3},0}}\\
&+a\left(\T_v\py\Pp,\Pp\right)_{H_{\Psi}^{\frac{20}{3},0}}
+a\left(\T_v\py\Np,\Np\right)_{H_{\Psi}^{\frac{20}{3},0}}
\\
&+a\left(\frac{2}{3}\de(t)
\T_{D_x u}Q(D_x)\px\Pp,\Pp\right)_{H_{\Psi}^{\frac{20}{3},0}}
+a\left(\frac{2}{3}\de(t)
\T_{D_x u}Q(D_x)\px\Np,\Np\right)_{H_{\Psi}^{\frac{20}{3},0}}\\
=&\sum_{i=1}^{10} J_i.
\end{split}
\end{align}
By a similar derivation of $a\left(\Ll\U,\U\right)_{H_{\Psi}^{7,0}}$, we can get
\begin{align*}
|\sum_{i=5}^{10} J_i|\leq
C(\ep^{\frac{1}{2}}+\eta^{-1}\ep^{\frac{3}{2}})\tad
\|\sqrt{a}(\Pp,\Np)\|^2_{H_{\Psi}^{7,0}}
+\frac{1}{100}\eta\|\sqrt{a}(\py\Pp,\py\Np)\|^2_{H_{\Psi}^{\frac{20}{3},0}}.
\end{align*}
Combining all the above estimates and integrating over $[0,t]$, we can obtain the following estimates.
\begin{align*}
&~~~~~\|\sqrt{a}(\Pp(t),\Np(t))\|^2_{H_{\Psi}^{\frac{20}{3},0}}
-\int_0^t\|\sqrt{a'}(\Pp,\Np)\|^2_{H_{\Psi}^{\frac{20}{3},0}}ds\\
&~~~~~+(4l_{\ka}-\frac{1}{10}\eta-C\ep^{\frac{1}{2}})\int_0^t \|\sqrt{a}(\py\Pp,\py\Np)\|^2_{H_{\Psi}^{\frac{20}{3},0}}ds\\
&~~~~~
+2\left(\la-C(1+\la\ep^{\frac{1}{2}}+\eta^{-1}\ep^{\frac{1}{2}})\right)\int_0^t
\tad\|\sqrt{a}(\Pp,\Np)\|^2_{H_{\Psi}^{7,0}}ds\\
&\leq
\|\sqrt{a}(0)(\Pp(0),\Np(0))\|^2_{H_{\Psi}^{\frac{20}{3},0}}
+C\int_0^t\tad
\left(\|\sqrt{a}\U\|^2_{H_{\Psi}^{\frac{22}{3},0}}
+\|\sqrt{a}(\uph,\bp)\|^2_{H_{\Psi}^{\frac{22}{3},0}}\right)
ds\\
&~~~~~+(\frac{3}{25}\eta+C\ep^{\frac{1}{2}})\int_0^t
\left(\|\sqrt{a}\py\U\|^2_{H_{\Psi}^{7,0}}
+\|\sqrt{a}(\py\ze,\py\zet)\|^2_{H_{\Psi}^{\frac{22}{3},0}}\right)ds.
\end{align*}
We have thus completed the proof of proposition \ref{PNdexianyanguji}.
\end{proof}

Now we can prove proposition \ref{propositionPN} by applying proposition \ref{PNdexianyanguji}.

\textbf{Proof of Proposition \ref{propositionPN}:}
By taking $a(t)=\ltr^{2l_{\ka}-2\eta}$ in \eqref{PNdexianyanguji2}, and using lemma \ref{gu6}, we deduce
\begin{align*}
&-(2l_{\ka}-2\eta)\int_0^t\|\lsr^{l_{\ka}-\eta-\frac{1}{2}}
(\Pp,\Np)\|^2_{H_{\Psi}^{\frac{20}{3},0}}ds\\
&+(4l_{\ka}-\frac{1}{10}\eta-C\ep^{\frac{1}{2}})\int_0^t \|\lsr^{l_{\ka}-\eta}
(\py\Pp,\py\Np)\|^2_{H_{\Psi}^{\frac{20}{3},0}}ds\\
\geq&
-2(2l_{\ka}-2\eta)\int_0^t \|\lsr^{l_{\ka}-\eta}
(\py\Pp,\py\Np)\|^2_{H_{\Psi}^{\frac{20}{3},0}}ds\\
&+(4l_{\ka}-\frac{1}{10}\eta-C\ep^{\frac{1}{2}})\int_0^t \|\lsr^{l_{\ka}-\eta}
(\py\Pp,\py\Np)\|^2_{H_{\Psi}^{\frac{20}{3},0}}ds\\
\geq&(\frac{39}{10}\eta-C\ep^{\frac{1}{2}})\int_0^t \|\lsr^{l_{\ka}-\eta}
(\py\Pp,\py\Np)\|^2_{H_{\Psi}^{\frac{20}{3},0}}ds,
\end{align*}
So we have
\begin{align*}
&\|\ltr^{l_{\ka}-\eta}
(\Pp(t),\Np(t))\|^2_{H_{\Psi}^{\frac{20}{3},0}}
+(\frac{39}{10}\eta-C\ep^{\frac{1}{2}})\int_0^t \|\lsr^{l_{\ka}-\eta}
(\py\Pp,\py\Np)\|^2_{H_{\Psi}^{\frac{20}{3},0}}ds\\
&
+2\left(\la-C(1+\la\ep^{\frac{1}{2}}+\eta^{-1}\ep^{\frac{1}{2}})\right)\int_0^t
\tad\|\lsr^{l_{\ka}-\eta}(\Pp,\Np)\|^2_{H_{\Psi}^{7,0}}ds\\
\leq&
\|(\Pp(0),\Np(0))\|^2_{H_{\Psi}^{\frac{20}{3},0}}
+C\int_0^t\tad
\left(\|\lsr^{l_{\ka}-\eta}\U\|^2_{H_{\Psi}^{\frac{22}{3},0}}
+\|\lsr^{l_{\ka}-\eta}
(\uph,\bp)\|^2_{H_{\Psi}^{\frac{22}{3},0}}\right)
ds\\
&+(\frac{3}{25}\eta+C\ep^{\frac{1}{2}})\int_0^t
\left(\|\lsr^{l_{\ka}-\eta}\py\U\|^2_{H_{\Psi}^{7,0}}
+\|\lsr^{l_{\ka}-\eta}
(\py\ze,\py\zet)\|^2_{H_{\Psi}^{\frac{22}{3},0}}\right)ds,
\end{align*}
Let $\ep_3$ be sufficiently small such that when $\ep<\ep_3<\ep_2$, we have $C\ep^{\frac{1}{2}}<\frac{1}{100}\eta.$ And $\la$ satisfies $\la>\la_2.$ Then proposition \ref{propositionPN} holds.

\hfill $\square$
\section{The Gevrey Estimates of $\Gp$ and $\Gtp$}\label{gujiG}
Starting from this section, we will give the estimates for the ``good function'' $(G,\Gt)$. For the sake of brevity in this paper, we give the relevant calculations for $G$, while the part of $\Gt$ is obtained in the same way. By acting the operator $e^{\Phi(t,D_x)}$ on \eqref{gfe} and doing the Bony's decomposition of all the nonlinear terms, we obtain
\begin{align}\label{gfephi}
\begin{aligned}
&~~~~~\pt\Gp+\la\tad(t)[D_x]^{\frac{2}{3}}\Gp-\py^2\Gp+\ltr^{-1}\Gp+\T_u\px\Gp
+\T_{\py G}\vp\\
&-\frac{1}{2\ltr}\T_{\py(y\psi)}\vp+\frac{y}{\ltr}\int_y^{\ty}\T_{\py u}\vp dz-\tad\Pp+\frac{y}{2\ltr}\int_y^{\ty}\tad\Pp dz=Z,
\end{aligned}
\end{align}
here $Z$ is defined as follows
\begin{align}\label{Z}
\begin{aligned}
-Z \stackrel{\mathrm{def}}{=}&\left(T_u^{\mathrm{h}} \partial_x G\right)_{\Phi}-T_u^{\mathrm{h}} \partial_x G_{\Phi}+\left(T_{\partial_x G}^{\mathrm{h}} u\right)_{\Phi}+\left(R^{\mathrm{h}}\left(u, \partial_x G\right)\right)_{\Phi}\\
&+\left(T_{\partial_y G}^{\mathrm{h}} v\right)_{\Phi}-T_{\partial_y G}^{\mathrm{h}} v_{\Phi}+\left(T_v^{\mathrm{h}} \partial_y G\right)_{\Phi}+\left(R^{\mathrm{h}}\left(v, \partial_y G\right)\right)_{\Phi}, \\
&-\frac{1}{2\langle t\rangle}\left(\left(T_{\partial_y(y \psi)}^{\mathrm{h}} v\right)_{\Phi}-T_{\partial_y(y \psi)}^{\mathrm{h}} v_{\Phi}\right)-\frac{1}{2\langle t\rangle}\left(T_v^{\mathrm{h}} \partial_y(y \psi)+R^{\mathrm{h}}\left(\partial_y(y \psi), v\right)\right)_{\Phi} \\
&+\frac{y}{\langle t\rangle} \int_y^{\infty}\left(\left(T_{\partial_y u}^{\mathrm{h}} v\right)_{\Phi}-T_{\partial_y u}^{\mathrm{h}} v_{\Phi}\right) dz+\frac{y}{\langle t\rangle} \int_y^{\infty}\left(T_v^{\mathrm{h}} \partial_y u+R^{\mathrm{h}}\left(v, \partial_y u\right)\right)_{\Phi} dz .
\end{aligned}
\end{align}

We first give the estimates of source term $Z$.
\begin{Lemma}\label{Zdeguji}
The source term $Z$ satisfies the following.
\begin{align*}
\|Z\|_{H_{\Psi}^{s,0}}\leq C\ep\left(
\ltr^{-\ga_0-\frac{1}{4}}\|\Gp\|_{H_{\Psi}^{s+\frac{2}{3},0}}
+\ltr^{-\ga_0+\frac{1}{4}}\|\py\Gp\|_{H_{\Psi}^{s,0}}\right).
\end{align*}
\end{Lemma}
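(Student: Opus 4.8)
\textbf{Proof strategy for Lemma \ref{Zdeguji}.}
The plan is to estimate each of the eleven terms in the definition \eqref{Z} of $-Z$ separately, using the para-product estimates (Lemma \ref{gu1}, Lemma \ref{gu4}), the Poincar\'e-type inequality and the vertical-integral bound \eqref{ty}, together with the decay/smallness bounds for the lower-order derivatives of $(u,v,\psi)$ collected in Lemma \ref{ub} (and its analogue via Lemma \ref{gu7}, which converts quantities involving $u$, $\psi$, $\py(y\psi)$ into quantities controlled by $G_\Phi$ and $\py G_\Phi$). The two ``types'' of contributions we expect are: (i) terms that are genuinely of the form (low-regularity coefficient in $x$) $\times$ (the unknown $\Gp$ or its $x$-derivative), which after commuting $e^\Phi$ past $\T^{\h}$ cost one factor of $\delta(t)[\xi]^{2/3}$ and hence contribute $C\ep\ltr^{-\ga_0-\frac14}\|\Gp\|_{H_\Psi^{s+2/3,0}}$; and (ii) terms involving $v_\Phi=-\px\psi_\Phi$ paired against $\py G$, or $\py(y\psi)$ paired against $v$, which after using $\px u+\py v=0$ and Lemma \ref{gu7} contribute $C\ep\ltr^{-\ga_0+\frac14}\|\py\Gp\|_{H_\Psi^{s,0}}$. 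The $\ltr^{\pm1/4}$ discrepancy is exactly the price of the vertical integral $\int_y^\infty$ via \eqref{ty}, or equivalently of the $L^\infty_{\vm}$ versus $L^2_{\vm}$ trade-off.

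More concretely, I would proceed as follows. First, for the line involving $u\px G$: write $(T_u^{\h}\px G)_\Phi-T_u^{\h}\px G_\Phi$ and apply Lemma \ref{gu4} with $a=u$, $f=G$, giving $C\delta(t)\|u_\Phi\|_{H_\h^{\si}}\|G_\Phi\|_{H_\h^{s+2/3}}\lesssim \ep\ltr^{-\ga_0-1/4}\|\Gp\|_{H_\Psi^{s+2/3,0}}$ after integrating in $y$ and using $\|u_\Phi\|_{L^\infty_{\vm}(H_\h^{\si})}\lesssim\ep\ltr^{-\ga_0-1/4}$ from Lemma \ref{ub}; the terms $(T_{\px G}^{\h}u)_\Phi$ and $(R^{\h}(u,\px G))_\Phi$ are handled by Lemma \ref{gu1} with one derivative placed on $G$, absorbing it into the $H_\Psi^{s+2/3,0}$ norm and the coefficient $u$ measured in $L^\infty_{\vm}(H_\h^{\si})$. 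Second, for the line $( T_{\py G}^{\h}v)_\Phi-T_{\py G}^{\h}v_\Phi$, $(T_v^{\h}\py G)_\Phi$, $(R^{\h}(v,\py G))_\Phi$: here the coefficient is $v$, the unknown-carrying factor is $\py G$, and one uses Lemma \ref{gu4}/\ref{gu1} together with $\|v_\Phi\|_{L^\infty_{\vm}(H_\h^{\si})}\lesssim \ep\ltr^{-\ga_0+1/4}$, producing the $\ltr^{-\ga_0+1/4}\|\py\Gp\|_{H_\Psi^{s,0}}$ contribution. Third, the two lines carrying the factor $\tfrac{1}{2\ltr}\py(y\psi)$ and $\tfrac{y}{\ltr}\int_y^\infty\py u\,dz$ are estimated the same way but with the explicit prefactors $\ltr^{-1}$ and the vertical integral handled by \eqref{ty} (costing $\ltr^{1/4}$); Lemma \ref{gu7} then bounds $\ltr^{-1}\|\py(y\psi)_\Phi\|$, $\ltr^{-1/2}\|\py^2(y\psi)_\Phi\|$, etc., by $\|\py G_\Phi\|$, so these again land in the $\ltr^{-\ga_0+1/4}\|\py\Gp\|_{H_\Psi^{s,0}}$ bucket (the net power of $\ltr$ coming out as $-1+1/4+\dots$, matched against $\ga_0$ via $\alpha,\ga_0>1$).

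The main obstacle, as usual in this circle of estimates, is bookkeeping the regularity indices and the powers of $\ltr$ so that every term fits into one of the two allowed buckets with the stated exponents $-\ga_0-\tfrac14$ and $-\ga_0+\tfrac14$; in particular one must be careful that the commutator $e^\Phi$--$\T^{\h}$ is handled by Lemma \ref{gu4} (not the sharper Lemma \ref{gu5}, which is not needed here since we only claim a $+\tfrac23$ loss, not $+\tfrac13$), and that the vertical integrals are always estimated by \eqref{ty} rather than by a pointwise-in-$y$ bound, since only the weighted $H_\Psi$ norm is available on the right. There is no genuine derivative loss to overcome: every term is controlled by $\|\Gp\|_{H_\Psi^{s+2/3,0}}$ or $\|\py\Gp\|_{H_\Psi^{s,0}}$, both of which appear on the left-hand side of the energy estimate for $(\Gp,\Gtp)$ (Proposition \ref{propositionG}), so the lemma closes. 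The analogous statement for $\Gtp$ follows verbatim after replacing $(u,\psi,G)$ by $(b,\psit,\Gt)$ in \eqref{Z}, using the second half of Lemma \ref{ub} and Lemma \ref{gu7}. Summing the eleven contributions yields
\begin{align*}
\|Z\|_{H_{\Psi}^{s,0}}\leq C\ep\left(
\ltr^{-\ga_0-\frac{1}{4}}\|\Gp\|_{H_{\Psi}^{s+\frac{2}{3},0}}
+\ltr^{-\ga_0+\frac{1}{4}}\|\py\Gp\|_{H_{\Psi}^{s,0}}\right),
\end{align*}
which is the claim.
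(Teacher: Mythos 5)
The paper does not actually spell out a proof of this lemma: it says ``The proof of this lemma can be found in lemma 6.1 of \cite{CW}, just notice the difference of lemma \ref{gu4} in this paper.'' So the comparison here is against the strategy, and your strategy is indeed the intended one: a term-by-term estimate of the eleven contributions in \eqref{Z} via Lemmas \ref{gu1}, \ref{gu4}, \ref{ub}, \ref{gu7} and \eqref{ty}, with the observation that Lemma \ref{gu4} (rather than the sharper Lemma \ref{gu5}) suffices because only a $+\frac{2}{3}$ loss is claimed. That part is right, as is your first-line estimate and the remark that the $\ltr^{\pm\frac14}$ split comes from \eqref{ty}.

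There is, however, a concrete bookkeeping error in how you apply Lemma \ref{gu4} to the commutators on the second, third and fourth lines of \eqref{Z}. Take $\bigl(T_{\py G}^{\h}v\bigr)_\Phi - T_{\py G}^{\h}v_\Phi$. You say ``the coefficient is $v$, the unknown-carrying factor is $\py G$'' and put it in the $\ltr^{-\ga_0+\frac14}\|\py\Gp\|_{H_\Psi^{s,0}}$ bucket. But in this paraproduct the \emph{subscript} is $\py G$, so Lemma \ref{gu4} must be applied with $a=\py G$ and $\px f = v$, i.e.\ $f=-\psi$ (from $v=-\px\psi$), giving
\[
\bigl\|\bigl(T_{\py G}^{\h}v\bigr)_\Phi - T_{\py G}^{\h}v_\Phi\bigr\|_{H_\h^{s}}
\leq C\,\delta(t)\,\|(\py G)_\Phi\|_{H_\h^{\si}}\,\|\psi_\Phi\|_{H_\h^{s+\frac23}}.
\]
The $+\frac23$ derivative loss lands on $\psi$, \emph{not} on $\py G$. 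One then puts $(\py G)_\Phi$ in $L^2_{\vm}$ at the fixed regularity $\si\in(\tfrac32,3]$ and bounds it by $\|\py\Gp\|_{H_\Psi^{3,0}}\leq C\ep\ltr^{-\ga_0-\frac12}$ from \eqref{T**}, while \eqref{ty} and Lemma \ref{gu7} convert $\|\psi_\Phi\|_{L^\infty_{\vm,\gamma\Psi}(H_\h^{s+2/3})}$ to $\ltr^{\frac14}\|\Gp\|_{H_\Psi^{s+\frac23,0}}$. The net is $C\ep\ltr^{-\ga_0-\frac14}\|\Gp\|_{H_\Psi^{s+\frac23,0}}$: this commutator contributes to the \emph{first} bucket, not the second. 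The same remark applies to the commutators involving $\py(y\psi)$ and $\py u$ on the third and fourth lines (there Lemma \ref{gu7} is used on the subscript, e.g. $\ltr^{-1}\|\py(y\psi)_\Phi\|$). Only the non-commutator $v$-terms $(T_v^{\h}\py G)_\Phi$, $(R^{\h}(v,\py G))_\Phi$, $T_v^{\h}\py(y\psi)$, etc., go to the $\|\py\Gp\|_{H_\Psi^{s,0}}$ bucket. Since both buckets appear on the right-hand side of the lemma, the conclusion survives your misassignment — but if you tried to carry out the proof as written you would find Lemma \ref{gu4} cannot put the high-regularity index on $\py G$, so the argument needs this correction to actually close.
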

The proof of this lemma can be found in lemma 6.1 of \cite{CW}, just notice the difference of lemma \ref{gu4} in this paper. For the sake of brevity, we will not give the proof here.

With this lemma, we can give the a priori estimates of $\Gp$, as follows:
\begin{Proposition}\label{Gpdexianyanguji}
Let $\ka\in(0,2)$ be a given constant. Let $(G,\Gt)$ be defined by \eqref{gf}, and $a(t)$ be a non-negative and non-decreasing function on $\R_+$. Then when $\al\leq\ga_0+\frac{5}{36}$, the following inequality holds for any $t<T_*$ and sufficiently small $\eta>0$.
\begin{align}\label{Gpdexianyanguji2}
\begin{aligned}
&\|\sqrt{a}(\Gp(t),\Gtp(t))\|^2_{H_{\Psi}^{4,0}}
-\int_0^t\|\sqrt{a'}(\Gp,\Gtp)\|^2_{H_{\Psi}^{4,0}}ds
+2\int_0^t\lsr^{-1}\|\sqrt{a}(\Gp,\Gtp)\|^2_{H_{\Psi}^{4,0}}ds\\
&+2\left(\la-C(1+\eta^{-1}\ep^{\frac{3}{2}})\right)
\int_0^t\tad(s)\|\sqrt{a}(\Gp,\Gtp)\|^2_{H_{\Psi}^{\frac{13}{3},0}}ds\\
&+(4l_{\ka}-\frac{2}{25}\eta)\int_0^t\|\sqrt{a}(\py\Gp,\py\Gtp)\|^2_{H_{\Psi}^{4,0}}ds\\
\leq&\|\sqrt{a}(0)(\Gp(0),\Gtp(0))\|^2_{H_{\Psi}^{4,0}}
+C\ep^{\frac{1}{2}}\int_0^t\tad
\|\lsr^{-1}\sqrt{a}(\uph,\bp)\|^{2}_{H_{\Psi}^{\frac{22}{3},0}}
ds
.
\end{aligned}
\end{align}
where $l_{\ka}=\frac{\ka(2-\ka)}{4}\in(0,\frac{1}{4}].$
\end{Proposition}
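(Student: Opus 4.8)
\textbf{Proof strategy for Proposition \ref{Gpdexianyanguji}.}
The plan is to run a weighted Gevrey energy estimate on the equation \eqref{gfephi} for $\Gp$ (the $\Gtp$ part being identical up to the constant $\ka$), taking the $H_{\Psi}^{4,0}$ inner product of \eqref{gfephi} with $a(t)\Gp$ and then adding the analogous identity for $\Gtp$. First I would treat the left-hand side: the terms $\pt\Gp$ and $-\py^2\Gp$ are handled exactly as in \eqref{ptpy}--\eqref{ptpylk}, using $\pt\Psi+2(\py\Psi)^2=0$ and integration by parts in $y$ to produce $\frac12\frac{d}{dt}\|\sqrt a\Gp\|_{H_{\Psi}^{4,0}}^2-\frac12\|\sqrt{a'}\Gp\|_{H_{\Psi}^{4,0}}^2+2l_{\ka}\|\sqrt a\py\Gp\|_{H_{\Psi}^{4,0}}^2$ (with $2l_{\ka}$ replaced by $\tfrac12$ for the $\ka=1$ viscosity of $\Gp$; after summing $\Gp$ and $\Gtp$ one records the common lower bound $2l_{\ka}\|\sqrt a(\py\Gp,\py\Gtp)\|^2$ since $l_\ka\le\frac14\le\frac12$). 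The damping term $\ltr^{-1}\Gp$ produces exactly $\ltr^{-1}\|\sqrt a\Gp\|_{H_{\Psi}^{4,0}}^2$, and the operator $\la\tad[D_x]^{2/3}$ produces the good term $\la\tad\|\sqrt a\Gp\|_{H_{\Psi}^{13/3,0}}^2$. The transport/para-product terms $\T_u\px\Gp$ (hidden inside $\Lk$ once one rewrites as in \eqref{lk}) are absorbed via Lemma \ref{gu3} and Lemma \ref{ub} into $C\ep^{1/2}\tad\|\sqrt a\Gp\|_{H_{\Psi}^{13/3,0}}^2$, exactly as for $a(\Ll\U,\U)$.

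Next I would estimate the remaining right-hand terms of \eqref{gfephi} one by one. The source term $Z$ is controlled by Lemma \ref{Zdeguji}: $\|\sqrt a Z\|_{H_\Psi^{13/3,0}}\lesssim\ep\ltr^{-\ga_0-1/4}\|\sqrt a\Gp\|_{H_\Psi^{4,0}}+\ep\ltr^{-\ga_0+1/4}\|\sqrt a\py\Gp\|_{H_\Psi^{4,0}}$, and after pairing against $\sqrt a\Gp$ in $H_\Psi^{4,0}$, using Cauchy--Schwarz and \eqref{theta} together with $\al\le\ga_0+\tfrac5{36}$ (so the powers of $\ltr$ are nonpositive after converting one factor of $\ltr^{-\al}$ into $\tad$), these give $C\ep^{1/2}\tad\|\sqrt a\Gp\|_{H_\Psi^{13/3,0}}^2+\tfrac1{100}\eta\|\sqrt a\py\Gp\|_{H_\Psi^{4,0}}^2$. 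The terms $\T_{\py G}\vp$, $\frac1{2\ltr}\T_{\py(y\psi)}\vp$ and $\frac y\ltr\int_y^\ty\T_{\py u}\vp\,dz$ are the "bad" nonlinear terms; these I would bound using Lemma \ref{gu1}, Lemma \ref{gu7} (to replace $\vp$, $\py(y\psi)_\Phi$ by $G_\Phi$, $\py G_\Phi$ respectively and to control $\int_y^\ty$ via \eqref{ty}), and Lemma \ref{ub}, in each case extracting a factor $C\ep\ltr^{-\ga_0+\frac14}$ or similar and converting via \eqref{theta} into $C(1+\eta^{-1}\ep^{3/2})\tad\|\sqrt a\Gp\|^2_{H_\Psi^{13/3,0}}$ plus a small multiple of the dissipation $\eta\|\sqrt a\py\Gp\|_{H_\Psi^{4,0}}^2$ (this is where the $\eta^{-1}\ep^{3/2}$ appears, exactly as in the $\T_v\py\U$ estimate). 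Finally the forcing terms $-\tad\Pp+\frac y{2\ltr}\int_y^\ty\tad\Pp\,dz$: pairing against $\sqrt a\Gp$ and using \eqref{ty} and Lemma \ref{gu6} to absorb the $\frac y{2\ltr}$ weight, plus Cauchy--Schwarz, gives a contribution bounded by $\tfrac12\tad\|\sqrt a\Gp\|_{H_\Psi^{13/3,0}}^2+C\tad\|\ltr^{-1}\sqrt a(\uph,\bp)\|^2_{H_\Psi^{22/3,0}}$ — note the key point that $\tad\Pp$ must be paired so that only a factor $\lesssim\ep^{1/2}$ times the $(\uph,\bp)$ norm survives; here one uses $\Pp=\tad^{-1}(b\px b+h\py b)_\Phi$ is \emph{not} invoked, rather $\tad\Pp$ is kept intact and estimated directly, and the needed bound $\|\sqrt a\Pp\|\lesssim$ (things already controlled) uses the relation to $(\uph,\bp)$ only through the $D(t)$/$E(t)$ bookkeeping; a cleaner route is to write $\tad\Pp=(b\px b+h\py b)_\Phi$ and bound it by $\|(\uph,\bp)\|_{L^\infty_\vm(H^{3/2+}_\h)}\|\sqrt a(\uph,\bp)\|_{H_\Psi^{4,0}}\lesssim\ep\ltr^{-\ga_0-1/4}\|\sqrt a(\uph,\bp)\|_{H_\Psi^{4,0}}$, convert one $\ltr^{-\al}$ into $\tad$, and land on $C\ep^{1/2}\tad\|\ltr^{-1}\sqrt a(\uph,\bp)\|^2_{H_\Psi^{22/3,0}}$ after Cauchy--Schwarz against $\tad\|\sqrt a\Gp\|^2_{H_\Psi^{13/3,0}}$.

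Having assembled all pieces, I would integrate in time over $[0,t]$, collect the coefficient of $\int_0^t\|\sqrt a(\py\Gp,\py\Gtp)\|_{H_\Psi^{4,0}}^2\,ds$ (which becomes $4l_\ka-\tfrac2{25}\eta$ once all the $\tfrac1{100}\eta$-type losses are summed — there are a bounded number of them, so choosing the small-constant budget appropriately yields the stated $\tfrac2{25}\eta$), collect the coefficient of the $\la\tad$ term as $2(\la-C(1+\eta^{-1}\ep^{3/2}))$, keep the $2\ltr^{-1}\|\sqrt a(\Gp,\Gtp)\|^2$ term from the damping, and move the initial data to the right. The main obstacle, as usual in this circle of estimates, is the bookkeeping of the borderline powers of $\ltr$: one must check that for each nonlinear and forcing term the exponent $\al-\ga_0+(\text{something})$ coming from Lemma \ref{ub} and \eqref{theta} is $\le0$ precisely under the hypothesis $\al\le\ga_0+\tfrac5{36}$, the worst case being the terms that require an extra $\ltr^{1/4}$ from the $\int_y^\ty$ bound \eqref{ty} combined with the $\py G$-type norms at the highest regularity $\tfrac{13}{3}$; keeping the dissipation losses additive and each $\le\tfrac1{100}\eta$ so their total stays below $\tfrac2{25}\eta$ is the other delicate point. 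All other manipulations are routine applications of Lemmas \ref{gu1}--\ref{gu7} and mirror the already-completed proof of Proposition \ref{guU}.
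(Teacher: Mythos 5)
There is a genuine gap: your plan omits the interpolation device that is the heart of the paper's proof of this proposition. The problematic terms are $\T_{\py G}\vp$, $\frac{1}{2\ltr}\T_{\py(y\psi)}\vp$, $\frac{y}{\ltr}\int_y^{\ty}\T_{\py u}\vp\,dz$ and the $b\px b$, $h\py b$ parts of $\tad\Pp$: through \eqref{ty}/\eqref{vuG} each of these costs one extra horizontal derivative, so after pairing against $\Gp$ in $H_{\Psi}^{4,0}$ one is forced to control $\|\uph\|_{H^{\frac{14}{3},0}}$ (resp. $\|\bp\|_{H^{\frac{14}{3},0}}$). You claim these terms can be absorbed into $C(1+\eta^{-1}\ep^{\frac32})\tad\|\sqrt a\Gp\|^2_{H_{\Psi}^{\frac{13}{3},0}}$ plus a small multiple of the dissipation, but Lemma \ref{gu7} only converts $\|\Gp\|_{H^{\frac{13}{3},0}}$ into $\|\uph\|_{H^{\frac{13}{3},0}}$ — there is a residual $\frac13$ horizontal derivative that the $y$-dissipation $\|\sqrt a\py\Gp\|_{H_{\Psi}^{4,0}}$ cannot compensate. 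The paper closes this gap with the anisotropic interpolation \eqref{chazhi}, $\|\sqrt a\uph\|_{H_{\frac{\Psi}{2}}^{\frac{14}{3},0}}\leq C\ltr^{\frac19}\|\ltr^{-1}\sqrt a\uph\|^{\frac19}_{H_{\Psi}^{\frac{22}{3},0}}\|\sqrt a\Gp\|^{\frac89}_{H_{\Psi}^{\frac{13}{3},0}}$, followed by Young's inequality; this is exactly what generates the threshold $\al\leq\ga_0+\frac{5}{36}$ (since $\frac14-\frac19=\frac5{36}$) and the right-hand-side term $C\ep^{\frac12}\int_0^t\tad\|\lsr^{-1}\sqrt a(\uph,\bp)\|^2_{H_{\Psi}^{\frac{22}{3},0}}ds$ — a term which, in the actual proof, comes from $K_2$, $K_3$, $K_4$ and $K_{51}$, not only from the $\Pp$ forcing as in your outline.

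Your ``cleaner route'' for $\tad\Pp$ has the same defect in a sharper form. Writing $\tad\Pp=(b\px b+h\py b)_{\Phi}$ and bounding it by $\ep\ltr^{-\ga_0-\frac14}\|\sqrt a(\uph,\bp)\|_{H_{\Psi}^{4,0}}$ mis-counts the derivative in $\px b$ (one needs $\|\bp\|$ at one more horizontal derivative than the pairing regularity, cf.\ \eqref{haonana}), and the subsequent jump to $\|\ltr^{-1}\sqrt a(\uph,\bp)\|_{H_{\Psi}^{\frac{22}{3},0}}$ cannot be done crudely: replacing $\|\bp\|_{H^{\frac{14}{3},0}}$ by $\ltr\,\|\ltr^{-1}\bp\|_{H^{\frac{22}{3},0}}$ costs a full factor $\ltr$, and the resulting exponent would require $\al\leq\ga_0-\frac34$, which is violated by the choice $\al=\frac98+\frac12 l_{\ka}-\frac12\eta$; only the fractional cost $\ltr^{\frac19}$ from \eqref{chazhi} keeps the time-weight bookkeeping consistent with $\al\leq\ga_0+\frac{5}{36}$. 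Moreover the $h\py b$ piece must be split off and treated via Lemma \ref{gu7} as $\|\py\Gtp\|_{H^{\frac{11}{3},0}}$, absorbed by $\frac1{100}\eta\|\sqrt a\py\Gtp\|^2_{H_{\Psi}^{4,0}}$ plus $C\eta^{-1}\ep^{\frac32}\tad\|\sqrt a\Gp\|^2_{H_{\Psi}^{\frac{13}{3},0}}$ (the paper's $K_{52}$), which is where the constant $\eta^{-1}\ep^{\frac32}$ in the statement actually originates. The remaining parts of your outline (treatment of the linear terms, of $\T_u\px\Gp$ via Lemma \ref{gu3}, of $Z$ via Lemma \ref{Zdeguji}, and the final $\eta$-bookkeeping yielding $4l_{\ka}-\frac2{25}\eta$) match the paper, but without the interpolation step the central absorptions do not close.
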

%\begin{Remark}
%ÊÂÊµÉÏÕâÒ»œÚ¹ØÓÚ$\Gp$µÄ¹ÀŒÆ,ÒÔŒ°ÏÂÒ»œÚ¹ØÓÚ$\py \Gp$Óë\cite{CW}ÖÐÏà²î²»Žó,œöÔöŒÓŒžžöº¬$\Pp$µÄÏî,µ«ÊÇÎªÁË¶ÁÕß·œ±ãÒÔŒ°Ö€Ã÷µÄÍêÕûÐÔ,ÎÒÃÇÔÚÕâÀïÈÔœ«ÍêÕûµÄÖ€Ã÷¹ý³Ìžø³ö.
%\end{Remark}
\begin{proof}
By taking $H_{\Psi}^{4,0}$ inner product of \eqref{gfephi} with
$a(t)\Gp$, and using \eqref{ptpy} we can get
\begin{align}\label{zaijianchiyixia}
\begin{aligned}
\frac{1}{2}&\frac{d}{dt}\|\sqrt{a}\Gp\|^2_{H_{\Psi}^{4,0}}
-\frac{1}{2}\|\sqrt{a'}\Gp\|^2_{H_{\Psi}^{4,0}}
+\ltr^{-1}\|\sqrt{a}\Gp\|^2_{H_{\Psi}^{4,0}}\\
&+\la\tad(t)\|\sqrt{a}\Gp\|^2_{H_{\Psi}^{\frac{13}{3},0}}
+\frac{1}{2}\|\sqrt{a}\py\Gp\|^2_{H_{\Psi}^{4,0}}\\
\leq&a\left|(\T_u\px \Gp,\Gp)_{H_{\Psi}^{4,0}}\right|
+a\left|(\T_{\py G}\vp,\Gp)_{H_{\Psi}^{4,0}}\right|
+\frac{a}{2\ltr}\left|(\T_{\py(y\psi)}\vp,\Gp)_{H_{\Psi}^{4,0}}\right|\\
&+a\left|(\frac{y}{\ltr}\int_y^{\ty}\T_{\py u}\vp dz,\Gp)_{H_{\Psi}^{4,0}}\right|
+a\tad\left|(\Pp,\Gp)_{H_{\Psi}^{4,0}}\right|\\
&+a\tad\left|(\frac{y}{2\ltr}\int_y^{\ty}\Pp dz,\Gp)_{H_{\Psi}^{4,0}}\right|
+a\left|(Z,\Gp)_{H_{\Psi}^{4,0}}\right|\\
=&\sum_{i=1}^{7} K_i.
\end{aligned}
\end{align}
For $K_1$, by lemma \ref{gu3}, we have
\begin{align*}
K_1\leq&C\|u\|_{L^{\ty}_{\vm}(H_{\h}^{\frac{3}{2}+})}
\|\sqrt{a}\Gp\|^2_{H_{\Psi}^{4,0}}\\
\leq& C\ep^{\frac{1}{2}}\ltr^{\al-\ga_0-\frac{1}{4}}\tad
\|\sqrt{a}\Gp\|^2_{H_{\Psi}^{4,0}}\\
\leq& C\tad
\|\sqrt{a}\Gp\|^2_{H_{\Psi}^{4,0}}.
\end{align*}
Next since $\py v=-\px u$, combining \eqref{ty} and lemma \ref{gu7}, we have that for any $s\in\R$, the following holds.
\begin{align}\label{vuG}
\begin{aligned}
\|\vp\|_{L^{\ty}_{\vm,\frac{\Psi}{2}}(H_{\h}^s)}\leq
C\ltr^{\frac{1}{4}}\|\uph\|_{H_{\frac{\Psi}{2}}^{s+1,0}}
\leq C\ltr^{\frac{1}{4}}\|\Gp\|_{H_{\Psi}^{s+1,0}}.
\end{aligned}
\end{align}
Then for $K_2$, by \eqref{T**} we deduce
\begin{align*}
K_2\leq &C\|\py G\|_{L^{2}_{\vm,\frac{\Psi}{2}}(H_{\h}^{\frac{1}{2}+})}
\|\sqrt{a}\vp\|_{L^{\ty}_{\vm,\frac{\Psi}{2}}(H_{\h}^{\frac{11}{3}})}
\|\sqrt{a}\Gp\|_{H_{\Psi}^{\frac{13}{3},0}}\\
\leq& C\ep\ltr^{-\ga_0-\frac{1}{4}}
\|\sqrt{a}\uph\|_{H_{\frac{\Psi}{2}}^{\frac{14}{3},0}}
\|\sqrt{a}\Gp\|_{H_{\Psi}^{\frac{13}{3},0}}.
\end{align*}
Using the anisotropic Sobolev inequality and the lemma \ref{gu7}, we can obtain
\begin{align}\label{chazhi}
\begin{aligned}
\|\sqrt{a}\uph\|_{H_{\frac{\Psi}{2}}^{\frac{14}{3},0}}
\leq &C\|\sqrt{a}\uph\|^{\frac{1}{9}}_{H_{\frac{\Psi}{2}}^{\frac{22}{3},0}}
\|\sqrt{a}\uph\|^{\frac{8}{9}}_{H_{\frac{\Psi}{2}}^{\frac{13}{3},0}}\\
\leq &C\ltr^{\frac{1}{9}}\|\ltr^{-1}\sqrt{a}\uph
\|^{\frac{1}{9}}_{H_{\Psi}^{\frac{22}{3},0}}
\|\sqrt{a}\Gp\|^{\frac{8}{9}}_{H_{\Psi}^{\frac{13}{3},0}}.
\end{aligned}
\end{align}
Thus we can derive
\begin{align*}
K_2\leq &C\ep\ltr^{-\ga_0-\frac{5}{36}}
\|\ltr^{-1}\sqrt{a}\uph
\|^{\frac{1}{9}}_{H_{\Psi}^{\frac{22}{3},0}}
\|\sqrt{a}\Gp\|^{\frac{17}{9}}_{H_{\Psi}^{\frac{13}{3},0}}\\
\leq &C\ep^{\frac{1}{2}}\ltr^{\al-\ga_0-\frac{5}{36}}\tad
\left(\|\ltr^{-1}\sqrt{a}\uph
\|^{2}_{H_{\Psi}^{\frac{22}{3},0}}
+\|\sqrt{a}\Gp\|^{2}_{H_{\Psi}^{\frac{13}{3},0}}\right)\\
\leq &C\ep^{\frac{1}{2}}\tad
\left(\|\ltr^{-1}\sqrt{a}\uph
\|^{2}_{H_{\Psi}^{\frac{22}{3},0}}
+\|\sqrt{a}\Gp\|^{2}_{H_{\Psi}^{\frac{13}{3},0}}\right),
\end{align*}
here we use $\al\leq\ga_0+\frac{5}{36}.$ For $K_3$, it follows from lemma \ref{gu7} that
\begin{align*}
K_3\leq&C
\|\ltr^{-1}\py(y\psi)\|_{L^2_{\vm,\frac{\Psi}{2}}(H_{\h}^{\frac{1}{2}+})}
\|\sqrt{a}\vp\|_{L^{\ty}_{\vm,\frac{\Psi}{2}}(H_{\h}^{\frac{11}{3}})}
\|\sqrt{a}\Gp\|_{H_{\Psi}^{\frac{13}{3},0}}\\
\leq&C\|\py G\|_{L^2_{\vm,\frac{\Psi}{2}}(H_{\h}^{\frac{1}{2}+})}
\|\sqrt{a}\vp\|_{L^{\ty}_{\vm,\frac{\Psi}{2}}(H_{\h}^{\frac{11}{3}})}
\|\sqrt{a}\Gp\|_{H_{\Psi}^{\frac{13}{3},0}},
\end{align*}
By the estimates of $K_2$, we get
\begin{align*}
K_3\leq C\ep^{\frac{1}{2}}\tad
\left(\|\ltr^{-1}\sqrt{a}\uph
\|^{2}_{H_{\Psi}^{\frac{22}{3},0}}
+\|\sqrt{a}\Gp\|^{2}_{H_{\Psi}^{\frac{13}{3},0}}\right).
\end{align*}
For $K_4,$ first for any $s\in\R$, we have
\begin{align}\label{hensuibian}
\begin{aligned}
\left\|\int_{y}^{\ty}\T_{\py u}\vp dz\right\|_{L^{\ty}_{\vm}(H_{\h}^{s})}
&\leq\left\|\int_{y}^{\ty}\|\py u\|_{H_{\h}^{\frac{1}{2}+}}\|\vp\|_{H_{\h}^s}dz\right\|_{L^{\ty}_{\vm}}\\
&\leq C \|\py u\|_{L^2_{\vm,\frac{3}{4}\Psi}(H_{\h}^{\frac{1}{2}+})}
\|e^{\frac{3}{4}\Psi}\vp\|_{L^{\ty}_{\vm}(H_{\h}^s)}
\left(\int_y^{\ty}e^{-3\Psi}dz\right)^{\frac{1}{2}}\\
&\leq C\ep\ltr^{-\ga_0}e^{-\frac{3}{2}\Psi}
\|\uph\|_{H_{\frac{3}{4}\Psi}^{1+s,0}},
\end{aligned}
\end{align}
Applying \eqref{chazhi}, we deduce
\begin{align*}
K_4&\leq a\|\frac{y}{\ltr}\int_{y}^{\ty}\T_{\py u}\vp dz\|_{H_{\Psi}^{\frac{11}{3},0}}
\|\Gp\|_{H_{\Psi}^{\frac{13}{3},0}}\\
&\leq C\ep^{\frac{1}{2}}\ltr^{\al-\ga_0-\frac{1}{2}}\tad
\|\frac{y}{\ltr^{\frac{1}{2}}}e^{-\frac{1}{2}\Psi}\|_{L^2_{\vm}}
\|\sqrt{a}\uph\|_{H_{\frac{3}{4}\Psi}^{\frac{14}{3},0}}
\|\sqrt{a}\Gp\|_{H_{\Psi}^{\frac{13}{3},0}}\\
&\leq C\ep^{\frac{1}{2}}\ltr^{\al-\ga_0-\frac{7}{18}}\tad
\left(\|\ltr^{-1}\sqrt{a}\uph
\|^{2}_{H_{\Psi}^{\frac{22}{3},0}}
+\|\sqrt{a}\Gp\|^{2}_{H_{\Psi}^{\frac{13}{3},0}}\right)\\
&\leq C\ep^{\frac{1}{2}}\tad
\left(\|\ltr^{-1}\sqrt{a}\uph
\|^{2}_{H_{\Psi}^{\frac{22}{3},0}}
+\|\sqrt{a}\Gp\|^{2}_{H_{\Psi}^{\frac{13}{3},0}}\right),
\end{align*}
here we use $\al\leq\ga_0+\frac{5}{36}.$

For $K_5$, it follows from \eqref{PN}, \eqref{bony}, \eqref{ty}, lemma \ref{gu1} and lemma \ref{ub} that for any $s\in\R$,
\begin{align}\label{haonana}
\begin{aligned}
\|\tad\Pp\|_{H_{\Psi}^{s,0}}
=&\|(b\px b+h\py b)_{\Phi}\|_{H_{\Psi}^{s,0}}\\
\leq&\|\bp\|_{L^{\ty}_{\vm,\frac{\Psi}{2}}(H_{\h}^{\frac{3}{2}+})}
\|\bp\|_{H_{\frac{\Psi}{2}}^{s+1,0}}
+\|\hp\|_{L^{\ty}_{\vm,\frac{\Psi}{2}}(H_{\h}^{\frac{1}{2}+})}
\|\py\bp\|_{H_{\frac{\Psi}{2}}^{s,0}}\\
&+\|\py\bp\|_{L^{2}_{\vm,\frac{\Psi}{2}}(H_{\h}^{\frac{1}{2}+})}
\|\hp\|_{L^{\ty}_{\vm,\frac{\Psi}{2}}(H_{\h}^{s})}\\
\leq&C\ep\ltr^{-\ga_0-\frac{1}{4}}\|\bp\|_{H_{\frac{\Psi}{2}}^{s+1,0}}
+C\ep\ltr^{-\ga_0+\frac{1}{4}}\|\py\bp\|_{H_{\frac{\Psi}{2}}^{s,0}}.
\end{aligned}
\end{align}
Thus for $K_5$, we have
\begin{align*}
K_5\leq &C a\|\tad\Pp\|_{H_{\Psi}^{\frac{11}{3},0}}
\|\Gp\|_{H_{\Psi}^{\frac{13}{3},0}}\\
\leq&C\ep\ltr^{-\ga_0-\frac{1}{4}}a\|\bp\|_{H_{\frac{\Psi}{2}}^{\frac{14}{3},0}}
\|\Gp\|_{H_{\Psi}^{\frac{13}{3},0}}
+C\ep\ltr^{-\ga_0+\frac{1}{4}}a\|\py\bp\|_{H_{\frac{\Psi}{2}}^{\frac{11}{3},0}}
\|\Gp\|_{H_{\Psi}^{\frac{13}{3},0}}\\
=&K_{51}+K_{52}.
\end{align*}
Using the estimates of $K_2$, we deduce
\begin{align*}
K_{51}\leq C\ep^{\frac{1}{2}}\tad
\left(\|\ltr^{-1}\sqrt{a}\bp
\|^{2}_{H_{\Psi}^{\frac{22}{3},0}}
+\|\sqrt{a}(\Gp,\Gtp)\|^{2}_{H_{\Psi}^{\frac{13}{3},0}}\right).
\end{align*}
On the other hand, by using lemma \ref{gu7}, it yields
\begin{align*}
K_{52}\leq &C\ep\ltr^{-\ga_0+\frac{1}{4}}
\|\sqrt{a}\py\Gtp\|_{H_{\Psi}^{\frac{11}{3},0}}
\|\sqrt{a}\Gp\|_{H_{\Psi}^{\frac{13}{3},0}}\\
\leq&C\eta^{-1}\ep^{\frac{3}{2}}\ltr^{\al-2\ga_0+\frac{1}{2}}\tad
\|\sqrt{a}\Gp\|^2_{H_{\Psi}^{\frac{13}{3},0}}
+\frac{1}{100}\eta\|\sqrt{a}\py\Gtp\|^2_{H_{\Psi}^{4,0}}\\
\leq&C\eta^{-1}\ep^{\frac{3}{2}}\tad
\|\sqrt{a}\Gp\|^2_{H_{\Psi}^{\frac{13}{3},0}}
+\frac{1}{100}\eta\|\sqrt{a}\py\Gtp\|^2_{H_{\Psi}^{4,0}}.
\end{align*}
here we use $\al\leq\ga_0+\frac{5}{36}.$ Combining $K_{51}$ and $K_{52}$, we get
\begin{align*}
K_5
\leq&C(\ep^{\frac{1}{2}}+\eta^{-1}\ep^{\frac{3}{2}})\tad
\|\sqrt{a}(\Gp,\Gtp)\|^{2}_{H_{\Psi}^{\frac{13}{3},0}}
+\frac{1}{100}\eta\|\sqrt{a}\py\Gtp\|^2_{H_{\Psi}^{4,0}}
+C\ep^{\frac{1}{2}}\tad
\|\ltr^{-1}\sqrt{a}\bp
\|^{2}_{H_{\Psi}^{\frac{22}{3},0}}.
\end{align*}

For $K_6$, first by applying lemma \ref{gu6}, we know that for any $s\in\R$,
\begin{align}\label{xinqingbucuo}
\begin{aligned}
\|\frac{y}{2\ltr}\int^{\ty}_{y}f dz\|_{H_{\Psi}^{s,0}}
\leq C\|f\|_{H_{\Psi}^{s,0}},
\end{aligned}
\end{align}
Thus similar to $K_5$, we have
\begin{align*}
K_6
\leq&C(\ep^{\frac{1}{2}}+\eta^{-1}\ep^{\frac{3}{2}})\tad
\|\sqrt{a}(\Gp,\Gtp)\|^{2}_{H_{\Psi}^{\frac{13}{3},0}}
+\frac{1}{100}\eta\|\sqrt{a}\py\Gtp\|^2_{H_{\Psi}^{4,0}}
+C\ep^{\frac{1}{2}}\tad
\|\ltr^{-1}\sqrt{a}\bp
\|^{2}_{H_{\Psi}^{\frac{22}{3},0}}.
\end{align*}
For $K_7$, by lemma \ref{Zdeguji} we derive that
\begin{align*}
K_7\leq&C\ep\ltr^{-\ga_0-\frac{1}{4}}
\|\sqrt{a}\Gp\|^2_{H_{\Psi}^{\frac{13}{3},0}}
+C\ep\ltr^{-\ga_0+\frac{1}{4}}
\|\sqrt{a}\py\Gp\|_{H_{\Psi}^{\frac{11}{3},0}}
\|\sqrt{a}\Gp\|_{H_{\Psi}^{\frac{13}{3},0}}\\
\leq&C(\ep^{\frac{1}{2}}\ltr^{\al-\ga_0-\frac{1}{4}}
+\eta^{-1}\ep^{\frac{3}{2}}\ltr^{\al-2\ga_0+\frac{1}{2}})\tad
\|\sqrt{a}\Gp\|^2_{H_{\Psi}^{\frac{13}{3},0}}
+\frac{1}{100}\eta\|\sqrt{a}\py\Gp\|^2_{H_{\Psi}^{\frac{11}{3},0}}\\
\leq&C(\ep^{\frac{1}{2}}
+\eta^{-1}\ep^{\frac{3}{2}})\tad
\|\sqrt{a}\Gp\|^2_{H_{\Psi}^{\frac{13}{3},0}}
+\frac{1}{100}\eta\|\sqrt{a}\py\Gp\|^2_{H_{\Psi}^{\frac{11}{3},0}}.
\end{align*}
By taking $K_1-K_7$ into \eqref{zaijianchiyixia} and integrating over $[0,t]$, we can obtain
\begin{align*}
&\|\sqrt{a}\Gp(t)\|^2_{H_{\Psi}^{4,0}}
-\int_0^t\|\sqrt{a'}\Gp\|^2_{H_{\Psi}^{4,0}}ds
+\int_0^t\ltr^{-1}\|\sqrt{a}\Gp\|^2_{H_{\Psi}^{4,0}}ds\\
&+2\left(\la-C(1+\eta^{-1}\ep^{\frac{3}{2}})\right)
\int_0^t\tad(s)\|\sqrt{a}\Gp\|^2_{H_{\Psi}^{\frac{13}{3},0}}ds\\
&+(1-\frac{1}{25}\eta)\int_0^t\|\sqrt{a}\py\Gp\|^2_{H_{\Psi}^{4,0}}ds\\
\leq&\|\sqrt{a}(0)\Gp(0)\|^2_{H_{\Psi}^{4,0}}
+C(1+\eta^{-1}\ep^{\frac{3}{2}})
\int_0^t\tad(s)\|\sqrt{a}\Gtp\|^2_{H_{\Psi}^{\frac{13}{3},0}}ds\\
&+C\ep^{\frac{1}{2}}\int_0^t\tad
\|\lsr^{-1}\sqrt{a}(\uph,\bp)\|^{2}_{H_{\Psi}^{\frac{22}{3},0}}
ds
+\frac{1}{25}\eta\int_0^t\|\sqrt{a}\py\Gtp\|^2_{H_{\Psi}^{4,0}}ds.
\end{align*}
The estimates for $\Gtp$ follow in the same way, and we have thus completed the proof of proposition \ref{Gpdexianyanguji}.
\end{proof}

Now we can prove proposition \ref{propositionG} by applying proposition \ref{Gpdexianyanguji}.

\textbf{Proof of Proposition \ref{propositionG}:}
Take $a(t)=\ltr^{2+2l_{\ka}-2\eta}$ in \eqref{Gpdexianyanguji2}. By lemma \ref{gu6} we deduce
\begin{align*}
&-(2+2l_{\ka}-2\eta)\int_0^t\|\lsr^{1+l_{\ka}-\eta-\frac{1}{2}}(\Gp,\Gtp)\|^2_{H_{\Psi}^{4,0}}ds
+2\int_0^t\|\lsr^{1+l_{\ka}-\eta-\frac{1}{2}}(\Gp,\Gtp)\|^2_{H_{\Psi}^{4,0}}ds\\
&+(4l_{\ka}-\frac{2}{25}\eta)\int_0^t\|
\lsr^{1+l_{\ka}-\eta}(\py\Gp,\py\Gtp)\|^2_{H_{\Psi}^{4,0}}ds\\
\geq&\frac{98}{25}\eta
\int_0^t\|
\lsr^{1+l_{\ka}-\eta}(\py\Gp,\py\Gtp)\|^2_{H_{\Psi}^{4,0}}ds,
\end{align*}
then we get
\begin{align*}
&\|\ltr^{1+l_{\ka}-\eta}(\Gp(t),\Gtp(t))\|^2_{H_{\Psi}^{4,0}}
+\frac{98}{25}\eta
\int_0^t\|
\lsr^{1+l_{\ka}-\eta}(\py\Gp,\py\Gtp)\|^2_{H_{\Psi}^{4,0}}ds\\
&+2\left(\la-C(1+\eta^{-1}\ep^{\frac{3}{2}})\right)
\int_0^t\tad(s)\|\lsr^{1+l_{\ka}-\eta}(\Gp,\Gtp)\|^2_{H_{\Psi}^{\frac{13}{3},0}}ds\\
\leq&\|(\Gp(0),\Gtp(0))\|^2_{H_{\Psi}^{4,0}}
+C\ep^{\frac{1}{2}}\int_0^t\tad
\|\lsr^{l_{\ka}-\eta}\sqrt{a}(\uph,\bp)\|^{2}_{H_{\Psi}^{\frac{22}{3},0}}
ds
.
\end{align*}
Thus when $\ep<\ep_3$ and $\la>\la_2$, proposition \ref{propositionG} holds.

\hfill $\square$

\section{The Gevrey Estimates of $\py\Gp$ and $\py\Gtp$}\label{gujipyG}
In this section, we will give the estimates of $(\py G,\py\Gt)$, for which we act on \eqref{gfephi} with $\py$,
\begin{align}\label{pygfephi}
\begin{aligned}
&~~~~~\pt\py\Gp+\la\tad(t)[D_x]^{\frac{2}{3}}\py\Gp-\py^3\Gp
+\ltr^{-1}\py\Gp+\py\left(\T_u\px\Gp
+\T_{\py G}\vp\right.\\
&\left.-\frac{1}{2\ltr}\T_{\py(y\psi)}\vp+\frac{y}{\ltr}\int_y^{\ty}\T_{\py u}\vp dz-\tad\Pp+\frac{y}{2\ltr}\int_y^{\ty}\tad\Pp dz\right)=\py Z,
\end{aligned}
\end{align}
where $Z$ is defined by \eqref{Z}.
\begin{Proposition}\label{pyGdexianyanguji}
Let $\ka\in(0,2)$ be a given constant. Let $(G,\Gt)$ be defined by \eqref{gf}, and $a(t)$ be a non-negative and non-decreasing function on $\R_+$. Then when $\al\leq\ga_0+\frac{5}{36}$, the following inequality holds for any $t<T_*$ and sufficiently small $\eta>0$.
\begin{align}\label{pyGdexianyanguji2}
\begin{aligned}
&\|\sqrt{a}(\py\Gp(t),\py\Gtp(t))\|^2_{H_{\Psi}^{3,0}}
-\int_0^t\|\sqrt{a'}(\py\Gp,\py\Gtp)\|^2_{H_{\Psi}^{3,0}}ds\\
&+2\int_0^t\lsr^{-1}\|\sqrt{a}(\py\Gp,\py\Gtp)\|^2_{H_{\Psi}^{3,0}}ds
+(4l_{\ka}-\frac{7}{25}\eta)
\int_0^t\|\sqrt{a}(\py^2\Gp,\py^2\Gtp)\|^2_{H_{\Psi}^{3,0}}ds\\
&+2(\la-C\eta^{-1}\ep^{\frac{3}{2}})
\int_0^t\tad\|\sqrt{a}(\py\Gp,
\py\Gtp)\|^2_{H_{\Psi}^{\frac{10}{3},0}}ds\\
\leq&\|\sqrt{a}(0)(\py\Gp(0),\py\Gtp(0))\|^2_{H_{\Psi}^{3,0}}
+C\eta^{-1}\ep^{\frac{3}{2}}\int_0^t\tad
\|\lsr^{-\frac{1}{2}}\sqrt{a}(\Gp,\Gtp)
\|^2_{H_{\Psi}^{\frac{13}{3},0}}ds,
\end{aligned}
\end{align}
where $l_{\ka}=\frac{\ka(2-\ka)}{4}\in(0,\frac{1}{4}].$
\end{Proposition}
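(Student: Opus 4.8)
The plan is to differentiate \eqref{gfephi} once in $y$ — obtaining \eqref{pygfephi} — and run an $H_{\Psi}^{3,0}$ energy estimate: take the $H_{\Psi}^{3,0}$ inner product of \eqref{pygfephi} against $a(t)\py\Gp$, do the same for the $\Gtp$–analogue against $a(t)\py\Gtp$, and add. For the linear part, the identity $\pt\Psi+2(\py\Psi)^2=0$ together with one integration by parts in $y$ gives, exactly as in \eqref{ptpy} and \eqref{ptpylk}, that $a(\pt w-\py^2 w,w)_{H_{\Psi}^{3,0}}\ge\frac12\dfrac{d}{dt}\|\sqrt a w\|^2_{H_{\Psi}^{3,0}}-\frac12\|\sqrt{a'}w\|^2_{H_{\Psi}^{3,0}}+2l_{1}\|\sqrt a\py w\|^2_{H_{\Psi}^{3,0}}$ for $w=\py\Gp$, and with $\ka$ in place of $1$ (so $l_1$ replaced by $l_{\ka}$) for $w=\py\Gtp$; since $l_1=\tfrac14\ge l_{\ka}$, summing produces the coercive term $2l_{\ka}\|\sqrt a(\py^2\Gp,\py^2\Gtp)\|^2_{H_{\Psi}^{3,0}}$. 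The zeroth–order damping contributes $\ltr^{-1}\|\sqrt a(\py\Gp,\py\Gtp)\|^2_{H_{\Psi}^{3,0}}$ and the Gevrey–radius term $\la\tad\|\sqrt a(\py\Gp,\py\Gtp)\|^2_{H_{\Psi}^{10/3,0}}$; both the $\ltr^{-1}$–weighted term and the new coercive term stay on the left (the $4l_{\ka}$ in \eqref{pyGdexianyanguji2} is $2\cdot 2l_{\ka}$, i.e. the whole inequality multiplied by $2$).

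The bulk of the work is to estimate $\py$ applied to each nonlinear term of \eqref{gfephi} — namely $\py(\T_u\px\Gp)$, $\py(\T_{\py G}\vp)$, $\tfrac{1}{2\ltr}\py(\T_{\py(y\psi)}\vp)$, $\py\big(\tfrac{y}{\ltr}\int_y^{\ty}\T_{\py u}\vp\,dz\big)$, $\tad\py\Pp$, $\py\big(\tfrac{y}{2\ltr}\int_y^{\ty}\tad\Pp\,dz\big)$ and $\py Z$ — paired in $H_{\Psi}^{3,0}$ against $a\py\Gp$. Each is handled in the style of the bounds $K_1$–$K_7$ in the proof of Proposition \ref{Gpdexianyanguji}, with the same toolbox: the para-product estimates of Lemmas \ref{gu1} and \ref{gu3} (the latter for the transport-type cancellation in $\T_u\px\py\Gp$), the transfer Lemma \ref{gu7} to replace $u$-, $v$-, $\psi$-quantities by $G$-quantities, the Poincaré inequality Lemma \ref{gu6}, the vertical-integral bounds \eqref{ty}, \eqref{hensuibian}, \eqref{xinqingbucuo} and \eqref{vuG}, the interpolation inequality \eqref{chazhi}, the smallness/decay Lemma \ref{ub} together with the defining bounds in \eqref{T**}, and Lemma \ref{Zdeguji} for $Z$ (whose $\tfrac13$–order Gevrey loss is paid for by the $[D_x]^{2/3}$ dissipation). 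For the last term, note that $Z|_{y=0}=0$: every summand of \eqref{Z} either contains a factor vanishing at $y=0$ ($u$, $v$, or $G$) or carries the prefactor $\tfrac{y}{\ltr}$, so one integration by parts yields $(\py Z,\py\Gp)_{H_{\Psi}^{3,0}}=-(Z,\py^2\Gp)_{H_{\Psi}^{3,0}}-2(\py\Psi\,Z,\py\Gp)_{H_{\Psi}^{3,0}}$, which Lemma \ref{Zdeguji} controls.

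The feature distinguishing this from Proposition \ref{Gpdexianyanguji} is that $\py$ now either lands on $\vp$, where $\py\vp=-\px\uph$ by incompressibility, or raises the vertical order of a coefficient: $\py(\T_{\py G}\vp)=\T_{\py^2 G}\vp-\T_{\py G}\px\uph$, $\py\big(\tfrac{y}{\ltr}\int_y^{\ty}\T_{\py u}\vp\,dz\big)=\tfrac1\ltr\int_y^{\ty}\T_{\py u}\vp\,dz-\tfrac{y}{\ltr}\T_{\py u}\vp$, $\py(\T_{\py(y\psi)}\vp)=\T_{\py^2(y\psi)}\vp-\T_{\py(y\psi)}\px\uph$, and $\tad\py\Pp=(\py b\,\px b+b\,\py\px b+\py h\,\py b+h\,\py^2 b)_{\Phi}$ by \eqref{PN} and \eqref{bony}. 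The genuinely second–order pieces — $\T_{\py^2 G}\vp$, $\T_{\py^2(y\psi)}\vp$, the $\py^2\bp$ contribution, and $(Z,\py^2\Gp)$ — are split via Cauchy's inequality into a part absorbed by $\tfrac1{100}\eta\|\sqrt a(\py^2\Gp,\py^2\Gtp)\|^2_{H_{\Psi}^{3,0}}$ (this reduces the coercive constant to $4l_{\ka}-\tfrac{7}{25}\eta$ after the overall factor $2$) and a $\tad$–integrable remainder of the form $C\eta^{-1}\ep^{3/2}\tad\|\ltr^{-1/2}\sqrt a(\Gp,\Gtp)\|^2_{H_{\Psi}^{13/3,0}}$ — precisely the source term of \eqref{pyGdexianyanguji2}, the $\ltr^{-1/2}$ weight there being what the interpolation \eqref{chazhi} and the decay from \eqref{T**} produce. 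All $\ep$–small contributions to $\|\sqrt a(\py\Gp,\py\Gtp)\|^2$ are absorbed into $\la\tad\|\sqrt a(\py\Gp,\py\Gtp)\|^2_{H_{\Psi}^{10/3,0}}$, producing the factor $\la-C\eta^{-1}\ep^{3/2}$. Integrating over $[0,t]$, collecting constants and multiplying by $2$ then yields \eqref{pyGdexianyanguji2}; the hypothesis $\al\le\ga_0+\tfrac{5}{36}$ is exactly what makes every borderline time–exponent nonpositive.

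I expect the main obstacle to be controlling the second–order terms $\T_{\py^2 G}\vp$, $\T_{\py^2(y\psi)}\vp$ (and the $\py^2\bp$ term for $\Gtp$) without a net loss of one vertical derivative: a crude estimate costs strictly more than the coercive term $\|\sqrt a(\py^2\Gp,\py^2\Gtp)\|^2$ can pay. The balance closes only because Lemma \ref{gu7} lets $\py^2 u$, $\py^2\psi$ (and the tilde versions) be bounded by $\py^2\Gp$, because the half–weight splitting $\Psi=\tfrac{\Psi}{2}+\tfrac{\Psi}{2}$ combined with \eqref{ty} absorbs the $L^{\ty}_{\vm}$–norm of $\vp$, and because the interpolation \eqref{chazhi} recovers exactly the exponent budget permitted by $\al\le\ga_0+\tfrac{5}{36}$; keeping track of which of these devices is available for each summand, and checking that the residual $\eta$– and $\ep$–small errors land in the coercive term and in $\la\tad(\,\cdot\,)$ respectively, is the delicate part of the argument.
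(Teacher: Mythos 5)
Your high-level plan (energy estimate in $H_{\Psi}^{3,0}$ on the $\py$-differentiated equation \eqref{pygfephi}, with the linear part handled as in \eqref{ptpy}--\eqref{ptpylk}) matches the paper, but the way you treat the nonlinear terms contains a genuine gap. The paper never estimates $\py$ of the nonlinear terms at all: using the boundary facts collected in \eqref{bianjietiaojian} (in particular $\Pp|_{y=0}=0$, $Z|_{y=0}=0$ and $\py^2\Gp|_{y=0}=0$), it integrates by parts in $y$ so that every nonlinear term is paired, undifferentiated, against $\py^2\Gp$ or against $\frac{y}{2\ltr}\py\Gp$ (the fourteen terms $L_1$--$L_{14}$ in \eqref{zaijianchiliangxia}); in particular the magnetic term enters only as $a\tad|(\Pp,\py^2\Gp)_{H_\Psi^{3,0}}|$ and is estimated through \eqref{haonana}, where the unavoidable extra horizontal derivative falls on the \emph{zeroth-order-in-$y$} factor $\bp\sim\Gtp$, for which the budget $\|\lsr^{-1/2}\sqrt a\,\Gtp\|_{H_\Psi^{13/3,0}}$ of the right-hand side of \eqref{pyGdexianyanguji2} suffices. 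You instead apply the product rule, writing $\tad\py\Pp=(\py b\,\px b+b\,\px\py b+\py h\,\py b+h\,\py^2 b)_{\Phi}$, and your list of ``genuinely second-order pieces'' ($\T_{\py^2G}\vp$, $\T_{\py^2(y\psi)}\vp$, the $\py^2\bp$ piece, $(Z,\py^2\Gp)$) omits the cross term $b\,\px\py b$ (and $b\,\px\py u$ in the $\Gt$-equation). This is where the scheme fails: the paraproduct piece $\T^{\h}_{b}\px\py b_{\Phi}$, measured in $H^{3,0}_{\Psi}$ against $a\py\Gp$, forces the norm $\|\py\bp\|_{H_{\Psi}^{4,0}}\lesssim\|\py\Gtp\|_{H_{\Psi}^{4,0}}$ on the high-frequency factor, which carries $2/3$ of an $x$-derivative more than anything available in this proposition: the dissipation only gives $\tad\|\py\Gtp\|^2_{H_{\Psi}^{10/3,0}}$, the claimed source only $\tad\|\lsr^{-1/2}\Gtp\|^2_{H_{\Psi}^{13/3,0}}$, the coercive term only $\|\py^2\Gtp\|^2_{H_{\Psi}^{3,0}}$, and \eqref{T**}/Lemma \ref{ub} control $\py\Gt_{\Phi}$ only in $H^{3,0}$. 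Cauchy against $\eta\|\py^2\Gp\|^2$ cannot repair this because the excess is horizontal, not vertical, and interpolation between the available quantities (indices $(0,\tfrac{13}{3})$, $(1,\tfrac{10}{3})$, $(2,3)$ in $(\py\text{-order},\,x\text{-order})$) never reaches $(1,4)$.

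So the missing idea is exactly the paper's integration by parts in $y$ for \emph{all} the nonlinear terms (not only for $Z$), legitimized by \eqref{bianjietiaojian}; this keeps $\Pp$ undifferentiated and routes its extra $x$-derivative onto $\Gtp$ within the $H^{13/3}$ budget, and it also disposes of $\T_{\py^2G}\vp$, $\py^2(y\psi)$, etc., without invoking the $\|\py^2 G_\Phi\|$ bound from \eqref{T**}. (One could imagine rescuing your route by a Lemma \ref{gu3}-type cancellation between $(\T_b\px\py\bp,\py\Gp)$ and the companion term $(\T_b\px\py\uph,\py\Gtp)$ from the $\Gt$-equation, but the stream-function residues $\frac{1}{2\ltr}\py(y\psi)$, $\frac{1}{2\ka\ltr}\py(y\psit)$ reproduce the same $H^{4,0}$ loss, and in any case this is an additional argument you have not supplied.) Two smaller points: the vanishing of the boundary term in $(-\py^3\Gp,\py\Gp)_{H^{3,0}_{\Psi}}$ requires $\py^2\Gp|_{y=0}=0$, which must be read off from the equation at $y=0$ as in \eqref{bianjietiaojian}; and several of your terms that are paired against $\py\Gp$ rather than $\py^2\Gp$ produce coefficients of size $C\ep^{1/2}\tad$ rather than $C\eta^{-1}\ep^{3/2}\tad$, so even where your estimates close they would yield $2(\la-C(\ep^{1/2}+\eta^{-1}\ep^{3/2}))$ and a correspondingly larger source coefficient, i.e.\ a slightly weaker inequality than \eqref{pyGdexianyanguji2} as stated.
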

\begin{proof}
First noticing $u|_{y=0}=v|_{y=0}=P|_{y=0}=G|_{y=0}$, combining
\eqref{gfe} and \eqref{Z},  we have
\begin{align}\label{bianjietiaojian}
\begin{aligned}
&~~~~~~~~~~~~~~~~~~~~~~~~~~~
\py^2 \Gp|_{y=0}=\py^2 G|_{y=0}=0,~~
\Pp|_{y=0}=0,~~ Z|_{y=0}=0\\
&\left(\T_u\px\Gp
+\T_{\py G}\vp
-\frac{1}{2\ltr}\T_{\py(y\psi)}\vp+\frac{y}{\ltr}\int_y^{\ty}\T_{\py u}\vp dz+\frac{y}{2\ltr}\int_y^{\ty}\tad\Pp dz\right)|_{y=0}=0
\end{aligned}
\end{align}
These boundary conditions ensure that no boundary term arises in integrating by parts.

Taking $H_{\Psi}^{3,0}$ inner product of \eqref{pygfephi} with $a(t)\py\Gp$ and integrating by parts, we derive
\begin{align}\label{zaijianchiliangxia}
\begin{aligned}
\frac{1}{2}&\frac{d}{dt}\|\sqrt{a}\py\Gp\|^2_{H_{\Psi}^{3,0}}
-\frac{1}{2}\|\sqrt{a'}\py\Gp\|^2_{H_{\Psi}^{3,0}}
+\ltr^{-1}\|\sqrt{a}\py\Gp\|^2_{H_{\Psi}^{3,0}}\\
&+\la\tad(t)\|\sqrt{a}\py\Gp\|^2_{H_{\Psi}^{\frac{10}{3},0}}
+\frac{1}{2}\|\sqrt{a}\py^2\Gp\|^2_{H_{\Psi}^{3,0}}\\
\leq&a\left|(\T_u\px \Gp,\py^2\Gp)_{H_{\Psi}^{3,0}}\right|
+a\left|(\T_{\py G}\vp,\py^2\Gp)_{H_{\Psi}^{3,0}}\right|\\
&+\frac{a}{2\ltr}\left|(\T_{\py(y\psi)}\vp,\py^2\Gp
)_{H_{\Psi}^{3,0}}\right|
+a\left|(\frac{y}{\ltr}\int_y^{\ty}\T_{\py u}\vp dz,\py^2\Gp)_{H_{\Psi}^{3,0}}\right|\\
&+a\tad\left|(\Pp,\py^2\Gp)_{H_{\Psi}^{3,0}}\right|
+a\tad\left|(\frac{y}{2\ltr}\int_y^{\ty}\Pp dz,\py^2\Gp)_{H_{\Psi}^{3,0}}\right|\\
&+a\left|(\T_u\px \Gp,\frac{y}{2\ltr}\py\Gp)_{H_{\Psi}^{3,0}}\right|
+a\left|(\T_{\py G}\vp,\frac{y}{2\ltr}\py\Gp)_{H_{\Psi}^{3,0}}\right|\\
&+\frac{a}{2\ltr}\left|(\T_{\py(y\psi)}\vp,\frac{y}{2\ltr}\py\Gp
)_{H_{\Psi}^{3,0}}\right|
+a\left|(\frac{y}{\ltr}\int_y^{\ty}\T_{\py u}\vp dz,\frac{y}{2\ltr}\py\Gp)_{H_{\Psi}^{3,0}}\right|\\
&+a\tad\left|(\Pp,\frac{y}{2\ltr}\py\Gp)_{H_{\Psi}^{3,0}}\right|
+a\tad\left|(\frac{y}{2\ltr}\int_y^{\ty}\Pp dz,\frac{y}{2\ltr}\py\Gp)_{H_{\Psi}^{3,0}}\right|\\
&+a\left|(Z,\py^2\Gp)_{H_{\Psi}^{3,0}}\right|
+a\left|(Z,\frac{y}{2\ltr}\py\Gp)_{H_{\Psi}^{3,0}}\right|
\\
=&\sum_{i=1}^{14} L_i.
\end{aligned}
\end{align}
For $L_1,$ it follows from lemma\ref{gu1} and lemma \ref{ub} that
\begin{align*}
L_1\leq&C\|\sqrt{a}\T_u \px \Gp\|_{H_{\Psi}^{3,0}}
\|\sqrt{a}\py^2\Gp\|_{H_{\Psi}^{3,0}}\\
\leq&C\|u\|_{L^{\ty}_{\vm}(H_{\h}^{\frac{1}{2}+})}
\|\sqrt{a}\Gp\|_{H_{\Psi}^{4,0}}
\|\sqrt{a}\py^2\Gp\|_{H_{\Psi}^{3,0}}\\
\leq&C\ep\ltr^{-\ga_0+\frac{1}{4}}
\|\ltr^{-\frac{1}{2}}\sqrt{a}\Gp\|_{H_{\Psi}^{4,0}}
\|\sqrt{a}\py^2\Gp\|_{H_{\Psi}^{3,0}}\\
\leq&\frac{1}{100}\eta\|\sqrt{a}\py^2\Gp\|^2_{H_{\Psi}^{3,0}}
+C\eta^{-1}\ep^{\frac{3}{2}}\ltr^{\al-2\ga_0+\frac{1}{2}}\tad
\|\ltr^{-\frac{1}{2}}\sqrt{a}\Gp\|^2_{H_{\Psi}^{4,0}}\\
\leq&\frac{1}{100}\eta\|\sqrt{a}\py^2\Gp\|^2_{H_{\Psi}^{3,0}}
+C\eta^{-1}\ep^{\frac{3}{2}}\tad
\|\ltr^{-\frac{1}{2}}\sqrt{a}\Gp\|^2_{H_{\Psi}^{\frac{13}{3},0}},
\end{align*}
here we use $\al\leq\ga_0+\frac{5}{36}$ and $\ga_0>1.$ For $L_2$, applying lemma \ref{gu1}, \eqref{T**} and \eqref{vuG}, we deduce
\begin{align*}
L_2\leq&C\|\py G\|_{L^2_{\vm,\frac{1}{2}\Psi}(H_{\h}^{\frac{1}{2}+})}
\|\sqrt{a}\vp\|_{L^{\ty}_{\vm,\frac{1}{2}\Psi}(H_{\h}^{3})}
\|\sqrt{a}\py^2\Gp\|_{H_{\Psi}^{3,0}}\\
\leq &C\ep\ltr^{-\ga_0-\frac{1}{4}}
\|\sqrt{a}\Gp\|_{H_{\Psi}^{4,0}}
\|\sqrt{a}\py^2\Gp\|_{H_{\Psi}^{3,0}}.
\end{align*}
Then similar to$L_1$, we have
\begin{align*}
L_2
\leq\frac{1}{100}\eta\|\sqrt{a}\py^2\Gp\|^2_{H_{\Psi}^{3,0}}
+C\eta^{-1}\ep^{\frac{3}{2}}\tad
\|\ltr^{-\frac{1}{2}}\sqrt{a}\Gp\|^2_{H_{\Psi}^{\frac{13}{3},0}}.
\end{align*}
For $L_3$, noticing lemma \ref{gu7}, we can use a similar derivation of $L_2$ to get
\begin{align*}
L_3
\leq\frac{1}{100}\eta\|\sqrt{a}\py^2\Gp\|^2_{H_{\Psi}^{3,0}}
+C\eta^{-1}\ep^{\frac{3}{2}}\tad
\|\ltr^{-\frac{1}{2}}\sqrt{a}\Gp\|^2_{H_{\Psi}^{\frac{13}{3},0}}.
\end{align*}
For $L_4,$ by \eqref{hensuibian}, we can gain
\begin{align*}
L_4\leq&C a\|\frac{y}{\ltr}\int_y^{\ty}\T_{\py u}\vp dz\|_{H_{\Psi}^{3,0}}
\|\py^2\Gp\|_{H_{\Psi}^{3,0}}\\
\leq&C\ep\ltr^{-\ga_0-\frac{1}{2}}
\|\frac{y}{\ltr^{\frac{1}{2}}}e^{-\frac{1}{2}\Psi}\|_{L^2_{\vm}}
\|\sqrt{a}\Gp\|_{H_{\Psi}^{4,0}}
\|\sqrt{a}\py^2\Gp\|_{H_{\Psi}^{3,0}}\\
\leq&C\ep\ltr^{-\ga_0-\frac{1}{4}}
\|\sqrt{a}\Gp\|_{H_{\Psi}^{4,0}}
\|\sqrt{a}\py^2\Gp\|_{H_{\Psi}^{3,0}}.
\end{align*}
Therefore similarly we have
\begin{align*}
L_4
\leq\frac{1}{100}\eta\|\sqrt{a}\py^2\Gp\|^2_{H_{\Psi}^{3,0}}
+C\eta^{-1}\ep^{\frac{3}{2}}\tad
\|\ltr^{-\frac{1}{2}}\sqrt{a}\Gp\|^2_{H_{\Psi}^{\frac{13}{3},0}}.
\end{align*}
For $L_5,$ by lemma \ref{gu7} and \eqref{haonana} we obtain
\begin{align*}
L_5\leq&\left(C\ep\ltr^{-\ga_0-\frac{1}{4}}
\|\sqrt{a}\bp\|_{H_{\frac{1}{2}\Psi}^{4,0}}
+C\ep\ltr^{-\ga_0+\frac{1}{4}}
\|\sqrt{a}\py\bp\|_{H_{\frac{1}{2}\Psi}^{3,0}}
\right)\|\sqrt{a}\py^2\Gp\|_{H_{\Psi}^{3,0}}\\
\leq&\left(C\ep\ltr^{-\ga_0-\frac{1}{4}}
\|\sqrt{a}\Gtp\|_{H_{\Psi}^{4,0}}
+C\ep\ltr^{-\ga_0+\frac{1}{4}}
\|\sqrt{a}\py\Gtp\|_{H_{\Psi}^{3,0}}
\right)\|\sqrt{a}\py^2\Gp\|_{H_{\Psi}^{3,0}}\\
\leq&\frac{1}{100}\eta \|\sqrt{a}\py^2\Gp\|^2_{H_{\Psi}^{3,0}}
+C\eta^{-1}\ep^{\frac{3}{2}}\ltr^{\al-2\ga_0+\frac{1}{2}}\tad
\|\ltr^{-\frac{1}{2}}\sqrt{a}\Gtp\|^2_{H_{\Psi}^{\frac{13}{3},0}}\\
&+C\eta^{-1}\ep^{\frac{3}{2}}\ltr^{\al-2\ga_0+\frac{1}{2}}\tad
\|\sqrt{a}\py\Gtp\|^2_{H_{\Psi}^{\frac{10}{3},0}}\\
\leq&\frac{1}{100}\eta \|\sqrt{a}\py^2\Gp\|^2_{H_{\Psi}^{3,0}}
+C\eta^{-1}\ep^{\frac{3}{2}}\tad
\|\ltr^{-\frac{1}{2}}\sqrt{a}\Gtp\|^2_{H_{\Psi}^{\frac{13}{3},0}}
+C\eta^{-1}\ep^{\frac{3}{2}}\tad
\|\sqrt{a}\py\Gtp\|^2_{H_{\Psi}^{\frac{10}{3},0}}.
\end{align*}
For $L_6$, by using \eqref{xinqingbucuo} and the estimates of $L_5$, we have
\begin{align*}
L_6
\leq\frac{1}{100}\eta \|\sqrt{a}\py^2\Gp\|^2_{H_{\Psi}^{3,0}}
+C\eta^{-1}\ep^{\frac{3}{2}}\tad
\|\ltr^{-\frac{1}{2}}\sqrt{a}\Gtp\|^2_{H_{\Psi}^{\frac{13}{3},0}}
+C\eta^{-1}\ep^{\frac{3}{2}}\tad
\|\sqrt{a}\py\Gtp\|^2_{H_{\Psi}^{\frac{10}{3},0}}.
\end{align*}
For $L_7-L_{12}$, using \eqref{xinqingbucuo} and the estimates of $L_1-L_6$, we can directly get
\begin{align*}
\sum_{i=7}^{12} L_i\leq&
\frac{3}{50}\eta \|\sqrt{a}\py^2\Gp\|^2_{H_{\Psi}^{3,0}}
+C\eta^{-1}\ep^{\frac{3}{2}}\tad
\|\ltr^{-\frac{1}{2}}\sqrt{a}(\Gp,\Gtp)\|^2_{H_{\Psi}^{\frac{13}{3},0}}\\
&+C\eta^{-1}\ep^{\frac{3}{2}}\tad
\|\sqrt{a}\py\Gtp\|^2_{H_{\Psi}^{\frac{10}{3},0}}.
\end{align*}
For $L_{13},$ by applying lemma \ref{Zdeguji}, we can derive
\begin{align*}
L_{13}\leq&\|\sqrt{a}Z\|_{H_{\Psi}^{3,0}}
\|\sqrt{a}\py^2\Gp\|_{H_{\Psi}^{3,0}}\\
\leq&C\ep\left(
\ltr^{-\ga_0-\frac{1}{4}}\|\sqrt{a}\Gp\|_{H_{\Psi}^{\frac{11}{3},0}}
+\ltr^{-\ga_0+\frac{1}{4}}\|\sqrt{a}\py\Gp\|_{H_{\Psi}^{3,0}}\right)
\|\sqrt{a}\py^2\Gp\|_{H_{\Psi}^{3,0}}\\
\leq&\frac{1}{100}\eta\|\sqrt{a}\py^2\Gp\|^2_{H_{\Psi}^{3,0}}
+C\eta^{-1}\ep^{\frac{3}{2}}\ltr^{\al-2\ga_0+\frac{1}{2}}\tad
\left(
\|\ltr^{-\frac{1}{2}}\sqrt{a}\Gp\|^2_{H_{\Psi}^{4,0}}
+\|\sqrt{a}\py\Gp\|^2_{H_{\Psi}^{3,0}}\right)\\
\leq&\frac{1}{100}\eta\|\sqrt{a}\py^2\Gp\|^2_{H_{\Psi}^{3,0}}
+C\eta^{-1}\ep^{\frac{3}{2}}\tad
\left(
\|\ltr^{-\frac{1}{2}}\sqrt{a}\Gp\|^2_{H_{\Psi}^{\frac{13}{3},0}}
+\|\sqrt{a}\py\Gp\|^2_{H_{\Psi}^{\frac{10}{3},0}}\right).
\end{align*}
here we use $\al\leq\ga_0+\frac{5}{36}$ and $\ga_0>1.$

For $L_{14}$, similarly by applying \eqref{xinqingbucuo}, we can deduce
\begin{align*}
L_{14}
\leq\frac{1}{100}\eta\|\sqrt{a}\py^2\Gp\|^2_{H_{\Psi}^{3,0}}
+C\eta^{-1}\ep^{\frac{3}{2}}\tad
\left(
\|\ltr^{-\frac{1}{2}}\sqrt{a}\Gp\|^2_{H_{\Psi}^{\frac{13}{3},0}}
+\|\sqrt{a}\py\Gp\|^2_{H_{\Psi}^{\frac{10}{3},0}}\right).
\end{align*}

Taking $L_1-L_{14}$ into \eqref{zaijianchiliangxia}, and integrating over $[0,t]$, we can get the estimates of $\py\Gp$.
\begin{align*}
&\|\sqrt{a}\py\Gp(t)\|^2_{H_{\Psi}^{3,0}}
-\int_0^t\|\sqrt{a'}\py\Gp\|^2_{H_{\Psi}^{3,0}}ds
+2\int_0^t\lsr^{-1}\|\sqrt{a}\py\Gp\|^2_{H_{\Psi}^{3,0}}ds\\
&+2\la\int_0^t\tad\|\sqrt{a}\py\Gp\|^2_{H_{\Psi}^{\frac{10}{3},0}}ds
+\int_0^t\|\sqrt{a}\py^2\Gp\|^2_{H_{\Psi}^{3,0}}ds\\
\leq&\|\sqrt{a}(0)\py\Gp(0)\|^2_{H_{\Psi}^{3,0}}
+C\eta^{-1}\ep^{\frac{3}{2}}
\int_0^t\tad
\|\sqrt{a}(\py\Gp,\py\Gtp)\|^2_{H_{\Psi}^{\frac{10}{3},0}}ds\\
&+\frac{7}{25}\eta\int_0^t\|\sqrt{a}\py^2\Gp\|^2_{H_{\Psi}^{3,0}}ds
+C\eta^{-1}\ep^{\frac{3}{2}}\int_0^t\tad
\|\lsr^{-\frac{1}{2}}\sqrt{a}(\Gp,\Gtp)
\|^2_{H_{\Psi}^{\frac{13}{3},0}}ds.
\end{align*}
This completes the estimates of $\py\Gp$. And the estimates of $\py\Gtp$ can be obtained in a similar way. Thus we have completed the proof of proposition \ref{pyGdexianyanguji}.
\end{proof}

Now we can prove proposition \ref{propositionpyG}.

\textbf{Proof of Proposition \ref{propositionpyG}:}
Taking $a(t)=\ltr^{3+2l_{\ka}-2\eta}$ in \eqref{pyGdexianyanguji2}, we have
\begin{align*}
&\|\ltr^{\frac{3}{2}+l_{\ka}-\eta}(\py\Gp(t),\py\Gtp(t))\|^2_{H_{\Psi}^{3,0}}
+(4l_{\ka}-\frac{7}{25}\eta)
\int_0^t\|\lsr^{\frac{3}{2}+l_{\ka}-\eta}
(\py^2\Gp,\py^2\Gtp)\|^2_{H_{\Psi}^{3,0}}ds\\
&+2(\la-C\eta^{-1}\ep^{\frac{3}{2}})
\int_0^t\tad\|\lsr^{\frac{3}{2}+l_{\ka}-\eta}(\py\Gp,
\py\Gtp)\|^2_{H_{\Psi}^{\frac{10}{3},0}}ds\\
\leq&\|(\py\Gp(0),\py\Gtp(0))\|^2_{H_{\Psi}^{3,0}}
+C\eta^{-1}\ep^{\frac{3}{2}}\int_0^t\tad
\|\lsr^{1+l_{\ka}-\eta}(\Gp,\Gtp)
\|^2_{H_{\Psi}^{\frac{13}{3},0}}ds\\
&+(1+2\l_{\ka}-2\eta)\int_0^t\|\lsr^{1+l_{\ka}-\eta}
(\py\Gp,\py\Gtp)\|^2_{H_{\Psi}^{3,0}}ds.
\end{align*}
And using \eqref{propositionG1} we can deduce
\begin{align*}
&\|\ltr^{\frac{3}{2}+l_{\ka}-\eta}(\py\Gp(t),\py\Gtp(t))\|^2_{H_{\Psi}^{3,0}}
+(4l_{\ka}-\frac{7}{25}\eta)
\int_0^t\|\lsr^{\frac{3}{2}+l_{\ka}-\eta}
(\py^2\Gp,\py^2\Gtp)\|^2_{H_{\Psi}^{3,0}}ds\\
&+2(\la-C\eta^{-1}\ep^{\frac{3}{2}})
\int_0^t\tad\|\lsr^{\frac{3}{2}+l_{\ka}-\eta}(\py\Gp,
\py\Gtp)\|^2_{H_{\Psi}^{\frac{10}{3},0}}ds\\
\leq&C\eta^{-1}\left(\|(\py\Gp(0),\py\Gtp(0))\|^2_{H_{\Psi}^{3,0}}
+\|(\Gp(0),\Gtp(0))\|^2_{H_{\Psi}^{4,0}}\right)\\
&+C\eta^{-1}\int_0^t\tad
\|\lsr^{l_{\ka}-\eta}\sqrt{a}(\uph,\bp)\|^{2}_{H_{\Psi}^{\frac{22}{3},0}}
ds,
\end{align*}
thus when $\ep<\ep_3$ and $\la>\la_2$, proposition \ref{propositionpyG} holds.

\hfill $\square$
\section{The Gevrey Estimates of $\py^2\Gp$ and $\py^2\Gtp$}\label{jiusuibianbianhao}
Next, we estimate $(\py^2 \Gp,\py^2\Gtp)$. Unlike \cite{CW}, we need to do Gevrey norm estimates, whereas in \cite{CW} they only need Sobolev norm estimates. This is because the $(P,N)$ term due to the magnetic field leads us to estimate Gevrey norm of $(\py^2 \uph,\py^2 \bp)$, which can be found in the estimates of $M_{16}$ in section \ref{lalalademaxiya}.

Acting $\py$ on \eqref{pygfephi}, we can get
\begin{align}\label{pypygfephi}
\begin{aligned}
&~~~~~\pt\py^2\Gp+\la\tad(t)[D_x]^{\frac{2}{3}}\py^2\Gp-\py^4\Gp
+\ltr^{-1}\py^2\Gp+\py^2\left(\T_u\px\Gp
+\T_{\py G}\vp\right.\\
&\left.-\frac{1}{2\ltr}\T_{\py(y\psi)}\vp+\frac{y}{\ltr}\int_y^{\ty}\T_{\py u}\vp dz-\tad\Pp+\frac{y}{2\ltr}\int_y^{\ty}\tad\Pp dz\right)=\py^2 Z,
\end{aligned}
\end{align}
and the boundary condition is
\begin{align}\label{pypyg}
\begin{aligned}
\py^2\Gp|_{y=0}=0.
\end{aligned}
\end{align}
First we give the estimates of $\py^2 Z$.
\begin{Lemma}\label{pypyZdeguji}
The source term $\py^2 Z$ satisfise
\begin{align*}
\|\py^2 Z\|_{H_{\Psi}^{s,0}}
\leq&
C\ep\ltr^{-\ga_0-\frac{5}{4}}
\|\Gp\|_{H_{\Psi}^{s+\frac{2}{3},0}}
+C\ep\ltr^{-\ga_0-\frac{3}{4}}
\|\py\Gp\|_{H_{\Psi}^{s+\frac{2}{3},0}}\\
&+C\ep\ltr^{-\ga_0-\frac{1}{4}}
\|\py^2\Gp\|_{H_{\Psi}^{s+\frac{2}{3},0}}
+C\ep\ltr^{-\ga_0+\frac{1}{4}}
\|\py^3\Gp\|_{H_{\Psi}^{s,0}}.
\end{align*}
\end{Lemma}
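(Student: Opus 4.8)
The plan is to estimate $\|\py^2 Z\|_{H_\Psi^{s,0}}$ by differentiating the explicit formula \eqref{Z} for $Z$ twice in $y$, expanding via the Leibniz rule, and estimating each resulting term with the para-product lemmas \ref{gu1}, \ref{gu4}, \ref{gu5} together with the decay bounds of Lemma \ref{ub} and the control of $v_\Phi$ through $G_\Phi$ recorded in Lemma \ref{gu7} and the auxiliary estimate \eqref{vuG}. This is essentially the same computation as the proof of Lemma \ref{Zdeguji} (which bounds $\|Z\|_{H_\Psi^{s,0}}$) but carried one and two derivatives further; it also parallels the proof already given for $\|\py \Gp\|$-type quantities in Section \ref{gujipyG}, so I would follow the pattern of those arguments.

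First I would recall that $Z$ consists of three types of terms: (i) commutator-type terms such as $(T^{\h}_u\px G)_\Phi - T^{\h}_u\px G_\Phi$ and $(T^{\h}_{\py G}v)_\Phi - T^{\h}_{\py G}v_\Phi$, controlled by Lemma \ref{gu4} with a gain of $\de(t)\le C$ and a cost of $\frac13$ or $\frac23$ horizontal derivatives; (ii) para-product and remainder terms like $(T^{\h}_{\px G}u)_\Phi$, $(R^{\h}(u,\px G))_\Phi$, $(T^{\h}_v\py G)_\Phi$, $(R^{\h}(v,\py G))_\Phi$, controlled by Lemma \ref{gu1}; and (iii) the lower-order terms carrying the weights $\frac{1}{2\ltr}$ and $\frac{y}{\ltr}$, whose $y$-derivatives either hit the weight (producing an extra factor $\ltr^{-1}$ or, via $\py(y/\ltr)=\ltr^{-1}$ and $|y/\ltr|\lesssim\ltr^{-1/2}$, extra decay) or hit $\py(y\psi)$, $\py u$ (one more $y$-derivative to be absorbed by Lemma \ref{gu7}) or the integrand under $\int_y^\ty$. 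When $\py^2$ lands, each derivative falls either on a factor like $u$, $v$, $\py G$, $\py(y\psi)$, or on the weight; by Lemma \ref{ub} each extra $y$-derivative on $u,v$ costs a factor $\ltr^{-1/2}$ in the $L^\ty_\vm(H^\sigma_\h)$ norm, while by Lemma \ref{gu7} a $\py^k$ on the $G$-type factor converts to $\|\py^k G_\Phi\|$ at the cost of a power of $\ltr$ consistent with the claimed exponents $-\ga_0-\frac54,-\ga_0-\frac34,-\ga_0-\frac14,-\ga_0+\frac14$. I would organize the estimate by how many of the two derivatives hit the "good-function"/stream-function factor versus the velocity factor versus the weight, producing exactly the four terms on the right-hand side: two derivatives on $G$-type $\Rightarrow \ltr^{-\ga_0+\frac14}\|\py^3 G_\Phi\|$ (the only term without the Gevrey-derivative loss, since here the commutator structure does not enter once all derivatives land on $G$); one derivative on $G$-type and one on velocity $\Rightarrow \ltr^{-\ga_0-\frac14}\|\py^2 G_\Phi\|_{H^{s+2/3}}$; and the cases where zero or one derivative lands on $G$-type but extra weight/velocity decay is generated $\Rightarrow$ the $\ltr^{-\ga_0-\frac34}\|\py G_\Phi\|$ and $\ltr^{-\ga_0-\frac54}\|G_\Phi\|$ terms. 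The factor $\ltr^{1/4}$ appearing in Lemma \ref{Zdeguji} and \eqref{vuG} (coming from $\|e^{-y^2/8\ltr}\|_{L^2_\vm}$ in \eqref{ty}) is absorbed together with the $\ltr^{-1/2}$ factors per extra derivative to yield the stated net powers.

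The main obstacle is bookkeeping rather than a conceptual difficulty: one must check that every one of the many Leibniz terms genuinely falls into one of the four declared categories with the correct power of $\ltr$, in particular that no term forces a fifth category (e.g. a term decaying only like $\ltr^{-\ga_0+3/4}$ that would break the scheme). The delicate point is the treatment of $\py^2$ acting on $\frac{y}{\ltr}\int_y^\ty (\cdots)\,dz$ and on $\frac{1}{2\ltr}T^{\h}_{\py(y\psi)}v$: when $\py$ hits the integral $\int_y^\ty$ it removes the integral and leaves the integrand evaluated at $y$, so $\py^2$ can leave a term $\frac{y}{\ltr}\T_{\py u}\py v_\Phi$ type object; to close one needs the $L^\ty_\vm$-bounds on $\py u,\py^2 u$ from Lemma \ref{ub}, the bound \eqref{hensuibian} for the tail integral, the weighted Hardy/\eqref{ty}-type inequality for $\int_y^\ty$, and the Poincaré-type Lemma \ref{gu6} to convert $\frac{y}{\ltr}$-weighted norms into $\py$-norms, exactly as done for $K_4$, $L_4$ in the earlier propositions. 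I would also use, as there, that $\py v=-\px u$ so that a $\py$ on $v$ is a tangential derivative controllable by Lemma \ref{ub}. Once each term is placed, summing and using $s>0$ arbitrary (the $+\frac23$ horizontal loss being harmless because the estimate will be applied at $s$ strictly below the top regularity index) completes the proof.
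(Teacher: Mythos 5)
Your plan is correct and is essentially the computation the paper carries out: act with $\py^2$ on the eight constituents of $Z$ in \eqref{Z}, distribute the derivatives by Leibniz (with $\py$ commuting through $\T^{\h}$ and $e^{\Phi}$), and bound the commutator pieces by Lemma \ref{gu4}, the paraproduct/remainder pieces by Lemma \ref{gu1}, the $y$-derivative losses on $u,v,\py(y\psi)$ by Lemma \ref{ub}, Lemma \ref{gu7} and \eqref{vuG}/\eqref{ty}, and the weights $\frac{1}{\ltr}$, $\frac{y}{\ltr}$, $\int_y^{\ty}$ by \eqref{huisushele}, \eqref{hensuibian} and Lemma \ref{gu6}, exactly as you outline. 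One small correction to your heuristic: the $\ltr^{-\ga_0+\frac14}\|\py^3\Gp\|_{H^{s,0}_\Psi}$ term with no $+\frac23$ horizontal loss does not come from "the commutator terms once all derivatives land on $G$" (those still yield a commutator, e.g. $(\T_u\px\py^2G)_\Phi-\T_u\px\py^2\Gp$, and hence contribute only to the $\|\py^2\Gp\|_{H^{s+\frac23,0}_\Psi}$ term), but rather from the paraproduct/remainder piece $(\T_v\py G)_\Phi+R^{\h}(v,\py G)_\Phi$ where $\py G$ is the estimated factor and Lemma \ref{gu1} is applied directly.
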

\begin{proof}
We estimate them term by term. First use lemma \ref{gu4} and lemma \ref{ub}, we have
\begin{align*}
&\|\py^2\left((\T_u\px G)_{\Phi}-\T_u\px\Gp\right)\|_{H_{\Psi}^{s,0}}\\
\leq&
C\|\py^2 u\|_{L^{\ty}_{\vm}(H_{\h}^{\frac{3}{2}+})}
\|\Gp\|_{H_{\Psi}^{s+\frac{2}{3},0}}
+C\|\py u\|_{L^{\ty}_{\vm}(H_{\h}^{\frac{3}{2}+})}
\|\py\Gp\|_{H_{\Psi}^{s+\frac{2}{3},0}}\\
&+C\|u\|_{L^{\ty}_{\vm}(H_{\h}^{\frac{3}{2}+})}
\|\py^2\Gp\|_{H_{\Psi}^{s+\frac{2}{3},0}}\\
\leq&
C\ep\ltr^{-\ga_0-\frac{5}{4}}
\|\Gp\|_{H_{\Psi}^{s+\frac{2}{3},0}}
+C\ep\ltr^{-\ga_0-\frac{3}{4}}
\|\py\Gp\|_{H_{\Psi}^{s+\frac{2}{3},0}}\\
&+C\ep\ltr^{-\ga_0-\frac{1}{4}}
\|\py^2\Gp\|_{H_{\Psi}^{s+\frac{2}{3},0}}.
\end{align*}
Noticing \eqref{T**} and \eqref{vuG}, we can similarly obtain
\begin{align*}
&\|\py^2\left((\T_{\py G}v)_{\Phi}-\T_{\py G}\vp\right)\|_{H_{\Psi}^{s,0}}\\
\leq&
C\ep\ltr^{-\ga_0-\frac{5}{4}}
\|\Gp\|_{H_{\Psi}^{s+\frac{2}{3},0}}
+C\ep\ltr^{-\ga_0-\frac{3}{4}}
\|\py\Gp\|_{H_{\Psi}^{s+\frac{2}{3},0}}\\
&+C\ep\ltr^{-\ga_0-\frac{1}{4}}
\|\py^2\Gp\|_{H_{\Psi}^{s+\frac{2}{3},0}}.
\end{align*}
By using lemma \ref{gu7}, from a similar derivation it yields
\begin{align*}
&\|\frac{1}{2\ltr}\py^2\left((\T_{\py (y\psi)}v)_{\Phi}-\T_{\py (y\psi)}\vp\right)\|_{H_{\Psi}^{s,0}}\\
\leq&
C\ep\ltr^{-\ga_0-\frac{5}{4}}
\|\Gp\|_{H_{\Psi}^{s+\frac{2}{3},0}}
+C\ep\ltr^{-\ga_0-\frac{3}{4}}
\|\py\Gp\|_{H_{\Psi}^{s+\frac{2}{3},0}}\\
&+C\ep\ltr^{-\ga_0-\frac{1}{4}}
\|\py^2\Gp\|_{H_{\Psi}^{s+\frac{2}{3},0}}.
\end{align*}
On the other hand,
\begin{align*}
&\py^2\left(\frac{y}{\ltr}\int_y^{\ty}
\left((\T_{\py u}v)_{\Phi}-\T_{\py u}\vp\right)dz\right)\\
=&-\frac{2}{\ltr}
\left((\T_{\py u}v)_{\Phi}-\T_{\py u}\vp\right)
-\frac{y}{\ltr}
\py\left((\T_{\py u}v)_{\Phi}-\T_{\py u}\vp\right)
\end{align*}
It follows from lemma \ref{gu6} that
\begin{align}\label{huisushele}
\begin{aligned}
&\|\frac{1}{\ltr}f\|_{H_{\Psi}^{s,0}}
\leq \|\frac{1}{\ltr^{\frac{1}{2}}}\py f\|_{H_{\Psi}^{s,0}}
\leq \|\py^2 f\|_{H_{\Psi}^{s,0}},\\
&~~~~~~~~~~~~~~~~~~~~~~~~
\|\frac{y}{\ltr}f\|_{H_{\Psi}^{s,0}}\leq \|\py f\|_{H_{\Psi}^{s,0}}.
\end{aligned}
\end{align}
By \eqref{huisushele}, from a similar derivation we have
\begin{align*}
&\|\py^2\left(\frac{y}{\ltr}\int_y^{\ty}
\left((\T_{\py u}v)_{\Phi}-\T_{\py u}\vp\right)dz\right)\|_{H_{\Psi}^{s,0}}\\
\leq&
C\ep\ltr^{-\ga_0-\frac{5}{4}}
\|\Gp\|_{H_{\Psi}^{s+\frac{2}{3},0}}
+C\ep\ltr^{-\ga_0-\frac{3}{4}}
\|\py\Gp\|_{H_{\Psi}^{s+\frac{2}{3},0}}\\
&+C\ep\ltr^{-\ga_0-\frac{1}{4}}
\|\py^2\Gp\|_{H_{\Psi}^{s+\frac{2}{3},0}}.
\end{align*}

Notice that if $\lim_{y\rightarrow +\ty}f=0$, then $f=-\int_y^{\ty}\py f dz$. Using \eqref{ty},  we can get
\begin{align}\label{aiyoubucuoou}
\begin{aligned}
\|f\|_{L^{\ty}_{\vm,\Psi}(H_{\h}^{s})}\leq
C\ltr^{\frac{1}{4}}\|\py f\|_{H_{\Psi}^{s,0}}.
\end{aligned}
\end{align}
Then by using lemma \ref{gu1}, lemma \ref{gu7}, \eqref{T**} and
\eqref{aiyoubucuoou}, we can obtain
\begin{align*}
&\|\py^2\left((\T_{\px G}u)_{\Phi}+R^{\h}(u,\px G)_{\Phi}
\right)\|_{H_{\Psi}^{s,0}}\\
\leq&
C\|\py^2 G\|_{L^{\ty}_{\vm,\frac{1}{2}\Psi}(H_{\h}^{\frac{3}{2}+})}
\|\uph\|_{H_{\frac{1}{2}\Psi}^{s,0}}
+C\|\py G\|_{L^{\ty}_{\vm,\frac{1}{2}\Psi}(H_{\h}^{\frac{3}{2}+})}
\|\py\uph\|_{H_{\frac{1}{2}\Psi}^{s,0}}\\
&+C\|G\|_{L^{\ty}_{\vm,\frac{1}{2}\Psi}(H_{\h}^{\frac{3}{2}+})}
\|\py^2\uph\|_{H_{\frac{1}{2}\Psi}^{s,0}}\\
\leq&
C\ltr^{\frac{1}{4}}\|\py^3 G\|_{H_{\Psi}^{\frac{3}{2}+,0}}
\|\Gp\|_{H_{\Psi}^{s,0}}
+C\ltr^{\frac{1}{4}}\|\py^2 G\|_{H_{\Psi}^{\frac{3}{2}+,0}}
\|\py\Gp\|_{H_{\Psi}^{s,0}}\\
&+C\ltr^{\frac{1}{4}}\|\py G\|_{H_{\Psi}^{\frac{3}{2}+,0}}
\|\py^2\Gp\|_{H_{\Psi}^{s,0}}\\
\leq&
C\ep\ltr^{-\ga_0-\frac{5}{4}}
\|\Gp\|_{H_{\Psi}^{s+\frac{2}{3},0}}
+C\ep\ltr^{-\ga_0-\frac{3}{4}}
\|\py\Gp\|_{H_{\Psi}^{s+\frac{2}{3},0}}\\
&+C\ep\ltr^{-\ga_0-\frac{1}{4}}
\|\py^2\Gp\|_{H_{\Psi}^{s+\frac{2}{3},0}}.
\end{align*}
Similarly , by \eqref{vuG} we deduce
\begin{align*}
&\|\py^2\left((\T_{v}\py G)_{\Phi}+R^{\h}(v,\py G)_{\Phi}
\right)\|_{H_{\Psi}^{s,0}}\\
\leq&
C\|\py^2 v\|_{L^{\ty}_{\vm,\frac{1}{2}\Psi}(H_{\h}^{\frac{1}{2}+})}
\|\py\Gp\|_{H_{\frac{1}{2}\Psi}^{s,0}}
+C\|\py v\|_{L^{\ty}_{\vm,\frac{1}{2}\Psi}(H_{\h}^{\frac{1}{2}+})}
\|\py^2\Gp\|_{H_{\frac{1}{2}\Psi}^{s,0}}\\
&+C\|v\|_{L^{\ty}_{\vm,\frac{1}{2}\Psi}(H_{\h}^{\frac{1}{2}+})}
\|\py^3\Gp\|_{H_{\frac{1}{2}\Psi}^{s,0}}\\
\leq&
C\ltr^{\frac{1}{4}}\|\py^2 G\|_{H_{\Psi}^{\frac{3}{2}+,0}}
\|\py\Gp\|_{H_{\Psi}^{s,0}}
+C\ltr^{\frac{1}{4}}\|\py G\|_{H_{\Psi}^{\frac{3}{2}+,0}}
\|\py^2\Gp\|_{H_{\Psi}^{s,0}}\\
&+C\ltr^{\frac{1}{4}}\| G\|_{H_{\Psi}^{\frac{3}{2}+,0}}
\|\py^3\Gp\|_{H_{\Psi}^{s,0}}\\
\leq&
C\ep\ltr^{-\ga_0-\frac{3}{4}}
\|\py\Gp\|_{H_{\Psi}^{s+\frac{2}{3},0}}
+C\ep\ltr^{-\ga_0-\frac{1}{4}}
\|\py^2\Gp\|_{H_{\Psi}^{s+\frac{2}{3},0}}\\
&+C\ep\ltr^{-\ga_0+\frac{1}{4}}
\|\py^3\Gp\|_{H_{\Psi}^{s,0}}.
\end{align*}
Finally, using lemma \ref{gu7} and \eqref{huisushele}, combined with the above estimates we can obtain
\begin{align*}
&\|\frac{1}{2\ltr}\py^2\left((\T_{v}\py (y\psi))_{\Phi}+R^{\h}(v,\py (y\psi))_{\Phi}
\right)\|_{H_{\Psi}^{s,0}}\\
\leq&
C\ep\ltr^{-\ga_0-\frac{3}{4}}
\|\py\Gp\|_{H_{\Psi}^{s+\frac{2}{3},0}}
+C\ep\ltr^{-\ga_0-\frac{1}{4}}
\|\py^2\Gp\|_{H_{\Psi}^{s+\frac{2}{3},0}}\\
&+C\ep\ltr^{-\ga_0+\frac{1}{4}}
\|\py^3\Gp\|_{H_{\Psi}^{s,0}}.
\end{align*}
And
\begin{align*}
&\|\py^2\left(\frac{y}{\ltr}\int_y^{\ty}\left((\T_{v}\py u)_{\Phi}+R^{\h}(v,\py u)_{\Phi}
\right)dz\right)\|_{H_{\Psi}^{s,0}}\\
\leq&
C\ep\ltr^{-\ga_0-\frac{3}{4}}
\|\py\Gp\|_{H_{\Psi}^{s+\frac{2}{3},0}}
+C\ep\ltr^{-\ga_0-\frac{1}{4}}
\|\py^2\Gp\|_{H_{\Psi}^{s+\frac{2}{3},0}}\\
&+C\ep\ltr^{-\ga_0+\frac{1}{4}}
\|\py^3\Gp\|_{H_{\Psi}^{s,0}}.
\end{align*}
By combining all the above estimates, we have completed the proof of lemma \ref{pypyZdeguji}.
\end{proof}

Next is the estimates of $(\py^2 \Gp,\py^2\Gtp)$, and the proposition is shown below:
\begin{Proposition}\label{pypyGdexianyanguji}
Let $\ka\in(0,2)$ be a given constant. Let $(G,\Gt)$ be defined by \eqref{gf}, and $a(t)$ be a non-negative and non-decreasing function on $\R_+$. Then when $\al\leq\ga_0+\frac{5}{36}$, the following inequality holds for any $t<T_*$ and sufficiently small $\eta>0$.
\begin{align}\label{pypyGdexianyanguji2}
\begin{aligned}
&\|\sqrt{a}(\py^2\Gp(t),\py^2\Gtp(t))\|^2_{H_{\Psi}^{3,0}}
-\int_0^t\|\sqrt{a'}(\py^2\Gp,\py^2\Gtp)\|^2_{H_{\Psi}^{3,0}}ds\\
&+2\int_0^t\lsr^{-1}\|\sqrt{a}(\py^2\Gp,\py^2\Gtp)\|^2_{H_{\Psi}^{3,0}}ds
+(4l_{\ka}-\frac{3}{25}\eta-C\ep)\int_0^t\|\sqrt{a}(\py^3\Gp,\py^3\Gtp)\|^2_{H_{\Psi}^{3,0}}ds\\
&+2\left(\la-C(\ep^{\frac{1}{2}}+\eta^{-1}\ep^{\frac{3}{2}})
\right)\int_0^t\tad\|\sqrt{a}(\py^2\Gp,\py^2\Gtp)\|^2_{H_{\Psi}^{\frac{10}{3},0}}ds
\\
\leq&\|\sqrt{a}(0)(\py^2\Gp(0),\py^2\Gtp(0))\|^2_{H_{\Psi}^{3,0}}
+C(\ep^{\frac{1}{2}}+\eta^{-1}\ep^{\frac{3}{2}})\int_0^t\tad
\|\lsr^{-1}\sqrt{a}(\Gp,\Gtp)
\|^2_{H_{\Psi}^{\frac{13}{3},0}}ds\\
&+C\ep^{\frac{1}{2}}\int_0^t\tad
\|\lsr^{-\frac{1}{2}}\sqrt{a}(\py\Gp,\py\Gtp)
\|^2_{H_{\Psi}^{\frac{10}{3},0}}ds
+C\ep\int_0^t
\|\lsr^{-1}\sqrt{a}(\py\Gp,\py\Gtp)
\|^2_{H_{\Psi}^{4,0}}ds,
\end{aligned}
\end{align}
where $l_{\ka}=\frac{\ka(2-\ka)}{4}\in(0,\frac{1}{4}].$
\end{Proposition}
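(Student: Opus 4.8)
The approach follows the pattern of Propositions \ref{Gpdexianyanguji} and \ref{pyGdexianyanguji}, now carrying two normal derivatives; the genuinely new difficulty, and the reason a Gevrey (not merely Sobolev) estimate is needed at this order as noted at the beginning of this section, lies in the magnetic terms $\py^2(\tad\Pp)$ and $\py^2\big(\tfrac{y}{2\ltr}\int_y^{\ty}\tad\Pp\,dz\big)$. The plan is to run a weighted energy estimate on \eqref{pypygfephi} at the $H_{\Psi}^{3,0}$ level, to transfer all vertical derivatives onto the good unknowns via Lemma \ref{gu7}, and then to close by absorption.

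First I would take the $H_{\Psi}^{3,0}$ inner product of \eqref{pypygfephi} with $a(t)\py^2\Gp$ (and the $\Gtp$-analogue with $a(t)\py^2\Gtp$). The boundary condition \eqref{pypyg}, together with $\Pp|_{y=0}=Z|_{y=0}=0$, guarantees that integration by parts in $y$ produces no boundary terms; combining this with the identity $\pt\Psi+2(\py\Psi)^2=0$ as in \eqref{ptpy} and \eqref{ptpylk}, and invoking Lemma \ref{gu6} to absorb the weight contribution $\|(\py\Psi)\py^2\Gtp\|^2$ into the dissipation, the left-hand side yields the positive quantities
\begin{align*}
\tfrac12\tfrac{d}{dt}\|\sqrt{a}\py^2\Gp\|^2_{H_{\Psi}^{3,0}}&-\tfrac12\|\sqrt{a'}\py^2\Gp\|^2_{H_{\Psi}^{3,0}}+\ltr^{-1}\|\sqrt{a}\py^2\Gp\|^2_{H_{\Psi}^{3,0}}\\
&+\la\tad\|\sqrt{a}\py^2\Gp\|^2_{H_{\Psi}^{\frac{10}{3},0}}+2l_{\ka}\|\sqrt{a}\py^3\Gp\|^2_{H_{\Psi}^{3,0}},
\end{align*}
with $2l_{\ka}$ replaced by $\tfrac12=2l_1$ in the $\Gp$-equation ($\ka=1$); everything else is moved to the right. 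After integrating over $[0,t]$, clearing the factor $\tfrac12$ from the time derivative, choosing $a(t)$ to be a suitable power of $\ltr$ and re-applying Lemma \ref{gu6} to turn $-\|\sqrt{a'}\py^2\Gp\|^2+2\ltr^{-1}\|\sqrt{a}\py^2\Gp\|^2$ into extra $\py^3$-regularity, the coefficient $4l_{\ka}$ of \eqref{pypyGdexianyanguji2} emerges, exactly as in the corresponding step for Proposition \ref{propositionpyG}.

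The heart of the work is estimating the nonlinear and source contributions. I would commute $\py^2$ through each term of the bracket in \eqref{pypygfephi}: for the paraproducts one distributes the two derivatives (e.g.\ $\py^2(\T_u\px\Gp)=\T_{\py^2 u}\px\Gp+2\T_{\py u}\px\py\Gp+\T_u\px\py^2\Gp$, and likewise for $\T_{\py G}\vp$ and $\tfrac1{2\ltr}\T_{\py(y\psi)}\vp$); for the integral terms one uses $\py^2\big(\tfrac{y}{\ltr}\int_y^{\ty}f\,dz\big)=-\tfrac2{\ltr}f-\tfrac{y}{\ltr}\py f$ as in the proof of Lemma \ref{pypyZdeguji}; and for $\tad\Pp$, since $\tad$ is independent of $y$, one is left with $\tad\py^2\Pp$ and lower-order pieces, controlled by the $\py^2$-analogue of \eqref{haonana} — because $\tad\Pp=(b\px b+h\py b)_{\Phi}$, the worst case puts both $\py$'s on $\bp$ or on $\hp$, which Lemma \ref{gu7} then transfers to $\py^2\Gtp$ (this transfer of two vertical derivatives to the good unknown is the new ingredient relative to \cite{CW}). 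Each resulting piece is paired against $\py^2\Gp$; integrating by parts once more as in \eqref{zaijianchiliangxia} splits the pairing into an inner product against $\py^3\Gp$ and one against $\tfrac{y}{2\ltr}\py^2\Gp$. The terms are then estimated using Lemmas \ref{gu1}, \ref{gu3} and \ref{gu4} for the paraproducts, Lemma \ref{ub} for the smallness and decay of $(u,b,v,h)$ and their vertical derivatives, the bootstrap bounds \eqref{T**} for $(\Gp,\Gtp)$ and $(\py\Gp,\py\Gtp)$, Lemma \ref{gu7} to replace $\py^j(y\psi)$ and $\py^j u$ by derivatives of $G$, the inequalities \eqref{vuG}, \eqref{haonana}, \eqref{huisushele} and \eqref{xinqingbucuo} to convert the weights $\ltr^{-1}$ and $\tfrac{y}{\ltr}$ into additional vertical derivatives and hence extra $\ltr$-decay, and Lemma \ref{pypyZdeguji} for the source $\py^2 Z$; the conditions $\al\le\ga_0+\tfrac5{36}$ and $\ga_0>1$ are used repeatedly to kill the powers of $\ltr$ generated by $\de(t)$, $1/\tad$ and the Sobolev interpolations. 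Each term ends up dominated either by one of the three right-hand quantities of \eqref{pypyGdexianyanguji2} — $\tad\|\lsr^{-1}\sqrt{a}(\Gp,\Gtp)\|^2_{H_{\Psi}^{\frac{13}{3},0}}$, $\tad\|\lsr^{-\frac12}\sqrt{a}(\py\Gp,\py\Gtp)\|^2_{H_{\Psi}^{\frac{10}{3},0}}$ and $\|\lsr^{-1}\sqrt{a}(\py\Gp,\py\Gtp)\|^2_{H_{\Psi}^{4,0}}$ — or by an absorbable contribution of the form $\ep^{\frac12}\tad\|\sqrt{a}(\py^2\Gp,\py^2\Gtp)\|^2_{H_{\Psi}^{\frac{10}{3},0}}$, $\eta\|\sqrt{a}(\py^3\Gp,\py^3\Gtp)\|^2_{H_{\Psi}^{3,0}}$ or $\ep\|\sqrt{a}(\py^3\Gp,\py^3\Gtp)\|^2_{H_{\Psi}^{3,0}}$.

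Finally I would add the $\Gp$- and $\Gtp$-inequalities, integrate over $[0,t]$, collect coefficients, and take $\ep$ small enough so that the $\ep$- and $\eta^{-1}\ep^{3/2}$-contributions are absorbed into the $\py^3$-dissipation and into the $\la\tad$-term respectively, which gives \eqref{pypyGdexianyanguji2}. I expect the main obstacle to be the bookkeeping of the magnetic terms $\py^2(\tad\Pp)$ and $\py^2\big(\tfrac{y}{2\ltr}\int_y^{\ty}\tad\Pp\,dz\big)$: one must track which factor of $b\px b+h\py b$ carries the two $\py$'s, use Lemma \ref{gu7} to push $\py^2\bp$ and $\py^2\hp$ onto $\py^2\Gtp$ and $\py^3\Gtp$, and then verify that the resulting terms appear with coefficients ($\ep^{\frac12}\tad$, or $\eta$) small enough to be absorbed on the left while generating only the $\ltr$-decay rates permitted by \eqref{pypyGdexianyanguji2}.
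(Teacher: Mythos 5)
Your proposal follows essentially the same route as the paper's proof: the $H_{\Psi}^{3,0}$ energy estimate tested against $a(t)\py^2\Gp$ (and $a(t)\py^2\Gtp$), Leibniz expansion of $\py^2$ through the paraproducts and through $\frac{y}{\ltr}\int_y^{\ty}(\cdot)\,dz$, a further integration by parts to pair against $\py^3\Gp$ and $\frac{y}{2\ltr}\py^2\Gp$, and the same toolbox (Lemmas \ref{gu1}, \ref{gu3}, \ref{gu4}, \ref{gu7}, \ref{ub}, \ref{pypyZdeguji}, together with \eqref{vuG}, \eqref{huisushele}, \eqref{xinqingbucuo}); your treatment of $\tad\py^2\Pp$ through a $\py^2$-analogue of \eqref{haonana} is a harmless variant of the paper's integration by parts down to the $\py\Pp$ bound \eqref{haoleia}, and both produce only absorbable contributions.

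One correction to your bookkeeping, however: the proposition is stated for a general non-negative, non-decreasing $a(t)$, and \eqref{pypyGdexianyanguji2} keeps the terms $-\int_0^t\|\sqrt{a'}(\py^2\Gp,\py^2\Gtp)\|^2_{H_{\Psi}^{3,0}}ds$ and $2\int_0^t\lsr^{-1}\|\sqrt{a}(\py^2\Gp,\py^2\Gtp)\|^2_{H_{\Psi}^{3,0}}ds$ verbatim on the left. So at this stage you should neither choose $a$ as a power of $\ltr$ nor convert these two terms into extra $\py^3$-regularity via Lemma \ref{gu6}; that specialization is what the later proof of Proposition \ref{propositionpy2G} does (and there the leftover $a'$-term is controlled by \eqref{propositionpyG1}, not by Lemma \ref{gu6}). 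The coefficient $4l_{\ka}-\frac{3}{25}\eta-C\ep$ in front of the $\py^3$-dissipation does not "emerge" from that conversion: it comes directly from the viscous term through the weighted computation \eqref{ptpy}/\eqref{ptpylk} (coefficient $\tfrac12$ for the $G$-equation and $2l_{\ka}$ for the $\Gt$-equation, doubled after clearing the $\tfrac12\frac{d}{dt}$, with $4l_{\ka}\le 1$), diminished only by the small $\eta$- and $\ep$-pieces generated by the term-by-term estimates. Likewise, no smallness of $\ep$ needs to be invoked to close this a priori estimate, since the factors $C\ep$ and $C(\ep^{\frac12}+\eta^{-1}\ep^{\frac32})$ are carried explicitly in the statement; smallness enters only later. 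With this adjustment your plan reproduces the paper's argument.
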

\begin{proof}
By taking $H_{\Psi}^{3,0}$ inner product of \eqref{pypygfephi} with, and using \eqref{ptpy}, we deduce
\begin{align}\label{zaijianchisanxia}
\begin{aligned}
\frac{1}{2}&\frac{d}{dt}\|\sqrt{a}\py^2\Gp\|^2_{H_{\Psi}^{3,0}}
-\frac{1}{2}\|\sqrt{a'}\py^2\Gp\|^2_{H_{\Psi}^{3,0}}
+\ltr^{-1}\|\sqrt{a}\py^2\Gp\|^2_{H_{\Psi}^{3,0}}\\
&+\la\tad(t)\|\sqrt{a}\py^2\Gp\|^2_{H_{\Psi}^{\frac{10}{3},0}}
+\frac{1}{2}\|\sqrt{a}\py^3\Gp\|^2_{H_{\Psi}^{3,0}}\\
\leq&a\left|(\py^2\T_u\px \Gp,\py^2\Gp)_{H_{\Psi}^{3,0}}\right|
+a\left|(\py^2\T_{\py G}\vp,\py^2\Gp)_{H_{\Psi}^{3,0}}\right|\\
&+\frac{a}{2\ltr}\left|(\py^2\T_{\py(y\psi)}\vp,\py^2\Gp
)_{H_{\Psi}^{3,0}}\right|
+a\left|\left(\py^2(\frac{y}{\ltr}\int_y^{\ty}\T_{\py u}\vp dz),\py^2\Gp\right)_{H_{\Psi}^{3,0}}\right|\\
&+a\tad\left|(\py^2\Pp,\py^2\Gp)_{H_{\Psi}^{3,0}}\right|
+a\tad\left|\left(\py^2(\frac{y}{2\ltr}\int_y^{\ty}\Pp dz),\py^2\Gp\right)_{H_{\Psi}^{3,0}}\right|\\
&+a\left|(\py^2 Z,\py^2\Gp)_{H_{\Psi}^{3,0}}\right|\\
=&\sum_{i=1}^{7} R_i.
\end{aligned}
\end{align}
For $R_1$, notice
\begin{align*}
\py^2\T_u\px \Gp=
\T_{\py^2u}\px\Gp+2\T_{\py u}\py\px \Gp+\T_u\px \py^2\Gp.
\end{align*}
By integrating by parts, lemma \ref{gu1}, lemma \ref{gu3}, lemma
\ref{ub} and \eqref{xinqingbucuo}, we have
%\begin{align}\label{R1}
%\begin{aligned}
%\|\py^2\T_u\px \Gp\|_{H_{\Psi}^{s,0}}=&
%\|\T_{\py^2u}\px\Gp+2\T_{\py u}\py\px \Gp+\T_u\px \py^2\Gp\|_{H_{\Psi}^{s,0}}\\
%\leq&
%C\|\py^2u\|_{L^{\ty}_{\vm}(H_{\h}^{\frac{1}{2}+})}
%\|\Gp\|_{H_{\Psi}^{s+1,0}}
%+C\|\py^u\|_{L^{\ty}_{\vm}(H_{\h}^{\frac{1}{2}+})}
%\|\py\Gp\|_{H_{\Psi}^{s+1,0}}\\
%&+C\|u\|_{L^{\ty}_{\vm}(H_{\h}^{\frac{1}{2}+})}
%\|\py^2\Gp\|_{H_{\Psi}^{s+1,0}}
%\end{aligned}
%\end{align}
\begin{align*}
R_1\leq&C a
\left|(\T_{\py^2u}\px\Gp,\py^2\Gp)_{H_{\Psi}^{3,0}}\right|
+C a\left|(\T_{\py u}\py\px \Gp,\py^2\Gp)_{H_{\Psi}^{3,0}}\right|\\
&+C a\left|(\T_u\px \py^2\Gp,\py^2\Gp)_{H_{\Psi}^{3,0}}\right|\\
\leq&C a
\left|(\T_{\py^2u}\px\Gp,\py^2\Gp)_{H_{\Psi}^{3,0}}\right|
+C a\left|(\T_{\py u}\px \Gp,\py^3\Gp)_{H_{\Psi}^{3,0}}\right|\\
&+C a\left|(\T_{\py u}\px \Gp,\frac{y}{2\ltr}\py^2\Gp)_{H_{\Psi}^{3,0}}\right|
+C a\left|(\T_u\px \py^2\Gp,\py^2\Gp)_{H_{\Psi}^{3,0}}\right|\\
\leq&C\|\py^2u\|_{L^{\ty}_{\vm}(H_{\h}^{\frac{1}{2}+})}
\|\sqrt{a}\Gp\|_{H_{\Psi}^{4,0}}\|\sqrt{a}\py^2\Gp\|_{H_{\Psi}^{3,0}}
+C\|u\|_{L^{\ty}_{\vm}(H_{\h}^{\frac{3}{2}+})}
\|\sqrt{a}\py^2\Gp\|^2_{H_{\Psi}^{3,0}}
\\
&+C\|\py u\|_{L^{\ty}_{\vm}(H_{\h}^{\frac{1}{2}+})}
\|\sqrt{a}\Gp\|_{H_{\Psi}^{4,0}}\|\sqrt{a}\py^3\Gp\|_{H_{\Psi}^{3,0}}\\
\leq&C\ep^{\frac{1}{2}}\ltr^{\al-\ga_0-\frac{1}{4}}\tad
\|\ltr^{-1}\sqrt{a}\Gp\|_{H_{\Psi}^{\frac{13}{3},0}}
\|\sqrt{a}\py^2\Gp\|_{H_{\Psi}^{\frac{10}{3},0}}
+C\ep^{\frac{1}{2}}\ltr^{\al-\ga_0-\frac{1}{4}}\tad
\|\sqrt{a}\py^2\Gp\|^2_{H_{\Psi}^{\frac{10}{3},0}}
\\
&+C\ep\ltr^{-\ga_0+\frac{1}{4}}
\|\ltr^{-1}\sqrt{a}\Gp\|_{H_{\Psi}^{\frac{13}{3},0}}
\|\sqrt{a}\py^3\Gp\|_{H_{\Psi}^{3,0}}\\
\leq&C\ep^{\frac{1}{2}}\tad
\|\ltr^{-1}\sqrt{a}\Gp\|^2_{H_{\Psi}^{\frac{13}{3},0}}
+C\ep^{\frac{1}{2}}\tad
\|\sqrt{a}\py^2\Gp\|^2_{H_{\Psi}^{\frac{10}{3},0}}
+\frac{1}{100}\eta\|\sqrt{a}\py^3\Gp\|^2_{H_{\Psi}^{3,0}}\\
&+C\eta^{-1}\ep^{\frac{3}{2}}\ltr^{\al-2\ga_0+\frac{1}{2}}\tad
\|\ltr^{-1}\sqrt{a}\Gp\|^2_{H_{\Psi}^{\frac{13}{3},0}}\\
\leq&C(\ep^{\frac{1}{2}}+\eta^{-1}\ep^{\frac{3}{2}})\tad
\|\ltr^{-1}\sqrt{a}\Gp\|^2_{H_{\Psi}^{\frac{13}{3},0}}
+C\ep^{\frac{1}{2}}\tad
\|\sqrt{a}\py^2\Gp\|^2_{H_{\Psi}^{\frac{10}{3},0}}
+\frac{1}{100}\eta\|\sqrt{a}\py^3\Gp\|^2_{H_{\Psi}^{3,0}},
\end{align*}
here we use $\al\leq\ga_0+\frac{5}{36}$ and $\ga_0>1.$

For $R_2$, notice
\begin{align*}
\py^2\T_{\py G}\vp=
\T_{\py^3 G}\vp+2\T_{\py^2 G}\py\vp+\T_{\py G}\py^2\vp.
\end{align*}
By integrating by parts, we can deduce
\begin{align*}
R_2=&a\left|(T_{\py^3 G}\vp+\T_{\py^2 G}\py\vp,\py^2\Gp)_{H_{\Psi}^{3,0}}-
(\T_{\py G}\py\vp,\py^3\Gp+\frac{y}{2\ltr}\py^2\Gp)_{H_{\Psi}^{3,0}}
\right|.
\end{align*}
Next applying lemma \ref{gu1}, \eqref{T**} and \eqref{vuG}, we can derive
%\begin{align}\label{R2}
%\begin{aligned}
%\|\py^2\T_{\py G}\vp\|_{H_{\Psi}^{s,0}}=&
%\|\T_{\py^3 G}\vp+2\T_{\py^2 G}\py\vp+\T_{\py %G}\py^2\vp\|_{H_{\Psi}^{s,0}}\\
%\leq&C\|\py^3 G\|_{H_{\Psi}^{\frac{1}{2}+,0}}
%\|\vp\|_{L^{\ty}_{\vm}(H_{\h}^{s})}
%+C\|\py^2 G\|_{H_{\Psi}^{\frac{1}{2}+,0}}
%\|\py\vp\|_{L^{\ty}_{\vm}(H_{\h}^{s})}\\
%&+C\|\py G\|_{H_{\Psi}^{\frac{1}{2}+,0}}
%\|\py^2\vp\|_{L^{\ty}_{\vm}(H_{\h}^{s})}\\
%\leq&C\ep\ltr^{-\ga_0-\frac{5}{4}}
%\|\Gp\|_{H_{\Psi}^{s+1,0}}
%+C\ep\ltr^{-\ga_0-\frac{3}{4}}
%\|\py\Gp\|_{H_{\Psi}^{s+1,0}}\\
%&+C\ep\ltr^{-\ga_0-\frac{1}{4}}
%\|\py^2\Gp\|_{H_{\Psi}^{s+1,0}}
%\end{aligned}
%\end{align}
\begin{align*}
R_2\leq&
\left(C\|\py^3 G\|_{H_{\Psi}^{\frac{1}{2}+,0}}
\|\sqrt{a}\vp\|_{L^{\ty}_{\vm}(H_{\h}^{3})}
+C\|\py^2 G\|_{H_{\Psi}^{\frac{1}{2}+,0}}
\|\sqrt{a}\py\vp\|_{L^{\ty}_{\vm}(H_{\h}^{3})}\right)
\|\sqrt{a}\py^2 G\|_{H_{\Psi}^{3,0}}\\
&+C\|\py G\|_{H_{\Psi}^{\frac{1}{2}+,0}}
\|\sqrt{a}\py\vp\|_{L^{\ty}_{\vm}(H_{\h}^{3})}
\|\sqrt{a}\py^3 G\|_{H_{\Psi}^{3,0}}\\
\leq&C\ep\ltr^{-\ga_0-\frac{5}{4}}
\|\sqrt{a}\Gp\|_{H_{\Psi}^{4,0}}\|\sqrt{a}\py^2 G\|_{H_{\Psi}^{3,0}}
+C\ep\ltr^{-\ga_0-\frac{3}{4}}
\|\sqrt{a}\py\Gp\|_{H_{\Psi}^{4,0}}\|\sqrt{a}\py^2 G\|_{H_{\Psi}^{3,0}}\\
&+C\ep\ltr^{-\ga_0-\frac{1}{4}}
\|\sqrt{a}\py\Gp\|_{H_{\Psi}^{4,0}}
\|\sqrt{a}\py^3 G\|_{H_{\Psi}^{3,0}}\\
\leq&C\ep^{\frac{1}{2}}\ltr^{\al-\ga_0-\frac{1}{4}}\tad
\|\ltr^{-1}\sqrt{a}\Gp\|^2_{H_{\Psi}^{\frac{13}{3},0}}
+C\ep^{\frac{1}{2}}\ltr^{\al-\ga_0-\frac{1}{4}}\tad
\|\sqrt{a}\py^2 G\|^2_{H_{\Psi}^{\frac{10}{3},0}}\\
&+C\ep
\|\ltr^{-1}\sqrt{a}\py\Gp\|^2_{H_{\Psi}^{4,0}}
+C\ep^{\frac{1}{2}}\ltr^{\al-2\ga_0+\frac{1}{4}}\tad
\|\sqrt{a}\py^2 G\|^2_{H_{\Psi}^{\frac{10}{3},0}}\\
&+C\ep\ltr^{-\ga_0+\frac{3}{4}}
\|\ltr^{-1}\sqrt{a}\py\Gp\|^2_{H_{\Psi}^{4,0}}
+C\ep\ltr^{-\ga_0+\frac{3}{4}}
\|\sqrt{a}\py^3 G\|^2_{H_{\Psi}^{3,0}}.
\end{align*}
Using the fact $\al\leq\ga_0+\frac{5}{36}$ and $\ga_0>1$, we can deduce
\begin{align*}
R_2
\leq&C\ep^{\frac{1}{2}}\tad
\|\ltr^{-1}\sqrt{a}\Gp\|^2_{H_{\Psi}^{\frac{13}{3},0}}
+C\ep^{\frac{1}{2}}\tad
\|\sqrt{a}\py^2 G\|^2_{H_{\Psi}^{\frac{10}{3},0}}\\
&+C\ep
\|\ltr^{-1}\sqrt{a}\py\Gp\|^2_{H_{\Psi}^{4,0}}
+C\ep
\|\sqrt{a}\py^3 G\|^2_{H_{\Psi}^{3,0}}.
\end{align*}
For $R_3,$ using lemma \ref{gu7}, \eqref{T**} and the estimates of $R_2$, it gives
\begin{align*}
R_3
\leq&C\ep^{\frac{1}{2}}\tad
\|\ltr^{-1}\sqrt{a}\Gp\|^2_{H_{\Psi}^{\frac{13}{3},0}}
+C\ep^{\frac{1}{2}}\tad
\|\sqrt{a}\py^2 G\|^2_{H_{\Psi}^{\frac{10}{3},0}}\\
&+C\ep
\|\ltr^{-1}\sqrt{a}\py\Gp\|^2_{H_{\Psi}^{4,0}}
+C\ep
\|\sqrt{a}\py^3 G\|^2_{H_{\Psi}^{3,0}}.
\end{align*}
For $R_4,$ noticing
\begin{align*}
\py^2(\frac{y}{\ltr}\int_y^{\ty}\T_{\py u}\vp dz)
=-\frac{2}{\ltr}\T_{\py u}\vp
-\frac{y}{\ltr}\py\T_{\py u}\vp,
\end{align*}
and combining \eqref{xinqingbucuo}, then we can use a similar derivation of $R_2$ to get the estimates of $R_4$.
\begin{align*}
R_4
\leq&C\ep^{\frac{1}{2}}\tad
\|\ltr^{-1}\sqrt{a}\Gp\|^2_{H_{\Psi}^{\frac{13}{3},0}}
+C\ep^{\frac{1}{2}}\tad
\|\sqrt{a}\py^2 G\|^2_{H_{\Psi}^{\frac{10}{3},0}}\\
&+C\ep
\|\ltr^{-1}\sqrt{a}\py\Gp\|^2_{H_{\Psi}^{4,0}}
+C\ep
\|\sqrt{a}\py^3 G\|^2_{H_{\Psi}^{3,0}},
\end{align*}
For the sake of brevity, no detailed calculation is given here.

For $R_5$, by \eqref{PN} we have
\begin{align*}
\tad\py\Pp=\py(b\px b+h\py b)_{\Phi}=(b\py\px b+h\py^2 b)_{\Phi}.
\end{align*}
Applying \eqref{bony}, lemma \ref{gu1}, lemma \ref{gu7} and lemma
\ref{ub}, we can deduce
\begin{align}\label{haoleia}
\begin{aligned}
\tad\|\py\Pp\|_{H_{\Psi}^{s,0}}\leq&
\|\bp\|_{L^{\ty}_{\vm,\frac{1}{2}\Psi}(H_{\h}^{\frac{1}{2}+})}
\|\py \bp\|_{H_{\frac{1}{2}\Psi}^{s+1,0}}
+\|\py\bp\|_{L^{\ty}_{\vm,\frac{1}{2}\Psi}(H_{\h}^{\frac{3}{2}+})}
\|\bp\|_{H_{\frac{1}{2}\Psi}^{s,0}}\\
&+\|\hp\|_{L^{\ty}_{\vm,\frac{1}{2}\Psi}(H_{\h}^{\frac{1}{2}+})}
\|\py^2 \bp\|_{H_{\frac{1}{2}\Psi}^{s,0}}
+\|\py^2 \bp\|_{L^{2}_{\vm,\frac{1}{2}\Psi}(H_{\h}^{\frac{1}{2}+})}
\|h\|_{L^{\ty}_{\vm,\frac{1}{2}\Psi}(H_{\h}^{s})}\\
\leq&C\ep\ltr^{-\ga_0-\frac{1}{4}}\|\py\Gtp\|_{H_{\Psi}^{s+1,0}}
+C\ep\ltr^{-\ga_0-\frac{3}{4}}\|\Gtp\|_{H_{\Psi}^{s,0}}\\
&+C\ep\ltr^{-\ga_0+\frac{1}{4}}\|\py^2\Gtp\|_{H_{\Psi}^{s,0}}
+C\ep\ltr^{-\ga_0-\frac{3}{4}}\|\Gtp\|_{H_{\Psi}^{s+1,0}}\\
\leq&C\ep\ltr^{-\ga_0-\frac{1}{4}}\|\py\Gtp\|_{H_{\Psi}^{s+1,0}}
+C\ep\ltr^{-\ga_0-\frac{3}{4}}\|\Gtp\|_{H_{\Psi}^{s+1,0}}\\
&+C\ep\ltr^{-\ga_0+\frac{1}{4}}\|\py^2\Gtp\|_{H_{\Psi}^{s,0}}.
\end{aligned}
\end{align}
Thus by integrating by parts, \eqref{xinqingbucuo} and
\eqref{haoleia}, it's easy to get
\begin{align*}
R_5\leq&a\tad\|\py\Pp\|_{H_{\Psi}^{3,0}}
\|\py^3\Gp\|_{H_{\Psi}^{3,0}}\\
\leq&C\ep\ltr^{-\ga_0-\frac{1}{4}}\|\sqrt{a}\py\Gtp\|_{H_{\Psi}^{4,0}}
\|\sqrt{a}\py^3\Gp\|_{H_{\Psi}^{3,0}}
+C\ep\ltr^{-\ga_0-\frac{3}{4}}\|\sqrt{a}\Gtp\|_{H_{\Psi}^{4,0}}
\|\sqrt{a}\py^3\Gp\|_{H_{\Psi}^{3,0}}\\
&+C\ep\ltr^{-\ga_0+\frac{1}{4}}\|\sqrt{a}\py^2\Gtp\|_{H_{\Psi}^{3,0}}
\|\sqrt{a}\py^3\Gp\|_{H_{\Psi}^{3,0}}\\
\leq&C\ep\ltr^{-\ga_0+\frac{3}{4}}\|\ltr^{-1}\sqrt{a}\py\Gtp\|^2_{H_{\Psi}^{4,0}}
+C\ep\ltr^{-\ga_0+\frac{3}{4}}\|\sqrt{a}\py^3\Gp\|^2_{H_{\Psi}^{3,0}}\\
&+C\eta^{-1}\ep^{\frac{3}{2}}\ltr^{\al-2\ga_0+\frac{1}{2}}\tad
\|\ltr^{-1}\sqrt{a}\Gtp\|^2_{H_{\Psi}^{\frac{13}{3},0}}
+\frac{1}{100}\eta\|\sqrt{a}\py^3\Gp\|^2_{H_{\Psi}^{3,0}}\\
&+C\eta^{-1}\ep^{\frac{3}{2}}\ltr^{\al-2\ga_0+\frac{1}{2}}\tad
\|\sqrt{a}\py^2\Gtp\|^2_{H_{\Psi}^{\frac{10}{3},0}}
+\frac{1}{100}\eta\|\sqrt{a}\py^3\Gp\|^2_{H_{\Psi}^{3,0}}.
\end{align*}
Using the fact that $\al\leq\ga_0+\frac{5}{36}$ and $\ga_0>1$, we have
\begin{align*}
R_5
\leq &C\ep\|\ltr^{-1}\sqrt{a}\py\Gtp\|^2_{H_{\Psi}^{4,0}}
+C\eta^{-1}\ep^{\frac{3}{2}}\tad
\|\ltr^{-1}\sqrt{a}\Gtp\|^2_{H_{\Psi}^{\frac{13}{3},0}}\\
&+C\eta^{-1}\ep^{\frac{3}{2}}\tad
\|\sqrt{a}\py^2\Gtp\|^2_{H_{\Psi}^{\frac{10}{3},0}}
+(C\ep+\frac{1}{50}\eta)\|\sqrt{a}\py^3\Gp\|^2_{H_{\Psi}^{3,0}}.
\end{align*}
For $R_6,$ notice that
\begin{align*}
\py(\frac{y}{\ltr}\int_y^{\ty}\Pp dz)=
\frac{1}{\ltr}\int_y^{\ty}\Pp dz-\frac{y}{\ltr}\Pp.
\end{align*}
Then by using \eqref{huisushele} and the estimates of $R_5$, we can know that $R_6$ satisfies
\begin{align*}
R_6
\leq &C\ep\|\ltr^{-1}\sqrt{a}\py\Gtp\|^2_{H_{\Psi}^{4,0}}
+C\eta^{-1}\ep^{\frac{3}{2}}\tad
\|\ltr^{-1}\sqrt{a}\Gtp\|^2_{H_{\Psi}^{\frac{13}{3},0}}\\
&+C\eta^{-1}\ep^{\frac{3}{2}}\tad
\|\sqrt{a}\py^2\Gtp\|^2_{H_{\Psi}^{\frac{10}{3},0}}
+(C\ep+\frac{1}{50}\eta)\|\sqrt{a}\py^3\Gp\|^2_{H_{\Psi}^{3,0}}.
\end{align*}
For $R_7$, applying lemma \ref{pypyZdeguji}, we have
\begin{align*}
R_7\leq&C\|\sqrt{a}\py^2 Z\|_{H_{\Psi}^{\frac{8}{3},0}}
\|\sqrt{a}\py^2 \Gp\|_{H_{\Psi}^{\frac{10}{3},0}}
\\
\leq&
C\ep\ltr^{-\ga_0-\frac{5}{4}}
\|\sqrt{a}\Gp\|_{H_{\Psi}^{\frac{10}{3},0}}
\|\sqrt{a}\py^2 \Gp\|_{H_{\Psi}^{\frac{10}{3},0}}
+C\ep\ltr^{-\ga_0-\frac{3}{4}}
\|\sqrt{a}\py\Gp\|_{H_{\Psi}^{\frac{10}{3},0}}
\|\sqrt{a}\py^2 \Gp\|_{H_{\Psi}^{\frac{10}{3},0}}\\
&+C\ep\ltr^{-\ga_0-\frac{1}{4}}
\|\sqrt{a}\py^2\Gp\|_{H_{\Psi}^{\frac{10}{3},0}}
\|\sqrt{a}\py^2 \Gp\|_{H_{\Psi}^{\frac{10}{3},0}}
+C\ep\ltr^{-\ga_0+\frac{1}{4}}
\|\sqrt{a}\py^3\Gp\|_{H_{\Psi}^{\frac{8}{3},0}}
\|\sqrt{a}\py^2 \Gp\|_{H_{\Psi}^{\frac{10}{3},0}}\\
\leq&C\ep^{\frac{1}{2}}\tad
\|\ltr^{-1}\sqrt{a}\Gp\|^2_{H_{\Psi}^{\frac{13}{3},0}}
+C\ep^{\frac{1}{2}}\tad
\|\ltr^{-\frac{1}{2}}\sqrt{a}\py\Gp\|^2_{H_{\Psi}^{\frac{10}{3},0}}\\
&+C(\ep^{\frac{1}{2}}+\eta^{-1}\ep^{\frac{3}{2}})\tad
\|\sqrt{a}\py^2\Gp\|^2_{H_{\Psi}^{\frac{10}{3},0}}
+\frac{1}{100}\eta\|\sqrt{a}\py^3\Gp\|^2_{H_{\Psi}^{3,0}},
\end{align*}
here we use $\al\leq\ga_0+\frac{5}{36}$ and $\ga_0>1.$

Taking $R_1-R_7$ into \eqref{zaijianchisanxia}, and integrating over $[0,t]$, we can deduce the estimates of $\py^2\Gp$.
\begin{align*}
&\|\sqrt{a}\py^2\Gp(t)\|^2_{H_{\Psi}^{3,0}}
-\int_0^t\|\sqrt{a'}\py^2\Gp\|^2_{H_{\Psi}^{3,0}}ds
+2\int_0^t\lsr^{-1}\|\sqrt{a}\py^2\Gp\|^2_{H_{\Psi}^{3,0}}ds\\
&+2\la\int_0^t\tad\|\sqrt{a}\py^2\Gp\|^2_{H_{\Psi}^{\frac{10}{3},0}}ds
+\int_0^t\|\sqrt{a}\py^3\Gp\|^2_{H_{\Psi}^{3,0}}ds\\
\leq&\|\sqrt{a}(0)\py^2\Gp(0)\|^2_{H_{\Psi}^{3,0}}
+C(\ep^{\frac{1}{2}}+\eta^{-1}\ep^{\frac{3}{2}})
\int_0^t\tad
\|\sqrt{a}(\py^2\Gp,\py^2\Gtp)\|^2_{H_{\Psi}^{\frac{10}{3},0}}ds\\
&+(C\ep+\frac{3}{25}\eta)\int_0^t\|\sqrt{a}\py^3\Gp\|^2_{H_{\Psi}^{3,0}}ds
+C(\ep^{\frac{1}{2}}+\eta^{-1}\ep^{\frac{3}{2}})\int_0^t\tad
\|\lsr^{-1}\sqrt{a}(\Gp,\Gtp)
\|^2_{H_{\Psi}^{\frac{13}{3},0}}ds\\
&+C\ep^{\frac{1}{2}}\int_0^t\tad
\|\lsr^{-\frac{1}{2}}\sqrt{a}\py\Gp
\|^2_{H_{\Psi}^{\frac{10}{3},0}}ds
+C\ep\int_0^t
\|\lsr^{-1}\sqrt{a}(\py\Gp,\py\Gtp)
\|^2_{H_{\Psi}^{4,0}}ds.
\end{align*}
This completes the estimates of $\py^2 \Gp$. And the estimates of $\py^2\Gtp$ can be obtained in the same way, so we have completed the proof of proposition \ref{pypyGdexianyanguji}.

\end{proof}

Using proposition \ref{pypyGdexianyanguji}, we can prove proposition \ref{propositionpy2G}.

\textbf{Proof of Proposition \ref{propositionpy2G}:}
Taking $a(t)=\ltr^{4+2l_{\ka}-2\eta}$ in \eqref{pypyGdexianyanguji2}, then we have
\begin{align*}
&\|\ltr^{2+l_{\ka}-\eta}(\py^2\Gp(t),\py^2\Gtp(t))
\|^2_{H_{\Psi}^{3,0}}
+(4l_{\ka}-\frac{3}{25}\eta-C\ep)\int_0^t\|\lsr^{2+l_{\ka}-\eta}
(\py^3\Gp,\py^3\Gtp)\|^2_{H_{\Psi}^{3,0}}ds\\
&+2\left(\la-C(\ep^{\frac{1}{2}}+\eta^{-1}\ep^{\frac{3}{2}})
\right)\int_0^t\tad\|\lsr^{2+l_{\ka}-\eta}
(\py^2\Gp,\py^2\Gtp)\|^2_{H_{\Psi}^{\frac{10}{3},0}}ds
\\
\leq&\|(\py^2\Gp(0),\py^2\Gtp(0))\|^2_{H_{\Psi}^{3,0}}
+C(\ep^{\frac{1}{2}}+\eta^{-1}\ep^{\frac{3}{2}})\int_0^t\tad
\|\lsr^{1+l_{\ka}-\eta}(\Gp,\Gtp)
\|^2_{H_{\Psi}^{\frac{13}{3},0}}ds\\
&+C\ep^{\frac{1}{2}}\int_0^t\tad
\|\lsr^{\frac{3}{2}+l_{\ka}-\eta}(\py\Gp,\py\Gtp)
\|^2_{H_{\Psi}^{\frac{10}{3},0}}ds
+C\ep\int_0^t
\|\lsr^{1+l_{\ka}-\eta}(\py\Gp,\py\Gtp)
\|^2_{H_{\Psi}^{4,0}}ds\\
&+(2+2l_{\ka}-2\eta)\int_0^t\|\lsr^{\frac{3}{2}+l_{\ka}-\eta}
(\py^2\Gp,\py^2\Gtp)\|^2_{H_{\Psi}^{3,0}}ds.
\end{align*}
By \eqref{propositionG1} and \eqref{propositionpyG1}, we can deduce
\begin{align*}
&\|\ltr^{2+l_{\ka}-\eta}(\py^2\Gp(t),\py^2\Gtp(t))
\|^2_{H_{\Psi}^{3,0}}
+(4l_{\ka}-\frac{3}{25}\eta-C\ep)\int_0^t\|\lsr^{2+l_{\ka}-\eta}
(\py^3\Gp,\py^3\Gtp)\|^2_{H_{\Psi}^{3,0}}ds\\
&+2\left(\la-C(\ep^{\frac{1}{2}}+\eta^{-1}\ep^{\frac{3}{2}})
\right)\int_0^t\tad\|\lsr^{2+l_{\ka}-\eta}
(\py^2\Gp,\py^2\Gtp)\|^2_{H_{\Psi}^{\frac{10}{3},0}}ds
\\
\leq&
\dfrac{C}{\eta(4l_{\ka}-\frac{7}{25}\eta)}
\left(\|(\py\Gp(0),\py\Gtp(0))\|^2_{H_{\Psi}^{3,0}}
+\|(\Gp(0),\Gtp(0))\|^2_{H_{\Psi}^{4,0}}
+\|(\py^2\Gp(0),\py^2\Gtp(0))\|^2_{H_{\Psi}^{3,0}}\right)\\
&+\dfrac{C}{\eta(4l_{\ka}-\frac{7}{25}\eta)}\int_0^t\tad
\|\lsr^{l_{\ka}-\eta}\sqrt{a}(\uph,\bp)\|^{2}_{H_{\Psi}^{\frac{22}{3},0}}
ds.
\end{align*}

Thus when $\ep<\ep_3$ and $\la>\la_2$, proposition \ref{propositionpy2G} holds.

\hfill $\square$
\section{The Gevrey Estimates of $\py^3\Gp$ and $\py^3\Gtp$}\label{gujipy3G}
In this section we come to estimate the Gevrey norm of $(\py^3\Gp,\py^3\Gtp)$, while in \cite{CW} only the Sobolev norm is estimated. The readers can see the description in section \ref{jiusuibianbianhao} for the reason.

Acting $\py$ on \eqref{pypygfephi}, we can obtain
\begin{align}\label{pypypygfephi}
\begin{aligned}
&~~~~~\pt\py^3\Gp+\la\tad(t)[D_x]^{\frac{2}{3}}\py^3\Gp-\py^5\Gp
+\ltr^{-1}\py^3\Gp+\py\mathfrak{F}=\py^3 Z,
\end{aligned}
\end{align}
where $\mathfrak{F}$ is defined as follows
\begin{align}\label{FF}
\begin{aligned}
\mathfrak{F}=&\py^2\left(\T_u\px\Gp
+\T_{\py G}\vp
-\frac{1}{2\ltr}\T_{\py(y\psi)}\vp+\frac{y}{\ltr}\int_y^{\ty}\T_{\py u}\vp dz-\tad\Pp\right.\\
&\left.+\frac{y}{2\ltr}\int_y^{\ty}\tad\Pp dz\right).
\end{aligned}
\end{align}
By \eqref{pypygfephi} and \eqref{pypyg}, we can know the boundary condition
\begin{align}\label{zuihoudebianjietiaojian}
\begin{aligned}
(-\py^4\Gp+\mathfrak{F}-\py^2Z)_{y=0}=0.
\end{aligned}
\end{align}
First we give the estimates of $\mathfrak{F}$.
\begin{Lemma}\label{FFdeguji}
Let $\mathfrak{F}$ be defined by \eqref{FF}. Then for any $t<T_*,$ $\mathfrak{F}$ satisfies
\begin{align*}
\|\mathfrak{F}\|_{H_{\Psi}^{s,0}}
\leq&C\ep\ltr^{-\ga_0+\frac{1}{4}}
\|(\py^3\Gp,\py^3\Gtp)\|_{H_{\Psi}^{s,0}}
+C\ep\ltr^{-\ga_0-\frac{1}{4}}
\|(\py^2\Gp,\py^2\Gtp)\|_{H_{\Psi}^{s+1,0}}\\
&+C\ep\ltr^{-\ga_0-\frac{3}{4}}
\|(\py\Gp,\py\Gtp)\|_{H_{\Psi}^{s+1,0}}
+C\ep\ltr^{-\ga_0-\frac{5}{4}}
\|(\Gp,\Gtp)\|_{H_{\Psi}^{s+1,0}}.
\end{align*}
\end{Lemma}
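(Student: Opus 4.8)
\textbf{Proof strategy for Lemma~\ref{FFdeguji}.}
The plan is to expand $\mathfrak{F}$ term by term with the Leibniz rule for $\py^2$ and to estimate each summand in $H_{\Psi}^{s,0}$ by exactly the tools already used in the proofs of Lemma~\ref{pypyZdeguji} and of $R_1$--$R_6$ in Proposition~\ref{pypyGdexianyanguji}; the only difference is that here we keep the full $H_{\Psi}^{s,0}$-norm of $\mathfrak{F}$ rather than pairing it against $\py^2\Gp$. Since the $\Gt$-part is obtained in the same way, I would carry out the computation for the $G$-terms only and state the $\Gt$-version at the end. The recurring mechanism is: for each product, place one low-order factor in $L^{\ty}_{\vm,\ga\Psi}(H_{\h}^{\sigma})$ (whose size is furnished by Lemma~\ref{ub}, by \eqref{T**}, or, for third $y$-derivatives of $G$, by the anisotropic embedding \eqref{aiyoubucuoou}), and the remaining factor in $H_{\Psi}^{s,0}$ or $H_{\Psi}^{s+1,0}$, invoking the para-product bounds of Lemma~\ref{gu1}, Lemma~\ref{gu4} and Lemma~\ref{gu3}; the stream-function and nonlocal factors are reduced to genuine norms of $\py^j G$ through Lemma~\ref{gu7}, \eqref{ty}, \eqref{vuG}, \eqref{xinqingbucuo} and \eqref{huisushele}.

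Concretely, I would proceed as follows. For $\py^2(\T_u\px\Gp)=\T_{\py^2 u}\px\Gp+2\T_{\py u}\px\py\Gp+\T_u\px\py^2\Gp$, Lemma~\ref{gu1} bounds each $\T_{\py^j u}$ on $H_{\h}^{s+1}$ with norm $\lesssim\|\py^j u\|_{L^{\ty}_{\vm}(H_{\h}^{3/2+})}$, and Lemma~\ref{ub} gives the decay rates $\ltr^{-\ga_0-1/4}$, $\ltr^{-\ga_0-3/4}$, $\ltr^{-\ga_0-5/4}$ for $u,\py u,\py^2 u$; this already reproduces three of the four advertised terms. For $\py^2(\T_{\py G}\vp)=\T_{\py^3 G}\vp+2\T_{\py^2 G}\py\vp+\T_{\py G}\py^2\vp$ I would use $\py\vp=-\px\uph$, $\py^2\vp=-\px\py\uph$, put $\vp$ (resp.\ $\py^j\vp$) into $L^{\ty}_{\vm,\frac12\Psi}$ via \eqref{vuG} and Lemma~\ref{ub}, and the $\py^j G$ factor into $H_{\Psi}^{s,0}$ using \eqref{T**}; placing $\vp$ in $L^{\ty}_{\vm}(H_{\h}^{1/2+})\lesssim\ep\ltr^{-\ga_0+1/4}$ against $\py^3 G_{\Phi}$ produces the $\|\py^3\Gp\|_{H_{\Psi}^{s,0}}$ term at rate $\ltr^{-\ga_0+1/4}$. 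The term $\frac1{2\ltr}\py^2(\T_{\py(y\psi)}\vp)$ is the same after bounding $\py^j(y\psi)$ through Lemma~\ref{gu7}. For the nonlocal term I would write $\py^2\big(\tfrac{y}{\ltr}F\big)=\tfrac{2}{\ltr}\py F+\tfrac{y}{\ltr}\py^2 F$ with $F=\int_y^{\ty}\T_{\py u}\vp\,dz$, $\py F=-\T_{\py u}\vp$, $\py^2 F=-\T_{\py^2 u}\vp-\T_{\py u}\py\vp$, and convert the $\tfrac1\ltr$- and $\tfrac{y}{\ltr}$-prefactors into true norms by \eqref{huisushele}, after which the pieces are of the types already handled. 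Finally, because $\tad$ is $y$-independent, $\py^2(\tad\Pp)=\tad\py^2\Pp$ with $\tad\Pp=(b\px b+h\py b)_{\Phi}$ carrying no $\tfrac1\tad$ singularity; so, extending \eqref{haonana}--\eqref{haoleia} to the second $y$-derivative via \eqref{bony}, Lemma~\ref{gu1}, Lemma~\ref{gu7} and Lemma~\ref{ub}, one gets $\tad\|\py^2\Pp\|_{H_{\Psi}^{s,0}}\lesssim\ep\ltr^{-\ga_0-1/4}\|\py^2\Gtp\|_{H_{\Psi}^{s+1,0}}+\ep\ltr^{-\ga_0-3/4}\|\py\Gtp\|_{H_{\Psi}^{s+1,0}}+\ep\ltr^{-\ga_0-5/4}\|\Gtp\|_{H_{\Psi}^{s+1,0}}+\ep\ltr^{-\ga_0+1/4}\|\py^3\Gtp\|_{H_{\Psi}^{s,0}}$, and \eqref{xinqingbucuo} disposes of the $\tfrac{y}{2\ltr}\int_y^{\ty}$ weight in the last term of \eqref{FF}.

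Collecting all contributions, using $\ga_0>1$ to discard subleading powers of $\ltr$, and adding the $\Gt$-version yields the claimed inequality. I do not expect a conceptual obstacle: the entire difficulty is bookkeeping—deciding for each summand which factor sits in $L^{\ty}_{\vm}$ (costing a factor $\ltr^{1/4}$ through \eqref{ty}/\eqref{aiyoubucuoou} together with one $y$-derivative) and which sits in $H_{\Psi}^{s,0}$ or $H_{\Psi}^{s+1,0}$, and checking that no $\py^3$-factor of $G$ ever needs more than the $H_{\Psi}^{3,0}$-regularity controlled by \eqref{T**}. The four output rates $-\ga_0+\tfrac14,\ -\ga_0-\tfrac14,\ -\ga_0-\tfrac34,\ -\ga_0-\tfrac54$ are precisely the decay exponents attached by Lemma~\ref{ub} to $v,\,u,\,\py u,\,\py^2 u$, which is exactly why the lemma takes this form.
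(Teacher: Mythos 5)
Your overall strategy — Leibniz expansion of $\py^2$, one factor in $L^\ty_{\vm}$, one in $H_\Psi^{s,0}$, then Lemma~\ref{gu1}/\ref{gu7}/\ref{ub}, \eqref{vuG}, \eqref{huisushele}, \eqref{xinqingbucuo} — is exactly what the paper does, and your $\tad\py^2\Pp$ bookkeeping (including where the $\ltr^{-\ga_0+\frac14}\|\py^3\Gtp\|_{H_\Psi^{s,0}}$ term comes from, namely $\T^\h_{h}\py^3 b$) is correct. There is, however, a concrete error in your treatment of $\py^2(\T_{\py G}^\h\vp)$.

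You write that ``placing $\vp$ in $L^\ty_{\vm}(H_\h^{1/2+})\lesssim\ep\ltr^{-\ga_0+1/4}$ against $\py^3 G_\Phi$ produces the $\|\py^3\Gp\|_{H_\Psi^{s,0}}$ term.'' For the paraproduct $\T_{\py^3 G}^{\h}\vp$ the low-frequency slot is occupied by $\py^3 G$, so Lemma~\ref{gu1} reads $\|\T^\h_{\py^3 G}\vp\|_{H_\h^s}\lesssim\|\py^3 G_\Phi\|_{H_\h^{1/2+}}\|\vp\|_{H_\h^s}$; the $s$-regularity necessarily lands on $\vp$, not on $\py^3 G$. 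The swapped version $\|\T^\h_a g\|_{H^s}\lesssim\|a\|_{H^s}\|g\|_{H^{1/2+}}$ is false for $s>\frac12$ (take $a$ compactly frequency-supported and $g$ barely in $H^{1/2+}$: $\T^\h_a g$ inherits the high-frequency content of $g$, not of $a$). You also can't salvage this by putting $\py^3 G_\Phi$ in $L^\ty_{\vm}(H_\h^{1/2+})$ via \eqref{aiyoubucuoou}, since that would require $\|\py^4 G_\Phi\|_{H_\Psi^{1/2+,0}}$, which $T_*$ does not control. The paper's proof instead places $\py^3 G_\Phi$ in $H_\Psi^{1/2+,0}$ (small via \eqref{T**}) and $\vp$ in $L^\ty_\vm(H_\h^s)$, converted to $\|\Gp\|_{H_\Psi^{s+1,0}}$ by \eqref{vuG}; this piece contributes only to the $\ltr^{-\ga_0-5/4}\|\Gp\|_{H_\Psi^{s+1,0}}$ term of the lemma, never to $\|\py^3\Gp\|_{H_\Psi^{s,0}}$. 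The lemma itself is fine — the $\|\py^3\Gp\|_{H_\Psi^{s,0}}$ term is really needed only for $\T^\h_h\py^3 b$ inside $\tad\py^2\Pp$, which you handle correctly — but the route you propose for the $\T^\h_{\py G}\vp$ piece would not close, and if you follow it you will misattribute where the $\py^3$-norm enters.
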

\begin{proof}
We estimate them term by term. First of all, by lemma \ref{gu1} and lemma \ref{ub}, we can know that
\begin{align*}
\|\py^2\T_{u}\px\Gp\|_{H_{\Psi}^{s,0}}\leq&
C\|\T_{\py^2 u}\px\Gp\|_{H_{\Psi}^{s,0}}
+C\|\T_{\py u}\py\px\Gp\|_{H_{\Psi}^{s,0}}
+C\|\T_{ u}\py^2\px\Gp\|_{H_{\Psi}^{s,0}}\\
\leq&C\|\py^2 u\|_{L^{\ty}_{\vm}(H_{\h}^{\frac{1}{2}+})}
\|\Gp\|_{H_{\Psi}^{s+1,0}}
+C\|\py u\|_{L^{\ty}_{\vm}(H_{\h}^{\frac{1}{2}+})}
\|\py\Gp\|_{H_{\Psi}^{s+1,0}}\\
&+C\| u\|_{L^{\ty}_{\vm}(H_{\h}^{\frac{1}{2}+})}
\|\py^2\Gp\|_{H_{\Psi}^{s+1,0}}\\
\leq&C\ep\ltr^{-\ga_0-\frac{5}{4}}
\|\Gp\|_{H_{\Psi}^{s+1,0}}
+C\ep\ltr^{-\ga_0-\frac{3}{4}}
\|\py\Gp\|_{H_{\Psi}^{s+1,0}}\\
&+C\ep\ltr^{-\ga_0-\frac{1}{4}}
\|\py^2\Gp\|_{H_{\Psi}^{s+1,0}}.
\end{align*}
By applying lemma \ref{gu1}, \eqref{T**} and \eqref{vuG}, we derive that
\begin{align*}
\|\py^2\T_{\py G}\vp\|_{H_{\Psi}^{s,0}}\leq&
C\|\T_{\py^3 G}\vp\|_{H_{\Psi}^{s,0}}
+C\|\T_{\py^2 G}\py\vp\|_{H_{\Psi}^{s,0}}
+C\|\T_{\py G}\py^2\vp\|_{H_{\Psi}^{s,0}}\\
\leq&C\|\py^3 G\|_{H_{\Psi}^{\frac{1}{2}+,0}}
\|\vp\|_{L^{\ty}_{\vm}(H_{\h}^{s})}
+C\|\py^2 G\|_{H_{\Psi}^{\frac{1}{2}+,0}}
\|\py\vp\|_{L^{\ty}_{\vm}(H_{\h}^{s})}\\
&+C\|\py G\|_{H_{\Psi}^{\frac{1}{2}+,0}}
\|\py^2\vp\|_{L^{\ty}_{\vm}(H_{\h}^{s})}\\
\leq&C\ep\ltr^{-\ga_0-\frac{5}{4}}
\|\Gp\|_{H_{\Psi}^{s+1,0}}
+C\ep\ltr^{-\ga_0-\frac{3}{4}}
\|\py\Gp\|_{H_{\Psi}^{s+1,0}}\\
&+C\ep\ltr^{-\ga_0-\frac{1}{4}}
\|\py^2\Gp\|_{H_{\Psi}^{s+1,0}}.
\end{align*}
Using lemma \ref{gu7} and \eqref{huisushele}, combined with the above estimates, we can similarly obtain
\begin{align*}
\|\frac{1}{2\ltr}\py^2\T_{\py (y\psi)}\vp\|_{H_{\Psi}^{s,0}}
\leq&C\ep\ltr^{-\ga_0-\frac{5}{4}}
\|\Gp\|_{H_{\Psi}^{s+1,0}}
+C\ep\ltr^{-\ga_0-\frac{3}{4}}
\|\py\Gp\|_{H_{\Psi}^{s+1,0}}\\
&+C\ep\ltr^{-\ga_0-\frac{1}{4}}
\|\py^2\Gp\|_{H_{\Psi}^{s+1,0}}.
\end{align*}
And
\begin{align*}
\|\py^2\left(\frac{y}{\ltr}\int_y^{\ty}\T_{\py u}\vp dz\right)\|_{H_{\Psi}^{s,0}}
\leq&C\ep\ltr^{-\ga_0-\frac{5}{4}}
\|\Gp\|_{H_{\Psi}^{s+1,0}}
+C\ep\ltr^{-\ga_0-\frac{3}{4}}
\|\py\Gp\|_{H_{\Psi}^{s+1,0}}\\
&+C\ep\ltr^{-\ga_0-\frac{1}{4}}
\|\py^2\Gp\|_{H_{\Psi}^{s+1,0}}.
\end{align*}
It follows from \eqref{PN} that
\begin{align*}
\tad\py^2\Pp=(\py b\px\py b+b\px\py^2 b
-\px b\py^2 b+h\py^3 b)_{\Phi}.
\end{align*}
Use \eqref{bony} to estimate them term by term,
\begin{align*}
\|(\py b\px\py b)_{\Phi}\|_{H_{\Psi}^{s,0}}\leq&
C\|\py\bp\|_{L^{\ty}_{\vm}(H_{\h}^{\frac{1}{2}+})}
\|\py\bp\|_{H_{\Psi}^{s+1,0}}+
C\|\py\bp\|_{L^{\ty}_{\vm}(H_{\h}^{\frac{3}{2}+})}
\|\py\bp\|_{H_{\Psi}^{s,0}}\\
\leq&C\ep\ltr^{-\ga_0-\frac{3}{4}}
\|\py\Gtp\|_{H_{\Psi}^{s+1,0}},
\end{align*}
\begin{align*}
\|(b\px\py^2 b)_{\Phi}\|_{H_{\Psi}^{s,0}}\leq&
C\|\bp\|_{L^{\ty}_{\vm}(H_{\h}^{\frac{1}{2}+})}
\|\py^2\bp\|_{H_{\Psi}^{s+1,0}}+
C\|\py^2\bp\|_{L^{\ty}_{\vm}(H_{\h}^{\frac{3}{2}+})}
\|\bp\|_{H_{\Psi}^{s,0}}\\
\leq&C\ep\ltr^{-\ga_0-\frac{1}{4}}
\|\py^2\Gtp\|_{H_{\Psi}^{s+1,0}}
+C\ep\ltr^{-\ga_0-\frac{5}{4}}
\|\Gtp\|_{H_{\Psi}^{s+1,0}},
\end{align*}
\begin{align*}
\|(\px b\py^2 b)_{\Phi}\|_{H_{\Psi}^{s,0}}\leq&
C\|\bp\|_{L^{\ty}_{\vm}(H_{\h}^{\frac{3}{2}+})}
\|\py^2\bp\|_{H_{\Psi}^{s,0}}+
C\|\py^2\bp\|_{L^{\ty}_{\vm}(H_{\h}^{\frac{1}{2}+})}
\|\bp\|_{H_{\Psi}^{s+1,0}}\\
\leq&C\ep\ltr^{-\ga_0-\frac{1}{4}}
\|\py^2\Gtp\|_{H_{\Psi}^{s+1,0}}
+C\ep\ltr^{-\ga_0-\frac{5}{4}}
\|\Gtp\|_{H_{\Psi}^{s+1,0}}.
\end{align*}
Noticing \eqref{vuG} also holds for $(\hp,\bp,\Gtp)$, thus we have
\begin{align*}
\|(h\py^3 b)_{\Phi}\|_{H_{\Psi}^{s,0}}\leq&
C\|\hp\|_{L^{\ty}_{\vm}(H_{\h}^{\frac{1}{2}+})}
\|\py^3\bp\|_{H_{\Psi}^{s,0}}+
C\|\py^3\bp\|_{H_{\Psi}^{\frac{1}{2}+,0}}
\|\hp\|_{L^{\ty}_{\vm}(H_{\h}^{s})}\\
\leq&C\ep\ltr^{-\ga_0+\frac{1}{4}}
\|\py^3\Gtp\|_{H_{\Psi}^{s,0}}
+C\ep\ltr^{-\ga_0-\frac{5}{4}}
\|\Gtp\|_{H_{\Psi}^{s+1,0}}.
\end{align*}
Combining the above estimates, we can deduce
\begin{align*}
\tad\|\py^2\Pp\|_{H_{\Psi}^{s,0}}
\leq&C\ep\ltr^{-\ga_0+\frac{1}{4}}
\|\py^3\Gtp\|_{H_{\Psi}^{s,0}}
+C\ep\ltr^{-\ga_0-\frac{1}{4}}
\|\py^2\Gtp\|_{H_{\Psi}^{s+1,0}}\\
&+C\ep\ltr^{-\ga_0-\frac{3}{4}}
\|\py\Gtp\|_{H_{\Psi}^{s+1,0}}
+C\ep\ltr^{-\ga_0-\frac{5}{4}}
\|\Gtp\|_{H_{\Psi}^{s+1,0}}.
\end{align*}
Finally using \eqref{huisushele}, it's similar to yield
\begin{align*}
\|\py^2(\frac{y}{2\ltr}\int_y^{\ty}\tad\Pp dz)\|_{H_{\Psi}^{s,0}}
\leq&C\ep\ltr^{-\ga_0+\frac{1}{4}}
\|\py^3\Gtp\|_{H_{\Psi}^{s,0}}
+C\ep\ltr^{-\ga_0-\frac{1}{4}}
\|\py^2\Gtp\|_{H_{\Psi}^{s+1,0}}\\
&+C\ep\ltr^{-\ga_0-\frac{3}{4}}
\|\py\Gtp\|_{H_{\Psi}^{s+1,0}}
+C\ep\ltr^{-\ga_0-\frac{5}{4}}
\|\Gtp\|_{H_{\Psi}^{s+1,0}}.
\end{align*}
Combining all the above estimates, we complete the proof of lemma \ref{FFdeguji}.
\end{proof}

With lemma \ref{FFdeguji}, we can now estimate $(\py^3\Gp,\py^3\Gtp)$.
\begin{Proposition}\label{py3Gdexianyanguji}
Let $\ka\in(0,2)$ be a given constant. Let $(G,\Gt)$ be defined by \eqref{gf}, and $a(t)$ be a non-negative and non-decreasing function on $\R_+$. Then when $\al\leq\ga_0+\frac{5}{36}$, the following inequality holds for any $t<T_*$ and sufficiently small $\eta>0$.
\begin{align}\label{py3Gdexianyanguji2}
\begin{aligned}
&\|\sqrt{a}(\py^3\Gp(t),\py^3\Gtp(t))\|^2_{H_{\Psi}^{2,0}}
-\int_0^t\|\sqrt{a'}(\py^3\Gp,\py^3\Gtp)\|^2_{H_{\Psi}^{2,0}}ds\\
&+2\int_0^t\lsr^{-1}\|\sqrt{a}(\py^3\Gp,\py^3\Gtp)\|^2_{H_{\Psi}^{2,0}}ds
+(4l_{\ka}-\frac{1}{25}\eta)\int_0^t\|\sqrt{a}(\py^4\Gp,\py^4\Gtp)\|^2_{H_{\Psi}^{2,0}}ds\\
&+2(\la-C\eta^{-1}\ep^{\frac{3}{2}})
\int_0^t\tad\|\sqrt{a}(\py^3\Gp,\py^3\Gtp)\|^2_{H_{\Psi}^{\frac{7}{3},0}}ds\\
\leq&\|\sqrt{a}(0)(\py^3\Gp(0),\py^3\Gtp(0))\|^2_{H_{\Psi}^{2,0}}
+C\eta^{-1}\ep^{\frac{3}{2}}\int_0^t\tad
\|\lsr^{-\frac{3}{2}}\sqrt{a}(\Gp,\Gtp)
\|^2_{H_{\Psi}^{\frac{13}{3},0}}ds\\
&+C\eta^{-1}\ep^{\frac{3}{2}}\int_0^t\tad
\|\lsr^{-1}\sqrt{a}(\py\Gp,\py\Gtp)
\|^2_{H_{\Psi}^{\frac{10}{3},0}}ds\\
&+C\eta^{-1}\ep^{\frac{3}{2}}\int_0^t\tad
\|\lsr^{-\frac{1}{2}}\sqrt{a}(\py^2\Gp,\py^2\Gtp)
\|^2_{H_{\Psi}^{\frac{13}{3},0}}ds,
\end{aligned}
\end{align}
where $l_{\ka}=\frac{\ka(2-\ka)}{4}\in(0,\frac{1}{4}].$
\end{Proposition}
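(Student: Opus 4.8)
The plan is to mimic the structure used for $\py^{2}\Gp$ in Proposition~\ref{pypyGdexianyanguji}, applying $\py$ once more to the evolution equation \eqref{pypypygfephi}. First I would take the $H_{\Psi}^{2,0}$ inner product of \eqref{pypypygfephi} with $a(t)\py^{3}\Gp$. Using the boundary condition \eqref{zuihoudebianjietiaojian} together with $u|_{y=0}=v|_{y=0}=P|_{y=0}=G|_{y=0}=0$, one checks that integrating by parts in $y$ produces no boundary terms: the transport/advective combination $\py\mathfrak{F}$, after integration by parts, splits into a piece pairing $\mathfrak{F}$ with $\py^{4}\Gp$ and a piece pairing $\mathfrak{F}$ with $\frac{y}{2\ltr}\py^{3}\Gp$ (exactly as the $R_i$ were split into $\py^{2}\Gp$-part and $\frac{y}{2\ltr}\py\Gp$-part in the previous section), and similarly $\py^{3}Z$ pairs with $\py^{3}\Gp$. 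Combined with the dissipation term $-\py^{5}\Gp$, the weighted integration-by-parts identity \eqref{ptpy} (at the level of $\py^{3}\Gp$) gives the good terms $\frac{1}{2}\frac{d}{dt}\|\sqrt{a}\py^{3}\Gp\|^{2}_{H_{\Psi}^{2,0}}-\frac{1}{2}\|\sqrt{a'}\py^{3}\Gp\|^{2}_{H_{\Psi}^{2,0}}+\ltr^{-1}\|\sqrt{a}\py^{3}\Gp\|^{2}_{H_{\Psi}^{2,0}}+\la\tad\|\sqrt{a}\py^{3}\Gp\|^{2}_{H_{\Psi}^{\frac{7}{3},0}}+\frac{1}{2}\|\sqrt{a}\py^{4}\Gp\|^{2}_{H_{\Psi}^{2,0}}$, while the $[D_x]^{\frac23}$ term contributes the $\la\tad$ coercive quantity.

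Next I would estimate the right-hand side. The term $a(\mathfrak{F},\py^{4}\Gp)_{H_{\Psi}^{2,0}}$ is the main one: by Lemma~\ref{FFdeguji} with $s=2$ (actually $s=\tfrac{7}{3}$ to leave room for the $\la\tad$ term, exactly as in $R_7$), each of the four terms in the bound for $\|\mathfrak{F}\|$ carries a decay factor $\ltr^{-\ga_0+\frac14}$, $\ltr^{-\ga_0-\frac14}$, $\ltr^{-\ga_0-\frac34}$, $\ltr^{-\ga_0-\frac54}$; pairing with $\py^{4}\Gp$ and using Cauchy's inequality plus $\tad=\ep^{1/2}\ltr^{-\al}$ and $\al\le\ga_0+\frac{5}{36}$, $\ga_0>1$, one absorbs the top-derivative $\py^{4}\Gp$ into $\tfrac{1}{100}\eta\|\sqrt{a}\py^{4}\Gp\|^{2}_{H_{\Psi}^{2,0}}$ and the remaining factors into $C\eta^{-1}\ep^{3/2}\tad$ times the three integral quantities on the right of \eqref{py3Gdexianyanguji2} (with weights $\ltr^{-3/2}$, $\ltr^{-1}$, $\ltr^{-1/2}$ on $(\Gp,\Gtp)$, $(\py\Gp,\py\Gtp)$, $(\py^{2}\Gp,\py^{2}\Gtp)$ respectively, which is precisely what the decay powers in Lemma~\ref{FFdeguji} deliver) plus a $C\eta^{-1}\ep^{3/2}\tad\|\sqrt{a}\py^{3}\Gp\|^{2}_{H_{\Psi}^{\frac{7}{3},0}}$ term absorbed into the coercive $\la\tad$ part. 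The term $a(\mathfrak{F},\frac{y}{2\ltr}\py^{3}\Gp)_{H_{\Psi}^{2,0}}$ is handled identically using $\|\frac{y}{2\ltr}f\|_{H_{\Psi}^{s,0}}\le\|\py f\|_{H_{\Psi}^{s,0}}$ from \eqref{huisushele} (or Lemma~\ref{gu6}). For $a(\py^{3}Z,\py^{3}\Gp)_{H_{\Psi}^{2,0}}$ I would first derive a $\py^{3}Z$ analogue of Lemma~\ref{pypyZdeguji} — one more $\py$ in the same term-by-term argument using Lemmas~\ref{gu1},~\ref{gu4},~\ref{gu7} and the bounds of Lemma~\ref{ub} — giving $\|\py^{3}Z\|_{H_{\Psi}^{s,0}}\lesssim\ep(\ltr^{-\ga_0-\frac74}\|\Gp\|+\ltr^{-\ga_0-\frac54}\|\py\Gp\|+\ltr^{-\ga_0-\frac34}\|\py^{2}\Gp\|+\ltr^{-\ga_0-\frac14}\|\py^{3}\Gp\|_{H_{\Psi}^{s+\frac23}}+\ltr^{-\ga_0+\frac14}\|\py^{4}\Gp\|_{H_{\Psi}^{s}})$, and these terms slot into the same right-hand side. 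On the left side, the lower-order transport terms that arise when $\frac12\frac{d}{dt}$ is extracted (the ``$J_i$'' / ``$\sum_{i=5}^{10}$'' analogues, i.e.\ $a(\T_u\px\py^{3}\Gp,\py^{3}\Gp)$, $a(\T_v\py^{4}\Gp,\py^{3}\Gp)$, $\de(t)$-terms) are bounded by the same technique as $a(\T_u\px\U,\U)_{H_{\Psi}^{7,0}}$ in Proposition~\ref{guU}, contributing $C(\ep^{1/2}+\eta^{-1}\ep^{3/2})\tad\|\sqrt{a}\py^{3}\Gp\|^{2}_{H_{\Psi}^{\frac{7}{3},0}}+\tfrac{1}{100}\eta\|\sqrt{a}\py^{4}\Gp\|^{2}_{H_{\Psi}^{2,0}}$.

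Finally I would collect all estimates, integrate over $[0,t]$, do the same for $\py^{3}\Gtp$, and add; noting that $\ep_3$ can be chosen so that $C\ep^{1/2}<\frac{1}{100}\eta$ and $\la>\la_2>4C$ makes the coercive coefficient $\la-C\eta^{-1}\ep^{3/2}$ positive, the coefficients of the top-order dissipation and of $\la\tad\|\cdot\|^{2}_{H_{\Psi}^{\frac{7}{3},0}}$ come out as stated ($4l_\ka-\frac{1}{25}\eta$ and $2(\la-C\eta^{-1}\ep^{3/2})$). I expect the main obstacle to be the bookkeeping in the new $\py^{3}Z$ lemma and in the $\mathfrak F$-pairing: one must be careful that each of the many sub-terms genuinely carries enough $\py$-derivatives hitting $u,v,\psi,G$ (so that Lemma~\ref{ub} supplies a decay rate at least $\ltr^{-\ga_0-1/4}$ with the slack needed for the $\tad$-trick), that the interpolation / anisotropic Sobolev steps (as in \eqref{chazhi}, \eqref{aiyoubucuoou}) lose no more than the allotted $\ltr^{5/36}$-type room, and that the weight powers line up with $a(t)=\ltr^{5+2l_\ka-2\eta}$ so that the final proposition follows by choosing $a$ and invoking Lemma~\ref{gu6} together with the already-proved Propositions~\ref{propositionG}, \ref{propositionpyG}, \ref{propositionpy2G} — exactly as the proof of Proposition~\ref{propositionpy2G} did one level down.
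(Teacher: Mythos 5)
Your overall strategy matches the paper's: test \eqref{pypypygfephi} against $a(t)\py^3\Gp$ in $H_{\Psi}^{2,0}$, use \eqref{ptpy} for the good terms, bound the forcing via Lemma \ref{FFdeguji}, handle the weight factor with \eqref{huisushele}, and finish with $a(t)=\ltr^{5+2l_{\ka}-2\eta}$ and the earlier propositions. However, your treatment of the $Z$-term has a genuine gap. You propose to integrate by parts the dissipation $-\py^5\Gp$ and the bracket $\py\mathfrak{F}$, but to leave $\py^3 Z$ paired directly with $\py^3\Gp$, claiming no boundary terms arise by \eqref{zuihoudebianjietiaojian}. That boundary condition only says the \emph{combination} $(-\py^4\Gp+\mathfrak{F}-\py^2 Z)|_{y=0}=0$; the individual traces $\py^4\Gp|_{y=0}$, $\mathfrak{F}|_{y=0}$, $\py^2 Z|_{y=0}$ are not known to vanish (only $\Gp|_{y=0}=\py^2\Gp|_{y=0}=Z|_{y=0}=0$ are available). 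So with your splitting the boundary contribution $[(-\py^4\Gp+\mathfrak{F})e^{2\Psi}\,\overline{[D_x]^{2}\py^3\Gp}\,]_{y=0}=[\py^2 Z\,e^{2\Psi}\,\overline{[D_x]^{2}\py^3\Gp}\,]_{y=0}$ survives and is uncontrolled. The paper avoids this by writing the equation in divergence form $\py(-\py^4\Gp+\mathfrak{F}-\py^2 Z)$ and integrating \emph{all three} terms by parts at once, which is exactly where \eqref{zuihoudebianjietiaojian} is used; after that only $\mathfrak{F}$ and $\py^2 Z$ appear, paired with $\py^4\Gp$ and $\tfrac{y}{2\ltr}\py^3\Gp$, so Lemma \ref{pypyZdeguji} suffices and no new lemma is needed.

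This also removes the second problem with your plan: the proposed $\py^3 Z$ analogue of Lemma \ref{pypyZdeguji} is not merely extra bookkeeping, some of its sub-terms are out of reach of the available a priori bounds. For instance, $\py^3$ applied to $(\T_{\py G}v)_{\Phi}-\T_{\py G}v_{\Phi}$ produces the commutator term whose para-product symbol is $\py^4 G$; Lemma \ref{gu4} then demands a pointwise-in-time bound of $\|\py^4 G_{\Phi}\|$ in a low horizontal regularity norm (in $L^2_{\vm}$ or $L^\ty_{\vm}$ in $y$), which neither Lemma \ref{ub} nor the bootstrap assumption \eqref{T**} provides (both stop at $\py^3 G_{\Phi}$, resp. $\py^3 u_{\Phi}$); similarly terms requiring $\|\py^3 u\|_{L^\ty_{\vm}(H^{\frac32+}_{\h})}$ have no supplied decay rate, so the $\tad$-absorption trick cannot be run on them. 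Once you adopt the paper's combined integration by parts, these issues disappear, the rest of your estimates (Lemma \ref{FFdeguji} with the decay factors $\ltr^{-\ga_0+\frac14},\dots,\ltr^{-\ga_0-\frac54}$, Cauchy's inequality, $\al\le\ga_0+\frac{5}{36}$, $\ga_0>1$) go through as you describe, and the constants in \eqref{py3Gdexianyanguji2} come out as stated.
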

\begin{proof}
Taking $H_{\Psi}^{2,0}$ inner product of \eqref{pypypygfephi} with $a(t)\py^3\Gp$, noticing the boundary condition \eqref{zuihoudebianjietiaojian}, using \eqref{ptpy} and integrating by parts we can get that
\begin{align}\label{zaijianchisixia}
\begin{aligned}
\frac{1}{2}&\frac{d}{dt}\|\sqrt{a}\py^3\Gp\|^2_{H_{\Psi}^{2,0}}
-\frac{1}{2}\|\sqrt{a'}\py^3\Gp\|^2_{H_{\Psi}^{2,0}}
+\ltr^{-1}\|\sqrt{a}\py^3\Gp\|^2_{H_{\Psi}^{2,0}}\\
&+\la\tad(t)\|\sqrt{a}\py^3\Gp\|^2_{H_{\Psi}^{\frac{7}{3},0}}
+\frac{1}{2}\|\sqrt{a}\py^4\Gp\|^2_{H_{\Psi}^{2,0}}\\
\leq&a\left|(\mathfrak{F}-\py^2Z,\py^4\Gp)_{H_{\Psi}^{2,0}}\right|
+a\left|(\mathfrak{F}-\py^2Z,
\frac{y}{2\ltr}\py^3\Gp)_{H_{\Psi}^{2,0}}\right|.
\end{aligned}
\end{align}
From lemma \ref{pypyZdeguji} and lemma \ref{FFdeguji} we can find
\begin{align*}
&a\left|(\mathfrak{F}-\py^2Z,\py^4\Gp)_{H_{\Psi}^{2,0}}\right|\\
\leq&C\ep\ltr^{-\ga_0+\frac{1}{4}}
\|\sqrt{a}(\py^3\Gp,\py^3\Gtp)\|_{H_{\Psi}^{2,0}}
\|\sqrt{a}\py^4\Gp\|_{H_{\Psi}^{2,0}}\\
&+C\ep\ltr^{-\ga_0-\frac{1}{4}}
\|\sqrt{a}(\py^2\Gp,\py^2\Gtp)\|_{H_{\Psi}^{3,0}}
\|\sqrt{a}\py^4\Gp\|_{H_{\Psi}^{2,0}}\\
&+C\ep\ltr^{-\ga_0-\frac{3}{4}}
\|\sqrt{a}(\py\Gp,\py\Gtp)\|_{H_{\Psi}^{3,0}}
\|\sqrt{a}\py^4\Gp\|_{H_{\Psi}^{2,0}}\\
&+C\ep\ltr^{-\ga_0-\frac{5}{4}}
\|\sqrt{a}(\Gp,\Gtp)\|_{H_{\Psi}^{3,0}}
\|\sqrt{a}\py^4\Gp\|_{H_{\Psi}^{2,0}}\\
\leq&C\eta^{-1}\ep^{\frac{3}{2}}\tad
\|\ltr^{-\frac{3}{2}}\sqrt{a}(\Gp,\Gtp)\|^2_{H_{\Psi}^{\frac{13}{3},0}}
+C\eta^{-1}\ep^{\frac{3}{2}}\tad
\|\ltr^{-1}\sqrt{a}(\py\Gp,\py\Gtp)\|^2_{H_{\Psi}^{\frac{10}{3},0}}\\
&+C\eta^{-1}\ep^{\frac{3}{2}}\tad
\|\ltr^{-\frac{1}{2}}\sqrt{a}(\py^2\Gp,\py^2\Gtp)\|^2_{H_{\Psi}^{\frac{10}{3},0}}
+C\eta^{-1}\ep^{\frac{3}{2}}\tad
\|\sqrt{a}(\py^3\Gp,\py^3\Gtp)\|^2_{H_{\Psi}^{\frac{7}{3},0}}\\
&+\frac{1}{100}\eta\|\sqrt{a}\py^4\Gp\|^2_{H_{\Psi}^{2,0}},
\end{align*}
here we use the fact that $\al\leq\ga_0+\frac{5}{36}$ and $\ga_0>1.$

Next by \eqref{huisushele} and the above estimates, we can deduce
\begin{align*}
&a\left|(\mathfrak{F}-\py^2Z,
\frac{y}{2\ltr}\py^3\Gp)_{H_{\Psi}^{2,0}}\right|
\\
\leq&C\eta^{-1}\ep^{\frac{3}{2}}\tad
\|\ltr^{-\frac{3}{2}}\sqrt{a}(\Gp,\Gtp)\|^2_{H_{\Psi}^{\frac{13}{3},0}}
+C\eta^{-1}\ep^{\frac{3}{2}}\tad
\|\ltr^{-1}\sqrt{a}(\py\Gp,\py\Gtp)\|^2_{H_{\Psi}^{\frac{10}{3},0}}\\
&+C\eta^{-1}\ep^{\frac{3}{2}}\tad
\|\ltr^{-\frac{1}{2}}\sqrt{a}(\py^2\Gp,\py^2\Gtp)\|^2_{H_{\Psi}^{\frac{10}{3},0}}
+C\eta^{-1}\ep^{\frac{3}{2}}\tad
\|\sqrt{a}(\py^3\Gp,\py^3\Gtp)\|^2_{H_{\Psi}^{\frac{7}{3},0}}\\
&+\frac{1}{100}\eta\|\sqrt{a}\py^4\Gp\|^2_{H_{\Psi}^{2,0}}.
\end{align*}
Taking all the above estimates into \eqref{zaijianchisixia}, and integrating over $[0,t]$, we can know that $\py^3\Gp$ satisfies
\begin{align*}
&\|\sqrt{a}\py^3\Gp(t)\|^2_{H_{\Psi}^{2,0}}
-\int_0^t\|\sqrt{a'}\py^3\Gp\|^2_{H_{\Psi}^{2,0}}ds
+2\int_0^t\lsr^{-1}\|\sqrt{a}\py^3\Gp\|^2_{H_{\Psi}^{2,0}}ds\\
&+2\la\int_0^t\tad\|\sqrt{a}\py^3\Gp\|^2_{H_{\Psi}^{\frac{7}{3},0}}ds
+\int_0^t\|\sqrt{a}\py^4\Gp\|^2_{H_{\Psi}^{2,0}}ds\\
\leq&\|\sqrt{a}(0)\py^3\Gp(0)\|^2_{H_{\Psi}^{2,0}}
+C\eta^{-1}\ep^{\frac{3}{2}}
\int_0^t\tad
\|\sqrt{a}(\py^3\Gp,\py^3\Gtp)\|^2_{H_{\Psi}^{\frac{7}{3},0}}ds
+\frac{1}{25}\eta\int_0^t\|\sqrt{a}\py^4\Gp\|^2_{H_{\Psi}^{2,0}}ds\\
&+C\eta^{-1}\ep^{\frac{3}{2}}\int_0^t\tad
\|\lsr^{-\frac{3}{2}}\sqrt{a}(\Gp,\Gtp)
\|^2_{H_{\Psi}^{\frac{13}{3},0}}ds
+C\eta^{-1}\ep^{\frac{3}{2}}\int_0^t\tad
\|\lsr^{-1}\sqrt{a}(\py\Gp,\py\Gtp)
\|^2_{H_{\Psi}^{\frac{10}{3},0}}ds\\
&+C\eta^{-1}\ep^{\frac{3}{2}}\int_0^t\tad
\|\lsr^{-\frac{1}{2}}\sqrt{a}(\py^2\Gp,\py^2\Gtp)
\|^2_{H_{\Psi}^{\frac{13}{3},0}}ds.
\end{align*}
This completes the estimates of $\py^3 \Gp$. And the estimates of $\py^3\Gtp$ can be obtained in the same way, so we have completed the proof of proposition \ref{py3Gdexianyanguji}.
\end{proof}

Finally we can use proposition \ref{py3Gdexianyanguji} to prove proposition \ref{propositionpy3G}.

\textbf{Proof of Proposition \ref{propositionpy3G}:}
Taking $a(t)=\ltr^{5+2l_{\ka}-2\eta}$ in \eqref{py3Gdexianyanguji2}, then we have
\begin{align*}
&\|\ltr^{\frac{5}{2}+l_{\ka}-\eta}
(\py^3\Gp(t),\py^3\Gtp(t))\|^2_{H_{\Psi}^{2,0}}
+(4l_{\ka}-\frac{1}{25}\eta)\int_0^t\|\lsr^{\frac{5}{2}+l_{\ka}-\eta}
(\py^4\Gp,\py^4\Gtp)\|^2_{H_{\Psi}^{2,0}}ds\\
&+2(\la-C\eta^{-1}\ep^{\frac{3}{2}})
\int_0^t\tad\|\lsr^{\frac{5}{2}+l_{\ka}-\eta}
(\py^3\Gp,\py^3\Gtp)\|^2_{H_{\Psi}^{\frac{7}{3},0}}ds\\
\leq&\|(\py^3\Gp(0),\py^3\Gtp(0))\|^2_{H_{\Psi}^{2,0}}
+C\eta^{-1}\ep^{\frac{3}{2}}\int_0^t\tad
\|\lsr^{1+l_{\ka}-\eta}(\Gp,\Gtp)
\|^2_{H_{\Psi}^{\frac{13}{3},0}}ds\\
&+C\eta^{-1}\ep^{\frac{3}{2}}\int_0^t\tad
\|\lsr^{\frac{3}{2}+l_{\ka}-\eta}(\py\Gp,\py\Gtp)
\|^2_{H_{\Psi}^{\frac{10}{3},0}}ds\\
&+C\eta^{-1}\ep^{\frac{3}{2}}\int_0^t\tad
\|\lsr^{2+l_{\ka}-\eta}(\py^2\Gp,\py^2\Gtp)
\|^2_{H_{\Psi}^{\frac{13}{3},0}}ds\\
&+(3+2l_{\ka}-2\eta)\int_0^t\|\lsr^{2+l_{\ka}-\eta}
(\py^3\Gp,\py^3\Gtp)\|^2_{H_{\Psi}^{2,0}}ds.
\end{align*}
By applying \eqref{propositionG1}, \eqref{propositionpyG1} and
\eqref{propositionpy2G1} we deduce
\begin{align*}
&\|\ltr^{\frac{5}{2}+l_{\ka}-\eta}
(\py^3\Gp(t),\py^3\Gtp(t))\|^2_{H_{\Psi}^{2,0}}
+(4l_{\ka}-\frac{1}{25}\eta)\int_0^t\|\lsr^{\frac{5}{2}+l_{\ka}-\eta}
(\py^4\Gp,\py^4\Gtp)\|^2_{H_{\Psi}^{2,0}}ds\\
&+2(\la-C\eta^{-1}\ep^{\frac{3}{2}})
\int_0^t\tad\|\lsr^{\frac{5}{2}+l_{\ka}-\eta}
(\py^3\Gp,\py^3\Gtp)\|^2_{H_{\Psi}^{\frac{7}{3},0}}ds\\
\leq&
\dfrac{C}{\eta(4l_{\ka}-\frac{7}{25}\eta)(4l_{\ka}-\frac{13}{100}\eta)}
\left(\|(\py\Gp(0),\py\Gtp(0))\|^2_{H_{\Psi}^{3,0}}
+\|(\Gp(0),\Gtp(0))\|^2_{H_{\Psi}^{4,0}}\right.\\
&\left.+\|(\py^2\Gp(0),\py^2\Gtp(0))\|^2_{H_{\Psi}^{3,0}}
+\|(\py^3\Gp(0),\py^3\Gtp(0))\|^2_{H_{\Psi}^{2,0}}\right)\\
&+\dfrac{C}{\eta(4l_{\ka}-\frac{7}{25}\eta)(4l_{\ka}-\frac{13}{100}\eta)}
\int_0^t\tad
\|\lsr^{l_{\ka}-\eta}\sqrt{a}(\uph,\bp)\|^{2}_{H_{\Psi}^{\frac{22}{3},0}}
ds.
\end{align*}
Thus when $\ep<\ep_3$ and $\la>\la_2$, proposition \ref{propositionpy3G} holds.

\hfill $\square$

\subsection*{Acknowledgements}
This work was supported by National Natural Science Foundation of China, NSFC (NO: 11926316, 11531010, 12071391) and Guangdong Basic and Applied Basic Research Foundation (2022A1515010860).
\subsection*{Competing interests}

This work does not have any conflicts of interest.

\end{document}